\titleformat{\subsection}{\it}{\thesubsection.\enspace}{1.5pt}{}
\titleformat{\subsubsection}{\it}{\thesubsubsection.\enspace}{1.5pt}{}
\newtheorem{theo}{Theorem}[section]
\newtheorem{lemm}[theo]{Lemma}
\newtheorem{coro}[theo]{Corollary}
\newtheorem{prop}[theo]{Proposition}
\newtheorem{rema}{Remark}[section]
\numberwithin{equation}{section}
\def\p{\partial}
\def\px{\partial_x}
\def\py{\partial_y}
\def\pyy{\partial_{yy}}
\def\y{\langle y \rangle}
\def\k{\langle k \rangle}
\def\la{\langle}
\def\ra{\rangle}
\def\f{\frac}
\def\d{\delta}
\def\iy{\int_0^\infty}
\def\ix{\int_0^L}
\def\beq{\begin{equation}}
\def\bal{\begin{aligned}}
\def\dal{\end{aligned}}
\def\deq{\end{equation}}
\def\beqq{\begin{equation*}}
\def\deqq{\end{equation*}}
\def\e{\mathcal{E}}
\def\D{\mathcal{D}}
\def\eps{\epsilon}
\def\u{u^\eps}
\def\v{v^\eps}
\def\q{q^\eps}
\def\x{X^k_l(x)}
\def\bu{\widehat{u}}
\def\bv{\widehat{v}}
\def\bq{\widehat{q}}
\def\b{\mathcal{B}}
\def\e{\mathcal{E}}
\def\us{\bar{u}}
\def\vs{\bar{v}}
\def\bs{\bar{b}}
\def\om{\omega}
\def\sg{\sigma}
\def\gm{\gamma}
\begin{document}
\title{Higher regularity and asymptotic behavior of 2D
magnetic Prandtl model in the Prandtl-Hartmann  regime\hspace{-4mm}}
\author{Jincheng Gao$^\dag$ \quad Minling Li$^\ddag$ \quad Zheng-an Yao$^\sharp$ \\[10pt]
\small {School of Mathematics, Sun Yat-Sen University,}\\
\small {510275, Guangzhou, P. R. China}\\[5pt]
}

\footnotetext{Email: \it $^\dag$gaojch5@mail.sysu.edu.cn,
\it $^\ddag$limling3@mail2.sysu.edu.cn,
\it $^\sharp$mcsyao@mail.sysu.edu.cn}
\date{}

\maketitle

\begin{abstract}

In this paper, we investigate the higher regularity and asymptotic behavior for
the 2-D magnetic Prandtl model in the Prandtl-Hartmann regime. Due to the degeneracy of horizontal velocity near boundary, the higher regularity of solution is a tricky problem.
By constructing suitable approximated system and establishing closed energy estimate
for a good quantity(called ``quotient" in \cite{Guo-Iyer-2021}), our first result is to
solve this higher regularity problem.
Furthermore, we show the global well-posedness
and global-in-$x$ asymptotic behavior when the initial data are small
perturbation of the classical Hartmann layer in Sobolev space.
By using the energy method to establish closed estimate for the quotient,
we overcome the difficulty arising from the degeneracy of horizontal velocity
near boundary. Due to the damping effect, we also point out that this global solution
will converge to the equilibrium state(called Hartmann layer) with exponent decay rate.

\vspace*{5pt}
\noindent{\it {\rm Keywords}}: Magnetic Prandtl equation,
higher regularity, asymptotic behavior

\vspace*{5pt}
\noindent{\it {\rm 2020 Mathematics Subject Classification:}}\ {\rm 76W05, 35Q30, 76D10, 35B40}
\end{abstract}

\tableofcontents

\section{Introduction}\label{introduction}
In this paper, we are concerned with the higher regularity and asymptotic
behavior for the mixed Prandtl-Hartmann boundary layer equations
that were derived in \cite{G-V-P} from the classical incompressible
magnetohydrodynamic(MHD) system in two-dimensional domain with flat boundary.
Specifically, we are concerned with the following Prandtl-Hartmann equation
in $[0, L]\times \mathbb{R}_+$:
\beq\label{o-MHD}
  	\left\{\begin{aligned}
  &u \px u +v \py u-\pyy u=-\px p_E(x, 0)+\py b,\\
  &\py u+\pyy b=0,\\
  &\px u+\py v=0,
  	\end{aligned}\right.
  \deq
where $(u, v)$ denotes the velocity field, and
$b$ is the corresponding tangential magnetic component.
The quantity $p_E(x, 0)$ above is considered prescribed,
and $\px p_E(x, 0)$ evidently acts as a forcing term
to the Prandtl equations. In this paper, we are concerned with the
homogeneous Prandtl equations, that is $\px p_E(x, 0)=0$.

The Prandtl-Hartmann Eq.\eqref{o-MHD} are though of as evolution equation,
with $x$ being a time-like variable, and $y$ being space-like.
Thus, the Eq.\eqref{o-MHD} are supplemented with boundary conditions at
$y=0, y=+\infty$, and initial data at $x=0$.
The quantity $L$ appearing in \eqref{o-MHD} is though of as the time
over which we are considering the evolution.
For the Eq.\eqref{o-MHD}, we will consider this system with the initial data
\beq\label{id}
u(x, y)|_{x=0}=u_0(y),
\deq
and the Dirichlet boundary conditions
\beq\label{bc1}
u|_{y=0}=v|_{y=0}=b|_{y=0}=0.
\deq
The far field is taken as a uniform constant state,
and hence, we also suppose
\beq\label{bc2}
\underset{y\rightarrow +\infty}{\lim}u(x, y)=1,~~
\underset{y\rightarrow +\infty}{\lim}b(x, y)=1,
{\rm ~uniformly~with~respect~to~} x.
\deq
Using the boundary conditions \eqref{bc1}, \eqref{bc2} and
the second equation in \eqref{o-MHD}, one can obtain the relationship
\beq\label{re-eq}
\py b=-(u-1).
\deq
Substituting this equation \eqref{re-eq} into the original Eq.\eqref{o-MHD}, we have
\beq\label{H-eq}
  	\left\{\begin{aligned}
  &u \px u +v \py u-\pyy u+u-1=0,\\
  &\px u+\py v=0,
  	\end{aligned}\right.
  \deq
with the boundary conditions
\beq\label{H-bc}
u|_{y=0}=v|_{y=0}=0,\
\underset{y\rightarrow +\infty}{\lim}u(x, y)=1.
\deq
Since the vertical velocity $v(x, y)=-\int_0^y \px u(x, y')dy'$
creates a loss of $x-$derivative,
the classical method employed by Oleinik is to pass to the following
change of coordinates, known as the well-known von-Mises transform:
\beq\label{Von-Mises}
(x,\psi)=(x, \int_0^y u(x, y')dy').
\deq
In terms of new variables, the original Eq.\eqref{H-eq} translate into a quasilinear, degenerate diffusion equation as follows
\beq\label{u2eq}
(u^2)_x-u(u^2)_{\psi\psi}=2(1-u).
\deq
The corresponding boundary and initial conditions are\beq\label{u2bc}
u|_{\psi=0}=0,\quad
\underset{\psi \rightarrow +\infty}{\lim}u(x,\psi)=1,\quad
u(0,\psi)=u_0(\psi).
\deq
In this paper, the words ``local" and ``global" refer to the $x-$direction.
Similar to the classical Prandtl equation, higher regularity is a tricky problem
that arises from the degeneracy of velocity near the boundary.
Thus, our first target is to establish the local well-posedness
with higher regularity for the equations \eqref{o-MHD}-\eqref{bc2}.

\begin{theo}[Higher Regularity Result]\label{high-regu}
Assume that $u_0(y)\in C^\infty(\mathbb{R}^+)$ satisfies the following conditions:
\beq
u_0(y) >0, \ y>0, \ u_0(0)=0, \ u_0(y) \rightarrow 1, \ y \rightarrow \infty,
\deq
and
\beq
u_0'(0)>0, \ u_0''(0)+1=0, \ u_0'''(0)-u_0'(0)=0.
\deq
Assume the initial data $|u_0(y)-1|$ and $\py^m u_0(y)$ decay exponentially for all $m\ge 1$.
Assume also the generic compatibility conditions at the corner $(0, 0)$ up to order $2k-1$.
Then there exists a positive $0<L<<1$ depending on the initial data
such that on $0\le x \le L$, the solution $(u, v, b)$ of
Prandtl-Hartmann Eqs.\eqref{o-MHD}-\eqref{bc2}
obeys the following estimates for $0<2\alpha+\beta\le 2k$ and for $\gamma \le k-1$ that
\beq\label{THM_estimate}
\|\px^\alpha \py^\beta u \y^l\|_{L^\infty_x L^\infty_y}
+\|\px^\gamma v \y^{l-1}\|_{L^\infty_x L^\infty_y}
+\|\px^\alpha \py^{\beta+1} b \y^l\|_{L^\infty_x L^\infty_y}
\le C_0,
\deq
where the weight constant $l \ge 1$
and $C_0$ depends on $k, l$ and $u_0$.
\end{theo}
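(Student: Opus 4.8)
The plan is to pass to the von-Mises variables \eqref{Von-Mises}, where the degenerate-parabolic equation \eqref{u2eq} can be handled by an Oleinik-type regularization, and then to propagate the higher-order bounds not on $u$ itself but on the ``quotient'' of \cite{Guo-Iyer-2021}, for which the degeneracy near $y=0$ is divided out and a closed weighted energy estimate becomes available. A compactness argument at the end removes the regularization and returns to the original variables $(x,y)$.

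\emph{Step 1 (approximate system).} Since the diffusion coefficient in \eqref{u2eq} vanishes where $u=0$, I would first solve a family of non-degenerate problems — a standard regularization of \eqref{u2eq} in the spirit of Oleinik's local existence theory, e.g.\ replacing the coefficient $u$ by $u+\d$ — and obtain, for each $\d>0$, a smooth solution on some $x$-interval by classical quasilinear parabolic theory, using the smoothness and exponential decay of $u_0$ together with the corner compatibility conditions to order $2k-1$. These compatibility conditions force the $x$- and $y$-derivatives of the approximate solution to match at $(0,0)$ to the order needed below; in particular they reproduce, on $\{y=0,\ x\ge 0\}$, the identities $\pyy u(x,0)=-1$ and $\py^3 u(x,0)=\py u(x,0)$ (the boundary analogues of $u_0''(0)+1=0$ and $u_0'''(0)-u_0'(0)=0$), which is what makes the quotients continuous up to the boundary.

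\emph{Step 2 (the quotient).} Following \cite{Guo-Iyer-2021}, the good unknown is $\pvu$ together with its higher analogues. From $\px u+\py v=0$ and the momentum equation in \eqref{H-eq} one gets $-u^2\,\py\!\pvu=\pyy u-u+1$, whose right-hand side vanishes to second order at $y=0$ by the identities just recalled; hence $v/u$ is bounded up to the boundary although $u,v\to 0$ there. The same cancellation propagates to $q_{\alpha,\beta}:=\px^\alpha\py^\beta u/\py u$ (and to the corresponding quotients built from $v$), the denominator being bounded below near $y=0$ since $\py u(x,0)=u_0'(0)+O(x)>0$ for $L$ small. Each $q_{\alpha,\beta}$ satisfies a transport--diffusion equation of the schematic form $u\,\px q_{\alpha,\beta}+v\,\py q_{\alpha,\beta}-\pyy q_{\alpha,\beta}+(\text{damping})=(\text{commutators of parabolic order}\le 2\alpha+\beta)$, in which the degenerate factor has been divided out and the zeroth-order term inherited from $+u-1$ carries a favorable sign. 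Differentiating \emph{the quotient equation} — rather than the $u$-equation — is precisely what avoids the loss of one $x$-derivative through the convection term $v\py$, because $v=-\int_0^y\px u$ is controlled at one lower parabolic order.

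\emph{Step 3 (closed weighted estimate and return to $(x,y)$).} On the hierarchy $\{q_{\alpha,\beta}:2\alpha+\beta\le 2k\}$ I would run a weighted $L^2$ energy estimate: multiplying the equation for $q_{\alpha,\beta}$ by $q_{\alpha,\beta}\y^{2l}$ and integrating over $[0,x]\times\mathbb R_+$, the convection terms combine — via $\px u+\py v=0$ and $v|_{y=0}=0$ — into $\tfrac12\px\!\int u\,q_{\alpha,\beta}^2\y^{2l}$ plus lower-order weight and boundary-flux terms, the Laplacian produces the dissipation $\int|\py q_{\alpha,\beta}|^2\y^{2l}$, and the damping yields a further nonnegative term; the commutators are absorbed by $\sum_{2\alpha'+\beta'\le 2\alpha+\beta}\|q_{\alpha',\beta'}\y^{l}\|$, with the exponential decay of $u-1$ and of $\py^m u_0$ controlling the far field. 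Summing over $2\alpha+\beta\le 2k$ gives a differential inequality $\tfrac{d}{dx}\e(x)\le P(\e(x))$ for a fixed polynomial $P$, so a continuity argument on $0\le x\le L$ with $L\ll 1$ closes the bound. Translating back: $\px^\alpha\py^\beta u\,\y^l=q_{\alpha,\beta}\,\py u\,\y^l$ follows from the $q$-bounds, the boundedness of $\py u$ and Sobolev embedding in $y$; $\px^\gamma v=-\int_0^y\px^{\gamma+1}u$ requires $2(\gamma+1)\le 2k$, i.e.\ $\gamma\le k-1$, and it decays in $y$ because $\int_0^\infty(u-1)\,dy=-b(x,+\infty)=-1$ is independent of $x$, so $\int_0^\infty\px u\,dy=0$, $v(x,y)=\int_y^\infty\px u\,dy$ inherits the exponential decay of $\px u$, and $\|\px^\gamma v\,\y^{l-1}\|$ is bounded; finally $\px^\alpha\py^{\beta+1}b=-\px^\alpha\py^\beta(u-1)$ by \eqref{re-eq} gives the $b$-bound over the same index range. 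The limit $\d\to 0$ is taken by a standard compactness argument.

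\emph{Main obstacle.} The crux is establishing that the quotient equations genuinely have the structure claimed in Step 2: verifying that dividing by $\py u$ (resp.\ $u$) cancels the degeneracy for \emph{every} index with $2\alpha+\beta\le 2k$ — which is exactly what ties the required order $2k-1$ of corner compatibility to the target regularity — and that, after this division, no derivative is lost and all commutators close against the anisotropic, exponentially weighted energy $\e$. Propagating the boundary identities $\pyy u(x,0)=-1$, $\py^3 u(x,0)=\py u(x,0)$ and their higher-order descendants uniformly in the regularization parameter, so that the limiting solution is smooth up to the corner, is the delicate bookkeeping part of the argument.
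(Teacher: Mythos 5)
Your plan correctly identifies the three ingredients of the paper's proof---a positivity-restoring regularization, a quotient good unknown in the spirit of \cite{Guo-Iyer-2021}, and a closed weighted energy estimate followed by a compactness limit---but the good unknown you actually propagate is not the right one, and the central claim of your Step 2, that differentiating your quotient equation avoids the loss of an $x$-derivative, does not hold. The paper's quantity is $q=v/u$, and the crucial point is that after invoking $\px u+\py v=0$ the momentum equation rewrites as $-\px\{u^2\py q\}+\py^3 v-\py v=0$: everything appearing is $v$ or $q$, with no undifferentiated $\px u$ hiding inside a $y$-integral. Applying $\px^k$ and testing against $\px^k\py q\,\y^{2l}$ therefore closes, with $u\px^k\py q$ and $\sqrt{u}\,\px^k\py^2 q$ as the energy and dissipation weights; this is exactly the structure of $\mathcal{E}^k_l$ and $X^k_l$ in Section~3.1. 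By contrast, the hierarchy $q_{\alpha,\beta}=\px^\alpha\py^\beta u/\py u$ that you propagate inherits the loss unaltered: applying $\px^\alpha\py^\beta$ to $u\px u+v\py u-\pyy u+u-1=0$ produces the commutator $\px^\alpha v\cdot\py^{\beta+1}u=-\bigl(\int_0^y\px^{\alpha+1}u\,dy'\bigr)\py^{\beta+1}u$, which carries $\alpha+1$ tangential derivatives; dividing the whole equation by $\py u$ does not remove this term, and it does not yield the transport--diffusion equation with commutators of parabolic order $\le 2\alpha+\beta$ that you assert. Moreover $\py u$ is bounded below only near $y=0$ and decays exponentially at infinity, so this quotient is potentially worse at the far field than the object it replaces; the degeneracy that must be divided out lives in $u$ (which vanishes at $y=0$), not in $\py u$.

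A secondary point: the paper's regularization is an $\eps$-lift of the entire profile carried out in the original $(x,y)$ variables (boundary datum $\eps$, far field $1+\eps$, forcing $-\eps$), chosen precisely so that the quotient equation and divergence-free structure carry over verbatim to the approximate system \eqref{app-sym}, giving the $\eps$-uniform estimates of Proposition~\ref{Uniform estimate}; adding $\delta$ to the diffusion coefficient after passing to von-Mises, as you propose, does not obviously preserve the structure that the quotient estimate relies on and is not what the paper does (von-Mises is used only inside the construction of the $\eps$-approximate local solution, cf.\ Section~3.2 and Lemma~2.2 of \cite{Guo-Nguyen2}). Finally, the passage from the quotient bound to \eqref{THM_estimate} goes not through Sobolev embedding applied to $q_{\alpha,\beta}\py u$ but through the auxiliary $v$-norms in $X^k_l$, the mixed-derivative recovery of Proposition~\ref{p3}, and the lower bound $u\gtrsim y$ near the wall; you did identify correctly that $\gamma\le k-1$ arises from the parabolic count needed to control $\px^\gamma v$.
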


Next, our second target is to study asymptotic behavior
of solution as it tends to the equilibrium state.
More precisely, we study the global stability of the Hartmann layer $(\us,\vs)=(1-e^{-y},0)$.
It is easy to check that $(\us,\vs)$ is a special solution of the system \eqref{H-eq}-\eqref{H-bc}.
Since the higher regularity result in Theorem \ref{high-regu}
require the compatibility condition on the initial data,
thus we only investigate the asymptotic behavior result of solution
in low regularity initial data.
Specifically, we have the asymptotic behavior result as follows:

\begin{theo}[Asympototic Behavior Result]\label{asym-beha}
	Assume that $u_0(y)>0$ for $y>0$; $u_0(0)=0$, $u_0'(0)>0$, $u_0(y)\rightarrow 1$ as $y\rightarrow\infty$; $u_0(y)$, $u'_0(y)$, $u''_0(y)$ are bounded for $0\leq y<\infty$ and satisfy the H{\"o}lder condition. Moreover, assume that for small $y$ the following compatiblity condition is satisfied at the point $(0,0)$:
	\[-u''_0(y)+u_0(y)-1=O(y^2).\]
For a given real number $\gamma_0>1$,
suppose that the initial data satisfy $f(0)\leq \gamma_0$.
Furthermore, there exists a positive constanst $\sigma_0$ depending on $\gamma_0$ such that 
\beq\label{phi-condition}
\|\f{\phi_0}{u_0^{3/2}}\|_{L^2_\psi}
+\|\phi_{0\psi}\|_{L^2_\psi}+\|\f{L\phi_0}{u_0^{3/2}}\|_{L^2_\psi}
+\|(L\phi_{0})_{\psi}\|_{L^2_\psi}\leq \sigma_0, \deq
	where $\phi(x,\psi):=u^2(x,y(x,\psi))-\us^2(y(\psi))$, $f(x):=\underset{x'\in I}{\sup} \Big\{\|\f{u}{\us}\|_{L^\infty_\psi},\|\f{\us}{u}\|_{L^\infty_\psi} \Big\}$, the operator $L\phi:=-u\phi_{\psi\psi}+2\frac{\phi}{\us(u+\us)}$
and $(\us,\vs):=(1-e^{-y},0)$ be the Hartmann layer.
	Then there exists a global strong solution $(u,v)$ to the
    Prandtl-Hartmann system \eqref{H-eq}.
    Moreover, the solution satisfies the decay estimate:
	\begin{equation}\label{y-decay}
	\begin{aligned}
	\|u(x,y)-\us(y)\|_{H^2_y}+\|u_{y}(x,y)-\us_y(y)\|_{L^\infty_y}\leq C e^{-x},
	\end{aligned}
	\end{equation}
	where $C$ is a positive constant independent of $x$.
\end{theo}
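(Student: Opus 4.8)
The plan is to reformulate the stability of the Hartmann layer as a single closed energy estimate, in the von--Mises variables $(x,\psi)$, for the ``quotient'' unknown $\phi=u^2-\us^2$, and to extract the exponential relaxation from the reaction term $2(1-u)$ in \eqref{u2eq}; global existence and the decay \eqref{y-decay} then follow from a continuity argument that controls the energy together with the ratio $f(x)$. First I would record the equation for $\phi$. Viewing $\us=\us(\psi)$ as the Hartmann profile in its own von--Mises variable, it is a stationary solution of \eqref{u2eq}, so $-\us(\us^2)_{\psi\psi}=2(1-\us)$; subtracting this from \eqref{u2eq} and using $u-\us=\phi/(u+\us)$ to recombine the reaction terms yields that $\phi$ satisfies exactly
\beq\label{prop-phi}
\phi_x+L\phi=0,\qquad L\phi=-u\,\phi_{\psi\psi}+\frac{2\phi}{\us(u+\us)},
\deq
with $\phi|_{\psi=0}=0$, $\phi\to0$ as $\psi\to\infty$, and $\phi(0,\psi)=\phi_0$. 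Here $L$ is degenerate elliptic --- the diffusion coefficient $u$ vanishes like $\sqrt\psi$ at $\psi=0$ --- while the zeroth order coefficient $2/(\us(u+\us))$ is bounded below, equal to $1$ at $\psi=+\infty$ and larger near the boundary; this coercive damping is the source of the rate $e^{-x}$.

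Next I would invoke Oleinik's method: under the stated regularity, positivity and H\"older hypotheses on $u_0$ and the corner compatibility $-u''_0+u_0-1=O(y^2)$, there is a local-in-$x$ strong solution of \eqref{H-eq} with $u>0$ for $y>0$ and $u_y(x,0)>0$, so the change of variables \eqref{Von-Mises} is admissible on a maximal interval $I=[0,X^\ast)$ and \eqref{prop-phi} holds on $I$. On $I$ I would propagate the energy
\[
\mathcal E(x):=\Big\|\tfrac{\phi}{u^{3/2}}\Big\|_{L^2_\psi}^2+\|\phi_\psi\|_{L^2_\psi}^2+\Big\|\tfrac{L\phi}{u^{3/2}}\Big\|_{L^2_\psi}^2+\|(L\phi)_\psi\|_{L^2_\psi}^2,
\]
the square of the quantity bounded at $x=0$ by \eqref{phi-condition}; note that $L\phi=-\phi_x$, so $\mathcal E$ controls $\phi$, $\phi_\psi$, $\phi_x/u^{3/2}$, $\phi_{x\psi}$, and through \eqref{prop-phi} also $\phi_{\psi\psi}$ and $\phi_{\psi\psi\psi}$. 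The continuity argument starts from the bootstrap assumptions $\mathcal E(x)\le4\sigma_0^2e^{-2x}$ and $f(x)\le\gamma_0+1$ on a subinterval and aims to improve both when $\sigma_0$ is small.

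The core of the proof is the weighted energy estimate. Testing \eqref{prop-phi} against $\phi/u^3$ and integrating in $\psi$: the $\phi_x$-term gives $\tfrac12\tfrac{d}{dx}\|\phi/u^{3/2}\|_{L^2_\psi}^2$ plus a weight error $\tfrac32\int\phi^2u_xu^{-4}$, which I would turn cubic via $u_x=\phi_x/(2u)=-L\phi/(2u)$; the damping term contributes the coercive $\|\phi/u^{3/2}\|_{L^2_\psi}^2$, where I would use the strict positivity of $1-\us^2$ to absorb the lower order corrections and keep the sharp constant; and the degenerate term $-\phi_{\psi\psi}\phi\,u^{-2}$, integrated by parts once, produces the good term $\int\phi_\psi^2u^{-2}$, a boundary contribution at $\psi=0$ of favorable sign (thanks to the weight exponent and to $\phi=O(\psi)$ there) and cross terms that become cubic once $u_\psi=(\phi_\psi+2\us\us_\psi)/(2u)$ is substituted. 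Differentiating \eqref{prop-phi} in $\psi$ and testing against $\phi_\psi$ (unweighted), the first-order terms $-u\phi_{\psi\psi\psi}-u_\psi\phi_{\psi\psi}$ combine under integration by parts to leave the good term $\int u\phi_{\psi\psi}^2$, a boundary term at $\psi=0$ handled by the near-boundary expansion of $\phi$, and cubic remainders. Since $L$ and $\partial_x$ almost commute with \eqref{prop-phi}, $L\phi=-\phi_x$ and $(L\phi)_\psi=-\phi_{x\psi}$ satisfy equations of the same type with source $(\partial_xL)\phi=-u_x\phi_{\psi\psi}+\partial_x\big(\tfrac{2}{\us(u+\us)}\big)\phi$, whose $x$-derivatives of coefficients I would again convert into $\psi$-derivatives through $u_x=-L\phi/(2u)$; repeating the two weighted estimates for $L\phi$ and $(L\phi)_\psi$ and summing would yield a closed inequality
\beq
\frac{d}{dx}\mathcal E(x)+2\mathcal E(x)\le C\big(\mathcal E(x)^{1/2}+\mathcal E(x)\big)\mathcal E(x)\qquad\text{on }I.
\deq

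To finish: for $\sigma_0$ small, Gronwall's inequality and $\mathcal E(0)\le\sigma_0^2$ give $\mathcal E(x)\le2\sigma_0^2e^{-2x}$ on $I$, which strictly improves the first bootstrap bound; integrating in $x$ the resulting control on $\partial_x(u/\us)$ (again via $u_x=-L\phi/(2u)$ and the one-dimensional embedding $H^1_\psi\hookrightarrow L^\infty_\psi$) improves $f(x)\le\gamma_0+C\sigma_0<\gamma_0+1$. Hence the solution stays uniformly non-degenerate, $X^\ast=+\infty$ by the continuation criterion, and $(u,v)$ is a global strong solution of \eqref{H-eq}. Passing back to the physical variable by $\partial_\psi=u^{-1}\partial_y$ and $y=y(x,\psi)$, the bounds on $\phi$ and its $\psi$-derivatives up to third order contained in $\mathcal E$, together with Sobolev embedding, translate into $\|u(x,\cdot)-\us\|_{H^2_y}+\|u_y(x,\cdot)-\us_y\|_{L^\infty_y}\lesssim\mathcal E(x)^{1/2}\lesssim e^{-x}$, which is \eqref{y-decay}. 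The hard part will be the weighted estimate above: forcing the degenerate diffusion $-u\phi_{\psi\psi}$ to cooperate with the singular weight $u^{-3}$ so that the boundary terms at $\psi=0$ fall on the good side and every surviving term is genuinely cubic --- hence absorbable by the small data $\sigma_0$ --- while at the same time propagating the non-degeneracy $u\gtrsim\us$ on which the entire von--Mises formulation, and with it the very meaning of $\phi$, $L$ and $f$, depends; because the second-order operator $L$ enters three of the four controlled norms, one is effectively estimating third-order $\psi$-derivatives of $\phi$ through a degenerate parabolic equation, so the commutators between $L$, $\partial_\psi$ and the weight must be organized with considerable care.
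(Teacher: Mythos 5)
Your overall strategy matches the paper's: von--Mises reduction to the scalar equation $\phi_x+L\phi=0$, a weighted energy estimate on $\phi$ and its ``time'' derivative (you use $L\phi=-\phi_x$ and test $\psi$-differentiated equations, the paper tests the $x$-differentiated equation against $\phi_x$, $\phi_x/u$, $\phi_{xx}/u$ --- equivalent bookkeeping), a bootstrap on the ratio $f(x)$, and exponential decay extracted from the lower-order damping. That part of your proposal is sound, though the paper splits the work across more intermediate lemmas (e.g.\ separate $L^2$ and weighted quotient estimates, and the dedicated $\alpha(x)$ estimate controlling $\|\om/u\|_{L^\infty}$) rather than claiming a single closed inequality in one pass.

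There is, however, a genuine gap in your final step, and it is precisely the part the paper singles out as the novelty of Theorem 1.2. You write that ``passing back to the physical variable by $\partial_\psi=u^{-1}\partial_y$ and $y=y(x,\psi)$'' the bounds on $\phi$ and its $\psi$-derivatives ``translate into'' $\|u-\us\|_{H^2_y}+\|u_y-\us_y\|_{L^\infty_y}\lesssim e^{-x}$. This does not follow by chain rule, because the von--Mises map is \emph{solution-dependent}: for a fixed $\psi$, the physical height $y_1(x,\psi)$ determined by $\psi=\int_0^{y_1}u\,dy'$ and the height $y_2(\psi)$ determined by $\psi=\int_0^{y_2}\us\,dy'$ are different, and in fact $y_1\neq y_2$ in general. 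Thus $\phi(x,\psi)=u^2(x,y_1)-\us^2(y_2)$ compares the two solutions at \emph{different} physical points, and $\phi_\psi=u_y(x,y_1)-\us_y(y_2)$. Smallness of $\phi$ in $\psi$-Sobolev norms therefore does not directly give smallness of $u(x,y)-\us(y)$ at a common $y$. The paper handles this (Section 4.4, following Serrin's idea) by the triangle inequality $|u_y(x,y_1)-\us_y(y_1)|\le|u_y(x,y_1)-\us_y(y_2)|+|\us_y(y_2)-\us_y(y_1)|$: the first term is exactly $|\phi_\psi|$ and decays by the energy estimate, but the second requires an additional argument --- a mean value theorem in $y$ plus the bound $|y_1-y_2|\le\int_0^\psi\frac{|u-\us|}{u\us}\,d\psi'\lesssim\|\omega/u^{3/2}\|_{L^2_\psi}$, exploiting the explicit exponential structure of the Hartmann layer $\us(y)=1-e^{-y}$ to get an integrable weight. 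Your proposal is silent on this, and without it the claimed $H^2_y$ and $L^\infty_y$ decay in the physical variable is unproved. You should also justify the boundary terms at $\psi=0$ for the weight $u^{-3}$ more carefully: they are ``of favorable sign'' only after one verifies the near-boundary expansion $\phi=O(\psi)$, which in turn rests on the control of $f(x)$ you are simultaneously bootstrapping.
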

\begin{rema}
Since we apply the von-Mises transformation to overcome the loss of $x$ derivative,
the difference $\phi(x,\psi)$ satisfies the new system
\beq\label{phi-eq}
\phi_x+L\phi=0
\deq
in the new variable $(x, \psi)$.
Then, we establish the closed estimate for the system \eqref{phi-eq}
under the small condition \eqref{phi-condition}.
This condition can be expressed in terms of the initial data $u_0(y)$;
however, we only give condition \eqref{phi-condition}
for the sake of simplicity.
\end{rema}

\begin{rema}
Denote $\bs_{y}:=(1-\us)$, then we apply the relation \eqref{re-eq}
and decay estimate \eqref{y-decay} to obtain
\[\|b_{y}(x,y)-\bs_{y}(y)\|_{H^2_y}+\|b_{yy}(x,y)-\bs_{yy}(y)\|_{L^\infty_y}
\leq C e^{-x}, \]
where $C$ is a positive constant independent of $x$.
\end{rema}

We now review some related works involving the classical Prandtl
and MHD Prandtl type equations to the problem studied in this paper.

\textbf{(I)Some results for nonstationary boundary layer equations.}
The vanishing viscosity limit of the incompressible Navier-Stokes equations that,
in a domain with Dirichlet boundary condition, is an important problem in
both physics and mathematics.
As the viscosity coefficient tends to zero, the solution undergoes a sharp transition
from a solution of the Euler system to the zero non-slip boundary condition on boundary of the
Navier-Stokes system. This sharp transition will lead to the formation of the boundary layer.
Indeed, Prandtl \cite{Prandtl} derived the Prandtl equations for boundary layer
from the incompressible Navier-Stokes equations with non-slip boundary condition.
Now, let us introduce the related matter of well-posedness results for the Prandtl equation.
If the tangential velocity field in the normal direction to the boundary satisfies
the monotonicity condition, Oleinik \cite{{Oleinik2},{Oleinik4}} applied the Crocco transform
to establish the global-in-time regular solutions on $[0, L]\times \mathbb{R}_+$
for small $L$, and local-in-time solutions on $[0, L]\times \mathbb{R}_+$ for arbitrary
large by finite $L$.
Under the monotonicity condition and a favorable pressure gradient of the Euler flow, the
global-in-time weak solutions were obtained in \cite{Xin-Zhang-2004} for arbitrarily $L$.
It should be noted that all of the above results are achieved by using Crocco transformations.
By taking care of the cancelation in the convection term to overcome the
loss of derivative in the tangential direction of velocity, the researchers
in \cite{Xu-Yang-Xu} and \cite{Masmoudi} independently used the simply
energy method to establish well-posedness theory for the
two-dimensional Prandtl equations in the framework of Sobolev space.
For more results in this direction, the interested readers can refer to
the well-posedness results in the analytic or Geverey setting without monotonicity
\cite{{Sammartino-Caflisch1},{Sammartino-Caflisch2},{Kukavica-Vicol-2013},{Li-Yang-2020},
{Lombardo-Cannone-Sammartino-2003},{Ignatova-Vicol-2016},{David-Masmoudi-2015}},
ill-posedness results in the Sobolev setting without monotonicity
\cite{{David-Dormy-2010},{David-Nguyen-2012}}, generic invalidity of boundary
layer expansions in the Sobolev spaces \cite{{Grenier-Guo-Nguyen-2015},{Grenier-Guo-Nguyen-2016-01},
{Grenier-Guo-Nguyen-2016-02},{Guo-Nguyen-2011},
{Grenier-Nguyen-2017},{Grenier-Nguyen-2018},{Grenier-Nguyen-2019}}
and references therein.

\textbf{(II)Some results for stationary boundary layer equations.}
For the classical stationary Prandtl equation, the local-in-time well-posedness result
was obtained by Oleinik \cite{Oleinik3}, who used the von-Mises transformation
and maximum principle. Due to the the degenerate property  of horizontal velocity
near the boundary, it is hard to obtain the higher regularity for the stationary
Prandtl equation. This problem was settled by Guo and Iyer \cite{Guo-Iyer-2021}
since they found the good unknown quantity to establish the closed energy estimate.
For the incompressible steady Navier-Stokes equations, Guo and Nguyen \cite{Guo-Nguyen2} justified the boundary layer expansion for the flow with a non-slip boundary
condition on a moving plate.
This result has been extended to the case of a rotating disk and to the case
of nonshear Euler flows(\cite{{Iyer-ARMA},{Iyer-SIAM}}).
Recently, Guo and Iyer \cite{Guo-Iyer} studied the boundary layer expansion for the small viscous flows with the classical no slip boundary conditions or on the static plate.
This work was extend to the case of global theory in the $x-$variable
for a large class of boundary layer with sharp decay rates.
For more results about the boundary layer expansion, the reader should consult
\cite{{Li-Ding-2020},{Gao-Zhang-2020},{Iyer-PMJ-2019-1},{Iyer-PMJ-2019-2},{Iyer-PMJ-2020}}.
In terms of the asymptotic behavior of the solution, Serrin \cite{Serrin-1967}
used the maximum principle techniques to single out that
similarity solution as those which asymptotically develop downstream, whatever may be the state of motion at the initial position at $x=0$.
In the case of localized data near the Blasius solution, Iyer \cite{Iyer-2020-ARMA}
applied the energy method instead of maximum principle method for the good known quantity(called ``quotient") to establish specific convergence rate.

\textbf{(III)Some results for Prandtl type equation influencing magnetic field.}
Under the influence of electro-magnetic field, the system of
magnetohydrodynamics(denoted by MHD) is a fundamental system
to describe the movement of electrically conducting fluid,
for example plasmas and liquid metals(cf.\cite{Alfven}).
By asymptotic analysis of the incompressible MHD system, G\'{e}rard-Varet
and Prestipino \cite{G-V-P} established a systematic derivation of boundary layer
models in MHD. The most important point is that they performed some stability analysis
for the boundary layer system, and emphasized the stabilizing effect of the magnetic field.
At the same time, Liu, Xie and Yang\cite{Liu-Xie-Yang} studied the local-in-time
well-posedness for the Prandtl type equation in MHD that is built if both the hydrodynamic Reynolds numbers and magnetic Reynolds numbers tend to infinity at the same rate.
Since they found that magnetic field has good effect on stability, the well-posedness
result holds true under the condition that the initial tangential magnetic field is not zero
instead of the monotonicity condition on the tangential velocity field requiring in classical
Prandtl equation \cite{Oleinik4}.
This well-posednss result is generalized to the case of inhomogeneous incompressible flow
that is Hyperbolic-parabolic coupled equations in \cite{Gao-Huang-Yao-2021}.
Furthermore, the Prandtl ansatz boundary layer expansion for the unsteady MHD system
was justified \cite{Liu-Xie-Yang2} when no-slip boundary and perfect conducting boundary
conditions are imposed on velocity field and magnetic field respectively.
Recently there are many mathematical results on two dimensional MHD boundary layer system,
for the almost global existence \cite{Lin-Zhang-MMAS},
ill-posedness results \cite{{Liu-Xie-Yang-SCI},{Liu-Wang-Xie-Yang-JFA}},
lifespan of solution with analytic perturbation of general shear flow \cite{Xie-Yang-2019},
boundary layer expansions  for steady MHD over a moving plate \cite{Ding-Ji-Lin-2021},
and local-in-time well-posedness for compressible MHD boundary layer \cite{Huang-Liu-Yang}.
For the 2D MHD boundary layer equations in the mixed Prandtl and
Hartmann regime derived by formal multiscale expansion in \cite{G-V-P},
Xie and Yang \cite{Xie-Yang-Siam} obtained the global existence of solutions
with analytic regularity.
If the initial data is near the Hartmann layer, the solution of Prandtl-Hartmann system
is globally well-posedness in Sobolev space and will converge to the Hartmann
layer with exponent decay rate in \cite{Chen-Ren-Wang-Zhang-2020}.
However, there is no result investigating the global well-posedness
and convergence rate for the steady Prandtl-Hartmann system.
The difficulty comes from the degeneracy of horizontal velocity near the boundary
$y=0$.

\emph{Thus, motivated by \cite{Guo-Iyer-2021}, our first target is to establish
the higher regularity result of solution for the Prandtl-Hartmann system \eqref{H-eq}.
The advantage of our method here is to constructed new approximated system such that
higher regularity result can be obtained without using the property of lower regular solution.
Motivated by the recent work \cite{Iyer-2020-ARMA}, our second target is to use the energy method to build the asymptotic behavior of global solution for the Prandtl-Hartmann system \eqref{H-eq}. Due to the damping effect, we point out that this global solution will converge
to the equilibrium state(called Hartmann profile) with exponent decay rate.}

\textbf{Notations:}Through  this paper, all constant $C$ may be different in different lines.
Subscript(s) of a constant illustrates the dependence of the constant, for example,
$C_s$ is a constant depending on $s$ only.
We write $o_L(1)$ to refer to a constant that is bounded by some unspecified, perhaps small,
power of $L$: that is, $a=o_L(1)$ if $|a|\le C L^\theta$ for some $\theta>0$.
We also write the $C^\infty$ cutoff function:
\beq\label{f-cutoff}
\chi(y):=\left\{
\begin{aligned}
&1, \quad y\in [0,1),\\
&0, \quad y\in (2, \infty),
\end{aligned}
\right.
\deq
and $\chi'(y)\le 0$ for all $y>0$.
$A\lesssim  B$ $(A\gtrsim B)$ represents that there exists a positive constant $C$ such that $A\leq CB$ $(A\geq CB)$.
And $A\sim B$ stands for $A\lesssim B$ and $A\gtrsim B$.
Finally, $\mathcal{P}_i(\cdot , \cdot)$ stands for
a polynomial function independent of $\eps$,
and the index $i$ denote it changing from line to line.

\section{Difficulties and outline of our approach}

The main goal of this section is to explain main difficulties of proving
Theorems \ref{high-regu} and \ref{asym-beha} as well as our strategies for overcoming them.
In order to establish the well-posedness in some higher regularity  Sobolev space
and asymptotic behavior for the Prandtl type Eq.\eqref{H-eq}, the main difficulty
comes from the vertical velocity $v=-\py^{-1} \px u$ creating a loss of $x-$derivative.
Thus, we will introduce the method to overcome this difficulty.

First of all, we state the main idea to establish higher regularity of
Prandtl type Eq.\eqref{H-eq} in Theorem \ref{high-regu}.
In order to overcome the loss of $x-$derivative coming from the vertical velocity,
the classical method is to apply the so-called von-Mises transformation
that transforms the original Eq.\eqref{H-eq} into a single
quasi-linear parabolic equation.
Due to the degenerate property of horizontal velocity on the boundary, it is not easy
to establish higher regularity for this quasi-linear parabolic equation.
In this respect, Guo and Iyer \cite{Guo-Iyer-2021} settled this tricky subject,
arising from the classical steady Prandtl equation,
by establishing the energy estimate for the good unknown quantity $q:=\f{v}{\overline{u}}$
via the linear derivative Prandtl equation.
Here the quantity $\overline{u}$ is the solution of classical steady Prandtl equation
obtained by Oleinik \cite{Oleinik2}.
In other words, this method depends on the property of lower regular solution of
original steady Prandtl equation.
\textbf{\emph{Since there is no result about the well-posedness of Eq.\eqref{H-eq}, we hope to
find a method to establish the well-posedness in higher regularity space
without using the property of lower regular solution}}.
Thus, in this paper, we consider the following approximated system
\beq\label{app-sym}
  	\left\{\begin{aligned}
  &u^\eps \px u^\eps +v^\eps \py u^\eps-\pyy u^\eps+u^\eps-1-\eps=0,\\
  &\px u^\eps+\py v^\eps=0,
  	\end{aligned}\right.
\deq
with the boundary condition
\beq\label{bdyc}
u^\eps|_{y=0}=\eps, \quad
\lim_{y \rightarrow +\infty}u^\eps=1+\eps,\quad
v^\eps|_{y=0}=0,
\deq
and the initial data
\beq\label{ic}
u^\eps|_{x=0}=u_0^\eps(y):=u_0(y)+\eps,
\deq
for any parameter $\eps>0$.
Since the initial and boundary conditions of horizontal velocity are positive,
we can follow the idea as Guo and Nguyen(see Lemma 2.2 in \cite{Guo-Nguyen2})
to obtain the local well-posedness in higher regularity space for any fixed $\eps>0$.
However, it is worth nothing that the life time interval $[0, L^\eps]$ depends
on the parameter $\eps$.
As we hope that the solution $(\u, \v)$ of approximate system \eqref{app-sym}
with positive boundary condition will converge to the solution $(u, v)$
of original Prandtl type Eq.\eqref{H-eq}
as $\eps$ tends to zero.
To achieve the target, we need to obtain the uniform a priori estimates of solution
$(\u, \v)$ on existence time independent of $\eps$.
Thanks to the recent paper \cite{Guo-Iyer-2021}, the so called quotient $\py(\f{\v}{\u})$
is a good unknown for us to avoid the loss of derivative
and the degeneracy of quasi-linear parabolic equation to establish closed a priori estimate.
And hence, we need to control the quantity $\f{\v}{\u}$ by the good unknown $\py(\f{\v}{\u})$
by using the Hardy inequality.
Thus, our method here is to establish the energy estimate including
energy part and dissipative one both with suitable weighted $\y^l$.
Then, the local well-posedness of the Eq.\eqref{H-eq} in higher regular space
follows directly as the parameter $\eps$ tends to zero.

Next, we state the main approach to obtain the asymptotic behavior of solution for the Prandtl Eq.\eqref{H-eq} in Theorem \ref{asym-beha}.
For classical Prandtl equation, in the case of localized data near the Blasius solution, Iyer \cite{Iyer-2020-ARMA} applied the energy method instead of maximum principle method for the
good known quantity(called ``quotient") to establish specific convergence rate for the quantity $\phi(x,\psi):=u^2(x,\psi)-\us^2(\psi)$.
However, he did not build any convergence rate for the quantity $u(x,y)-\us(y)$.
\textbf{\emph{Since there is no result about the asymptotic behavior of Eq.\eqref{H-eq}, we hope to find a method to establish the convergence rate in physical variable}}.
By using the method as in \cite{Iyer-2020-ARMA}, we first show the global existence of the solution for 2-D Prandtl-Hartmann system via standard continuity argument.
It should be noted that the term $\f{\phi}{\us(u+\us)}$ in the equation \eqref{phi-eq} plays an important role as a damping term. Thus, we can get
that the difference $\phi$ decays exponentially in $x$ variable, that is
\beq\label{phi-decay}
\begin{aligned}	\|\phi\|_{H^1_\psi}^2+\|\f{\phi}{\sqrt{u}}
\|_{L^2_\psi}^2+\|\frac{\phi}{u^{\f32}}\|_{L^2_\psi}^2
+\|\phi_{x}\|_{H^1_\psi}^2+\|\f{\phi_{x}}{\sqrt{u}}\|_{L^2_\psi}^2
+\|\frac{\phi_{x}}{u^{\f32}}\|_{L^2_\psi}^2
\leq Ce^{-x}.
\end{aligned}
\deq
More details of the proof for this decay estimate can be found in Proposition \ref{them1} in Section \ref{Asymptotic}.

The asymptotic behavior \eqref{phi-decay} occurs at equal values of the stream function $\psi$, but rather than at equal values of the physical variable $y$.
In order to obtain the decay estimate in $x$ variable of the difference of $u$ and $\us$ as functions of $(x,y)$, as required for \eqref{y-decay}, we need further and more arguments.
For any given value of the stream function $\psi$, we denote $y_1$ and $y_2$ represent the corresponding physical variable for the flows $u$ and $\us$, respectively.
Motivated by \cite{Serrin-1967}, we decompose the normal gradient of the tangential velocity as the following two parts:
\beqq
\begin{aligned}
	|u_{y}(x,y_1)-\us_y(y_1)|\leq |u_{y}(x,y_1)-\us_y(y_2)|+|\us_{y}(y_2)-\us_y(y_1)|.
\end{aligned}
\deqq
The first part can be transformed into von-Mises variable and then controlled by the exponential decay estimate \eqref{phi-decay}.
After some basic calculations, the second part about the solution $\us$ of Hartmann boundary layer can also be controlled by the exponential decay estimate \eqref{phi-decay}.
Therefore, we conclude that $(u_y-\us_y)$ decays exponentially in $x$ variable in $L^\infty_y-$norm.
It then follows from the similar way above to show that $(u-\us)$ also decay exponentially in $x$ variable in $H^2_y-$norm.
The main ideas and strategy of establishing the decay estimates in $x$ variable of the difference of $u$ and $\us$ as functions of $(x,y)$ will be given in Section \ref{proofthem4}.

\section{Local existence and uniqueness of Prandtl-Hartmann system}

In this section, we will establish the local existence and uniqueness
of solution with higher regularity for the Prandtl-Hartmann system
\eqref{o-MHD}-\eqref{bc2} in Theorem \ref{high-regu}.
In section \ref{uniform}, we will establish some uniform estimates with respect to the
parameter $\eps$ for the approximated system \eqref{app-sym}.
The local existence and uniqueness of original Prandtl-Hartmann system
are be investigated in sections \ref{Local_existence} and \ref{uniqueness} respectively.

\subsection{Uniform estimate for approximated system}\label{uniform}

In this subsection, we will establish uniform estimate with respect to
the parameter $\eps$ for the approximated system \eqref{app-sym} such that there exists
a positive time $L$ independent of $\eps$.
Motivated by the recent work \cite{Guo-Iyer-2021},
we used the divergence-free condition $\eqref{app-sym}_2$
to rewrite the approximated system \eqref{app-sym} as follows
\beqq
-(\u)^2 \py\left(\f{\v}{\u}\right)-\pyy \u+\u-1-\eps=0.
\deqq
Apply the $\px-$operator to the above equation
and use the divergence-free condition $\eqref{app-sym}_2$, we have
\beqq
-\px \left\{ (\u)^2 \py \left(\f{\v}{\u}\right)\right\}
+\p_y^3 \v-\py \v=0.
\deqq
Let us denote the notation(called ``quotient" in \cite{Guo-Iyer-2021})
\beq\label{def-q}
\q:=\f{\v}{\u},
\deq
then it holds
\beq\label{app-q}
-\px \{ (\u)^2 \py \q \}+\p_y^3 \v-\py \v=0.
\deq

Due to the conditions of initial data $u_0(y)$ in Theorem \ref{high-regu},
there exists a small $0<\delta_0 \ll1$ such that
\beqq
\f12 y \le u_0(y) \le \f32 y, \quad \forall y\in [0, \delta_0]
\deqq
and
\beqq
u_0(y)\ge \f12 \delta_0, \quad \forall y\in [\delta_0,+\infty).
\deqq
Then, the initial data $u_0^\eps(y)$(see the definition in \eqref{ic}) will satisfy
\beq\label{cond-1}
\f12 (y+\eps) \le \u_0(y) \le \f32 (y+\eps), \quad \forall y\in [0, \delta_0]
\deq
and
\beq\label{cond-2}
\u_0(y)\ge \f{1}{2}(\delta_0+\eps), \quad \forall y\in [\delta_0,+\infty).
\deq

Let us define the norms:
\beq\label{energy-e}
\mathcal{E}^k_l(x)
:=\sum_{j=0}^k \|\u \px^j \py \q \y^{l} \|_{L^\infty_x L^2_y}
  +\sum_{j=0}^k \|\sqrt{\u}\px^j \py^2 \q \y^l\|_{L^2_x L^2_y},
\deq
\beq\label{energy-x}
X^k_l(x):=\mathcal{E}^k_l(x)
+\sum_{j=0}^{k-1} \sum_{i=1}^{3}\|\px^j \py^i \v \y^l \|_{L^2_x L^2_y}
+\|\py^4 \v \chi(\f{y}{\d})\|_{L^2_x L^2_y},
\deq
and
\beq\label{initial-data}
\bal
\mathcal{B}^k_l(x)
:=&\sum_{j=1}^2 \|\py^j \u(x, y)\y^l\|_{L^\infty([0, x])L^2_y}
   +\|\py^3 \u(x, y)\chi(\f{y}{\d})\|_{L^\infty([0, x])L^2_y}\\
   &+\sum_{j=1}^3 \|\px^{\la k-2\ra}\py^j \v(x, y)\y^l\|_{L^\infty([0, x])L^2_y}
   +\|\px^{\la k-1 \ra}\py \q(x, y)\y^l\|_{L^\infty([0, x])L^2_y}\\
   &+\|\px^{\la k-1 \ra}\py^2 \q(x, y) (1-\chi(\f{y}{\d}))\y^l\|_{L^\infty([0, x])L^2_y}.
\dal
\deq
Thus, the parameter $\delta$ appearing in cutoff function
$\chi(\f{y}{\delta})$(see \eqref{f-cutoff})
will be chosen to satisfy $\delta\le \delta_0$
such that the assumption conditions \eqref{cond-1}
and \eqref{cond-2} hold on.

\begin{prop}[Uniform estimate]\label{Uniform estimate}
Let $k \ge 2$ be an integer, $l \ge 1$ be a real number and $\eps \in (0, 1]$,
and $(\u, \v)$ be sufficiently  smooth solution, defined on $[0, L^\eps]$,
to the non-degenerated approximated system \eqref{app-sym}-\eqref{ic}.
The initial data $u_0^\eps(y)$ is defined by \eqref{ic} and
satisfies the conditions \eqref{cond-1} and \eqref{cond-2}. Then, there exists a time
$L_a=L_a(k, l, \d_0, u_0)>0$
independent of $\eps$ such that the following a priori estimate holds true for all
$x \in [0, L_a]$:
\beq\label{uniform-1}
X_l^k(x)^2+\b^k_l(x)^2
\le 2\overline{C}_{k, l,\delta_0}(1+C(u_0)),
\deq
and
\beq\label{uniform-2}
\f14 y \le \u(x, y) \le 2(y+\eps), \ (x, y) \in [0, L_a] \times [0, \d_0]; \quad
\u(x, y)\ge \f14 \d_0, \ (x, y) \in [0, L_a] \times [\d_0, \infty).
\deq
Furthermore, it also holds
\beq\label{uniform-estimate-mix}
\sum_{0\le 2\alpha+\beta \le 2k+1}
\|\px^\alpha \py^{\beta}\py \v\y^l\|_{L^2_x L^2_y}
\le {C}_{k, l,\delta_0}(1+C(u_0)),
\deq
where $C(u_0)$ is a constant depends on the initial data $u_0$.
\end{prop}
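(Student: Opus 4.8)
The plan is to run a continuity argument for the given smooth solution of the non-degenerate system \eqref{app-sym}--\eqref{ic}. Fix $\eps\in(0,1]$ and let $L_\ast$ be the supremum of those $\tau\in[0,L^\eps]$ on which both the inequality \eqref{uniform-1} with the constant $2$ replaced by $4$ and the pointwise bounds \eqref{uniform-2} (with constants $\tfrac14$ and $2$) hold for all $x\in[0,\tau]$; the hypotheses on $u_0$ together with \eqref{cond-1}--\eqref{cond-2} force $L_\ast>0$. On $[0,L_\ast]$ I will close \eqref{uniform-1} and \eqref{uniform-2} with the originally claimed constants provided one restricts to $x\le L_a$ for some $L_a=L_a(k,l,\d_0,u_0)>0$; since every constant produced below is independent of $\eps$, a standard connectedness argument then yields the proposition on $[0,L_a]$.

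The heart of the matter is a weighted energy estimate for the quotient $\q=\v/\u$ through its equation \eqref{app-q}. For each $0\le j\le k$, apply $\px^j$ to \eqref{app-q}, test against $\px^j\py\q\,\y^{2l}$ and integrate over $y\in\mathbb{R}_+$. Using $\v=\u\q$ and integrating by parts in $y$, the term $-\px\{(\u)^2\py\q\}$ produces $\tfrac12\px\!\int(\u)^2|\px^j\py\q|^2\y^{2l}\,dy$ up to a commutator controlled through $\px\u$ (read off from \eqref{app-sym}), the term $\py^3\v$ produces the dissipation $\int\u|\px^j\py^2\q|^2\y^{2l}\,dy$ together with boundary contributions at $y=0$ and weight errors, and $-\py\v$ is lower order. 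The boundary contributions are handled using $\q(x,0)=0$ (from $\v|_{y=0}=0$) and, for the remaining traces, the corner compatibility conditions up to order $2k-1$; the commutator and weight errors, and the quantities $\q$ and $\q/y$ needed where $\u$ is degenerate, are absorbed by weighted Hardy inequalities and the a priori hypotheses. Summing over $j$ and integrating in $x\in[0,L_a]$ gives $\e^k_l(x)^2\le C\,\e^k_l(0)^2+o_L(1)\bigl(X^k_l(x)^2+\b^k_l(x)^2\bigr)+(\text{lower order})$, which is the main content of \eqref{uniform-1}.

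It remains to upgrade $\e^k_l$ to all of $X^k_l$ and $\b^k_l$ and to recover \eqref{uniform-2} and \eqref{uniform-estimate-mix}. The vertical-velocity norms in \eqref{energy-x} follow from $\v=\u\q$ and the Leibniz rule, reducing to products of $\q$-derivatives (bounded by $\e^k_l$ via Hardy) with $\u$-derivatives (bounded by $\b^k_l$), the most singular pieces being confined by the cutoff $\chi(y/\d)$ to $y\in[0,2\d]$ where \eqref{cond-1} gives $\tfrac14 y\le\u$. The norms in $\b^k_l$ are recovered by differentiating the equivalent equation $-(\u)^2\py\q-\pyy\u+\u-1-\eps=0$, running $L^2_y$ estimates with weight $\y^{2l}$, and again invoking \eqref{uniform-2}. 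To recover \eqref{uniform-2} itself I write $\u(x,y)-u_0^\eps(y)=\int_0^x\px\u\,dx'$ with $\px\u=-\q\py\u+(\u)^{-1}(\pyy\u-\u+1+\eps)$: for $y$ bounded away from $0$ the factor $(\u)^{-1}$ is harmless, while near $y=0$ the numerator $\pyy\u-\u+1+\eps$ vanishes to the right order thanks to $u_0''(0)+1=0$ and $u_0'''(0)-u_0'(0)=0$, so $\px\u$ stays bounded, $\|\py\u(x,\cdot)-\py u_0^\eps\|_{L^\infty_y}=o_L(1)$, and the bounds persist. Finally \eqref{uniform-estimate-mix} is obtained by reading \eqref{app-q} as the elliptic-in-$y$ identity $\py^3\v=\px\{(\u)^2\py\q\}+\py\v$: every normal derivative of $\v$ of order at least three is traded for fewer normal derivatives at the cost of one extra $\px$ landing on $\u$ or $\q$, and iterating and estimating the resulting terms by $\e^k_l$ and the norms already controlled yields the claim.

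The step I expect to be the main obstacle is the degeneracy $\u\sim y$ near $y=0$, which appears both in the dissipation $\sqrt{\u}\,\py^2\q$ and in the coefficient of the $\u$-equation and which must be controlled uniformly as $\eps\to0$. Every manipulation that would naively divide by $\u$ has to be matched with the correct weighted Hardy inequality and with the pointwise lower bound \eqref{uniform-2}, and the boundary traces arising after integration by parts — where quantities of size $\q/\eps$ threaten to appear — must be absorbed using the compatibility conditions at the corner. In particular, obtaining the $\eps$-uniform lower bound $\u\ge\tfrac14 y$, rather than merely $\u\ge\tfrac14\eps$, is precisely the delicate point that these compatibility conditions are designed to make possible.
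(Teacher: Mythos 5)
Your overall architecture — a continuity/bootstrap argument on $X^k_l+\b^k_l$ together with the pointwise bounds, closed by a weighted energy estimate for $\q$ via \eqref{app-q}, and auxiliary lemmas to pass between $\e^k_l$, $X^k_l$, $\b^k_l$ and the mixed norm in \eqref{uniform-estimate-mix} — matches the paper, as does the observation that higher $\py$-derivatives of $\v$ are traded for extra $\px$'s via the quotient equation.

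Two points, however, diverge from the paper and in one case introduce a genuine weakness. First, you misattribute the role of the compatibility conditions. In the paper, the boundary trace that appears after integrating $\py^3\v\cdot\px^k\py\q\y^{2l}$ by parts is killed by showing $\px^k\py\q|_{y=0}=0$, and this follows \emph{solely} from the boundary conditions $\u|_{y=0}=\eps$, $\v|_{y=0}=0$ of the approximated system (see \eqref{BL-Condtion}); no corner compatibility is invoked there. The compatibility conditions at $(0,0)$ are used only to guarantee that the initial quantities $\e^k_l(0)$ and $\b^k_l(0)$ are finite and bounded uniformly in $\eps$ (Appendix~C), not to absorb boundary traces in the energy estimate.

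Second, your route to the pointwise bounds \eqref{uniform-2} is more fragile than the paper's. The paper does not work through the decomposition $\px\u=-\q\py\u+(\u)^{-1}(\pyy\u-\u+1+\eps)$; it uses that $\px\u(x,0)=0$ (an immediate consequence of $\u|_{y=0}\equiv\eps$), writes $\px\u(x,y)=\int_0^y\py\px\u\,d\tau$, bounds this by $y\,\|\py^2\px\u\,\y\|_{L^2_y}=y\,\|\py^3\v\,\y\|_{L^2_y}$, and then integrates in $x$ to get $|\u(x,y)-u_0^\eps(y)|\le\sqrt{L}\,y\,\|\py^3\v\,\y\|_{L^2_xL^2_y}$. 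This produces the crucial factor $y$ uniformly in $\eps$, which is exactly what is needed to preserve $\u\ge\f14 y$. In your decomposition, the numerator $\pyy\u-\u+1+\eps$ vanishes at $y=0$ for all $x$ (this follows from the boundary condition and the equation, not from the initial compatibility conditions as you state), but it vanishes only to first order, so $(\u)^{-1}(\pyy\u-\u+1+\eps)\sim ay/(y+\eps)$ near $y=0$. That quantity is bounded but is not $O(y)$ with an $\eps$-independent constant; and the higher-order vanishing $(\py^3\u-\py\u)|_{y=0}=0$, which would repair this, is imposed on $u_0$ but does \emph{not} propagate: differentiating the equation at $y=0$ gives $(\py^3\u-\py\u)(x,0)=\eps\px\py\u(x,0)$, generically nonzero for $x>0$. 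Without the factor $y$ your bound $\u\ge\u(0,y)-CL$ degenerates to an $\eps$-dependent statement for $y\lesssim\eps$. If you instead control $\|\py\u(x,\cdot)-\py u_0^\eps\|_{L^\infty_y}$ directly, you are in effect rederiving the paper's FTC-plus-Sobolev inequality; it would be cleaner to use it from the start.
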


Throughout this subsection, for some positive constants $k_*$
and $k^*(>k_*)$, we assume that the following a priori assumptions:
\beq\label{equi-relation}
k_* (y+\eps) \le \u(x, y) \le k^* (y+\eps),
\quad (x, y)\in [0, L^\eps] \times [0, \delta_0],
\deq
and
\beq\label{lower-bound}
\u(x, y)\ge k_* (\delta_0+\eps),\quad (x, y) \in [0, L^\eps] \times [\delta_0, +\infty)
\deq
hold on.
Here the parameter $\delta_0$ is defined before.

\begin{lemm}\label{lemma-one}
Let $(\u, \v)$ be the smooth solution, defined on $[0, L^\eps]$,
of the approximated equations \eqref{app-sym}-\eqref{ic}.
Under the conditions of \eqref{equi-relation} and \eqref{lower-bound}, then we have the
following estimates:
\begin{align}
\label{231}&\|\px^k \py^2 \v \y^l\|_{L^2_x L^2_y}
\le C_{k_*,k^*,l,k,\delta_0}(1+\b^k_l(0)^2+X^k_l(x)^2);\\
\label{232}&\|\px^{\la k\ra} \py v \y^l \|_{L^2_x L^2_y}
           \le  o_L(1)C_{k_*,l,k,\delta_0}(1+\b^k_l(0)^2+X^k_l(x)^2);\\
\label{233}&\|\px^{\la k-1\ra} \py^3 \v \y^l\|_{L^2_x L^2_y}
           \le o_L(1)C_{k_*,l,k,\delta_0}(1+\b^k_l(0)^4+X^k_l(x)^4);\\
\label{234}&\|\px^{\la k-1 \ra} \py^2 v \y^l\|_{L^2_x L^2_y}
           \le o_L(1)C_{k_*,l,k,\delta_0}(1+\b^k_l(0)^4+X^k_l(x)^4);\\
\label{235}&\|\px^{\la k-1 \ra} \py^4 v \y^l\|_{L^2_x L^2_y}
           \le C_{k_*,k^*,l,k,\delta_0}(1+\b^k_l(0)^4+X^k_l(x)^4);\\
\label{236}&\|\py^4 \v \chi(\f{y}{\d})\|_{L^2_x L^2_y}
           \le  o_L(1)C_{k_*,l,k,\delta_0}(1+\b^k_l(0)^8+X^k_l(x)^8),
\end{align}
where $l \ge 1$ and $k \ge 2$.
\end{lemm}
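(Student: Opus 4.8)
\emph{Proof proposal.} The plan is to view Lemma \ref{lemma-one} as an ``elliptic recovery'' step: every derivative of $\v$ that appears in $X^k_l$ and $\b^k_l$ will first be rewritten, via the structural identities of the system, as an explicit polynomial in derivatives of $\u$, of the quotient $\q$ (see \eqref{def-q}) and of the parameter $\eps$, and then each resulting term is controlled by one of four mechanisms: (i) the weighted energy/dissipation quantities already contained in $\e^k_l\le X^k_l$; (ii) the a priori two-sided bounds \eqref{equi-relation}--\eqref{lower-bound} on $\u$, which say that $\u$ is bounded and comparable to $y+\eps$ near the boundary; (iii) Hardy's inequality, used to absorb the undifferentiated factor $\q$ (which vanishes at $y=0$) against one unit of the weight $\y^l$ on $\py\q$; and (iv) the elementary gain $\|f\|_{L^2_xL^2_y}\le\sqrt{L}\,\|f\|_{L^\infty_xL^2_y}=o_L(1)\|f\|_{L^\infty_xL^2_y}$, available whenever the top-order term can be placed in $L^\infty_xL^2_y$.

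The structural identities are three. First, the divergence-free relation gives $\py^{i}\v=-\px\py^{\,i-1}\u$, so each extra $\py$ on $\v$ costs exactly one $\px$ and one $\py$ on $\u$, and in particular $\px\u=-\py\v=-(\py\u)\q-\u\py\q$ converts $x$-derivatives of $\u$ into derivatives of $\q$. Second, $\v=\u\q$ together with its Leibniz expansion. Third, the profile equation $\pyy\u=-(\u)^2\py\q+\u-1-\eps$ and its $\px$- and $\py$-derivatives, which convert any $y$-derivative of $\u$ of order $\ge2$ into derivatives of $\q$ of strictly lower $y$-order. With these, \eqref{231} is obtained by expanding $\px^k\py^2\v=\px^k\py^2(\u\q)$: the genuinely top-order term $\u\,\px^k\py^2\q$ is bounded in $L^2_xL^2_y$ by $\|\sqrt{\u}\|_{L^\infty}\,\|\sqrt{\u}\,\px^k\py^2\q\,\y^l\|_{L^2_xL^2_y}\le C\,\e^k_l$, which near the boundary forces the use of the upper bound $\u\le k^*(y+\eps)$ to carry the weight, hence the constant $C_{k^*}$; the remaining terms contain at most $\px^k\py^2\u$ (reduced to $\le\px^k\py^2\q$-type quantities by the profile equation) or a product of two energy-controlled factors, producing the quadratic $X^k_l(x)^2$; and the undifferentiated $\q$ is removed by Hardy.

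For \eqref{232}--\eqref{234} the same expansion is performed, but now the top-order term may be bounded in $L^\infty_xL^2_y$ by one of the quantities collected in $\b^k_l(x)$ (for instance $\|\px^{\la k-1\ra}\py\q\,\y^l\|_{L^\infty_xL^2_y}$ or $\|\py^j\u\,\y^l\|_{L^\infty_xL^2_y}$), or, after integrating back to $x=0$, by $\b^k_l(0)$ plus a remainder measured by $X^k_l(x)$; the passage $L^2_x\hookrightarrow\sqrt L\,L^\infty_x$ then supplies the factor $o_L(1)$. For \eqref{233}, \eqref{235} and \eqref{236} it is cleanest first to lower the number of $y$-derivatives on $\v$ by one through the $\q$-equation \eqref{app-q}, rewritten as $\py^3\v=\py\v+\px\{(\u)^2\py\q\}$: this reduces $\px^{\la k-1\ra}\py^3\v$, $\px^{\la k-1\ra}\py^4\v$ and, after dropping the weight on the support of $\chi(y/\d)$, the cut-off quantity in \eqref{236}, to $\px^{\la k-1\ra}\py\v$, the already-estimated $\px^{\la k-1\ra}\py^2\v$, and $\px$- or $\px\py$-derivatives of $(\u)^2\py\q$, which one expands by Leibniz and closes using the profile equation once more. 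The higher polynomial powers on the right-hand sides ($X^k_l(x)^4$ in \eqref{233}--\eqref{235}, $X^k_l(x)^8$ in \eqref{236}) are precisely the bookkeeping cost of the products of two, respectively four, energy-controlled factors generated by these nested Leibniz expansions, together with the Agmon / Gagliardo--Nirenberg inequalities in $y$ needed to place such products in $L^\infty$.

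The main obstacle is the interaction between the degeneracy of $\u$ at $y=0$ and the fact that the dissipation in $\e^k_l$ controls only $\sqrt{\u}\,\px^j\py^2\q$, not $\px^j\py^2\q$ itself nor $\u\,\px^j\py^2\q$. Whenever the expansion of $\py^i(\u\q)$ or of $\px\{(\u)^2\py\q\}$ produces two $y$-derivatives on $\q$ multiplied by less than one power of $\u$ — which happens generically near the boundary, and unavoidably in the cut-off estimate \eqref{236} where $\u\sim y$ — one must relocate the missing $\sqrt{\u}$ by the comparison $k_*(y+\eps)\le\u\le k^*(y+\eps)$; this is the reason that \eqref{231}, \eqref{235} and \eqref{236}, and only those, carry the constant $C_{k^*}$. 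Arranging the weights $\y^l$ to match on every term simultaneously, and eliminating the undifferentiated $\q$ by Hardy without spending a power of the weight, is the delicate part of the computation, while everything else is routine Leibniz expansion and interpolation.
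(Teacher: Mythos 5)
Your high-level plan --- expanding $\v=\u\q$ by Leibniz, reducing higher $y$-derivatives of $\v$ through the $\q$-equation \eqref{app-q}, absorbing undifferentiated $\q$ by Hardy, trading $L^2_x$ for $L^\infty_x$ to gain $\sqrt L$, and carrying the degenerate weight with the two-sided bound $k_*(y+\eps)\le\u\le k^*(y+\eps)$ --- is the same scaffolding the paper uses. But there are two concrete gaps and one substantive deviation.

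First, for \eqref{234} you assert that ``the same expansion is performed'' as for \eqref{232}--\eqref{233}. This does not close. The direct Leibniz expansion of $\px^{k-1}\py^2(\u\q)$ produces the top-order term $\u\,\px^{k-1}\py^2\q$; near the boundary you can at best trade one $\sqrt{\u}$ into the dissipation $\|\sqrt{\u}\px^{k-1}\py^2\q\y^l\|_{L^2_xL^2_y}\le\e^k_l$, and the leftover factor $\sqrt{\u}\lesssim\sqrt{\delta+\eps}$ on $\mathrm{supp}\,\chi$ is a bounded constant, not $o_L(1)$ (even after taking $\delta$ small, the $\eps$-dependence survives). The paper instead obtains \eqref{234} by a Hardy-type integration by parts in $y$, interpolating $\px^{k-1}\py^2\v$ between $\px^{k-1}\py\v$ and $\px^{k-1}\py^3\v$; both of those already carry $o_L(1)$ by \eqref{232} and \eqref{233}, and the boundary term vanishes because $\py\v|_{y=0}=-\px\u|_{y=0}=0$. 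That interpolation is a different mechanism from any of the four you list and is the thing that makes \eqref{234} work.

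Second, for \eqref{236} the $o_L(1)$ prefactor does not come merely from the $L^2_x\hookrightarrow\sqrt{L}\,L^\infty_x$ gain. The dangerous term after applying \eqref{app-q} is $(\u)^2\px\py^2\q\,\chi$, estimated by $(k^*)^{3/2}(\delta+\eps)^{3/2}\|\sqrt{\u}\,\px\py^2\q\|_{L^2_xL^2_y}$; the paper explicitly chooses the cut-off scale $\delta=\sqrt{L}$ (and relies on $\eqref{234}$ for $\py^2\v\,\chi$). Your sketch does not mention coupling $\delta$ to $L$, which is precisely what turns this constant into $o_L(1)$. As a small side remark, \eqref{236} does not carry a $C_{k^*}$ in the lemma's statement (only \eqref{231} and \eqref{235} do), so your claim that ``\eqref{231}, \eqref{235} and \eqref{236}, and only those, carry the constant $C_{k^*}$'' is off.

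Finally, you propose to use the undifferentiated profile equation $\pyy\u=-(\u)^2\py\q+\u-1-\eps$ to convert $\py^j\u$ (for $j\ge2$) into $\q$-derivatives. The paper does not do this: it keeps $\|\py^j\u\y^l\|_{L^\infty_xL^2_y}$ (and $\|\py^3\u\,\chi\|_{L^\infty_xL^2_y}$) as part of the norm $\b^k_l$, and controls them by their initial values plus $x$-integration of $\py^{j+1}\v$ via the divergence-free relation. Your substitution is a plausible alternative but trades one set of quantities for another and would inflate the nonlinear powers; also note that your parenthetical ``reduced to $\px^k\py^2\q$-type quantities'' should read $\px^k\py\q$-type, since the profile equation lowers the $y$-order on $\q$ by one, not keeps it equal.
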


\begin{proof}
Step 1: Using the definition of $\q$(see \eqref{def-q}), we can get
\beqq
\bal
\px^k \py^2 \v
=
&\py^2 \u \px^k \q +2\py \u \px^k \py \q +\u \px^k \py^2 \q
 +\sum_{j=1}^{k}C^k_j \px^j \u \px^{k-j} \py^2 \q\\
& +2\sum_{j=1}^k C^k_j \px^j \py \u \px^{k-j} \py \q
 +\sum_{j=1}^k C^k_j \px^j \py^2 \u \px^{k-j}\q\\
:=
&I_{11}+I_{12}+I_{13}+I_{14}+I_{15}+I_{16}.
\dal
\deqq
Using estimates \eqref{claim21}-\eqref{claim31}
in Appendix \ref{appendix-a}, it is easy to check that
\beqq
\bal
\|I_{11}\y^l\|_{L^2_x L^2_y}
 &\le \|\py^2 \u \y^l\|_{L^\infty_x L^2_y}
     \|\px^k \q \|_{L^2_x L^\infty_y}\\
 &\le o_L(1)C_{k_*,l}
      (\|\py^2 \u \y^l|_{x=0}\|_{L^2_y}+\|\py^3 \v \y^l\|_{L^2_x L^2_y})\e^k_l(x);\\
\|I_{12}\y^l\|_{L^2_x L^2_y}
 &\le \|\py \u\|_{L^\infty_x L^\infty_y}
     \|\px^k \py \q \y^l\|_{L^2_x L^2_y}\\
 &\le o_L(1)C_{k_*}(\|\py \u \y|_{x=0}\|_{L^2_y}+\|\py^2 \v \y\|_{L^2_x L^2_y})\e^k_l(x);\\
\|I_{13}\y^l\|_{L^2_x L^2_y}
 &\le \|\u\|_{L^\infty_x L^\infty_y}^{\f12}
      \|\sqrt{\u}\px^k \py^2 \q \y^l\|_{L^2_x L^2_y}\\
 &\le C(1+\|\py \u \y|_{x=0}\|_{L^2_y}+\|\py^2 \v \y\|_{L^2_x L^2_y})\e^k_l(x);\\
 \|I_{15}\y^l\|_{L^2_x L^2_y}
 &\le \|\px^{\la k-1\ra} \py^2 \v\|_{L^2_x L^\infty_y}
      \|\px^{\la k-1\ra} \py \q \y^l\|_{L^\infty_x L^2_y}\\
 &\le C\|\px^{\la k-1\ra} \py^3 \v \y\|_{L^2_x L^2_y}
      (\|\px^{\la k-1\ra} \py \q \y^l|_{x=0}\|_{L^2_y}+o_L(1)C_{k_*,\delta_0}\e^k_l(x));\\
 \|I_{16}\y^l\|_{L^2_x L^2_y}
 &\le \|\px^{\la k-1\ra}\py^3 \v \y^l\|_{L^2_x L^2_y}
      \|\px^{\la k-1 \ra}\q \|_{L^\infty_x L^\infty_y}\\
 &\le \|\px^{\la k-1\ra}\py^3 \v \y^l\|_{L^2_x L^2_y}
      (\|\px^{\la k-1\ra} \py q \y |_{x=0}\|_{L^2_y}
        +o_L(1)C_{k_*, l, \delta_0}\e^k_l(x)).
\dal
\deqq
Based on the combination of the above estimates, we have for all $l \ge 1$
\beq\label{i116}
\bal
&\|I_{11}\y^l\|_{L^2_x L^2_y}+
\|I_{12}\y^l\|_{L^2_x L^2_y}+
\|I_{13}\y^l\|_{L^2_x L^2_y}+
\|I_{15}\y^l\|_{L^2_x L^2_y}+
\|I_{16}\y^l\|_{L^2_x L^2_y}\\
\le
&C_{k_*, l, \delta_0}(1+\b^k_l(0)^2+\x^2).
\dal
\deq
Let us write
\beqq
I_{14}
=\sum_{j=1}^{k}C^k_j \px^j \u \px^{k-j} \py^2 \q \chi
    +\sum_{j=1}^{k}C^k_j \px^j \u \px^{k-j} \py^2 \q (1-\chi)
:=I_{141}+I_{142}.
\deqq
First of all, let us deal with the $I_{141}$ term.
Due to $k\ge 2$, using H\"{o}lder inequality,
estimates \eqref{claim1} and \eqref{claim4}, we can obtain
\beq\label{i141}
\bal
\|I_{141}\y^l\|_{L^2_x L^2_y}
\le
& C_k(\d_0+\eps)^{\f12}
  \|\f{\px^{\la [\f{k-1}{2}] \ra}\px \u}{\u} \chi\|_{L^\infty_x L^\infty_y}
  \|\px^{\la k-1 \ra}\py^2 \q \chi \|_{L^2_x L^2_y}\\
& +C_k (\d_0+\eps)^{\f12}\|\f{\px^{\la k-1 \ra}\px \u}{\u} \chi\|_{L^2_x L^\infty_y}
  \|\px^{\la [\f{k-1}{2}] \ra}\py^2 \q \chi \|_{L^\infty_x L^2_y}\\
\le
&o_L(1)C_{k,k_*, k^*,\d_0}(\|\px^{\la [\f{k-1}{2}] \ra} \py \v|_{x=0}\|_{H^2_y}
             +\|\px^{\la [\f{k-1}{2}]+1\ra} \py \v\|_{L^2_x H^2_y})
             \|\px^{\la k-1 \ra}\py^3 \v\|_{L^2_x L^2_y}\\
&+o_L(1)C_{k,k_*,\d_0} \|\px^{\la k-1 \ra}\py^3 \v\|_{L^2_x L^2_y}
           (\|\px^{\la [\f{k-1}{2}] \ra} \py^3 \v|_{x=0}\|_{L^2_y}
             +\|\px^{\la [\f{k-1}{2}]+1\ra} \py^3 \v\|_{L^2_x L^2_y})\\
\le
&o_L(1)C_{k_*,k^*,k,\delta_0}(1+\b^k_l(0)^2+\x^2).
\dal
\deq
Here the condition $k\ge 2$ implies directly $[\f{k-1}{2}]\le k-2$.
Using the H\"{o}lder inequality, we have
\beq\label{i41}
\bal
\|I_{142}\y^l\|_{L^2_x L^2_y}
\le
&C_k\|{\px^{\la k-1 \ra}\px \u}(1-\chi)\|_{L^2_x L^\infty_y}
  \|\px^{\la k-1 \ra}\py^2 \q (1-\chi) \y^l\|_{L^\infty_x L^2_y}.
\dal
\deq
Away from the boundary $y=0$, we have
\beq\label{i42}
\bal
&\|\px^{\la k-1 \ra}\py^2 \q (1-\chi) \y^l\|_{L^\infty_x L^2_y}\\
\le &\|\px^{\la k-1 \ra}\py^2 \q (1-\chi) \y^l|_{x=0}\|_{L^2_y}
     +o_L(1)\|\px^{\la k \ra}\py^2 \q (1-\chi) \y^l\|_{L^2_x L^2_y}\\
\le &\|\px^{\la k-1 \ra}\py^2 \q (1-\chi)\y^l|_{x=0}\|_{L^2_y}
     +o_L(1)C_{k_*,\d_0}\|\sqrt{\u}\px^{\la k \ra}\py^2 \q \y^l\|_{L^2_x L^2_y}.
\dal
\deq
Substituting the estimate \eqref{i42} into \eqref{i41}
and applying the divergence-free condition, we have
\beqq
\|I_{142}\y^l\|_{L^2_x L^2_y}
\le \|{\px^{\la k-1 \ra}\py \v}\|_{L^2_x H^1_y}
     (\|\px^{\la k-1 \ra}\py^2 \q (1-\chi)\y^l|_{x=0}\|_{L^2_y}
     +o_L(1)C_{k_*}\|\sqrt{\u}\px^{\la k \ra}\py^2 \q \y^l\|_{L^2_x L^2_y}).
\deqq
which, together with estimate \eqref{i141}, yields directly
\beqq
\|I_{14}\y^l\|_{L^2_x L^2_y}
\le o_L(1)C_{k_*,k^*,k,\delta_0}(1+\b^k_l(0)^2+\x^2).
\deqq
This and the estimate \eqref{i116} imply directly for all $l \ge 1$
\beqq
\|\px^k \py^2 \v \y^l\|_{L^2_x L^2_y}
\le C_{k_*,k^*,l,k,\delta_0}(1+\b^k_l(0)^2+ X^k_l(x)^2),
\deqq
which yields the estimate \eqref{231}.

Step 2: Obviously, it holds
\beqq
\bal
\px^k \py \v
&=\px^k(\py \u \q)+\px^k (\u \py \q)\\
&=(\py u \px^k \q +\u \px^k \py \q)
 +\sum_{j=1}^{k}C^k_j \px^j \py \u \px^{k-j}\q
 +\sum_{j=1}^k C^k_j \px^j \u \px^{k-j}\py \q\\
&:=I_{21}+I_{22}+I_{23}.
\dal
\deqq
Using H\"{o}lder inequality and estimate \eqref{claim31},
it holds for all $l \ge 1$
\beqq
\bal
\|I_{21}\y^l \|_{L^2_x L^2_y}
&\le
\|\py \u\y^l\|_{L^\infty_x L^2_y}\|\px^{k} \q \|_{L^2_x L^\infty_y}
  +\sqrt{L}\|\u \px^k \py \q \y^l\|_{L^\infty_x L^2_y}\\
&\le o_L(1)C_{k_*,l}
 (1+\|\py \u \y^l |_{x=0}\|_{L^2_y}
    +o_L(1)\|\py^2 \v \y^l \|_{L^2_x L^2_y})\e^{k}_l(x).
\dal
\deqq
Using the divergence-free condition,
and estimates \eqref{claim31} and \eqref{claim32}, it holds for $k\ge 2$
\beqq
\bal
\|I_{22}\y^l \|_{L^2_x L^2_y}
\le
& \|\px^{\la [\f{k-1}{2}] \ra} \py^2 \v \y^l\|_{L^\infty_x L^2_y}
  \|\px^{\la k-1 \ra}\q \|_{L^2_x L^\infty_y}\\
&+\|\px^{\la k-1 \ra} \py^2 \v \y^l\|_{L^2_x L^2_y}
  \|\px^{\la [\f{k-1}{2}] \ra}\q \|_{L^\infty_x L^\infty_y}\\
\le
& o_L(1)C_{k_*,k,l}(\|\px^{\la [\f{k-1}{2}] \ra} \py^2 \v\y^l|_{x=0}\|_{L^2_y}
   +\|\px^{\la [\f{k-1}{2}]+1\ra} \py^2 \v \y^l\|_{L^2_x L^2_y})\e^{k-1}_l(x)\\
&+o_L(1)\|\px^{\la k-1 \ra} \py^2 \v \y^l\|_{L^2_x L^2_y}
    (\|\px^{\la [\f{k-1}{2}] \ra}\py \q \y^{l}|_{x=0}\|_{L^2_y}
     +o_L(1)C_{k_*,l,\delta_0}\e^{[\f{k-1}{2}]+1}_l(x))\\
\le
&o_L(1)C_{k_*,k,l,\delta_0}(1+\b^k_l(0)^2+X^k_l(x)^2).
\dal
\deqq
Using the divergence-free condition,
and estimates \eqref{claim21} and \eqref{claim22}, it holds for $k \ge 2$
\beqq
\bal
\|I_{23}\y^l \|_{L^2_x L^2_y}
\le
&C_k \|\px^{\la [\f{k-1}{2}] \ra}\py \v \|_{L^\infty_x L^\infty_y}
  \|\px^{\la k-1 \ra}\py \q \y^{l}\|_{L^2_x L^2_y}\\
&+C_k\|\px^{\la k-1 \ra} \py \v\|_{L^2_x L^\infty_y}
  \|\px^{\la [\f{k-1}{2}] \ra}\py \q \y^{l}\|_{L^\infty_x L^2_y}\\
\le
&o_L(1)C_{k_*,k}(\|\px^{\la [\f{k-1}{2}] \ra}\py^2 \v \y |_{x=0}\|_{L^2_y}
         +\|\px^{\la [\f{k-1}{2}]+1\ra}\py^2 \v \y\|_{L^2_x L^2_y})\e^k_l(x)\\
&+o_L(1)C_k\|\px^{\la k-1\ra} \py^2 \v \y\|_{L^2_x L^2_y}
        (\|\px^{\la [\f{k-1}{2}] \ra}\py \q \y^{l} |_{x=0}\|_{L^2_y}
         +o_L(1)C_{k_*,\delta_0}\e^{[\f{k-1}{2}]+1}_l(x))\\
\le
&o_L(1)C_{k_*,k,\delta_0}(1+\b^k_l(0)^2+X^k_l(x)^2).
\dal
\deqq
Thus, we have for all integer $k \ge 2$ and $l \ge 1$
\beqq
\|\px^k \py \v \y^l \|_{L^2_x L^2_y}
\le  o_L(1)C_{k_*,k,l,\delta_0}(1+ \b^k_l(0)^2 +X^k_l(x)^2).
\deqq
This implies directly the estimate \eqref{232}.

{Step 3:}
Using the equation \eqref{app-q}, it holds
\beq\label{i31}
\bal
&\|\px^{k-1} \py^3 \v \y^l\|_{L^2_x L^2_y}\\
\le
&\|\px^{k-1} \py \v \y^l\|_{L^2_x L^2_y}
    +\|(\u)^2 \px^k\py \q \y^l \|_{L^2_x L^2_y}
 +\sum_{j=1}^{k} \|\u \px^{j-1} \py \v \px^{k-j} \py \q \y^l \|_{L^2_x L^2_y}\\
&+C_k\sum_{j=1}^{k} \sum_{j_1=1}^{j-1}
            \|\px^{j_1-1}\py \v \px^{j-j_1-1}\py \v \px^{k-j} \py \q \y^l \|_{L^2_x L^2_y}\\
:=
&I_{31}+I_{32}+I_{33}+I_{34}.
\dal
\deq
It is easy to check that
\beq\label{i32}
\bal
|I_{32}|
\le &\|\u\|_{L^2_x L^\infty_y}  \|\u \px^{k}\py \q \y^l\|_{L^\infty_x L^2_y}\\
\le &o_L(1)(1+\|\u-1-\eps\|_{L^\infty_x L^\infty_y})
     \|\u \px^{k}\py \q \y^l\|_{L^\infty_x L^2_y}\\
\le &o_L(1)(1+\|\py \u_0 \y\|_{L^2_y}+o_L(1)\|\py^2 \v \y\|_{L^2_x L^2_y})
     \|\u \px^{k}\py \q \y^l\|_{L^\infty_x L^2_y}.
\dal
\deq
Using divergence-free condition, estimates  \eqref{claim21} and  \eqref{232},
it is easy to check that
\beq\label{i33}
\bal
|I_{33}|
\le
&C_k \|\px^{\la k-2 \ra}\py \v\|_{L^\infty_x L^\infty_y}
     \|\u \px^{\la k-1 \ra} \py \q \y^l\|_{L^2_x L^2_y}
 +C_k\|\px^{k-1} \py \v \y^l\|_{L^2_x L^2_y}
     \|\u \py \q\|_{L^\infty_x L^\infty_y}\\
\le
&o_L(1)C_k (\|\px^{\la k-2 \ra}\py^2 \v \y |_{x=0}\|_{L^2_y}
             +\|\px^{\la k-1 \ra}\py^2 \v \y\|_{L^2_x L^2_y})
     \|\u \px^{\la k-1 \ra} \py \q \y^l\|_{L^\infty_x L^2_y}\\
&+C_k\|\px^{k-1} \py \v \y^l\|_{L^2_x L^2_y}
     \|\py(\u \py \q)\y\|_{L^\infty_x L^2_y}\\
\le
&C_{k_*,\d_0}(1+\b^k_l(0)^4+\x^4),
\dal
\deq
where we have used the estimate in the last inequality
\beqq
\bal
&\|\py(\u \py \q)\y\|_{L^\infty_x L^2_y}\\
\le  &\|\py \u \py \q \y\|_{L^\infty_x L^2_y}
      +\|\u \py^2 \q \y [(1-\chi)+\chi]\|_{L^\infty_x L^2_y}\\
\le &C_{k_*,\d_0}(1+\b^k_l(0)^2+\x^2).
\dal
\deqq
Here the above terms can be estimated as follows:
\beqq
\bal
&\|\py \u \py \q \y\|_{L^\infty_x L^2_y}\\
\le  &\|\py \u\|_{L^\infty_x L^\infty_y}
      \|\py \q \y\|_{L^\infty_x L^2_y}\\
\le &(\|\py^2 \u\y|_{x=0}\|_{L^2_y}+o_L(1)\|\py^3 \v \y\|_{L^2_x L^2_y})
     (\|\py \q \y|_{x=0}\|_{L^2_y}+o_L(1)\|\px \py \q \y\|_{L^2_x L^2_y})\\
\le &C(\b^k_l(0)^2+\x^2),
\dal
\deqq
and
\beq\label{uq}
\bal
&\|\u \py^2 \q \y [(1-\chi)+\chi]\|_{L^\infty_x L^2_y}\\
\le &(\|\u \py^2 \q \y \chi |_{x=0}\|_{L^2_y}
       +\|\u \px \py^2 \q \y \chi\|_{L^2_x L^2_y})
     +(1+\|\py u_0 \y\|_{L^2_y}+o_L(1)\|\py^2 \v \y\|_{L^2_x L^2_y})\\
&\quad \times  (\|\py^2 \q \y(1-\chi)|_{x=0}\|_{L^2_y}
      +C_{k_*,\d_0}\|\sqrt{\u}\px \py^2 \q \y(1-\chi)\|_{L^2_x L^2_y})\\
\le &C_{k_*,\d_0}(1+\b^k_l(0)^2+\x^2).
\dal
\deq
Similarly, we can get
\beq\label{i34}
\bal
|I_{34}|\le
&C_k\|\px^{\la k-2 \ra}\py \v\|_{L^\infty_x L^\infty_y}^2
  \|\px^{\la k-1 \ra}\py \q \y^l\|_{L^2_x L^2_y}\\
\le
&o_L(1)C_{k_*,k}(\|\px^{\la k-1 \ra}\py^2 \v \y|_{x=0}\|_{L^2_y}
         +\|\px^{\la k-1 \ra}\py^2 \v \y\|_{L^2_x L^2_y})^2\e^{k-1}_l(x).
\dal
\deq
Substituting estimates \eqref{i32}, \eqref{i33} and \eqref{i34}
into \eqref{i31}, then it holds for all $l \ge 1$
\beqq
\|\px^{k-1} \py^3 \v \y^l\|_{L^2_x L^2_y}
\le \|\px^{k-1} \py \v \y^l\|_{L^2_x L^2_y}
    +o_L(1)C_{k_*,k,\d_0}(1+\b^k_l(0)^4+ X^k_l(x)^4),
\deqq
which, together with estimate \eqref{232}, yields directly
\beqq
\|\px^{k-1} \py^3 \v \y^l\|_{L^2_x L^2_y}
\le o_L(1)C_{k_*,l,k,\delta_0}(1+\b^k_l(0)^4+ X^k_l(x)^4),
\deqq
which yields the estimate \eqref{233}.

{Step 4:}
Integrating by part, using Hardy inequality \eqref{Hardy-one},
H\"{o}lder  and Cauchy inequalities, it holds true
\beqq
\bal
\iy |\px^{k-1} \py^2 \v|^2 \y^{2l} dy
&\le \iy |\px^{k-1} \py \v(\px^{k-1} \py^3 v\y^{2l}
+2l \px^{k-1} \py^2 \v \y^{2l-2})| dy\\
&\le \|\px^{k-1} \py \v \y^l\|_{L^2_y}\|\px^{k-1} \py^3 \v \y^l\|_{L^2_y}
     +2l\|\px^{k-1} \py \v \y^l\|_{L^2_y}\|\px^{k-1} \py^2 \v \y^{l-1}\|_{L^2_y}\\
&\le C_l \|\px^{k-1} \py \v \y^l\|_{L^2_y}\|\px^{k-1} \py^3 \v \y^l\|_{L^2_y}\\
&\le \|\px^{k-1} \py^3 \v \y^l\|_{L^2_y}^2+\|\px^{k-1} \py \v \y^l\|_{L^2_y}^2.
\dal
\deqq
Then, for any integer $k \ge 2$ and $l \ge 1$,
we use the estimates \eqref{232} and \eqref{233} to obtain
\beqq
\|\px^{k-1} \py^2 \v \y^l\|_{L^2_x L^2_y}
\le o_L(1)C_{k_*,l, k,\delta_0}(1+\b^k_l(0)^4+X^k_l(x)^4),
\deqq
which yields the estimate \eqref{234}.

{Step 5:}
Using the equation \eqref{app-q}, we have for any integer $k \ge 2$
\beqq
\bal
\px^{k-1} \py^4 \v=
&\px^{k-1} \py^2 \v+(\u)^2 \px^{k} \py^2 \q+2\py \u \u \px^{k} \py \q
 +\sum_{j=1}^{k}C^{k}_j \px^j(\u \py \u)\px^{k-j}\py \q \\
&+\sum_{j=1}^{k}C^{k}_j \px^j\{(\u)^2\}\px^{k-j}\py^2 \q
:=I_{41}+I_{42}+I_{43}+I_{44}+I_{45}.
\dal
\deqq
It is easy to check that
\beqq
\bal
\|I_{42}\y^l\|_{L^2_x L^2_y}
&\le \|\u\|_{L^\infty_x L^\infty_y}^{\f32}
     \|\sqrt{\u}\px^{k}\py^2 \q \y^l\|_{L^2_x L^2_y}\\
&\le C(1+\|\py \u \y|_{x=0}\|_{L^2_y}^2+o_L(1)\|\py^2 \v \y\|_{L^2_x L^2_y}^2)
     \|\sqrt{\u}\px^{k}\py^2 \q \y^l\|_{L^2_x L^2_y},\\
\|I_{43}\y^l\|_{L^2_x L^2_y}
&\le \|\py \u\|_{L^2_x L^\infty_y}
      \|\u \px^{k}\py \q \y^l\|_{L^\infty_x L^2_y}\\
&\le o_L(1)(\|\py^2 \u \y|_{x=0}\|_{L^2_y}+\|\py^3 \v \y\|_{L^2_x L^2_y})
     \|\u \px^{k}\py \q \y^l\|_{L^\infty_x L^2_y}.
\dal
\deqq
To deal with the term $I_{44}$, we can follow the idea
as terms $I_{33}$.
Let us write
\beqq
\bal
I_{44}
=&\sum_{j=1}^k C^k_j \u \px^j \py \u \px^{k-j}\py \q
 +\sum_{j=1}^k C^k_j \py \u \px^j \u \px^{k-j}\py \q\\
 &+\sum_{j=1}^k \sum_{j_1=1}^{j-1}C^k_j C^j_{j_1}
  \px^{j_1}\u \px^{k-j_1} \py \u \px^{k-j}\py \q
:=I_{441}+I_{442}+I_{443}.
\dal
\deqq
Using estimate \eqref{uq}, it is easy to check that
\beqq
\bal
\|I_{441}\y^l\|_{L^2_x L^2_y}
\le & C_k \|\px^{\la k-2 \ra}\py^2 \v\|_{L^\infty_x L^\infty_y }
        \|\u \px^{\la k-1 \ra} \py \q \y^l\|_{L^2_x L^2_y }
      +C_k \|\px^{\la k-1 \ra}\py^2 \v \y^l\|_{L^2_x L^2_y }
         \|\u \py \q\|_{L^\infty_x L^\infty_y }\\
\le & C_k (\|\px^{\la k-2 \ra}\py^3 \v \y|_{x=0}\|_{L^2_y }
            +o_L(1)\|\px^{\la k-1 \ra}\py^3 \v \y\|_{L^\infty_x L^2_y })
        \|\u \px^{\la k-1 \ra} \py \q \y^l\|_{L^2_x L^2_y }\\
    &+C_k \|\px^{\la k-1 \ra}\py^2 \v \y^l\|_{L^2_x L^2_y }
         \|\u \py \q\|_{L^\infty_x L^\infty_y }\\
\le &C_k(1+\b^k_l(0)^4+\x^4).
\dal
\deqq
Similarly, using estimate \eqref{claim21}, we can obtain for all $k \ge 2$
\beqq
\bal
\|I_{442}\y^l\|_{L^2_x L^2_y}
\le &C_k \|\px^{\la [\f{k-1}{2}]\ra}\py \v\|_{L^\infty_x L^\infty_y}
         \|\px^{\la k-1 \ra}\py \q \y^l\|_{L^2_x L^2_y}\|\py \u\|_{L^\infty_x L^\infty_y}\\
      &+C_k\|\px^{\la k-1 \ra}\py \v\|_{L^2_x L^\infty_y}
            \|\px^{\la [\f{k-1}{2}] \ra}\py \q \y^l\|_{L^\infty_x L^2_y}
            \|\py \u\|_{L^\infty_x L^\infty_y}\\
\le &C_k (\|\px^{\la [\f{k-1}{2}]\ra}\py^2 \v \y|_{x=0}\|_{L^2_y}
           +o_L(1)\|\px^{\la [\f{k-1}{2}]+1\ra}\py^2 \v \y\|_{L^2_x L^2_y})\\
    &\quad \times \|\px^{\la k-1 \ra}\py \q \y^l\|_{L^2_x L^2_y}
             (\|\py^2 \u \y|_{x=0}\|_{L^2_y}
              +o_L(1)\|\py^3 \v \y\|_{L^2_x L^2_y})\\
     &+C_k (\|\px^{\la [\f{k-1}{2}] \ra}\py \q \y^l|_{x=0}\|_{L^2_y}
             +\|\px^{\la [\f{k-1}{2}] +1\ra}\py \q \y^l\|_{L^2_x L^2_y})\\
     &\quad \times  \|\px^{\la k-1 \ra}\py^2 \v \y\|_{L^2_x L^2_y}
             (\|\py^2 \u \y|_{x=0}\|_{L^2_y}
              +o_L(1)\|\py^3 \v \y\|_{L^2_x L^2_y})\\
\le &C_{k_*,k}(1+\b^k_l(0)^4+\x^4),
\dal
\deqq
and
\beqq
\bal
\|I_{443}\y^l\|_{L^2_x L^2_y}
\le &C_k \|\px^{k-2}\py \v\|_{L^\infty_x L^\infty_y}
     \|\px^{k-2}\py^2 \v\|_{L^\infty_x L^\infty_y}
     \|\px^{k-1} \py \q \y^l\|_{L^2_x L^2_y}\\
\le &C_k (\|\px^{k-2}\py^2 \v\y|_{x=0}\|_{L^2_y}
            +\|\px^{k-1}\py^2 \v\y\|_{L^2_x L^2_y})
     \|\px^{k-1} \py \q \y^l\|_{L^2_x L^2_y}\\
    &\quad \times (\|\px^{k-2}\py^3 \v\y|_{x=0}\|_{L^2_y}
                     +\|\px^{k-1}\py^3 \v\y\|_{L^2_x L^2_y})\\
\le &C_{k_*,k}(1+\b^k_l(0)^4+\x^4).
\dal
\deqq
Collecting the estimates from terms $I_{441}$ to $I_{443}$, we get that
\beqq
\|I_{44}\y^l\|_{L^2_x L^2_y}\le C_{k_*,k}(1+\b^k_l(0)^4+\x^4).
\deqq
Finally, let us deal with the term $I_{45}$.
Indeed, we use the divergence-free condition to write
\beqq
\bal
I_{45}
=&\sum_{j=1}^{k} 2C^{k}_j \u  \px^j \u \px^{k-j}\py^2 \q
   +\sum_{j=1}^{k} \sum_{j_1=1}^{j-1}
    2C^{k}_j C_{j_1}^j \px^{j_1-1}\py \v
    \px^{j-j_1-1}\py \v \px\px^{k-j}\py^2 \q\\
:=&I_{451}+I_{452}.
\dal
\deqq
Using divergence-free condition, estimates \eqref{uq} and \eqref{claim21}, we can get
\beqq
\bal
\|I_{451} \y^l\|_{L^2_x L^2_y}
\le &C_k (1+\|\u-1-\eps\|_{L^\infty_x L^\infty_y})
         \|\px^{\la k-2 \ra}\py \v\|_{L^\infty_x L^\infty_y}
         \|\sqrt{\u}\px^{\la k-1\ra}\py^2 \q \y^l\|_{L^2_x L^2_y}\\
    &+C_k \|\px^{\la k-1 \ra}\py \v \y^{l-1}\|_{L^2_x L^\infty_y}
          \|\u \py^2 \q \y\|_{L^\infty_x L^2_y}\\
\le &C_k(1+\|\py \u \y|_{x=0}\|_{L^2_y}+\|\py^2 \v \y\|_{L^2_x L^2_y})
         \|\sqrt{\u}\px^{\la k-1\ra}\py^2 \q \y^l\|_{L^2_x L^2_y}\\
   &\times
         (\|\px^{\la k-2 \ra}\py^2 \v\y|_{x=0}\|_{L^2_y}
          +\|\px^{\la k-1 \ra}\py^2 \v \y\|_{L^2_x L^2_y})\\
    &+C_{k,l} \|\px^{\la k-1 \ra}\py^2 \v \y^{l}\|_{L^2_x L^2_y}
          (1+\b^k_l(0)^2+\x^2)\\
\le &C_{k,l}(1+\b^k_l(0)^4+\x^4),
\dal
\deqq
and
\beqq
\bal
\|I_{452} \y^l\|_{L^2_x L^2_y}
\le &C_k \|\px^{\la k-2 \ra}\py \v\|_{L^\infty_x L^\infty_y}^2
         \|\px^{\la k-1 \ra}\py \q \y^l\|_{L^2_x L^2_y}\\
\le &C_{k,l} (\|\px^{\la k-2 \ra}\py^2 \v \y|_{x=0}\|_{L^2_y}^2
              +\|\px^{\la k-1 \ra}\py^2 \v \y\|_{L^2_x L^2_y}^2)
         \|\px^{\la k-1 \ra}\py \q \y^l\|_{L^2_x L^2_y}\\
\le &C_{k_*,k,l}(1+\b^k_l(0)^2+\x^2).
\dal
\deqq
Then, we have the estimate
\beqq
\|I_{45}\y^l\|_{L^2_x L^2_y}
\le C_{k_*,k,l}(1+\b^k_l(0)^4+\x^4).
\deqq
The combination of estimates from terms $I_{42}$ to $I_{45}$
and the estimate \eqref{231}, it holds for all $k\ge 2$ and $l \ge 1$
\beqq
\|\px^{k-1} \py^4 \v \y^l\|_{L^2_x L^2_y}
\le  C_{k_*,k^*,l,k,\delta_0}(1+\b^k_l(0)^4+X^k_l(x)^4),
\deqq
which yields the estimate \eqref{235}.

{Step 6:}
Using the equation \eqref{app-q}, then we have
\beqq
\bal
\|\py^4 \v \chi\|_{L^2_x L^2_y}
\le
    &\|\py^2 \v \chi\|_{L^2_x L^2_y}
    +2\|\py \u \px \u \py \q \chi\|_{L^2_x L^2_y}
    +2\|\u \px \py \u \py \q \chi\|_{L^2_x L^2_y}\\
    &+2\|\u \px \u \px \py \q\chi\|_{L^2_x L^2_y}
    +2\|\u \py \u \px \py \q \chi\|_{L^2_x L^2_y}
    +\|(\u)^2 \px \py^2 \q\chi\|_{L^2_x L^2_y}\\
:=&I_{51}+I_{52}+I_{53}+I_{54}+I_{55}+I_{56}.
\dal
\deqq
Using the a priori assumption \eqref{equi-relation}, it is easy to check that
\beqq
\bal
&I_{52}\le C(\|\py^2 \u \y|_{x=0}\|_{L^2_y}
        +o_L(1)\|\py^3 \v \y\|_{L^2_x L^2_y})
        (\|\py \q|_{x=0}\|_{L^2_y}
          +o_L(1)\|\px \py \q\|_{L^2_x L^2_y})
          \|\py^2 \v \y\|_{L^2_x L^2_y};\\
&I_{53}\le k^*(\d+\eps) \|\py^3 \v \y\|_{L^2_x L^2_y}
             ((\|\py \q|_{x=0}\|_{L^2_y}
          +o_L(1)\|\px \py \q\|_{L^2_x L^2_y});\\
&I_{54}\le \sqrt{L}\|\py^2 \v \y\|_{L^2_x L^2_y}
             \|\u \px \py \q\|_{L^\infty_x L^2_y};\\
&I_{55}\le \sqrt{L}(\|\py^2 \u \y|_{x=0}\|_{L^2_y}
                 +o_L(1)\|\py^3 \v \y\|_{L^2_x L^2_y})
             \|\u \px \py \q\|_{L^2_x L^2_y};\\
&I_{56}\le (k^*)^{\f32}(\d+\eps)^{\f32}\|\sqrt{\u} \px \py^2 \q\|_{L^2_x L^2y}.
\dal
\deqq
Combining estimates from $I_{52}$ to $I_{56}$ and using the estimates
\eqref{claim21} and \eqref{234}, we can choose $\d=\sqrt{L}$ to obtain
\beqq
\|\py^4 \v \chi\|_{L^2_x L^2_y}
\le o_L(1)C_{k_*,l,k,\delta_0}(1+\b^k_l(0)^8+X^k_l(x)^8),
\deqq
which yields the estimate \eqref{236}.
Therefore, we complete the proof of this lemma.
\end{proof}

Next, we will establish the energy estimate for the
approximated equation \eqref{app-sym}.

\begin{lemm}\label{lemma-two}
For any smooth solution $(\u, \v)$ of equations \eqref{app-sym}-\eqref{ic},
it holds true for all $x\in [0, L^\eps]$
\beq\label{241}
\e^k_l(x)^2 \le \e^k_l(0)^2
              +C_{k_*, k^*, k, l,\delta_0}(1+\b^k_l(0)^{16}+\x^{16}),
\deq
where $k \ge 2$ and $l \ge 1$.
\end{lemm}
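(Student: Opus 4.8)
\emph{Overall approach.} The plan is to run a weighted energy estimate for the quotient equation \eqref{app-q}. For each $0\le j\le k$ I would apply $\px^j$ to \eqref{app-q}, obtaining
\[
\px^{j+1}\{(\u)^2\py\q\}=\px^j\py^3\v-\px^j\py\v ,
\]
multiply by the weighted multiplier $\px^j\py\q\,\y^{2l}$, and integrate over $y\in(0,\infty)$. Isolating on the left the top-order factor $(\u)^2\px^{j+1}\py\q$ produces $\tfrac12\tfrac{d}{dx}\|\u\,\px^j\py\q\,\y^l\|_{L^2_y}^2$ plus the term $-\int\u\,\px\u\,(\px^j\py\q)^2\y^{2l}\,dy$ and commutators in which an $x$-derivative lands on $(\u)^2$. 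On the right, after substituting $\v=\u\q$ and expanding by Leibniz, the one top-order term $\u\,\px^j\py^3\q$ integrates by parts once in $y$ to give the dissipation $-\|\sqrt\u\,\px^j\py^2\q\,\y^l\|_{L^2_y}^2$ together with $-\int\py\u\,\px^j\py\q\,\px^j\py^2\q\,\y^{2l}$ and a weight-derivative term; every remaining contribution of $\px^j\py^3\v$ or $\px^j\py\v$ carries at least one $y$-derivative on $\u$ or leaves $\q$ with at most two $y$-derivatives. Integrating in $x$ over $[0,x]$ and taking $\sup_x$ of the first term bounds, for each $j$, the quantity $\|\u\,\px^j\py\q\,\y^l\|_{L^\infty_xL^2_y}^2+\|\sqrt\u\,\px^j\py^2\q\,\y^l\|_{L^2_xL^2_y}^2$ by $\e^k_l(0)^2$ plus error terms; summing over $0\le j\le k$ and using the definition of $\e^k_l$ then yields \eqref{241}.

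\emph{Boundary terms.} A preliminary point is that every boundary term at $y=0$ vanishes. Since $\v|_{y=0}=0$ and $\u(x,0)=\eps$ is independent of $x$, we get $\py\v|_{y=0}=-\px\u|_{y=0}=0$; hence $\q=\v/\u=O(y^2)$ near $y=0$, so $\q(x,0)=\py\q(x,0)=0$, and, differentiating in $x$, $\px^j\q(x,0)=\px^j\py\q(x,0)=0$ for all $j$. Likewise $\px^a\u=-\px^{a-1}\py\v$ vanishes at $y=0$ for every $a\ge1$, so $\px^a\u=O(y)\sim\u$ near $y=0$. Consequently all boundary terms produced by integration by parts in $y$ (each carries a factor $\px^j\q$, $\px^j\py\q$, or $\v$ at $y=0$) drop out, while the far-field terms vanish by the exponential decay; the assumed smoothness and the corner compatibility conditions underlie these manipulations.

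\emph{Error terms and the main obstacle.} The heart of the proof, and the step I expect to be hardest, is bounding the error terms by $C_{k_*,k^*,k,l,\d_0}(1+\b^k_l(0)^{16}+\x^{16})$ uniformly in $\eps$; this is exactly what Lemma \ref{lemma-one} is built for. For each error term I would: (i) split the $y$-integral with the cutoff $\chi$ into a near-boundary region, where $\u\sim y+\eps$ degenerates, and an away region where $\u\ge\tfrac14\d_0$ by the a priori bounds \eqref{equi-relation}--\eqref{lower-bound}; (ii) convert each $x$-derivative of $\u$ into a $y$-derivative of $\v$ via $\px\u=-\py\v$; (iii) bound the resulting $\v$-derivative norms by \eqref{231}--\eqref{236} of Lemma \ref{lemma-one}, which dominate them by $1+\b^k_l(0)^p+\x^p$ with $p\le8$, and the $\q$-derivative norms by $\e^k_l$ together with the interpolation and product inequalities \eqref{claim21}--\eqref{claim31} and the $\py^m\u$-bounds collected in $\b^k_l$; (iv) absorb the dissipation-type errors, i.e. those carrying a factor $\sqrt\u\,\px^j\py^2\q\,\y^l$ such as $\int\py\u\,\px^j\py\q\,\px^j\py^2\q\,\y^{2l}$, into the dissipation on the left using the short-time smallness $o_L(1)$, and treat the energy-type remainders, e.g. $\int\u\,\px\u\,(\px^j\py\q)^2\y^{2l}=-\int(\py\v/\u)\,(\u\,\px^j\py\q)^2\y^{2l}$, by a Gr\"onwall argument in $x$ (using that $\py\v/\u$ is bounded uniformly in $\eps$ since $\py\v=O(y)$ near $y=0$ while $\u\gtrsim y$ there). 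The genuinely delicate terms are the near-boundary ones in which an $x$-derivative of $\u$ meets a high $y$-derivative of $\q$: a crude H\"older split fails there because $1/\u$ is not $\eps$-uniformly integrable, so one must first redistribute $y$-derivatives by an extra integration by parts --- legitimate by the boundary analysis above --- and exploit $\px^a\u=O(y)\sim\u$ to recover the missing weight, after which H\"older, Hardy's inequality \eqref{Hardy-one}, and the $\sqrt\u$-dissipation close the estimate. Since the worst error terms are products of two Lemma \ref{lemma-one} bounds (each up to eighth power) with further $\q$-factors, the net polynomial degree is pushed to $16$, which is the source of the exponent in \eqref{241}. Summing over $0\le j\le k$ gives \eqref{241}.
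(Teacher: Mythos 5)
Your proposal is essentially the paper's proof: both apply $\px^j$ (the paper writes out only $j=k$, with lower $j$ implicit) to the quotient equation \eqref{app-q}, test against $\px^j\py\q\,\y^{2l}$, integrate by parts in $y$ using the boundary vanishing $\px^j\py\q|_{y=0}=0$ to extract the weighted energy $\|\u\,\px^j\py\q\,\y^l\|_{L^\infty_xL^2_y}^2$ and dissipation $\|\sqrt\u\,\px^j\py^2\q\,\y^l\|_{L^2_xL^2_y}^2$, and then estimate the commutator/error terms by the near/far $\chi$-cutoff split together with \eqref{claim1}--\eqref{claim4} and the Lemma~\ref{lemma-one} bounds on $\v$-derivative norms, with the degree-$16$ polynomial arising from products of those bounds. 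The only cosmetic differences are that you expand $\v=\u\q$ before integrating by parts (the paper does it after) and you suggest a Gr\"onwall step for the $\u\px\u$ term, whereas the paper instead extracts an $o_L(1)$ factor from $\|\px\u/\u\|_{L^2_xL^\infty_y}$ via \eqref{claim1}; both close the same way.
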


\begin{proof}
Taking $\px^{k}$ differential operator to the equation \eqref{app-q}, we have
\beqq
\px^{k+1} \{(\u)^2 \py \q\}-\px^k \py^3 \v+\px^k \py \v=0.
\deqq
Multiplying the above equation by $\px^k \py \q \y^{2l}$ and
integrating over $[0, x]\times[0,\infty)$, we have
\beq\label{242}
\bal
&\f12 \iy ((\u)^2 |\px^k \py \q|^2)(x, y) \y^{2l} dy
  -\int_0^x \iy \px^k \py^3 \v \cdot \px^k \py \q \y^{2l}d\tau dy \\
&+\int_0^x \iy \u |\px^k \py \q|^2 \y^{2l} d\tau dy\\
&=\f12 \iy ((\u)^2 |\px^k \py \q|^2)(0, y) \y^{2l} dy+II_1+II_2+II_3+II_4,
\dal
\deq
where the terms $II_i(i=1,2,3,4)$ are defined by
\beqq
\begin{aligned}
&II_1:=\int_0^x \iy \u \px \u |\px^k \py \q|^2 \y^{2l} d\tau dy,\\
&II_2:=-\sum_{j=1}^{k+1}C^{k+1}_j \int_0^x \iy \px^j\{(\u)^2\}
      \px^{k+1-j} \py \q \cdot \px^k \py \q \y^{2l} d\tau dy,\\
&II_3:=-\sum_{j=1}^k C^k_j \int_0^x \iy \px^j \u \px^{k-j} \py \q
      \cdot \px^k \py \q \y^{2l} d\tau dy,\\
&II_4:=-\int_0^x \iy \px^k(\py \u \q)\cdot \px^k \py \q \y^{2l} d \tau dy.
\end{aligned}
\deqq
Next, we deal with the dissipative term.
For any $\eps>0$, we use the boundary condition \eqref{bdyc} to obtain
\beq\label{BL-Condtion}
\px^k \py \q|_{y=0}
=\left.\px^k \left\{\f{\py \v \u-\v \py \u}{(\u)^2} \right\}\right|_{y=0}=0.
\deq
Then, we integrate by part to get
\beq\label{243}
\bal
&-\int_0^x \iy \px^k \py^3 \v \cdot \px^k \py \q \y^{2l}d\tau dy\\
=
&\int_0^x \px^k \py^2 \v \cdot \px^k \py \q \y^{2l}|_{y=0}d\tau
 +\int_0^x \iy \px^k \py^2 \v \cdot \py \{\px^k \py \q \y^{2l}\}d\tau dy\\
=
&\int_0^x \iy \px^k \{\py^2 \u \q+2\py \u \py \q+\u \py^2 \q\}
  \cdot \py \{\px^k \py \q \y^{2l}\}d\tau dy.
\dal
\deq
Then, substituting the equality \eqref{243} into \eqref{242}, we have
\beq\label{244}
\begin{aligned}
& \underset{0\le x \le L}{\sup}\iy (\u)^2 |\px^k \py \q|^2 \y^{2l}dy
  +2\ix \iy \u|\px^k \py \q|^2 \y^{2l} dx dy
  +2\ix \iy \u|\px^k \py^2 \q|\y^{2l} dx dy\\
&\le \iy ((\u)^2 |\px^k \py \q|^2)(0, y) \y^{2l} dy+2\sum_{i=1}^{i=8}|II_i|,
\end{aligned}
\deq
where the terms $II_i(i=5,6,7,8)$ are defined by
\beqq
\begin{aligned}
&II_5:=-\ix \iy \px^k (\py^2 \u \q )\cdot \px^k \py^2 \q \y^{2l} dx dy,\\
&II_6:=-2 \ix \iy \px^k (\py \u \py \q)\cdot \px^k \py^2 \q \y^{2l} dx dy,\\
&II_7:=-\sum_{j=1}^k C^k_j \ix \iy \px^j \u \px^{k-j} \py^2 \q
       \cdot \px^k \py^2 \q \y^{2l} dx dy,\\
&II_8:=-2l\ix \iy \px^k \{\py^2 \u \q+2\py \u \py \q+ \u \py^2 q\}
      \cdot\px^k \py \q \y^{2l-1} dx dy.
\end{aligned}
\deqq
Now, we claim that the terms $II_i(i=1,...,8)$ can be estimated as follows:
\begin{align}
\label{245}&|II_1|\le o_L(1)C_{k_*,l,k,\delta_0}(1+\b^k_l(0)^8+X^k_l(x)^8),\\
\label{246}&|II_2|\le o_L(1)C_{k_*,l,k,\delta_0}(1+\b^k_l(0)^{16}+X^k_l(x)^{16}),\\
\label{247}&|II_3|\le o_L(1)C_{k_*,k^*,k,\delta_0}(1+\b^k_l(0)^4+\x^4),\\
\label{248}&|II_4|\le o_L(1)C_{k_*,k,l,\delta_0}(1+\b^k_l(0)^4+\x^4),\\
\label{249}&|II_5|\le o_L(1)C_{k_*,k^*,l,k,\delta_0}(1+\b^k_l(0)^8+\x^{8}),\\
\label{2410}&|II_6|\le o_L(1)C_{k_*,k^*,k,l,\delta_0}(1+\b^k_l(0)^8+\x^{8}),\\
\label{2411}&|II_7|\le  o_L(1)C_{k_*,k^*,k,\delta_0}(1+\b^k_l(0)^8+\x^8),\\
\label{2412}&|II_8|\le o_L(1)C_{k_*, k^*, k, l,\delta_0}(1+\b^k_l(0)^4+\x^{4}).
\end{align}
Then, substituting the estimates \eqref{245}-\eqref{2412} into \eqref{244},
it is easy to obtain the estimate \eqref{241}.
\end{proof}

In order to complete the proof of Lemma \ref{lemma-two}, it remains to
show the claim estimates \eqref{245}-\eqref{2412} as follows.

\begin{proof}[\underline{\textbf{Proof of estimate \eqref{245}}}]
It is easy to deduce that
\beqq
|II_1|\le \|\f{\px \u}{\u}\|_{L^2_x L^\infty_{y}}
\|\u \px^k \py \q \y^l\|_{L^\infty_x L^2_y}^2,
\deqq
which, together with the estimates \eqref{claim1},
\eqref{232}, \eqref{233} and \eqref{234}, yields directly
\beqq
|II_1|
\le C_{k_*,\delta_0}\|\py \v\|_{L^2_x H^2_{y}}
    \|\u \px^k \py \q \y^l\|_{L^\infty_x L^2_y}^2
\le o_L(1)C_{k_*,l,k,\delta_0}(1+\b^k_l(0)^8+X^k_l(x)^8),
\deqq
which is the inequality \eqref{245}.
\end{proof}

\begin{proof}[\underline{\textbf{Proof of estimate \eqref{246}}}]
Let us write
\beqq
\begin{aligned}
II_2
=&-\sum_{j=1}^{[\f{k+1}{2}]}\sum_{j_1=0}^j C^{k+1}_j C^j_{j_1}
    \ix \iy \px^{j_1}\u \px^{j-j_1}\u \px^{k+1-j}\py \q \cdot \px^k \py \q \y^{2l} dx dy\\
 &-\sum_{[\f{k+1}{2}]}^{k+1}\sum_{j_1=0}^j C^{k+1}_j C^j_{j_1}
    \ix \iy \px^{j_1}\u \px^{j-j_1}\u \px^{k+1-j}\py \q \cdot \px^k \py \q \y^{2l} dx dy\\
:=&II_{21}+II_{22}.
\end{aligned}
\deqq
Applying the estimates \eqref{claim1}, \eqref{232}, \eqref{233} and \eqref{234}, we have
\beqq
\bal
|II_{21}| \le
&\|\f{\px^{\la[\f{k+1}{2}]\ra} \u}{\u}\|_{L^2_x L^\infty_{y}}
        \|\u \px^{\k}\py \q \y^l\|_{L^\infty_x L^2_{y}}^2
        +\|\f{\px^{\la [\f{k+1}{2}] \ra} \u}{\u}\|_{L^2_x L^\infty_{y}}^2
        \|\u \px^{\k}\py \q \y^l\|_{L^\infty_x L^2_{y}}^2\\
\le
&C_{k_*,\delta_0}(\|\px^{\la k-1 \ra} \py \v\|_{L^2_x H^2_y}
  +\|\px^{\la k-1 \ra} \py \v\|_{L^2_x H^2_y}^2)
        \|\u \px^{\k}\py \q \y^l\|_{L^\infty_x L^2_{y}}^2\\
\le & o_L(1)C_{k_*,l,k,\delta_0}(1+\b^k_l(0)^{16}+X^k_l(x)^{16}).
\dal
\deqq
Using Sobolev inequality, divergence-free condition,
estimates \eqref{claim1}, \eqref{claim21} and \eqref{231}, we have
\beqq
\bal
|II_{22}|
\le
&\|\px^{\langle k+1 \rangle} \u\|_{L^2_x L^\infty_{y}}
     \|\px^{\la [\f{k+1}{2}] \ra}\py \q \y^l \|_{L^2_x L^2_{y}}
     \|\u \px^k \py \q \y^l \|_{L^\infty_x L^2_{y}}\\
&+\|\f{\px^{\langle [\f{k+1}{2}]\rangle} \u}{\u}\|_{L^2_x L^\infty_{y}}
     \|\px^{\langle k+1 \rangle} \u\|_{L^2_x L^\infty_{y}}
     \|\px^{\la [\f{k+1}{2}] \ra}\py \q \y^{l} \|_{L^\infty_{x} L^2_{y}}
     \|\u \px^k \py \q \y^l \|_{L^\infty_x L^2_{y}}\\
\le
&   C_{k_*,\delta_0} \|\px^{\la k\ra} \py^2 \v \y \|_{L^2_x L^2_y}
     \|\px^{\la [\f{k+1}{2}] \ra}\py \q \y^l \|_{L^2_{x}L^2_{y}}
     \|\u \px^k \py \q \y^l \|_{L^\infty_x L^2_{y}}\\
&+\|\px^{\langle [\f{k+1}{2}]-1\rangle} \py^2 \v\|_{L^2_x H^1_{y}}
     \|\px^{\langle k \rangle}\py^2 \v \y\|_{L^2_x L^2_{y}}
     \|\u \px^k \py \q \y^l \|_{L^\infty_x L^2_{y}}\\
&\quad \times (\|\px^{\la [\f{k+1}{2}] \ra}\py \q \y^{l}|_{x=0}\|_{L^2_{y}}
      +o_L(1)\|\px^{\la [\f{k+1}{2}]+1\ra}\py \q \y^{l} \|_{L^2_{x} L^2_{y}})\\
\le
&o_L(1)C_{k_*, k^*, k, \delta_0}(1+\b^k_l(0)^{16}+X^k_l(x)^{16}).
\dal
\deqq
Therefore, we can obtain the following estimate
\beqq
|II_2|\le o_L(1)C_{k_*, k^*, k, \delta_0}(1+\b^k_l(0)^{16}+X^k_l(x)^{16}),
\deqq
which is inequality \eqref{246}.
\end{proof}

\begin{proof}[\underline{\textbf{Proof of estimate \eqref{247}}}]
Apply the divergence-free condition, we have
\beqq
II_3
=\sum_{j=1}^k C^k_j \int_0^x \iy \px^{j-1} \py \v \px^{k-j} \py \q
      \cdot \px^k \py \q \y^{2l} d\tau dy.
\deqq
Due to $k\ge 2$, using the Sobolev inequality, estimates
\eqref{claim21}, \eqref{claim22}, \eqref{231} and \eqref{232}, we have
\beqq
\bal
|II_3|
=&|\sum_{j=1}^k C^k_j \int_0^x \iy \px^{j-1} \py \v \px^{k-j} \py \q
      \cdot \px^k \py \q \y^{2l} d\tau dy|\\
\le
&  C_k \|\px^{\la [\f{k-1}{2}] \ra} \py \v\|_{L^\infty_x L^\infty_y}
    \|\px^{\la k-1\ra}\py \q \y^l \|_{L^2_x L^2_y}
    \|\px^k \py \q \y^l \|_{L^2_x L^2_y}\\
&    +C_k\|\px^{\la k-1 \ra} \py \v\|_{L^2_x L^\infty_y}
    \|\px^{\la [\f{k-1}{2}]\ra}\py \q \y^l \|_{L^\infty_x L^2_y}
    \|\px^k \py \q \y^l \|_{L^2_x L^2_y}\\
\le
&o_L(1)C_{k,k_*}(\|\px^{\la [\f{k-1}{2}] \ra} \py^2 \v \y |_{x=0}\|_{L^2_y}
         +\|\px^{\la [\f{k-1}{2}]+1 \ra} \py^2 \v \y \|_{L^2_x L^2_y})\e^k_l(x)^2\\
&+o_L(x)C_{k,k_*}\|\px^{\la k-1 \ra} \py^2 \v \y\|_{L^2_x L^2_y}\e^k_l(x)\\
&\quad \times (\|\px^{\la [\f{k-1}{2}]\ra}\py \q \y^l |_{x=0}\|_{L^2_y}
        +o_L(1)C_{k_*,\d_0}\e^{\la [\f{k-1}{2}]+1\ra}_l(x))\\
\le
&o_L(1)C_{k_*,k^*,k,\delta_0}(1+\b^k_l(0)^4+\x^4).
\dal
\deqq
This yields inequality \eqref{247} directly.
\end{proof}

\begin{proof}[\underline{\textbf{Proof of estimate \eqref{248}}}]
Using the divergence-free condition, we can write
\beqq
\bal
II_4=
&-\int_0^x \iy \py \u \px^k \q \cdot \px^k \py \q \y^{2l} d \tau dy\\
&+\sum_{j=1}^k C^k_j\int_0^x \iy \px^{j-1}\py^2 \v \px^{k-j}\q \cdot \px^k \py \q \y^{2l} d \tau dy\\
:=&II_{41}+II_{42}.
\dal
\deqq
Using H\"{o}lder inequality, divergence-free condition,
estimates \eqref{claim21} and \eqref{claim31}, it holds true
\beq\label{ii41}
\bal
|II_{41}|
&\le \|\py \u \y^l \|_{L^\infty_x L^2_y}
    \|\px^{k} \q \|_{L^2_x L^\infty_y}
    \|\px^k \py \q \y^l \|_{L^2_x L^2_y}\\
&\le o_L(1)C_{k_*,l}(\|\py \u_0 \y^l \|_{L^2_y}
         +\|\py^2 \v \y^l \|_{L^2_x L^2_y})\e^k_l(x)^2.
\dal
\deq
Due to $k\ge 2$, using the H\"{o}lder inequality, estimates
\eqref{claim21}, \eqref{claim31}, \eqref{claim32}, \eqref{231} and \eqref{232}, we have
\beq\label{ii422}
\bal
|II_{42}| \le
& C_k\|\px^{\la [\f{k-1}{2}]\ra}\py^2 \v \y^l \|_{L^\infty_x L^2_y}
  \|\px^{\la k-1 \ra}\q \|_{L^2_x L^\infty_y}
  \|\px^k \py \q \y^l\|_{L^2_x L^2_y}\\
&+C_k \|\px^{\la k-1 \ra}\py^2 \v \y^l\|_{L^2_x L^2_y}
  \|\px^{\la [\f{k-1}{2}]\ra}\q \|_{L^\infty_x L^\infty_y}
  \|\px^k \py \q \y^l\|_{L^2_x L^2_y}\\
\le
&o_L(1)C_{k_*,k,l,\delta_0}
        (\|\px^{\la [\f{k-1}{2}]\ra}\py^2 \v \y^l|_{x=0}\|_{L^2_y}
        +\|\px^{\la [\f{k-1}{2}]+1\ra}\py^2 \v \y^l \|_{L^2_x L^2_y})\e^k_l(x)^2\\
&+o_L(1)C_{k_*,k,l,\delta_0}\|\px^{\la k-1 \ra}\py^2 \v \y^l\|_{L^2_x L^2_y}
  (\|\px^{\la [\f{k-1}{2}]\ra}\py \q \y|_{x=0}\|_{L^2_y}
   +o_L(1)\e^{\la [\f{k-1}{2}]+1\ra}_l(x))\e^k_l(x)\\
\le
&o_L(1)C_{k_*,k,l,\delta_0}(1+\b^k_l(0)^4+\x^4).
\dal
\deq
Then, the combination of estimates \eqref{ii41}
and \eqref{ii422} yields the inequality \eqref{248}.
\end{proof}

\begin{proof}[\underline{\textbf{Proof of estimate \eqref{249}}}]
Using the divergence-free condition, it holds true
\beqq
\bal
II_5=&-\ix \iy \pyy \u \px^k \q \cdot \px^k \py^2 \q \y^{2l} dy dx\\
    &+\sum_{j=1}^{k}C^k_j \ix \iy \px^{j-1}\py^3 \v \px^{k-j} \q
     \cdot \px^k \py^2 \q \y^{2l} dy dx\\
   :=&II_{51}+II_{52}.
\dal
\deqq
First of all, let us deal with the term $II_{51}$.
We can write
\beqq
\bal
II_{51}
=&-\ix \iy \pyy \u \px^k \q \cdot \px^k \py^2 \q \chi \y^{2l} dy dx
  -\ix \iy \pyy \u \px^k \q \cdot \px^k \py^2 \q (1-\chi)\y^{2l} dy dx\\
:=&II_{511}+II_{512}.
\dal
\deqq
Using the H\"{o}lder inequality, it holds true
\beq\label{ii51}
|II_{511}|
\le C \|\f{y}{\sqrt{\u}}\py^2 \u \chi\|_{L^\infty_x L^\infty_y}
       \|\f{1}{y}\px^k \q \y^{l}\|_{L^2_x L^2_y}
       \|\sqrt{\u} \px^k \py^2 \q \|_{L^2_x L^2_y}.
\deq
Using the Hardy inequality and estimate \eqref{claim21}, it is easy to check that
\beq\label{ii511}
\|\f1y \px^k \q \|_{L^2_x L^2_y}
\le \|\px^k \py \q \|_{L^2_x L^2_y}
\le o_L(1)C_{k_*}(\|\u \px^k \py \q \y^l \|_{L^\infty_x L^2_y}
           +\|\sqrt{\u}\px^k \py^2 \q \y^l\|_{L^2_x L^2_y}).
\deq
On the other hand, using divergence-free condition,
relations \eqref{equi-relation} and \eqref{lower-bound}, it holds true
\beq\label{ii512}
\bal
&\|\f{y}{\sqrt{\u}}\py^2 \u \chi\|_{L^\infty_x L^\infty_y}
\le C_{k_*}\|\sqrt{y}\py^2 \u \chi\|_{L^\infty_x L^\infty_y}\\
&\le C_{k_*}(\|\py^2 \u_0 \chi\|_{L^2_y}+\|\py^3 \u_0 \chi\|_{L^2_y})
      +o_L(1)C_{k_*}(\|\py^3 \v \chi\|_{L^2_x L^2_y}+\|\py^4 \v \chi\|_{L^2_x L^2_y}).
\dal
\deq
Then, we can get that
\beq\label{ii513}
|II_{511}|
\le o_L(1)C_{k_*}(1+\b^k_l(0)^4+\x^4).
\deq
Using H\"{o}lder inequality and estimate \eqref{claim31}, it holds true
\beqq
\bal
|II_{512}|\le
&C_{k^*,\d_0}\|\py^2 \u \y^l\|_{L^\infty_x L^2_y}
  \|\px^k \q \|_{L^2_x L^\infty_y}
  \|\sqrt{\u}\px^k \py^2 \q \y^l\|_{L^2_x L^2_y}\\
\le
&o_L(1)C_{k_*,l,\d_0}(\|\py^2 \u \y^l|_{x=0}\|_{L^2_y}
                        +o_L(1)\|\py^3 \v \y^l\|_{L^2_x L^2_y})\e^k_l(x)
 \|\sqrt{\u}\px^k \py^2 \q \y^l\|_{L^2_x L^2_y}\\
\le
&o_L(1)C_{k_*,l,\d_0}(1+\b^k_l(0)^4+\x^4),
\dal
\deqq
which, together with estimate \eqref{ii513}, yields directly
\beq\label{ii51-1}
|II_{51}|
\le o_L(1)C_{k_*,l,\d_0}(1+\b^k_l(0)^4+\x^4).
\deq
Integrating by part and using boundary condition \eqref{BL-Condtion}, we have
\beqq
\bal
II_{52}=
&-\sum_{j=1}^{k}C^k_j \ix \iy \px^{j-1}\py^4 \v \px^{k-j} \q
     \cdot \px^k \py \q \y^{2l} dy dx\\
&-\sum_{j=1}^{k}C^k_j \ix \iy \px^{j-1}\py^3 \v \px^{k-j} \py \q
     \cdot \px^k \py \q \y^{2l} dy dx\\
&-2l\sum_{j=1}^{k}C^k_j \ix \iy \px^{j-1}\py^3 \v \px^{k-j} \q
     \cdot \px^k \py \q \y^{2l-1} dy dx\\
:=
&II_{521}+II_{522}+II_{523}.
\dal
\deqq
Using the H\"{o}lder inequality, estimates \eqref{claim31}, \eqref{claim32}
and \eqref{235}, we have
\beqq
\bal
|II_{521}|
\le
& C_k \|\px^{\la k-1 \ra} \py^4 \v \y^l \|_{L^2_x L^2_y}
  \|\px^{\la k-1 \ra}\q \|_{L^\infty_x L^\infty_y}
  \|\px^k \py \q \y^l\|_{L^2_x L^2_y}\\
\le
&o_L(1)C_{k_*,k, l,\delta_0}\|\px^{\la k-1 \ra} \py^4 \v \y^l \|_{L^2_x L^2_y}
       (\|\px^{\la k-1\ra}\py q \y |_{x=0}\|_{L^2_y}
         +o_L(1)\e^k_l(x))\e^k_l(x)\\
\le
&o_L(1)C_{k_*,k^*,l,k,\delta_0}(1+\b^k_l(0)^8+\x^8).
\dal
\deqq
Similarly, using \eqref{claim21}, \eqref{claim31}, \eqref{claim32}
and \eqref{233}, we also have
\beqq
\bal
|II_{522}|
\le
& C_k \|\px^{\la k-1 \ra} \py^3 \v \|_{L^2_x L^\infty_y}
  \|\px^{\la k-1 \ra}\py \q \y^{l}\|_{L^\infty_x L^2_y}
  \|\px^k \py \q \y^l\|_{L^2_x L^2_y}\\
\le
& C_k \|\px^{\la k-1 \ra} \py^4 \v \y\|_{L^2_x L^2_y}
  (\|\px^{\la k-1 \ra}\py \q \y^{l}|_{x=0}\|_{L^2_y}
   +o_L(1)\|\px^{\la k\ra}\py \q \y^{l}\|_{L^2_x L^2_y})
  \|\px^k \py \q \y^l\|_{L^2_x L^2_y}\\
\le
&o_L(1)C_{k_*,k,l,\delta_0}(1+\b^k_l(0)^{8}+\x^{8}).
\dal
\deqq
Finally, we use the estimates \eqref{claim21} and \eqref{claim32} to obtain
\beqq
\bal
|II_{523}|
\le
& C_k\|\px^{\la [\f{k-1}{2}]\ra} \py^3 \v \y^{l-1}\|_{L^\infty_x L^2_y}
  \|\px^{\la k-1 \ra}\q \|_{L^2_x L^\infty_y}
  \|\px^k \py \q \y^l\|_{L^2_x L^2_y}\\
&+C_k\|\px^{\la k-1 \ra} \py^3 \v \y^{l-1}\|_{L^2_x L^2_y}
  \|\px^{\la [\f{k-1}{2}]\ra}\q \|_{L^\infty_x L^\infty_y}
  \|\px^k \py \q \y^l\|_{L^2_x L^2_y}\\
\le
&o_L(1)C_{k_*,k,l,\delta_0}(\|\px^{\la [\f{k-1}{2}]\ra} \py^3 \v \y^{l-1}|_{x=0}\|_{L^2_y}
        +\|\px^{\la [\f{k-1}{2}]+1\ra} \py^3 \v \y^{l-1}\|_{L^2_x L^2_y})\e^k_l(x)^2\\
&+o_L(1)C_k\|\px^{\la k-1 \ra} \py^3 \v \y^{l-1}\|_{L^2_x L^2_y}
  (\|\px^{\la [\f{k-1}{2}]\ra}\py \q \y |_{x=0}\|_{L^2_y}
    +o_L(1)C_{k_*,l,\d_0}\e^{\la [\f{k-1}{2}]+1\ra}_l(x))\e^k_l(x)\\
\le
&o_L(1)C_{k_*,k,l,\delta_0}(1+\b^k_l(0)^4+\x^4).
\dal
\deqq
Based on the above estimates from terms $II_{521}$ to $II_{523}$, we can obtain the estimate
\beqq
|II_{52}|\le o_L(1) C_{k_*,k^*,l,k,\delta_0} (1+\b^k_l(0)^{8}+\x^{8}),
\deqq
which, together with estimate \eqref{ii51-1}, yields the following estimate directly
\beqq
|II_5|\le o_L(1)C_{k_*,k^*,l,k,\delta_0}(1+\b^k_l(0)^{8}+\x^{8}).
\deqq
Therefore, we complete the proof of estimate \eqref{249}.
\end{proof}

\begin{proof}[\underline{\textbf{Proof of estimate \eqref{2410}}}]
Obviously, we have
\beqq
\bal
II_6
=&-2 \ix \iy \py \u \px^k \py \q\cdot \px^k \py^2 \q \y^{2l} dx dy,\\
 &+2 \sum_{j=1}^k C^k_j \ix \iy \px^{j-1} \py^2 \v \px^{k-j} \py \q
     \cdot \px^k \py^2 \q \y^{2l} dx dy\\
:=&II_{61}+II_{62}.
\dal
\deqq
Let us write
\beqq
\bal
II_{61}
=&-2 \ix \iy \py \u \px^k \py \q\cdot \px^k \py^2 \q \y^{2l} \chi dx dy\\
  &-2 \ix \iy \py \u \px^k \py \q\cdot \px^k \py^2 \q \y^{2l} (1-\chi)dx dy\\
=&:II_{611}+II_{612}
\dal
\deqq
Integrating by part and using boundary condition \eqref{BL-Condtion},  we have
\beqq
\bal
II_{611}
&=\ix \iy |\px^k \py \q|^2 \py \{\py \u  \y^{2l}\chi \} dx dy\\
&=\ix \iy |\px^k \py \q|^2 \{\py^2 \u  \y^{2l}\chi
                             +2l \py \u  \y^{2l-1}\chi
                             +\f1{\delta}\py \u  \y^{2l}\chi' \} dx dy.
\dal
\deqq
Then, choosing $\delta=\delta_0$ and using the estimate \eqref{claim21}, we get
\beqq
\bal
|II_{611}|
&\le C_{l,\d_0}(\|\py^3 \u \chi\|_{L^\infty_x L^2_y}+\|\py^2 \u\|_{L^\infty_x L^2_y})
     \|\px^k \py \q \y^l\|_{L^2_x L^2_y}^2\\
&\le o_L(1)C_{k_*,l,\d_0}(\|\py^3 \u \chi|_{x=0}\|_{L^2_y}+\|\py^2 \u |_{x=0}\|_{L^2_y}
     +\|\py^3 \v\|_{L^2_x L^2_y}+\|\py^4 \v \chi \|_{L^2_x L^2_y})\e^k_l(x)^2.
\dal
\deqq
Using the H\"{o}lder inequality, lower bound assumption \eqref{lower-bound}
and divergence-free condition, we have
\beqq
\bal
|II_{612}|
&\le o_L(1)C_{k_*,\d_0}\|\py \u\|_{L^\infty_x L^\infty_y}
     \|\u \px^k \py \q \y\|_{L^\infty_x L^2_y}
     \|\sqrt{\u}\px^k \py^2 \q \y\|_{L^2_x L^2_y}\\
&\le o_L(1)C_{k_*,\d_0}(\|\py^2 \u \y|_{x=0}\|_{L^2_y}
                        +o_L(1)\|\py^3 \v \y\|_{L^2_x L^2_y})\e^k_l(x)^2.
\dal
\deqq
Then, we have the estimate
\beq\label{ii61}
|II_{61}|
\le o_L(1)C_{k_*,k,l,\delta_0}(1+\b^k_l(0)^4+\x^4).
\deq
Finally, let us deal with term $II_{62}$.
Due to $k\ge 2$, integrating by part and using boundary
condition \eqref{BL-Condtion}, we have
\beqq
\bal
II_{62}
=
&-2\sum_{j=1}^k C^k_j \ix \iy \px^{j-1} \py^3 \v \px^{k-j} \py \q
     \cdot \px^k \py \q \y^{2l} dx dy\\
&-2\sum_{j=1}^k C^k_j \ix \iy \px^{j-1} \py^2 \v \px^{k-j} \py^2 \q
     \cdot \px^k \py  \q \y^{2l} dx dy\\
&-4l\sum_{j=1}^k C^k_j \ix \iy \px^{j-1} \py^2 \v \px^{k-j} \py \q
     \cdot \px^k \py \q \y^{2l-1} dx dy\\
:=&II_{621}+II_{622}+II_{623}.
\dal
\deqq
Using the Sobolev inequality, estimates
\eqref{claim21}, \eqref{claim4}, \eqref{233} and \eqref{235},
it is easy to deduce
\beqq
\bal
|II_{621}|\le
&C_k\|\px^{\la [\f{k-1}{2}]\ra} \py^3 \v\y\|_{L^\infty_x L^2_y}
  \|\px^{\la k-1 \ra} \py \q \y^{l-1} \|_{L^2_x L^\infty_y}
  \|\px^k \py \q \y^l\|_{L^2_x L^2_y}\\
& +C_k\|\px^{\la k-1  \ra} \py^3 \v\|_{L^2_x L^\infty_y}
  \|\px^{\la [\f{k-1}{2}] \ra} \py \q \y^l \|_{L^\infty_x L^2_y}
  \|\px^k \py \q \y^l\|_{L^2_x L^2_y}\\
\le
&o_L(1)C_{k_*,k,l}(\|\px^{\la [\f{k-1}{2}]\ra} \py^3 \v|_{x=0}\|_{L^2_y}
        +\|\px^{\la [\f{k-1}{2}]+1\ra} \py^3 \v\|_{L^2_x L^2_y})
        \|\px^{\la k-1 \ra} \py^2 \q \y^{l} \|_{L^2_x L^2_y} \e^k_l(x)\\
& +o_L(1)C_{k_*,k,l}\|\px^{\la k-1  \ra} \py^4 \v \y\|_{L^2_x L^2_y}
  \|\px^k \py \q \y^l\|_{L^2_x L^2_y}\\
&\times (\|\px^{\la [\f{k-1}{2}] \ra} \py \q \y^l|_{x=0}\|_{L^2_y}
   +\|\px^{\la [\f{k-1}{2}]+1 \ra} \py \q \y^l \|_{L^2_x L^2_y})\\
\le
&o_L(1)C_{k_*,k^*,k,l,\delta_0}(1+\b^k_l(0)^8+\x^{8}).
\dal
\deqq
Similarly, for the case of $k\ge 2$, it is easy to check that
\beqq
\bal
|II_{623}|\le
&C_k\|\px^{\la [\f{k-1}{2}]\ra} \py^2 \v\|_{L^\infty_x L^\infty_y}
  \|\px^{\la k-1 \ra} \py \q \y^l \|_{L^2_x L^2_y}
  \|\px^k \py \q \y^l\|_{L^2_x L^2_y}\\
& +C_k\|\px^{\la k-1  \ra} \py^2 \v\|_{L^2_x L^\infty_y}
  \|\px^{\la [\f{k-1}{2}] \ra} \py \q \y^l \|_{L^\infty_x L^2_y}
  \|\px^k \py \q \y^l\|_{L^2_x L^2_y}\\
\le
&o_L(1)C_{k_*,k}(\|\px^{\la [\f{k-1}{2}]\ra} \py^3 \v\y|_{x=0}\|_{L^2_y}
        +\|\px^{\la [\f{k-1}{2}]+1  \ra} \py^3 \v \y\|_{L^2_x L^2_y})\e^k_l(x)^2\\
&+o_L(1)C_{k_*,k}\|\px^{\la k-1  \ra} \py^3 \v \y\|_{L^2_x L^2_y}
   (\|\px^{\la [\f{k-1}{2}] \ra} \py \q \y^l|_{x=0}\|_{L^2_y}
          +\|\px^{\la [\f{k-1}{2}]+1 \ra} \py \q \y^l \|_{L^2_x L^2_y})\e^k_l(x)\\
\le
&o_L(1)C_{k_*,k^*,k,l,\delta_0}(1+\b^k_l(0)^4+\x^{4}).
\dal
\deqq
and
\beqq
\bal
|II_{622}|\le
&C_k\|\px^{\la [\f{k-1}{2}]\ra} \py^2 \v\|_{L^\infty_x L^\infty_y}
  \|\px^{\la k-1 \ra} \py^2 \q \y^l \|_{L^2_x L^2_y}
  \|\px^k \py \q \y^l\|_{L^2_x L^2_y}\\
& +C_k\|\px^{\la k-1  \ra} \py^2 \v\|_{L^2_x L^\infty_y}
  \|\px^{\la [\f{k-1}{2}] \ra} \py^2 \q \y^l \|_{L^\infty_x L^2_y}
  \|\px^k \py \q \y^l\|_{L^2_x L^2_y}\\
\le
&o_L(1)C_{k_*,k}(\|\px^{\la [\f{k-1}{2}]\ra} \py^3 \v \y|_{x=0}\|_{L^2_y}
        +\|\px^{\la k-1  \ra} \py^3 \v \y\|_{L^2_x L^2_y})
        \|\px^{\la k-1 \ra} \py^2 \q \y^l \|_{L^2_x L^2_y}\e^k_l(x)\\
&+o_L(1)C_{k_*,k}\|\px^{\la k-1  \ra} \py^3 \v \y\|_{L^2_x L^2_y}
  (\|\px^{\la [\f{k-1}{2}] \ra} \py^2 \q \y^l|_{x=0}\|_{L^2_y}
   +\|\px^{\la [\f{k-1}{2}]+1\ra} \py^2 \q \y^l \|_{L^2_x L^2_y})
  \e^k_l(x)\\
\le
&o_L(1)C_{k_*,k^*,k,l,\delta_0}(1+\b^k_l(0)^4+\x^{4}),
\dal
\deqq
where we have used the following inequality in the last inequality
\beqq
\|\px^{\la [\f{k-1}{2}] \ra}\py^2 \q \y^{l}|_{x=0}\|_{L^2_y}
\le C_{k_*,k^*}(\|\px^{\la [\f{k-1}{2}] \ra}\py^2 \q \y^{l}(1-\chi)|_{x=0}\|_{L^2_y}
  +\|\px^{\la [\f{k-1}{2}] \ra}\py^3 \v|_{x=0}\|_{L^2_y}).
\deqq
Thus, we can obtain the following estimate for all $k \ge 2$
\beq\label{ii622}
|II_{62}|\le o_L(1)C_{k_*,k^*,k,l,\delta_0}(1+\b^k_l(0)^8+\x^{8}).
\deq
Thus, the combination of estimates \eqref{ii61} and \eqref{ii622} yields directly
\beqq
|II_{6}|\le o_L(1)C_{k_*,k^*,k,l,\delta_0}(1+\b^k_l(0)^8+\x^{8}),
\deqq
which is the inequality \eqref{2410}.
\end{proof}

\begin{proof}[\underline{\textbf{Proof of estimate \eqref{2411}}}]
Let us write
\beqq
\bal
II_7
=
&-\sum_{j=1}^k C^k_j \ix \iy \px^j \u \px^{k-j} \py^2 \q
     \cdot \px^k \py^2 \q \y^{2l} \chi(\f{y}{\delta_0}) \y^{2l}dy dx \\
&-\sum_{j=1}^k C^k_j \ix \iy \px^j \u \px^{k-j} \py^2 \q
       \cdot \px^k \py^2 \q \y^{2l} (1-\chi(\f{y}{\delta_0})) \y^{2l}dy dx \\
:=&II_{71}+II_{72}.
\dal
\deqq
First of all, let deal with the term $II_{71}$.
Using H\"{o}lder inequality,
estimates \eqref{claim1} and \eqref{claim4}, we can obtain
\beq\label{ii714}
\bal
|II_{71}|\le
&|\sum_{j=1}^k C^k_j \ix \iy \px^j \u \px^{k-j} \py^2 \q
       \cdot \px^k \py^2 \q \y^{2l} \chi dy dx |\\
\le
& C_k(\d_0+\eps)^{\f12}
  \|\f{\px^{\la [\f{k-1}{2}] \ra}\px \u}{\u} \chi\|_{L^\infty_x L^\infty_y}
  \|\px^{\la k-1 \ra}\py^2 \q \chi \|_{L^2_x L^2_y}
  \|\sqrt{\u} \px^k \py^2 \q \chi \|_{L^2_x L^2_y}\\
& +C_k (\d_0+\eps)^{\f12}\|\f{\px^{\la k-1 \ra}\px \u}{\u} \chi\|_{L^2_x L^\infty_y}
  \|\px^{\la [\f{k-1}{2}] \ra}\py^2 \q \chi \|_{L^\infty_x L^2_y}
  \|\sqrt{\u} \px^k \py^2 \q \chi \|_{L^2_x L^2_y}\\
\le
&o_L(1)C_{k,k_*, k^*,\d_0}(\|\px^{\la [\f{k-1}{2}] \ra} \py \v|_{x=0}\|_{H^2_y}
             +\|\px^{\la [\f{k-1}{2}]+1\ra} \py \v\|_{L^2_x H^2_y})
             \|\px^{\la k-1 \ra}\py^3 \v\|_{L^2_x L^2_y}\e^k_l(x)\\
&+o_L(1)C_{k,k_*,\d_0} \|\px^{\la k-1 \ra}\py^3 \v\|_{L^2_x L^2_y}
           (\|\px^{\la [\f{k-1}{2}] \ra} \py^3 \v|_{x=0}\|_{L^2_y}
             +\|\px^{\la [\f{k-1}{2}]+1\ra} \py^3 \v\|_{L^2_x L^2_y})\e^k_l(x)\\
\le
&o_L(1)C_{k_*,k^*,k,\delta_0}(1+\b^k_l(0)^4+\x^4).
\dal
\deq
Using the H\"{o}lder inequality, we have
\beqq
\bal
|II_{72}|=
&|\sum_{j=1}^k C^k_j \ix \iy \px^j \u \px^{k-j} \py^2 \q
       \cdot \px^k \py^2 \q \y^{2l} (1-\chi) dy dx |\\
\le
&C_k\|{\px^{\la k-1 \ra}\px \u}(1-\chi)\|_{L^2_x L^\infty_y}
  \|\px^{\la k-1 \ra}\py^2 \q (1-\chi) \y^l\|_{L^\infty_x L^2_y}
  \|\px^k \py^2 \q (1-\chi) \y^l\|_{L^2_x L^2_y}.
\dal
\deqq
Away from the boundary $y=0$, we have
\beqq
\bal
&\|\px^{\la k-1 \ra}\py^2 \q (1-\chi) \y^l\|_{L^\infty_x L^2_y}\\
\le &\|\px^{\la k-1 \ra}\py^2 \q (1-\chi) \y^l|_{x=0}\|_{L^2_y}
     +o_L(1)\|\px^{\la k \ra}\py^2 \q (1-\chi) \y^l\|_{L^2_x L^2_y}\\
\le &\|\px^{\la k-1 \ra}\py^2 \q (1-\chi)\y^l|_{x=0}\|_{L^2_y}
     +o_L(1)C_{k_*}\|\sqrt{\u}\px^{\la k \ra}\py^2 \q \y^l\|_{L^2_x L^2_y}.
\dal
\deqq
Then, we have
\beqq
\bal
|II_{72}|
\le &\|{\px^{\la k-1 \ra}\py \v}\|_{L^2_x H^1_y}
     \|\sqrt{\u}\px^k \py^2 \q \|_{L^2_x L^2_y}\\
    &\times (\|\px^{\la k-1 \ra}\py^2 \q (1-\chi)\y^l|_{x=0}\|_{L^2_y}
     +o_L(1)C_{k_*}\|\sqrt{\u}\px^{\la k \ra}\py^2 \q \y^l\|_{L^2_x L^2_y})\\
\le
   &o_L(1)C_{k_*,k^*,k,\delta_0}(1+\b^k_l(0)^8+\x^8),
\dal
\deqq
which, together with the estimate \eqref{ii714}, yields directly
\beqq
|II_7|\le o_L(1)C_{k_*,k^*,k,\delta_0}(1+\b^k_l(0)^8+\x^8),
\deqq
which is the inequality \eqref{2411}.
\end{proof}

\begin{proof}[\underline{\textbf{Proof of estimate \eqref{2412}}}]
Recall
\beqq
\bal
II_8
:=&-2l\ix \iy \px^k (\py^2 \u \q )\cdot\px^k \py \q \y^{2l-1} dy dx
   -4l\ix \iy \px^k (\py \u \py \q) \cdot\px^k \py \q \y^{2l-1} dy dx\\
&\quad  -2l\ix \iy \px^k (\u \py^2 \q) \cdot\px^k \py \q \y^{2l-1} dy dx\\
:=&II_{81}+II_{82}+II_{83}.
\dal
\deqq
Using the divergence-free condition, estimates \eqref{claim21},
\eqref{claim22}, \eqref{claim31} and \eqref{claim32}, we have
\beq\label{ii81}
\bal
|II_{81}|
\le
& C_{k,l}\|\px^k \q \|_{L^2_x L^\infty_y}
    \|\py^2 \u \y^{l-1}\|_{L^\infty_x L^2_y}\|\px^k \py \q \y^l\|_{L^2_x L^2_y}\\
    &+C_{k,l}\|\px^{\la k-1 \ra}\q \|_{L^\infty_x L^\infty_y}
     \|\px^{\la k-1 \ra}\py^3 \v \y^{l-1}\|_{L^2_x L^2_y}
     \|\px^k \py \q \y^l\|_{L^2_x L^2_y}\\
\le
&o_L(1)C_{k_*,k,l}(\|\py^2 \u \y^{l-1}|_{x=0}\|_{L^2_y}
         +o_L(1)\|\py^3 \v \y^{l-1} \|_{L^2_x L^2_y})\e^k_l(x)^2\\
&+o_L(1)C_{k_*,k,l,\delta_0}(\|\px^{\la k-1 \ra}\py \q \y|_{x=0}\|_{L^2_y}
                             +o_L(1)\e^k_l(x))
     \|\px^{\la k-1 \ra}\py^3 \v \y^{l-1}\|_{L^2_x L^2_y}\e^k_l(x)\\
\le
&o_L(1)C_{k_*,k,l,\delta_0}(1+\b^k_l(0)^4+\x^4).
\dal
\deq
Similarly, it is easy to get that
\beq\label{ii82}
\bal
|II_{82}|
\le
&C_{k,l}\|\py \u\|_{L^\infty_x L^\infty_y}
    \|\px^{k}\py \q \y^{l-1}\|_{L^2_x L^2_y}
    \|\px^{k}\py \q \y^{l}\|_{L^2_x L^2_y}\\
&+C_{k,l}\|\px^{\la k-1 \ra}\py^2 \v\|_{L^2_x L^\infty_y}
    \|\px^{\la k-1 \ra}\py \q \y^{l-1}\|_{L^\infty_x L^2_y}
    \|\px^{k}\py \q \y^{l}\|_{L^2_x L^2_y}\\
\le
&o_L(1)C_{k_*,k,l}(\|\py \u|_{x=0}\|_{H^1_y}
 +o_{L}(1)\|\py^2 \v\|_{L^2_x H^1_y})\e^k_l(x)^2\\
&+o_L(1)C_{k_*,k,l}\|\px^{\la k-1\ra}\py^2 \v\|_{L^2_x H^1_y}
  (\|\px^{\la k-1 \ra}\py \q \y^{l-1}|_{x=0}\|_{L^2_y}+\e^k_l(x))\e^k_l(x)\\
\le
&o_L(1)C_{k_*,k,l,\delta_0}(1+\b^k_l(0)^4+\x^4).
\dal
\deq
Finally, let us deal with the term $II_{83}$.
Indeed, Let us write
\beqq
\bal
II_{83}
=&-2l\ix \iy \u \px^k \py^2 \q \cdot\px^k \py \q \y^{2l-1} dy dx\\
 &-2l\sum_{j=1}^k C^k_j \ix \iy \px^j \u \px^{k-j}\py^2 \q
   \cdot\px^k \py \q \y^{2l-1} dy dx\\
:=&II_{831}+II_{832}.
\dal
\deqq
Using the H\"{o}lder inequality, we get
\beqq
|II_{831}|\le  C_l\|\sqrt{\u} \px^k \py \q \y^{l-1}\|_{L^2_x L^2_y}
                  \|\sqrt{\u} \px^k \py^2 \q \y^l\|_{L^2_x L^2_y}.
\deqq
Using the equivalent relation \eqref{equi-relation}
and lower bound \eqref{lower-bound}, it holds for all $\delta\le \delta_0$
\beqq
\bal
&\|\sqrt{\u} \px^k \py \q \y^{l-1}\|_{L^2_x L^2_y}\\
\le
&\|\sqrt{\u} \px^k \py \q \y^{l-1}(1-\chi)\|_{L^2_x L^2_y}
 +\|\sqrt{u} \px^k \py \q \y^{l-1} \chi\|_{L^2_x L^2_y}\\
\le
&\f{L}{\d}\|\u \px^k \py \q \y^{l-1}(1-\chi)\|_{L^\infty_x L^2_y}
 +\d \|\px^k \py \q \y^{l-1} \chi\|_{L^2_x L^2_y}\\
\le
&o_L(1)C_{k_*}(\|\u \px^k \py \q \y^l \|_{L^\infty_x L^2_y}
           +\|\sqrt{\u}\px^k \py^2 \q\|_{L^2_x L^2_y}),
\dal
\deqq
where we have chosen $\d=L^{\f12}$ and used estimate
\eqref{claim31} in the last inequality. Thus, it holds
\beqq
|II_{831}| \le o_L(1)C_{k_*,l}\x^2.
\deqq
Using Sobolev inequality, estimates \eqref{claim21},
\eqref{claim4}, \eqref{231}, \eqref{232} and \eqref{233}, we get
\beqq
\bal
|II_{832}|
\le &C_{k,l}\|\px^{\la[\f{k-1}{2}]\ra} \py \v\|_{L^\infty_x L^\infty_y}
    \|\px^{\la k-1 \ra}\py^2 \q \y^{l-1}\|_{L^2_x L^2_y}
    \|\px^k \py \q \y^l\|_{L^2_x L^2_y}\\
    &+C_{k,l}\|\px^{\la k-1 \ra} \py \v\|_{L^2_x L^\infty_y}
    \|\px^{\la [\f{k-1}{2}] \ra}\py^2 \q \y^{l-1}\|_{L^\infty_x L^2_y}
    \|\px^k \py q \y^l\|_{L^2_x L^2_y}\\
\le &o_L(1)C_{k_*, k, l}(\|\px^{\la [\f{k-1}{2}] \ra} \py^2 \v \y|_{x=0}\|_{L^2_y}
             +\|\px^{\la[\f{k-1}{2}]+1\ra} \py^2 \v\y\|_{L^2_x L^2_y})
             \|\px^{\la k-1 \ra}\py^2 \q \y^{l-1}\|_{L^2_x L^2_y}
             \e^k_l(x)\\
&+o_L(1)C_{k_*, k, l}\|\px^{\la k-1 \ra} \py^2 \v \y \|_{L^2_x L^2_y}
             (\|\px^{\la [\f{k-1}{2}] \ra}\py^2 \q \y^{l-1}|_{x=0}\|_{L^2_y}
              +\|\px^{\la [\f{k-1}{2}]+1 \ra}\py^2 \q \y^{l-1}\|_{L^2_x L^2_y})
             \e^k_l(x)\\
\le
&o_L(1)C_{k_*, k^*, k, l,\delta_0}(1+\b^k_l(0)^4+\x^4),
\dal
\deqq
where we have used the following inequality in the last inequality
\beqq
\|\px^{\la [\f{k-1}{2}] \ra}\py^2 \q \y^{l-1}|_{x=0}\|_{L^2_y}
\le C_{k_*,k^*}(\|\px^{\la [\f{k-1}{2}] \ra}\py^2 \q \y^{l-1}(1-\chi)|_{x=0}\|_{L^2_y}
  +\|\px^{\la [\f{k-1}{2}] \ra}\py^3 \v|_{x=0}\|_{L^2_y}).
\deqq
Thus, we can obtain the following estimate
\beqq
|II_{83}| \le o_L(1)C_{k_*, k^*, k, l,\delta_0}(1+\b^k_l(0)^4+\x^{4}).
\deqq
This, together with the estimates \eqref{ii81} and \eqref{ii82}, yields directly
\beqq
|II_{8}| \le o_L(1)C_{k_*, k^*, k, l,\delta_0}(1+\b^k_l(0)^4+\x^{4}),
\deqq
which is the inequality \eqref{2412}.
\end{proof}

Now let us give the proof for the Proposition \ref{Uniform estimate}.

\begin{proof}[\textbf{Proof of Proposition \ref{Uniform estimate}}]
Based on the estimates obtained so far, we can complete the proof of
Proposition \ref{Uniform estimate} in this subsection.
First of all, we give the proof for the estimate.
For three parameters $R, k_*$ and $k^*$, which will be defined later, we have
\beqq
\bal
L^\eps_*:=
&\sup\left\{L\in[0, 1]|
\x^2+\b^k_l(x)^2\le R, \quad
\u(x, y)\ge k_* (\delta_0+\eps),\quad
(x, y) \in [0, L^\eps] \times [\delta_0, +\infty);\right.\\
&\quad \quad \quad \quad \quad \quad \quad
\left. k_* (y+\eps) \le \u(x, y) \le k^* (y+\eps),
\ (x, y)\in [0, L^\eps] \times [0, \delta_0]\right\},
\dal
\deqq
where the constant $\delta_0$ is defined by the property of initial
data $u_0(y)$.
Recall the estimates \eqref{232}, \eqref{233}, \eqref{234}
and \eqref{236} in Lemma \ref{lemma-one}, we have
\beqq
\sum_{j=1}^3\|\px^{\la k-1 \ra} \py^j \v \y^l\|_{L^2_x L^2_y}^2
+\|\py^4 \v \chi\|_{L^2_x L^2_y}^2
\le o_L(1)C_{k_*,l,k,\delta_0}(1+\b^k_l(0)^{16}+\x^{16}),
\deqq
and estimate \eqref{241} in Lemma \ref{lemma-two}
\beqq
\e^k_l(x)^2 \le \e^k_l(0)^2
            +o_L(1)C_{k_*, k^*, k, l,\delta_0}(1+\b^k_l(0)^{16}+\x^{16}),
\deqq
then we can obtain the following estimate
\beq\label{c1}
X^k_l(x)^2 \le \e^k_l(0)^2
               +o_L(1)C_{k_*, k^*, k, l,\delta_0}(1+\b^k_l(0)^{16}+X^k_l(x)^{16}).
\deq
On the other hand, the quantity $\b^k_l(x)$ can be controlled as follows
\beq\label{c2}
\b^k_l(x)
\le \b^k_l(0)+o_L(1)C_{k_*,k^*}\x.
\deq
Then, the combination of estimates \eqref{c1} and \eqref{c2} yields directly
\beq\label{c4}
X^k_l(x)^2+\b^k_l(x)^2
\le \e^k_l(0)^2+C_{k_*, k^*, k, l,\delta_0}(1+\b^k_l(0)^{16})
    +o_L(1)C_{k_*, k^*, k, l,\delta_0}\x^{16}.
\deq
It is easy to check that there exists a constant
only depends on the initial data $u_0$ such that
\beq\label{initial-control}
\e^k_l(0)+\b^k_l(0)\le C(u_0),
\deq
which can be found in \eqref{initial-control-01} in Appendix \ref{appendix-c}.
Then, the combination of \eqref{c4} and \eqref{initial-control} yields directly
\beqq
X^k_l(x)^2+\b^k_l(x)^2
\le C_{k_*, k^*, k, l,\delta_0}(1+ C(u_0))
    +o_L(1)C_{k_*, k^*, k, l,\delta_0}\x^{16}.
\deqq
Thus, we concludes for all $L_1\le L^\eps$ that
\beqq
X^k_l(L_1)^2+\b^k_l(L_1)^2
\le C_{k_*, k^*, k, l,\delta_0}(1+C(u_0))
    +o_{L_1}(1)C_{k_*, k^*, k, l,\delta_0}R^{16}.
\deqq
Let us choose constants $k_*=\f18,~ k^*=4$ and
$R=4 C_{k_*, k^*, k, l,\delta_0}(1+C(u_0))$, then we have
\beq\label{c3}
X^k_l(L_1)^2+\b^k_l(L_1)^2
\le \overline{C}_{k, l,\delta_0}(1+C(u_0))
    +o_{L_1}(1)\overline{C}_{k, l,\delta_0}\{4(1+C(u_0))\}^{16}.
\deq
Here we denote $\overline{C}_{k, l,\delta_0}:=
C_{k_*, k^*, k, l,\delta_0}|_{(k_*, k^*)=(\f18, 4)}$.
Choose the time $L_1$ small enough such that
\beqq
o_{L_1}(1)\overline{C}_{k, l,\delta_0}\{4(1+C(u_0))\}^{16}
\le \overline{C}_{k, l,\delta_0}(1+C(u_0)),
\deqq
and hence, we deduce from \eqref{c3} that
\beq\label{uniform-bound}
X^k_l(L_1)^2+\b^k_l(L_1)^2
\le 2\overline{C}_{k, l,\delta_0}(1+C(u_0))
=\f{R}{2}.
\deq
It is easy to check that
\beq\label{u-pre}
\u(x, y)=\u(0, y)+\int_0^x \px \u(s, y)ds,
\deq
and
\beqq
\px \u(x, y)=\px \u(x, 0)+\int_0^y \py \px \u(x, \tau)d\tau.
\deqq
Then, using the boundary condition \eqref{bdyc}, we have for all $y\in [0, \d_0]$
\beqq
|\px \u(x, y)|
\le y \|\py \px \u(x, y)\|_{L^\infty_y([0, y])}
\le y \|\py^2 \px \u(x,y) \y\|_{L^2_y([0,+\infty))},
\deqq
and hence, it is easy to check that
\beq\label{c5}
|\int_0^x \partial_x \u (x, y)dx|
\le \sqrt{x}y\|\py^3 \v \y\|_{L^2_x L^2_y}
\deq
Then, the combination of representation \eqref{u-pre} and estimate \eqref{c5} yields directly
\beqq
\u(x, y)
\ge \u(0, y)-|\int_0^L \px \u(s, y)d s|
\ge \f12(y+\eps)-\sqrt{L}y\|\py^3 \v \y\|_{L^2_x L^2_y}.
\deqq
Choose
$L_2:=\min\{\f{1}{32\overline{C}_{k, l,\delta_0}\{1+C(u_0)\}}, L_1\}$,
then we have
\beqq
\u(x, y)
\ge \f12(y+\eps)-\f14 y\ge \f14(y+\eps)=2k_*(y+\eps),
\text{~for~all~}(x, y)\in [0, L_2]\times [0, \d_0].
\deqq
Similarly, it is easy to check that
\beqq
\u(x, y)
\le \u(0, y)+|\int_0^L \px \u(s, y)d s|
\le \f32(y+\eps)+\sqrt{L}y\|\py^3 \v \y\|_{L^2_x L^2_y}.
\deqq
Choose
$L_3=\min\{L_2, \f{1}{8\overline{C}_{k, l,\delta_0}
     \{1+C(u_0)\}}\}$,
then we have
\beqq
\u(x, y)
\le \f32(y+\eps)+\f12 y
=2(y+\eps)=\f12 k^*(y+\eps),
\text{~for~all~}(x, y)\in [0, L_3]\times [0, \d_0].
\deqq
Finally, using the Sobolev inequality, H\"{o}lder inequality
and divergence-free condition, we have
\beqq
|\ix \partial_x \u (x, y)dx|
\le \ix \|\py \px \u \y\|_{L^2_y} dx
\le \sqrt{L}\|\py^2 \v \y\|_{L^2_x L^2_y},
\deqq
which, together with \eqref{u-pre}, yields directly
\beqq
\u(x, y)
\ge \u(0, y)-|\int_0^L \px \u(s, y)d s|
\ge (\d_0+\eps)-\sqrt{L}\|\py^2 \v \y\|_{L^2_x L^2_y}.
\deqq
Choose
$L_4:=\min\{L_3, \f{\d_0^2}{32\overline{C}_{k, l,\delta_0}
             \{1+C(u_0)\}}\}$,
then we have
\beqq
\u(x, y)
\ge \f12(\d_0+\eps)-\f14 \d_0
\ge \f14(\d_0+\eps)=2 k_*(\d_0+\eps),
\text{~for~all~}(x, y)\in [0, L_4]\times [\d_0, +\infty).
\deqq
Obviously, we conclude that there exists $L_4>0$ depending on
$k,l,\d_0$ and the initial data(hence independent of parameter $\eps$)
such that for all $L\le \min\{L_4, L^\eps\}$, the estimates \eqref{uniform-1}
and \eqref{uniform-2} in Proposition \ref{Uniform estimate} hold true.
Of course, it holds that $L_4 \le L^\eps$.
Indeed otherwise, our criterion about the continuation of the solution would
contradict the definition of $L^\eps_*$.
Then, taking $L_{a}=L_4$, we obtain the estimates \eqref{uniform-1}
and \eqref{uniform-2}.
Furthermore, due to the equation \eqref{app-q},
we can apply the estimates \eqref{uniform-bound} and
\eqref{238}(see Proposition \ref{p3} in Appendix \ref{appendix-a}) to get
\beqq
\sum_{0\le 2\alpha+\beta \le 2k+1}
\|\px^\alpha \py^{\beta} \py \v\y^l\|_{L^2_x L^2_y}
\le C_{k,l,\d_0}(1+C(u_0)),
\deqq
where $C(u_0)$ is a constant depends on the initial data $u_0$.
Therefore, the proof of Proposition \ref{Uniform estimate} is completed.
\end{proof}

\subsection{Local existence of Prandtl-Hartmann system}\label{Local_existence}

In this subsection, we will give the proof of local existence of
original system \eqref{H-eq}.
For any $\eps>0$ and some large $N>0$, we introduce an artificial truncation at $y=N$
and then send $N \rightarrow \infty$.
For any $\eps \in (0, 1]$ and large $N$,
let us consider the problem on $(x, y)\in (0, L) \times (0, N)$,
\beq\label{app-sym-n}
  	\left\{\begin{aligned}
  &u^{\eps, N} \px u^{\eps, N} +v^{\eps, N} \py u^{\eps, N}
   -\pyy u^{\eps, N}+u^{\eps, N}-u_0(N)-\eps=0,\\
  &\px u^{\eps, N}+\py v^{\eps, N}=0,
  	\end{aligned}\right.
\deq
with the boundary conditions
\beq\label{bdyc-n}
u^{\eps, N}(x, y)|_{y=0}=\eps, \
u^{\eps, N}(x, y)|_{y=N}=u_0(N)+\eps,\
v^{\eps, N}(x, y)|_{y=0}=v^{\eps, N}(x, y)|_{y=N}=0, \quad  x\in (0, L),
\deq
and the initial data
\beq\label{ic-n}
u^{\eps, N}(x, y)|_{x=0}:={u}_0(y)+\eps,\quad y \in [0, N].
\deq
Applying the classical von-Mises transformation
(i.e.,$\eta:=\int_0^y u^{\eps, N}(x, y')dy'$), the system \eqref{app-sym-n}
can be translated into a quasi-linear parabolic equation that will not cause
the loss of $x-$derivative.
Then, it is easy to obtain the local-in-time well-posedness with
higher order Sobolev regularity for this translated system.
Due to the non-degeneracy boundary of horizontal velocity in \eqref{bdyc-n},
the variable $\eta$ after von-Mises transformation
is equivalent to the vertical variable $y$ in the original Eulerian coordinates.
Then, the system \eqref{app-sym-n}-\eqref{ic-n} be obtained local
well-posedness with higher Sobolev regularity on the life span $[0, L^\eps]$.
For more details on the results of this local existence, the reader can refer to
the Lemma 2.2 in \cite{Guo-Nguyen2}.
Then, applying the a priori estimates given in Proposition \ref{Uniform estimate}
to the solution $({u}^{\eps, N}, {v}^{\eps, N})$, we obtain a uniform life span time
$L_a>0$ and a constant $C_1$(independent of $\eps$ and $N$) such that it holds true
\beq
\sum_{0\le 2\alpha+\beta \le 2k+1}
\|\px^\alpha \py^\beta \py {v}^{\eps, N} \y^l\|_{L^2_x L^2_y[0, N]}\le C_1,
\deq
and
\beq
\begin{aligned}
&\f14 y \le {u}^{\eps, N}(x, y) \le 2(y+\eps), \ (x, y) \in [0, L_a] \times [0, \d_0],\\
&{u}^{\eps, N}(x, y)\ge \f14 \d_0, \ (x, y) \in [0, L_a] \times [\d_0, N].
\end{aligned}
\deq
Based on the uniform estimates for $({u}^{\eps, N}, {v}^{\eps, N})$,
one can pass the limit $\eps \rightarrow 0^+$ and $N\rightarrow +\infty$
to get a solution $(u, v)$ satisfying \eqref{H-eq} by using a strong compactness arguments.
Indeed, it follows from estimate that $\py {v}^{\eps, N}$ is bounded uniformly in
$L^\infty([0, L_a]; H^{2k+1}_l([0, N]))$,
and $\partial_{x}(\py v^{\eps, N}([0, N])) $ is bounded
uniformly in $L^2([0, L_a]; H^{2k-1}_l([0, N]))$.
Then, it follows from a strong compactness argument,
after taking a subsequence $\eps_k \rightarrow 0^+$,
\beqq
\py {v}^{\eps_k, N} \  {\rightharpoonup}\  w^N
\quad {\rm in}~L^2([0, L_a]; H^{2k+1}_l([0, N])),
\deqq
and
\beqq
\py {v}^{\eps_k, N} \  {\longrightarrow} \ w^N
\quad {\rm in}~C([0, L_a];H^{2k-2}([0,N])),
\deqq
where $w^N \in L^2([0, L_a]; H^{2k+1}_l([0, N])) \cap L^\infty([0, L_a];H^{2k-2}([0,N]))$.
Let us define $v^N(x, y):=\int_0^y w^N (x, y') dy'$ and
$u^N(x, y):={u}_0(y)-\int_0^x w^N(x', y) dx'$, it holds true
\beqq
\px u^N(x, y)+\py v^N(x, y)=0.
\deqq
Furthermore, it is easy to deduce that
\beqq
\bal
{u}^{\eps_k, N}(x, y)-u^N(x, y)
&=-\int_0^x \py {v}^{\eps_k, N}(x',y)dx'+{u}_0(y)+\eps_k
 -{u}_0(y)+\int_0^x w(x', y)dx'\\
&=-\int_0^x (\py {v}^{\eps_k, N}(x',y)- w^N(x', y))dx'+\eps_k,
\dal
\deqq
and hence, we have
\beqq
\bal
\|{u}^{\eps_k, N}-u^N\|_{L^\infty_x H^{2k-2}([0, N])}
&\le \|\int_0^x (\py {v}^{\eps_k, N}(x',y)- w^N(x', y))dx'\|_{L^\infty_x H^{2k-2}
([0, N])}+\eps_k \sqrt{N}\\
&\le L_a\|\py {v}^{\eps_k, N}-w^N\|_{L^\infty_x H^{2k-2}([0, N])}+\eps_k \sqrt{N}.
\dal
\deqq
Therefore, it holds true
\beqq
{u}^{\eps_k, N} \  {\longrightarrow} \ u^N \quad {\rm in}
~L^\infty([0, L_a];H^{2k-2}([0,N])).
\deqq
Therefore, the solution $(u^N(x, y), v^N(x,y))$ satisfies
\beqq
  	\left\{\begin{aligned}
  &u^N \px u^N +v^N \py u^N-\pyy u^N+u^N-u_0(N)=0,\ (x, y) \in [0, L_a]\times(0, N);\\
  &\px u^N +\py v^N=0,\ (x, y) \in [0, L_a] \in (0, N),
  	\end{aligned}\right.
\deqq
with the boundary condition
\beqq
u^N(x, y)|_{y=0}=0, \quad
u^N(x, y)|_{y=N}=u_0(N),\quad
v^N(x, y)|_{y=0}=0, \quad  x\in (0, L_a),
\deqq
and the initial data
\beqq
u^N(x, y)|_{x=0}={u}_0(y),\quad y \in [0, N].
\deqq
Furthermore, the solution $(u^N, v^N)$ also satisfies the estimates
\beq\label{est-n1}
\sum_{0\le 2\alpha+\beta \le 2k+1}
\|\px^\alpha \py^\beta \py v^N \y^l\|_{L^2_x L^2_y[0, N]}
\le C_{k,l,\delta_0}(1+C(u_0)),
\deq
and
\beq\label{est-n2}
\f14 y \le u^N(x, y) \le 2 y, \ (x, y) \in [0, L_a] \times [0, \d_0];\quad \quad
u^N(x, y)\ge \f14 \d_0, \ (x, y) \in [0, L_a] \times [\d_0, N].
\deq
Since the estimate \eqref{est-n1} is bounded independent of $N$,
and similarly taking $N\rightarrow +\infty$, it is easy to obtain the solution
$(u, v)$ to the original problem Eqs.\eqref{H-eq}-\eqref{H-bc}.
At the same time, the estimates \eqref{est-n1} and \eqref{est-n2} hold on
over the domain $[0, +\infty)$. Therefore, we complete the proof of local
existence and estimate \eqref{THM_estimate} in Theorem \ref{high-regu}.

\subsection{Uniqueness of Prandtl-Hartmann system}\label{uniqueness}

In this subsection, we will show the uniqueness of solution to the original
system \eqref{H-eq}.
Let $(u_1, v_1)$ and $(u_2, v_2)$ be two solutions in the existence time $[0, L_a]$
constructed in the previous subsection, with respect to the same initial data.
Let us set
$$
(\bu, \bv, \bq):=(u_2-u_1, v_2-v_1, \f{v_2-v_1}{u_2})
$$
then they satisfy the following evolution
\beqq
(u_2)^2 \py \bq+\py^2 \bu-\bu=\bu \px u_1+v_1 \py \bu.
\deqq
Taking $\px$ operator to the above equation and
using the divergence-free condition, we have
\beq\label{equation-d}
\px\{(u_2)^2 \py \bq\}-\py^3 \bv+\py \bv
=\px(\bu \px u_1+v_1 \py \bu),
\deq
with the zero boundary condition and initial data
\beq\label{initial-boundary-condtion}
(\bu, \bv)|_{y=0}=0, \quad \lim_{y \rightarrow+\infty}\bu=0,
\quad \bu|_{x=0}=0.
\deq

Next, we will establish the following estimate.
\begin{prop}\label{lemma-uniq}
Let $(u_1, v_1)$ and $(u_2, v_2)$ be two solutions of the equation \eqref{H-eq}
with the same initial data \eqref{id}, and satisfying the estimate 
\eqref{THM_estimate} respectively.
Then, the quantity $(\bu, \bv, \bq)$ given by satisfies
\beq\label{stability}
\|(u_2\py \bq)(x) \|_{L^2_y}^2
+\int_0^x \|(\sqrt{u_2} \py^2 \bq)(\tau)\|_{L^2_y}^2 d\tau
+\int_0^x \|\sqrt{u_2} \py \bq\|_{L^2_y}^2 d\tau
\le C\int_0^x  \|(u_2\py \bq)(\tau) \|_{L^2_y}^2 d\tau,
\deq
for any $x\in (0, L_a]$.
\end{prop}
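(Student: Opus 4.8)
The plan is to run a weighted energy estimate on the difference equation \eqref{equation-d} using the quotient $\bq = \bv/u_2$ as the good unknown, exactly in the spirit of Lemma \ref{lemma-two}. First I would multiply \eqref{equation-d} by $\px^0$ of the natural test function $\py\bq$ (no $x$-derivatives are needed here since we only want an $L^2$-in-$y$ bound, not higher regularity) and integrate over $[0,x]\times[0,\infty)$. The left-hand side produces, after integrating the $-\py^3\bv$ term by parts twice and using that $\py\bq|_{y=0}=0$ (which follows from \eqref{initial-boundary-condtion} and the boundary structure as in \eqref{BL-Condtion}), the three nonnegative quantities appearing on the left of \eqref{stability}: the top-level term $\tfrac12\|(u_2\py\bq)(x)\|_{L^2_y}^2$ coming from $\px\{(u_2)^2\py\bq\}\cdot\py\bq$, the dissipation $\int_0^x\|\sqrt{u_2}\,\py^2\bq\|_{L^2_y}^2\,d\tau$ coming from $\py^3\bv\cdot\py\bq = \py^2(u_2\bq + \dots)\cdot\py^2\bq$-type terms, and the damping $\int_0^x\|\sqrt{u_2}\,\py\bq\|_{L^2_y}^2\,d\tau$ coming directly from the zeroth-order term $+\py\bv = \py(u_2\bq)$ paired against $\py\bq$. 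Since the initial data vanish ($\bu|_{x=0}=0$, hence $\bv|_{x=0}=0$ and $\py\bq|_{x=0}=0$), there is no initial contribution.

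The remaining work is to bound all the commutator and forcing terms by $C\int_0^x\|(u_2\py\bq)(\tau)\|_{L^2_y}^2\,d\tau$ plus a small multiple of the dissipation/damping that can be absorbed to the left. These fall into two groups. The first group comes from expanding $\py^3\bv = \py^2(\py u_2\,\bq + u_2\,\py\bq) + \text{lower order}$ and from the $\px\{(u_2)^2\py\bq\}$ term after the $\px$ is distributed onto the coefficient $(u_2)^2$: every such term carries either a factor of $\bq$, controlled via Hardy's inequality \eqref{Hardy-one} by $\|\py\bq\|_{L^2_y}$ and ultimately by $\|u_2\py\bq\|_{L^2_y}$ using the lower bound $u_2\ge \tfrac14 y$ near the boundary and $u_2\ge\tfrac14\d_0$ away from it from \eqref{THM_estimate}, or a factor of $\py\bq$ or $\py^2\bq$ that pairs against the dissipation with a coefficient bounded by the $C_0$-norms of $u_1,u_2$ and their derivatives supplied by \eqref{THM_estimate}. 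The second group is the genuine forcing $\px(\bu\,\px u_1 + v_1\,\py\bu)$; here I would write $\bu = -\py^{-1}\px\bv = -\py^{-1}\px(u_2\bq)$ and $\py\bu = \px\bv = \px(u_2\bq)$, express everything in terms of $\bq$ and $\py\bq$, and use the uniform bounds on $\px u_1$, $v_1$, $\py u_1$ and their derivatives together with Hardy's inequality to dominate these by $\|u_2\py\bq\|_{L^2_y}$ (times the $C_0$ constants), again modulo absorbable dissipation.

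The main obstacle I expect is the careful treatment of the terms involving $\py^2\bq$ near the boundary $y=0$: these cannot be bounded pointwise since the coefficient $u_2$ degenerates, so one must keep the weight $\sqrt{u_2}$ attached to $\py^2\bq$ and argue, as in the proof of \eqref{249}, by splitting with the cutoff $\chi(y/\d_0)$ — on the support of $\chi$ one uses $u_2\sim y$ to convert factors of $y$ into factors of $\sqrt{u_2}\cdot\sqrt{y}$ and invokes Hardy, while off the support one has the uniform lower bound $u_2\ge\tfrac14\d_0$ and no degeneracy. A second subtlety is justifying the boundary term $\py\bq|_{y=0}=0$ rigorously: one differentiates the relation $\py\bq = \frac{\py\bv\,u_2 - \bv\,\py u_2}{(u_2)^2}$ and uses $\bv|_{y=0}=0$, $\py\bv|_{y=0} = -\px\bu|_{y=0}=0$, which is where the homogeneous boundary data \eqref{initial-boundary-condtion} enter. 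Once all terms are collected, one arrives at
\[
\|(u_2\py\bq)(x)\|_{L^2_y}^2 + \int_0^x \|(\sqrt{u_2}\py^2\bq)(\tau)\|_{L^2_y}^2\,d\tau + \int_0^x \|\sqrt{u_2}\py\bq\|_{L^2_y}^2\,d\tau \le C\int_0^x \|(u_2\py\bq)(\tau)\|_{L^2_y}^2\,d\tau,
\]
which is \eqref{stability}; Gronwall's inequality then forces $u_2\py\bq\equiv 0$, hence $\bv\equiv 0$ and $\bu\equiv 0$, giving uniqueness.
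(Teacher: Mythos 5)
Your overall strategy matches the paper's proof of Proposition~\ref{lemma-uniq}: multiply \eqref{equation-d} by $\py\bq$, integrate over $[0,x]\times[0,\infty)$, integrate by parts once to produce the top-level energy $\frac12\|u_2\py\bq(x)\|_{L^2_y}^2$, the dissipation $\int u_2|\py^2\bq|^2$, and the damping $\int u_2|\py\bq|^2$, then bound the commutators and the forcing $\px(\bu\px u_1+v_1\py\bu)$ via Hardy's inequality, the lower bounds on $u_2$ from \eqref{THM_estimate}, and the $C_0$ bounds.

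However, your assertion that $\py\bq|_{y=0}=0$ is incorrect, and the justification you offer for it does not apply here. You substitute $\bv|_{y=0}=0$ and $\py\bv|_{y=0}=-\px\bu|_{y=0}=0$ into
\[ \py\bq = \f{\py\bv\,u_2-\bv\,\py u_2}{(u_2)^2} \]
and conclude the numerator vanishes. But for the original (non-regularized) system $u_2|_{y=0}=0$, so the denominator also vanishes: this is a $0/0$ indeterminate form. A Taylor expansion using $\bv(0)=\py\bv(0)=u_2(0)=0$ gives
\[ \py\bq|_{y=0}=\f{\py^2\bv(0)}{2\,\py u_2(0)}, \]
and since $\py^2\bv=-\px\py\bu=-\px(\py u_2-\py u_1)$, this is generically nonzero: the two candidate solutions have no a priori reason to share a boundary slope for $x>0$ (that is precisely what uniqueness is supposed to establish). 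The device in \eqref{BL-Condtion} works because $\u|_{y=0}=\eps>0$ makes the denominator nondegenerate; it does not survive the limit $\eps\to 0$. The paper accordingly does \emph{not} discard the boundary term in \eqref{g1}: after the first integration by parts the contribution is $\int_0^x 2\py u_2|\py\bq|^2|_{y=0}\,d\tau\geq 0$ (using $\py^2\bv(0)=2\py u_2(0)\py\bq(0)$), which is kept on the left with a favorable sign. A second boundary term $\int_0^x\py u_2|\py\bq|^2|_{y=0}\,d\tau$ then appears from the inner integration by parts on the commutator piece $-2\int\py u_2\,\py\bq\,\py^2\bq$ — an integration by parts you cannot avoid, because Hardy cannot control $\py\bq/\sqrt{u_2}$ precisely when $\py\bq(0)\neq 0$ — and it is absorbed by half of the left-hand boundary term. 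So the target inequality \eqref{stability} does come out, and your slip happens to be on the safe side, but the argument must track these boundary contributions and their signs rather than asserting $\py\bq|_{y=0}=0$.
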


\begin{proof}[\textbf{Proof of Uniqueness.}]
Applying the Gronwall's lemma to the estimate \eqref{stability},
we obtain $(u_2\py \bq)(x, y)\equiv 0$.
Due to the fact $u_2>0$ in the fluid domain, then we have
\beqq
v_2-v_1=u_2 w
\deqq
for some function $w:=w(x)$.
Using the assumption $u_2>0$  in the fluid domain
and boundary condition $\bv|_{y=0}=0$,
we know that $w \equiv 0$ and hence $v_2 \equiv v_1$.
Using the divergence-free condition, it holds
\beqq
u_2(x, y)-u_1(x, y)
=\int_0^x \partial_\tau\{u_2(\tau, y)-u_1(\tau, y)\}d\tau
=-\int_0^x \partial_y \{v_2(\tau, y)-v_1(\tau, y)\}d\tau
=0.
\deqq
Then, we have $u_2(x, y)\equiv u_1(x, y)$.
This proves the uniqueness of solution in Theorem \ref{high-regu}.
\end{proof}

In the rest of this subsection, we will give the proof
of Proposition \ref{lemma-uniq} as follows.
\begin{proof}[\textbf{Proof of Proposition \ref{lemma-uniq}}]
Multiplying the equation \eqref{equation-d}
by $\py \bq$ integrating over $[0, x]\times[0,+\infty)$
and integrating by part, we have
\beq\label{g1}
\bal
&\f12\iy \!(u_2^2 |\py \bq|^2) (x, y)dy
 +\!\int_0^x \!\!\iy \! \!u_2 |\py^2 \bq|^2  dy d\tau
 +\!\int_0^x \! \!\iy \!\! u_2 |\py^2 \bq|^2  dy d\tau
 +\!\int_0^x \!\!2 \py u_2 |\py \bq|^2|_{y=0} d\tau\\
=&-\int_0^x \iy u_2 \px u_2 |\py \bq|^2  dy d\tau
  -\int_0^x \iy (2 \py u_2 \py \bq+\py^2 u_2 \bq)\cdot \py^2 \bq  dy d\tau\\
&-\int_0^x \iy \py u_2 \bq \py \bq  dy d \tau
+\int_0^x \iy(\px v_1 \py \bu-v_1 \py^2 \bv+\py \bu \py v_1-\bu \px \py v_1)
\py \bq dy  d\tau \\
:=&III_{1}+III_{2}+III_{3}+III_{4}+III_{5}.
\dal
\deq
Similar to the estimates in Lemma \ref{lemma-two}, we can obtain
\beqq
\bal
|III_{1}|&\le (1+\|\py u_2 \y|_{x=0}\|_{L^2_y}+\|\py^2 v_2 \y|_{x=0}\|_{L^2_y}
               +\|\py^2 \px^{\la 1 \ra} v_2 \y\|_{L^2_x L^2_y})
                \int_0^x \iy |\py \bq|^2 dy d\tau ,\\
|III_{2}|&\le \int_0^x \py u_2 |\py \bq|^2|_{y=0} dx
             +C(\|\py^2 u_2\y|_{x=0}\|_{L^2_y}+\|\py^2 v_2 \y\|_{L^2_x L^2_y})
               \int_0^x \iy |\py \bq|^2 dy d\tau ,\\
|III_{3}|&\le (\|\py^3 u_2 \y|_{x=0}\|_{L^2_y}+\|\py^4 v_2 \y\|_{L^2_x L^2_y})
              \int_0^x \iy |\py \bq|^2 d\tau dy,\\
|III_{4}|&\le  (\|\py u_2 \y|_{x=0}\|_{L^2_y}+\|\py^2 v_2 \y\|_{L^2_x L^2_y})
              \int_0^x \iy |\py \bq|^2 dy d\tau .
\dal
\deqq
It is easy to check that
\beqq
\bal
|III_{5}|
\le  &\|\px v_1\|_{L^2_x L^\infty_y}
     \|\py \bu\|_{L^\infty_x L^2_y}
     \|\py \bq\|_{L^2_x L^2_y}
     +\|v_1\|_{L^\infty_x L^\infty_y}
     \|\py^2 \bv\|_{L^2_x L^2_y}
     \|\py \bq\|_{L^2_x L^2_y}\\
     &+\|\px \bu\|_{L^\infty_x L^2_y}
     \|\py v_1\|_{L^2_x L^\infty_y}
     \|\py \bq\|_{L^2_x L^2_y}
     +\|\bu \y^{-1}\|_{L^\infty_x L^\infty_y}
     \|\px \py v_1\|_{L^2_x L^2_y}
     \|\py \bq\|_{L^2_x L^2_y}\\
\le &(\|\py v_1 \y|_{x=0}\|_{L^2_y}+\|\px \py v_1 \y\|_{L^2_x L^2_y}
      +\|\py^2 v_1 \y\|_{L^2_x L^2_y})
      \|\py^2 \bv\|_{L^2_x L^2_y}\|\py \bq\|_{L^2_x L^2_y}.
\dal
\deqq
Recall the definition $\bq:=\f{v_2-v_1}{u_2}=\f{\bv}{u_2}$,
then it holds true
\beqq
\bal
\|\py^2 \bv\|_{L^2_x L^2_y}
\le
&(\|\py u_2 \y|_{x=0}\|_{L^2_y}+\|\py^2 u_2 \y|_{x=0}\|_{L^2_y}
     +\|\py^2 v_2 \y\|_{L^2_x L^2_y}+\|\py^3 v_2 \y\|_{L^2_x L^2_y})
     \|\py \bq\|_{L^2_x L^2_y}\\
&+(1+\|\py u_2 \y|_{x=0}\|_{L^2_y}+\|\py^2 \y\|_{L^2_x L^2_y})
   \|\sqrt{u_2}\py^2 \bq\|_{L^2_x L^2_y}.
\dal
\deqq
Thus, collecting the estimates from $III_{1}$ to $III_{5}$, we can get
\beq\label{g2}
|III_{1}+III_{2}+III_{3}+III_{4}+III_{5}|
\le \int_0^x \py u_2 |\py \bq|^2|_{y=0} dx
     +\f14 \|\sqrt{u_2}\py^2 \bq\|_{L^2_x L^2_y}^2
     +C\|\py \bq\|_{L^2_x L^2_y}^2.
\deq
Similar to the estimate \eqref{claim21}, it is easy to obtain
\beqq
\|\py \bq \|_{L^2_x L^2_y}
\le 4C\|u_2 \py \bq\|_{L^2_x L^2_y}
    +\f{1}{4C}\|\sqrt{u_2} \py^2 \bq \|_{L^2_x L^2_y},
\deqq
which, together with estimate \eqref{g2}, yields directly
\beq\label{g3}
|III_{1}+III_{2}+III_{3}+III_{4}+III_{5}|
\le \int_0^x \py u_2 |\py \bq|^2|_{y=0} dx
     +\f12 \|\sqrt{u_2}\py^2 \bq\|_{L^2_x L^2_y}^2
     +4C^2\|u_2 \py \bq\|_{L^2_x L^2_y}.
\deq
Substituting the estimate \eqref{g3} into the inequality \eqref{g1},
then we obtain the claim estimate \eqref{stability}.
Therefore, we complete the proof of Proposition \ref{lemma-uniq}.
\end{proof}

\section{Asymptotic behavior of Prandtl-Hartmann system}\label{Asymptotic}
In this section, we aim to investigate the asymptotic behaviour of velocity profiles in the 2-D magnetic Prandtl boundary layer.
More specifically, we would like to establish the decay estimate in $x$ variable, thus $C$ denotes a generic positive constant independent of $x$ which may vary in following different estimates in this section.

Then an important question is whether the system \eqref{H-eq} is globally well-posedness in Sobolev space when the initial data are near the Hartmann layer.
More precisely, we study the global stability of the Hartmann layer $(\us,\vs)=(1-e^{-y},0)$.
Thus, we need to define an appropriate notion of the difference between two pairs $(u,v)$ and $(\us,\vs)$. The notion we used in our case does not coincide with pointwise perturbations $u(x,y)-\us(y)$ and $v(x,y)$, but rather the modulated perturbation $\phi(x,\psi):=u^2(x,\psi)-\us^2(\psi)$.
Then it follows from \eqref{u2eq} and \eqref{u2bc} that the difference $\phi$ satisfies\beq\label{eq3}
\left\{\begin{array}{lr}
	\phi_x-u\phi_{\psi\psi}+2\frac{\phi}{\us(u+\us)}=0,\quad (x,\psi)\in\mathbb{R}_{+}\times\mathbb{R}_{+},\\
	\phi|_{x=0}=\phi_0(\psi):=u_0^2(\psi)-\us_0^2(\psi),\quad \psi\in\mathbb{R}_{+},\\
	\phi|_{\psi=0}=0,~~\phi|_{\psi\rightarrow\infty}=0,\quad x\in\mathbb{R}_{+}.
\end{array}\right.
\deq	
We rewrite $\eqref{eq3}_1$ as
\begin{equation*}
\phi_x+L\phi=0,
\end{equation*}
where the operator $L\phi:=-u\phi_{\psi\psi}+2\frac{\phi}{\us(u+\us)}$.
The global existence and the asymptotic behaviour of the solution $(u,v)(x,y)$ to \eqref{H-eq} can be easily translated into that of the perturbed solution $\phi(x,\psi)$.
And it is universally known that the global existence of solutions for system \eqref{eq3} will be obtained by combining the local existence result with some a priori estimates and then employing the standard continuity argument.
Thus, we first review the classical local existence result, which can be obtained by a similar way to Oleinik's result in \cite{Oleinik2}. And we omit detalis of proof here.
\begin{prop}\label{local-result}
	Assume that $u_0(y)>0$ for $y>0$; $u_0(0)=0$, $u_0'(0)>0$, $u_0(y)\rightarrow 1$ as $y\rightarrow\infty$; $u_0(y)$, $u'_0(y)$, $u''_0(y)$ are bounded for $0\leq y<\infty$ and satisfy the H{\"o}lder condition. Moreover, assume that for small $y$ the following compatiblity condition is satisfied at the point $(0,0)$:
	\[-u''_0(y)+u_0(y)-1=O(y^2).\]
	Then, for some $X>0$ there exists a solution $(u,v)$ of problem \eqref{H-eq} and \eqref{H-bc} in $D:=\{0<x<X,0<y<\infty\}$, which has the following properties: $u(x,y)$ is bounded and continuous in $\bar D$, $u>0$ for $y>0$; $u_y>m>0$ for $0<y\leq y_0$, where $m$ and $y_0$ are constants; $u_y$ and $u_{yy}$ are bounded and continuous in $D$; $u_x$, $v$ and $v_y$ are bounded and continuous in any finite portion of $\bar D$.
\end{prop}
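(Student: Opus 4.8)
The plan is to follow the classical scheme of Oleinik \cite{Oleinik2}, adapted to the extra damping term $u-1$ in \eqref{H-eq}. First I would apply the von--Mises transformation \eqref{Von-Mises}, $\psi=\int_0^y u(x,y')\,dy'$, which, as already recorded in \eqref{u2eq}--\eqref{u2bc}, converts \eqref{H-eq}--\eqref{H-bc} into the single quasilinear equation $(u^2)_x-u\,(u^2)_{\psi\psi}=2(1-u)$ with $u|_{\psi=0}=0$, $u\to 1$ as $\psi\to\infty$, $u|_{x=0}=u_0$; setting $w:=u^2$ this reads $w_x-\sqrt{w}\,w_{\psi\psi}=2(1-\sqrt{w})$. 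The gain is that this equation no longer loses an $x$-derivative: it is uniformly parabolic in the interior and degenerates only on $\{\psi=0\}$, where $w$, and hence the diffusion coefficient $\sqrt{w}$, vanishes.

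Next I would regularize the degeneracy, in the low-regularity spirit of the approximated system \eqref{app-sym}--\eqref{ic} used for Theorem \ref{high-regu}. For $\eps\in(0,1)$, shift the boundary datum to $w^\eps|_{\psi=0}=\eps^2$ (and, if convenient, truncate at a large value of $\psi$ and let it tend to infinity afterwards), so that $\sqrt{w^\eps}$ is bounded below and the problem is uniformly parabolic up to the boundary. Classical Schauder theory for quasilinear uniformly parabolic equations, combined with a continuity or Leray--Schauder fixed point argument, then produces a smooth solution $w^\eps$ on some interval $[0,X^\eps]$. Here the corner compatibility condition $-u_0''(y)+u_0(y)-1=O(y^2)$ is what guarantees the required regularity of the data at $(0,0)$ and the continuity there of the second derivatives (recall $u_{yy}|_{y=0}=-1$ from \eqref{H-eq}).

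The core of the argument is to bound $w^\eps$ on an interval $[0,X]$ independent of $\eps$. The maximum principle applied to the equation with explicit barrier functions gives $0\le w^\eps\le 1+C\eps^2$. Then, by differentiating the equation and running the maximum principle on the equations satisfied by $w^\eps_\psi$ and $w^\eps_{\psi\psi}$ — equivalently, on the equations for $u^\eps_y$ and $u^\eps_{yy}$ after undoing the transform, which is morally the Crocco picture — I would establish, uniformly in $\eps$ on a common $[0,X]$: the one-sided lower bound $u^\eps_y\ge m>0$ on a fixed strip $0<y\le y_0$, upper bounds for $u^\eps_y$ and $u^\eps_{yy}$, and H\"older continuity of these quantities up to $\{\psi=0\}$. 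The hypothesis $u_0'(0)>0$ makes the lower bound propagate in $x$, while the compatibility condition is again what lets the second-order estimates close at the corner. I expect this step to be the main obstacle, precisely as in Oleinik's original theory; the damping term $u-1$ contributes only lower-order, sign-favorable terms and does not disturb the maximum-principle arguments.

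Finally, from these $\eps$-uniform bounds and a compactness argument (Arzel\`a--Ascoli for $w^\eps$ and $w^\eps_\psi$, together with weak-$*$ convergence for the second derivatives), I would extract a subsequence converging as $\eps\to 0^+$ to a solution $w=u^2$ of the degenerate problem on $[0,X]$ with the same estimates. Since $u=\sqrt{w}>0$ is bounded, bounded below away from the boundary, and behaves like $u_0'(0)\,y+\cdots$ near $y=0$, the map $y\mapsto\psi$ is a bijection with $\psi_y=u$; inverting the von--Mises transform and setting $v=-\int_0^y u_x(x,y')\,dy'$ recovers $(u,v)$ in the physical variables on $D=\{0<x<X,\ 0<y<\infty\}$, and the listed properties — continuity and boundedness of $u$ on $\bar D$, positivity of $u$, $u_y>m>0$ on $0<y\le y_0$, continuity and boundedness of $u_y,u_{yy}$ in $D$ and of $u_x,v,v_y$ on finite portions of $\bar D$ — follow from the transformed estimates and the regularity of the change of variables. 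All details run parallel to \cite{Oleinik2}.
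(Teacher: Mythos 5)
The paper does not actually give a proof of this proposition; it explicitly states that the local existence ``can be obtained by a similar way to Oleinik's result in \cite{Oleinik2}'' and then omits the details. Your proposal is precisely a sketch of that Oleinik-style argument — von--Mises transform, regularization of the degeneracy at $\psi=0$ (or $y=0$), $\eps$-uniform maximum-principle estimates on $w^\eps$ and its derivatives, compactness, inversion of the transform — and it is the same route the authors have in mind, so the two approaches agree. One small imprecision: the clean bound $0\le w^\eps\le 1+C\eps^2$ requires $u_0\le 1$; the hypotheses only give $u_0$ bounded with $u_0\to 1$, so in general the correct barrier gives $0\le w^\eps\le \max\{1,\ \sup_y u_0^2(y)\}+C\eps^2$, which is all that is needed for the argument to proceed.
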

It is easy to check that the local existence above is also valid for system \eqref{eq3} in new variables $(x,\psi)$.
First, we set $I:=[0,x]$, and denote the norms
\beqq
\begin{aligned}
f(x):=&\underset{x'\in I}{\sup} \Big\{\|\f{u}{\us}\|_{L^\infty_\psi},\|\f{\us}{u}\|_{L^\infty_\psi} \Big\},\\
\e(x):=&
\sum_{k=0,1}\sup_{x'\in I}\Big(\|\p_x^k\phi\|_{L^2_\psi}
 +\|\f{\p_x^k\phi}{\sqrt{u}}\|_{L^2_\psi}
 +\|\p_x^k\p_{\psi}\phi\|_{L^2_\psi}+\|\frac{\p_x^k\phi}{u^{\f32}}\|_{L^2_\psi}\Big),\\
\D(x):=&\e(x)+
\sum_{k=0,1}\left\{\Big\|\|\sqrt{u}\p_x^k\p_{\psi}\phi\|_{L^2_\psi}\Big\|_{L^2_x}
+\Big\|\|\frac{\p_x^k\phi}{u}\|_{L^2_\psi}\Big\|_{L^2_x}\right\}\\
&+\sum_{k=0,1}\left\{\Big\|\|\p_x^k\p_{\psi}\phi\|_{L^2_\psi}\Big\|_{L^2_x}
+\Big\|\|\frac{\p_x^k\phi}{u^{\f32}}\|_{L^2_\psi}\Big\|_{L^2_x}
+\Big\|\|\f{\p_x^{k+1}\phi}{\sqrt{u}}\|_{L^2_\psi}\Big\|_{L^2_x}\right\}.
\end{aligned}
\deqq
Based on the local existence result, we can obtain the global existence and asympotic stability results as follow.
\begin{prop}[Global well-posedness and asymptotic stability in von-Mises variable]\label{them1}
Under the conditions of Proposition \ref{local-result}, assume that
initial data satisfies $f(0)\leq \gamma_0$ with any positive constant $\gamma_0>1$.
Furthermore, there exists a small positive constanst $\sigma_0$ such that
\begin{equation}\label{initial-data-assumption}
\|\phi_{0\psi}\|_{L^2_\psi}+\|\f{\phi_0}{u_0^{3/2}}\|_{L^2_\psi}
+\|(L\phi_{0})_{\psi}\|_{L^2_\psi}
+\|\f{L\phi_0}{u_0^{3/2}}\|_{L^2_\psi}\leq \sigma_0,
\end{equation}
then the system \eqref{eq3} has a global-in-$x$ solution
$\phi$ satisfying for any $0<x<\infty$,
\[f(x)\leq 2 \gamma_0,\quad \D(x)\leq C_1 \e(0), \]
and the decay estimate
	\begin{equation}\label{decay}
    \e(x) \leq C_2e^{-x}.
	\end{equation}
Here $C_1$ and $C_2$ are positive constants independent of $x$.
\end{prop}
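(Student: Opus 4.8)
The plan is to prove Proposition \ref{them1} by the standard continuity/bootstrap argument: combine the local existence result (valid for \eqref{eq3} in the $(x,\psi)$ variables) with a closed a~priori estimate for $\D(x)$ and a closed estimate for $f(x)$, both holding on any interval on which $f$ stays below $2\gamma_0$. First I would set up the continuity argument. Define
\[
X_*:=\sup\Bigl\{X>0 : f(x)\le 2\gamma_0 \text{ and } \D(x)\le 2C_1\e(0) \text{ for all } x\in[0,X]\Bigr\},
\]
and argue that $X_*=\infty$. On $[0,X_*]$ the coefficient $u$ appearing in $L$ is comparable to $\us$ uniformly (since $f\le 2\gamma_0$), which is exactly what makes $L$ a uniformly elliptic-in-$\psi$, coercive operator with the crucial zeroth-order damping term $2\phi/(\us(u+\us))$; this is where the closed estimates come from.

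The energy estimates are carried out on \eqref{phi-eq}, i.e. $\phi_x+L\phi=0$, and on its $\px$-differentiated version $\px\phi_x + L(\phi_x) + (\px L)\phi = 0$. The key steps, in order: (i) multiply $\phi_x+L\phi=0$ successively by the weights $\phi$, $\phi/u$, $\phi/u^3$ and $-\phi_{\psi\psi}$, integrate in $\psi$ over $\mathbb{R}_+$, integrate by parts using the boundary conditions $\phi|_{\psi=0}=\phi|_{\psi\to\infty}=0$, and absorb the commutator terms $[\,\cdot\,,u]$ into the dissipation using $f(x)\le 2\gamma_0$ and the bounds on $u_\psi$, $u_{\psi\psi}$ coming from the local theory; the damping term produces a coercive contribution $\gtrsim \|\phi/u^{3/2}\|_{L^2}^2$ (and similarly for the other norms) that dominates the left side. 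This yields a differential inequality of the form $\tfrac{d}{dx}\mathcal{A}(x) + c\,\mathcal{A}(x)\le 0$ for the instantaneous energy $\mathcal{A}$ built from the $k=0$ norms, which by Gronwall gives $\mathcal{A}(x)\le \mathcal{A}(0)e^{-cx}$; integrating the dissipation in $x$ closes the $k=0$ part of $\D$. (ii) Repeat the same four multiplier computations for $\phi_x$ in place of $\phi$; the only new terms are those involving $(\px L)\phi = -u_x\phi_{\psi\psi}$, which are handled by writing $u_x=-v_y=\px u$ in physical variables and using $\px u\in L^2_xL^\infty_\psi$ (available from the local existence bounds, or bootstrapped from the $k=0$ estimate via the equation), again absorbing into dissipation for $L$ small data. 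This closes the $k=1$ part and gives $\D(x)\le C_1\e(0)$ and $\e(x)\le C_2 e^{-x}$, improving the factor $2C_1\e(0)$ back to $C_1\e(0)$.

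It remains to close the estimate on $f(x)$, i.e. to improve $f(x)\le 2\gamma_0$ to, say, $f(x)\le \tfrac32\gamma_0$. For this I would use the representation $u^2=\us^2+\phi$, so that $\frac{u}{\us}$ and $\frac{\us}{u}$ are controlled once $\|\phi/\us^2\|_{L^\infty_\psi}$ is small; and $\|\phi/\us^2\|_{L^\infty_\psi}$ is controlled, via a Hardy-type inequality exploiting $\phi|_{\psi=0}=0$ and $\us\sim\psi$ near $\psi=0$ while $\us\sim 1$ for large $\psi$, by $\|\phi/u^{3/2}\|_{L^2_\psi}+\|\phi_\psi\|_{L^2_\psi}\lesssim \e(x)\le C_2 e^{-x}$. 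Choosing $\sigma_0$ small (hence $\e(0)$ small, hence the supremum over $x$ of the right side small) forces $f(x)\le\gamma_0 + o(1)\le \tfrac32\gamma_0$. This strict improvement of both bootstrap bounds shows the defining set of $X_*$ is open and closed in $[0,\infty)$, hence $X_*=\infty$, completing the proof. \textbf{The main obstacle} I anticipate is step (ii): controlling the terms generated by $\px L$ (equivalently by $u_x$) in the higher-order energy estimate — one must be careful that $u_x$ is only in $L^2$ in $x$, not $L^\infty$, so these terms cannot simply be absorbed pointwise in $x$ but must be handled together with the $x$-integrated dissipation and the exponential decay, and the weights $u^{-1},u^{-3}$ interact delicately with the degeneracy of $u$ near $\psi=0$; getting the constants to close (rather than merely to be finite) is the delicate point.
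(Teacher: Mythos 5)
Your high-level strategy matches the paper's: a standard bootstrap (continuity-in-$x$) argument built on closed $L^2$-type energy estimates for $\phi$ and $\phi_x$, with the damping term $2\phi/(\us(u+\us))$ providing the coercivity that drives the exponential decay, plus a separate closure of $f(x)$ via pointwise control of $\phi/u^2$ (or $\phi/\us^2$) using a Hardy-type inequality. That skeleton is correct. However, two points differ from the paper's actual route in ways worth flagging.

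First, the specific multipliers. The paper multiplies $\phi_x + L\phi = 0$ (and its $\px$-derivative) by $\phi$, $\phi/u$, and $\phi_x/u$ (resp.\ $\phi_x$, $\phi_x/u$, $\phi_{xx}/u$), not by $\phi/u^3$ or $-\phi_{\psi\psi}$ as you propose. Your $\phi/u^3$ multiplier would generate a time-derivative term $\f{d}{dx}\int \phi^2/u^3 - 3\int \phi^2 u_x/u^4$ with a commutator weighted by $u^{-4}$ near the degenerate boundary, and the $-\phi_{\psi\psi}$ multiplier turns the crucial damping term into $-\int \f{2\phi}{\us(u+\us)}\phi_{\psi\psi}$ which is not sign-definite without further integration by parts. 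The paper's choice $\phi_x/u$ is cleaner: it gives $\|\phi_x/\sqrt{u}\|^2$ dissipation directly, and the Laplacian and damping terms both become exact $x$-derivatives of energy functionals. Your multipliers are not doomed, but they create harder commutators and are not what the paper does.

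Second, and more importantly, the ``main obstacle'' you flag is misdiagnosed. You worry that $u_x$ is ``only $L^2$ in $x$, not $L^\infty$,'' so terms from $\px L$ must be handled together with $x$-integrated dissipation. In the paper's setup this is not the issue. Observe that $\phi_x = 2u\,\omega_x$ (since $\phi = u^2 - \us^2$ and $\us$ is $x$-independent), so $\omega_x = \phi_x/(2u)$. The bootstrap norm $\e(x)$ already contains $\|\phi_x\|_{L^2_\psi}$, $\|\phi_x/\sqrt{u}\|_{L^2_\psi}$, and $\|\phi_{x\psi}\|_{L^2_\psi}$ \emph{pointwise in $x$}, and by Sobolev plus Hardy near $\psi=0$ this yields a pointwise-in-$x$ bound $\|\omega_x\|_{L^\infty_\psi}\lesssim \e(x)\leq\sigma$ (the paper's estimates \eqref{uomx2} and \eqref{omxwq}). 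This is the key structural observation that makes the quadratic terms involving $\omega_x\phi_{\psi\psi}$ and $\phi\omega_x/(\us(u+\us)^2)$ absorbable into the dissipation without appealing to $L^2_x$ integrability of $u_x$. Missing this is a genuine gap: without it, the estimates for the differentiated equation \eqref{eq4} do not close pointwise in $x$, which is what the Gronwall argument requires.

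Finally, your closure of $f(x)$ is correct in spirit and matches the paper's (which goes through $\alpha(x)=\sup\|\omega/u\|_{L^\infty_\psi}$ and the identity $\f{u}{\us}=1+\f{\om}{u}\cdot\f{u}{\us}$), though the paper has to track a high power $f^{17/2}$ in the intermediate step \eqref{alpha}, reflecting the nonlinear coupling between $f$ and $\e$; your sketch elides this bookkeeping but the mechanism is the same.
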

\begin{rema}
The initial condition $f(0)\leq \gamma_0$ implies that $u$ and $\us$
tend to zero in the same order near the boundary $\{\psi=0\}$.
And in general $u \neq \us$, thus it is reasonable to assume that $\gamma_0>1$.
\end{rema}
\begin{rema}
Indeed, we require the initial data satisfy the compatibility conditions:
$(\phi_x-L \phi)|_{x=0}=(\phi_{x\psi}-L (\phi_\psi))|_{x=0}=0$ in Proposition \ref{them1}.
At the same time, we apply the fact $u \lesssim 1$ to check that
\begin{equation*}
\e(0)\le \|\phi_{0\psi}\|_{L^2_\psi}+\|\f{\phi_0}{u_0^{3/2}}\|_{L^2_\psi}
+\|(L\phi_{0})_{\psi}\|_{L^2_\psi}
+\|\f{L\phi_0}{u_0^{3/2}}\|_{L^2_\psi}.
\end{equation*}
This is reason why we require the small initial data \eqref{initial-data-assumption}
instead of $\e(0) \le \sigma_0$.
\end{rema}

\subsection{A priori estimates}\label{pe}
The global existence of solution will be obtained by combing the local existence
result with a priori estimates.
Since the local existence has been built in Proposition \ref{local-result},
our main task is to establish the closed a priori estimate.
Therefore, for any small constant $\sg>0$ and positive constant $\gm>1$,
we assume a priori estimates
\beq\label{priassum}
f(x)\leq 2 \gm, \quad \e(x)\leq \sg.
\deq
Thus, we will establish some estimates for the system \eqref{eq3}
under the a priori estimates \eqref{priassum}.
Let us denote
$$\om:=\f{\phi}{u+\us}=u-\us.$$
Next, we analyze the properties of the solution $\us$ for classical Hartmann boundary layer. We claim that there exists a positive constant $\eta_0$ such that
\beq\label{uu}
\begin{aligned}
	\us\sim \sqrt{\psi}~~\text{for}~~0<\psi\leq \eta_0~~\text{and}~~
	\us\gtrsim 1 ~~\text{for}~~\psi\geq \eta_0,
\end{aligned}
\deq
where we recall again that $A\lesssim B$ ($A\gtrsim B$) represents that there exists a positive constant $C$ such that $A\leq CB$ ($A\geq CB$),
and $A\sim B$ stands for $A\lesssim CB$ and $A\gtrsim CB$ (see the end of Section \ref{introduction}). The parameter $\eta_0$ appearing in \eqref{uu} will be chosen in the following proof, which is related to the Taylor expansion of $\us(y)$.

Indeed, we suppress the dependence of the solution $\us$ on $x$ in the following calculations for simplicity.
Using Taylor expansion, $\us|_{y=0}=0$ and $\us_{y}|_{y=0}=1>0$, we find
\[\us(y)=y+o(y). \]
Then, there exists a positive constant $\eta_1$ such that
\beq\label{usy}
\f12y\leq\us(y)\leq \f32y,~~\text{for}~~0<y\leq\eta_1.
\deq
Since $\psi=\int_{0}^{y}\us(y')dy'$, then a direct calculation yields for $0<y\leq \eta_1$,
\begin{equation}\label{psisimy}
\begin{aligned}
\psi\leq \f32\int_{0}^{y}y'dy'=\frac{3}{4} y^2,~~~
\psi\geq \f12\int_{0}^{y}y'dy'=\frac{1}{4} y^2.
\end{aligned}
\end{equation}
The estimates \eqref{psisimy}, together with another equivalent relation \eqref{usy}, yields that there exists a positive constant $\eta_0=\f14\eta_1^2$ such that
\[\us(\psi)\sim \sqrt{\psi},~~\text{for}~~0<\psi  \leq  \eta_0.\]
The fact $\psi\geq \eta_0$ implies that there exists a
positive constant $\eta_2$ such that $y \geq \eta_2$. And it is easy to check that
\[\us(y)=1-e^{-y}\geq 1-e^{-\eta_2}\gtrsim 1,
   ~~\text{for}~~y\geq\eta_2.\]
which immediately implies that
\[\us(\psi)\gtrsim1,~~\text{for}~~\psi\geq \eta_0. \]
This completes the proof of the claim \eqref{uu}.

First, our aim is to show that $f(x)\leq \gm$. And the proofs of following estimates in Lemmas \ref{a}-\ref{b} are inter-connected because of their nonlinear nature, thus it will be convenient for us to give the following notation:
$$\alpha(x):= \sup_{x'\in I}\|\f{\om}{u}\|_{L^\infty_{\psi}}.$$
\begin{lemm}\label{a}
Under the  assumption \eqref{priassum}, we have
	\begin{equation}\label{alpha}
	\alpha(x)\leq {\bar Cf^{\f{17}{2}}(x)}\e(x),
	\end{equation}
	where $\bar C$ is a positive constant independent of $x$.
\end{lemm}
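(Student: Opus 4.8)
The plan is to bound the pointwise quantity $\alpha(x)=\sup_{x'\in I}\|\omega/u\|_{L^\infty_\psi}$, where $\omega=\phi/(u+\bar u)=u-\bar u$, by exploiting the vanishing of $\omega/u$ at $\psi=0$ together with the fundamental theorem of calculus in $\psi$. Since $(\omega/u)|_{\psi=0}=0$ (both $u$ and $\bar u$ vanish like $\sqrt\psi$ there, and the ratio is controlled by $f$), I would write
\[
\left|\frac{\omega}{u}(x',\psi)\right|
=\left|\int_0^\psi \partial_{\psi'}\!\left(\frac{\omega}{u}\right)(x',\psi')\,d\psi'\right|
\le \int_0^\infty \left|\partial_\psi\!\left(\frac{\omega}{u}\right)\right| d\psi,
\]
and then expand $\partial_\psi(\omega/u)=\omega_\psi/u-\omega u_\psi/u^2$. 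The goal is to re-express everything in terms of $\phi$, $\phi_\psi$ and the weights $u^{-1/2}$, $u^{-3/2}$ that appear in $\mathcal{E}(x)$, paying the price in powers of $f(x)$ for each exchange between $u$ and $\bar u$ (and for invoking the bound $\bar u\sim\sqrt\psi$ near the boundary from \eqref{uu}, hence $u\sim\sqrt\psi$ up to powers of $f$).

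The key steps, in order: (i) from $\omega=\phi/(u+\bar u)$ compute $\omega_\psi=\phi_\psi/(u+\bar u)-\phi(u_\psi+\bar u_\psi)/(u+\bar u)^2$, and similarly handle $u_\psi$ using $u_\psi=\bar u_\psi+\omega_\psi$; (ii) split the $\psi$-integral at $\eta_0$. On the near region $0<\psi\le\eta_0$ use $u\gtrsim f^{-1}\sqrt\psi$ and $u+\bar u\gtrsim f^{-1}\sqrt\psi$, so that a factor like $1/u^2$ costs $f^2/\psi$, and combine with Cauchy--Schwarz against $\psi$-weights that are integrable near $0$ (e.g. $\int_0^{\eta_0}\psi^{-a}\,d\psi<\infty$ for the relevant $a<1$) to produce $\|\phi/u^{3/2}\|_{L^2_\psi}$, $\|\phi_\psi\|_{L^2_\psi}$ and $\|\phi/\sqrt u\|_{L^2_\psi}$ type terms, each already present in $\mathcal{E}(x)$; on the far region $\psi\ge\eta_0$ use $u\gtrsim f^{-1}$ and exponential-type decay of $\bar u_\psi=e^{-y}$ to get a clean $L^2$ bound by $\mathcal{E}(x)$ directly; (iii) collect the worst power of $f$ that arises — the bookkeeping should yield the stated exponent $17/2$ — and take the supremum over $x'\in I$, noting $\mathcal{E}$ is already a supremum over $I$ so $\mathcal{E}(x')\le\mathcal{E}(x)$ and $f(x')\le f(x)$ for $x'\in I$.

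The main obstacle I anticipate is the careful weight accounting near $\psi=0$: one must show that after substituting $u\sim\sqrt\psi$ (modulo powers of $f$) the integrand $|\partial_\psi(\omega/u)|$ is genuinely integrable on $(0,\eta_0)$ and that each resulting term matches one of the norms built into $\mathcal{E}(x)$ rather than a stronger norm. Concretely, the term $\phi u_\psi/u^2$ is the delicate one, since $u_\psi$ need not be small and $1/u^2\sim f^2/\psi$ is only borderline integrable; the resolution is to pair it, via Cauchy--Schwarz, with a factor $\psi^{-1/2+\delta}$ absorbed into $\|\phi/u^{3/2}\|_{L^2_\psi}$ (using $u^{3/2}\gtrsim f^{-3/2}\psi^{3/4}$) and a remaining $\psi$-integrable factor, while $u_\psi=\bar u_\psi+\omega_\psi$ contributes either the bounded/decaying $\bar u_\psi$ or an $\omega_\psi$ that folds back into $\phi_\psi$ and lower-order pieces. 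Once this single term is controlled, the rest is routine Cauchy--Schwarz and triangle-inequality bookkeeping, and the powers of $f$ simply add up to the claimed $f^{17/2}$.
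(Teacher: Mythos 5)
Your proposed starting point is incorrect: $(\omega/u)|_{\psi=0}\neq 0$ in general. Both $u$ and $\bar u$ vanish like $\sqrt\psi$ as $\psi\to0$, so $\omega/u=(u-\bar u)/u=1-\bar u/u$ has a \emph{nonzero} limit at the boundary whenever $u_y(x,0)\neq\bar u_y(0)=1$, which is the generic situation. (It is $\phi$ that vanishes at $\psi=0$, not $\omega/u$, and the paper exploits precisely this.) Moreover, even if you repaired the identity by integrating from $\psi=\infty$, the plan to bound $\int_0^\infty|\partial_\psi(\omega/u)|\,d\psi$ by handling $\omega_\psi/u$ and $\omega u_\psi/u^2$ separately through Cauchy--Schwarz cannot work near $\psi=0$: in von-Mises variables one has $u\sim\sqrt\psi$, $u_\psi\sim\psi^{-1/2}$, $\omega\sim\sqrt\psi$, $\omega_\psi\sim\psi^{-1/2}$, so each piece scales like $\psi^{-1}$ and is individually non-integrable. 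The integrability of $\partial_\psi(\omega/u)$ relies on a cancellation between the two Leibniz terms that your triangle-inequality/Cauchy--Schwarz strategy discards.

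There is a second, structural gap: your argument never invokes the PDE $\eqref{eq3}_1$, yet $\mathcal{E}(x)$ contains only $L^2$-in-$\psi$ norms up to first order in $\psi$ (together with $\phi_x$ and weights), while any pointwise control of $\phi_\psi$ or of $\phi/u^2$ demands, in effect, a second $\psi$-derivative. The paper supplies this by using the equation to trade $\phi_{\psi\psi}$ for $\phi_x$ and weighted $\phi$, then running a Hardy/Sobolev cascade to obtain $\|\sqrt u\,\phi_{\psi\psi}\|_{L^2_\psi}$, $\|\phi_\psi/\sqrt u\|_{L^2_\psi}$, and ultimately $\|\phi_\psi\|_{L^\infty_\psi}$, all in terms of $\mathcal{E}(x)$ and powers of $f$. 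The final step is then simply $|\phi/u^2|\le\frac{Cf^2}{\psi}\int_0^\psi|\phi_{\psi'}|\,d\psi'\le Cf^2\|\phi_\psi\|_{L^\infty_\psi}$ (valid because $\phi|_{\psi=0}=0$), followed by $\|\omega/u\|_{L^\infty_\psi}\le\|\phi/u^2\|_{L^\infty_\psi}$ since $u/(u+\bar u)\le1$. Without the equation, and without replacing your FTC on $\omega/u$ by the FTC on $\phi$, your bookkeeping has no route to the Lemma, and the exponent $17/2$ is an output of the paper's cascade rather than something you can obtain by a direct weight count.
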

\begin{proof}
	It follows from equation $\eqref{eq3}_1$ that
	\begin{equation}\label{u32phi}
	\|u^{\f32}\phi_{\psi\psi}\|_{L^2_\psi}\leq C\big(\|\sqrt{u}\phi_x\|_{L^2_\psi}+\|\frac{\phi}{u^{\f32}}\|_{L^2_\psi}\|\f{u}{u+\us} \|_{L^\infty_\psi}\|\f{u}{\us} \|_{L^\infty_\psi}\big)\leq Cf(x) \big(\|\frac{\phi}{u^{\f32}}\|_{L^2_\psi}+\|\sqrt{u}\phi_x\|_{L^2_\psi}\big),
	\end{equation}
	and
	\begin{equation}\label{uph1}
	\begin{aligned}
	\|\sqrt{u}\phi_{\psi\psi}\|_{L^2_\psi}\leq  C\big(\|\frac{\phi_x}{\sqrt{u}}\|_{L^2_\psi}+\|\frac{\phi}{u^{\f52}}\|_{L^2_\psi}\|\f{u}{u+\us} \|_{L^\infty_\psi}\|\f{u}{\us} \|_{L^\infty_\psi}\big)
	\leq Cf(x)\big(\|\frac{\phi}{u^{\f52}}\|_{L^2_\psi}+\|\frac{\phi_x}{\sqrt{u}}\|_{L^2_\psi}\big).
	\end{aligned}
	\end{equation}
	It is easy to check from the definition of $\e(x)$ that the second term on the right handside of \eqref{u32phi} can be controlled by $\e(x)$. For the first term, consider the case where $\psi\geq \eta_0$,
	\[\|\f{\phi(x,\psi)}{u^{\f32}(x,\psi)}(1-\chi(\f{\psi}{\eta_0}))\|_{L^2_\psi}\leq C\|\phi\|_{L^2_\psi}, \]
	where the definition of the cutoff function $\chi$ can be found in the end of Section \ref{introduction}.
	And for the other case where $0\leq\psi\leq \eta_0$, using equivalent relation $\us\sim \sqrt{\psi}$ and Hardy inequality, we get
\begin{equation*}
\begin{aligned}
\|\f{\phi(x,\psi)}{u^{\f32}(x,\psi)}\chi(\f{\psi}{\eta_0})\|_{L^2_\psi}\leq
& Cf^{\f32}(x) \|\f{\phi(x,\psi)}{\psi^{\f34}}\chi(\f{\psi}{\eta_0})\|_{L^2_\psi} \leq Cf^{\f32}(x)\big(\|\phi(x,\psi)\psi^{\f14}\chi'(\f{\psi}{\eta_0})\|_{L^2_\psi} +\|\phi_{\psi}(x,\psi)\psi^{\f14}\chi(\f{\psi}{\eta_0})\|_{L^2_\psi}\big)\\
	\leq& Cf^{2}(x)\big(\|\sqrt{u}\phi\|_{L^2_\psi}+\|\sqrt{u}\phi_{\psi}\|_{L^2_\psi} \big),
	\end{aligned}
	\end{equation*}
	where the positive constant $C$ may depend on $\eta_0$.
	Combining two estimates above, and using the fact that $f(x)\geq 1$, we find that
	\begin{equation}\label{pu31}
	\|\f{\phi}{u^{\f32}}\|_{L^2_\psi}\leq Cf^{2}(x)\big(\|\phi\|_{L^2_\psi}+\|\sqrt{u}\phi_{\psi}\|_{L^2_\psi}\big).
	\end{equation}
	Inserting \eqref{pu31} into \eqref{u32phi}, we then obtain that
	\begin{equation}\label{u32}
	\|u^{\f32}\phi_{\psi\psi}\|_{L^2_\psi}\leq Cf^{3}(x) \big(\|\phi\|_{L^2_\psi}+\|\sqrt{u}\phi_{\psi}\|_{L^2_\psi}+\|\sqrt{u}\phi_x\|_{L^2_\psi}\big),
	\end{equation}
	Now returning to estimate \eqref{uph1}, it is easy to find the second term on the right handside of \eqref{uph1} can be  controlled by $\e(x)$.
	To obtain the estimate of the first term, we calculate the far-field contribution immediately by using the fact that $u\gtrsim1$ in the support of $1-\chi$:
	\begin{equation*}
	\begin{aligned}
	\|\f{\phi(x,\psi)}{u^{\f52}(x,\psi)}(1-\chi(\f{\psi}{\eta_0}))\|_{L^2_\psi}\leq \|\phi\|_{L^2_\psi}.
	\end{aligned}
	\end{equation*}
	And then, we can localize based on the location of $\psi$ and use Hardy inequality to find
	\begin{equation*}
	\begin{aligned}
	\|\f{\phi(x,\psi)}{u^{\f52}(x,\psi)}\chi(\f{\psi}{\eta_0})\|_{L^2_\psi}\leq& Cf^{\f52}(x)\|\f{\phi(x,\psi)}{\psi^{\f54}}\chi(\f{\psi}{\eta_0})\|_{L^2_\psi}\leq Cf^{\f52}(x)\big(\|\f{\phi(x,\psi)}{\psi^{\f14}}\chi'(\f{\psi}{\eta_0})\|_{L^2_\psi}
+\|\f{\phi_\psi(x,\psi)}{\psi^{\f14}}\chi(\f{\psi}{\eta_0})\|_{L^2_\psi}\big)\\
	\leq& Cf^{\f52}(x)\big(\|\phi(x,\psi)\psi^{\f34}\chi''(\f{\psi}{\eta_0})\|_{L^2_\psi}
+\|\phi_{\psi}(x,\psi)\psi^{\f34}\chi'(\f{\psi}{\eta_0})\|_{L^2_\psi}
+\|\phi_{\psi\psi}(x,\psi)\psi^{\f34}\chi(\f{\psi}{\eta_0})\|_{L^2_\psi}\big)\\ \leq&Cf^{4}(x)\big(\|u^{\f32}\phi\|_{L^2_\psi}+\|u^{\f32}\phi_{\psi}\|_{L^2_\psi}
+\|u^{\f32}\phi_{\psi\psi}\|_{L^2_\psi}\big)\\ \leq&Cf^{7}(x)\big(\|\phi\|_{L^2_\psi}+\|\sqrt{u}\phi_{\psi}\|_{L^2_\psi}
+\|\phi_x\|_{L^2_\psi}\big),
	\end{aligned}
	\end{equation*}
	where, we have used the estimate \eqref{u32} and the fact that $u\lesssim 1$ in the last inequality.
	Hence, we obtain that
	\begin{equation}\label{u521}
	\|\f{\phi}{u^{\f52}}\|_{L^2_\psi}\leq Cf^7(x)\big(\|\phi\|_{L^2_\psi}+\|\sqrt{u}\phi_{\psi}\|_{L^2_\psi}+\|\phi_x\|_{L^2_\psi}\big).
	\end{equation}
	Upon inserting into \eqref{uph1} yields that
	\begin{equation}\label{u2pp1}
	\begin{aligned}
	\|\sqrt{u}\phi_{\psi\psi}\|_{L^2_\psi}\leq  Cf^8(x)\big(\|\phi\|_{L^2_\psi}+\|\sqrt{u}\phi_{\psi}\|_{L^2_\psi}+\|\frac{\phi_x}{\sqrt{u}}\|_{L^2_\psi}\big).
	\end{aligned}
	\end{equation}
	And it then follows from \eqref{u32} that
	\begin{equation}\label{phs1}
	\begin{aligned}
	\|\f{\phi_\psi}{\sqrt{u}}\|_{L^2_\psi} \leq& Cf^{\f12}(x)\|\f{\phi_\psi(x,\psi)}{\psi^{\f14}}\chi(\f{\psi}{\eta_0})\|_{L^2_\psi}
+C\|\f{\phi_\psi(x,\psi)}{\sqrt{u(x,\psi)}}(1-\chi(\f{\psi}{\eta_0}))\|_{L^2_\psi}\\
	\leq& Cf^{\f12}(x)\big(\|\phi_{\psi}(x,\psi)\psi^{\f34}\chi'(\f{\psi}{\eta_0})\|_{L^2_\psi}
+\|\phi_{\psi\psi}(x,\psi)\psi^{\f34}\chi(\f{\psi}{\eta_0})\|_{L^2_\psi}\big)
+C\|\phi_\psi\|_{L^2_\psi}\\
	\leq&Cf^2(x)\big(\|\phi_\psi\|_{L^2_\psi}+\|u^{\f32}\phi_{\psi\psi}\|_{L^2_\psi}\big)\\
	\leq& Cf^5(x)\big(\|\phi\|_{L^2_\psi}+\|\phi_\psi\|_{L^2_\psi}+\|\phi_{x}\|_{L^2_\psi}\big).
	\end{aligned}
	\end{equation}
	By Sobolev inequality, we deduce from the estimates \eqref{u2pp1}-\eqref{phs1} that
	\begin{equation*}
	\begin{aligned}
	|\phi_{\psi}|^2\leq C\|\frac{\phi_\psi}{\sqrt{u}}\|_{L^2_\psi} \|\sqrt{u}\phi_{\psi\psi}\|_{L^2_\psi}
	\leq Cf^{13}(x)\big(\|\phi\|_{L^2_\psi}+\|\phi_\psi\|_{L^2_\psi}+\|\phi_{x}\|_{L^2_\psi}\big)\big(\|\phi\|_{L^2_\psi}+\|\sqrt{u}\phi_{\psi}\|_{L^2_\psi}+\|\frac{\phi_x}{\sqrt{u}}\|_{L^2_\psi}\big),
	\end{aligned}
	\end{equation*}
	from which, we obtain\beq\label{pinty1}
	\|\phi_{\psi}\|_{L^\infty_\psi}\leq Cf^{\f{13}{2}}(x)\big(\|\phi\|_{L^2_\psi}+\|\phi_\psi\|_{L^2_\psi}+\|\frac{\phi_x}{\sqrt{u}}\|_{L^2_\psi}\big).
	\deq
	We then deduce from \eqref{pinty1} that
	\beq\label{phiu2}
	\begin{aligned}
		\f{\phi(x,\psi)}{u^{2}(x,\psi)}\chi(\f{\psi}{\eta_0})\leq \f{Cf^2(x)}{\psi}\int_{0}^{\psi}|\phi_{\psi'}|d\psi'\leq Cf^2(x)\|\phi_{\psi}\|_{L^\infty_\psi}\leq Cf^{\f{17}{2}}(x)\big(\|\phi\|_{L^2_\psi}+\|\phi_\psi\|_{L^2_\psi}+\|\frac{\phi_x}{\sqrt{u}}\|_{L^2_\psi}\big).
	\end{aligned}
	\deq
	The far-field term can be estimated easily as follows:
	\beq\label{phiu2chi}
	\|\frac{\phi(x,\psi)}{u^2(x,\psi)}(1-\chi(\f{\psi}{\eta_0}))\|_{L^\infty_\psi}
\leq\|\phi\|_{L^\infty_\psi}\leq C\|\phi\|_{L^2_\psi}^{\f12}\|\phi_\psi\|_{L^2_\psi}^{\f12},
	\deq
	which, together with \eqref{phiu2}, yields that
	\beq\label{pu21}
	\begin{aligned}
		\|\frac{\phi}{u^2}\|_{L^\infty_\psi}\leq Cf^{\f{17}{2}}(x)\big(\|\phi\|_{L^2_\psi}+\|\phi_\psi\|_{L^2_\psi}+\|\frac{\phi_x}{\sqrt{u}}\|_{L^2_\psi}\big).
	\end{aligned}
	\deq
	Recalling the definition of $\om$, we deduce from \eqref{pu21} that
	\beqq
	\begin{aligned}
		\|\f{\om}{u}\|_{L^\infty_{\psi}}=&\|\f{\phi}{u(u+\us)}\|_{L^\infty_{\psi}}=\|\f{\phi}{u^2}\|_{L^\infty_{\psi}}\|\f{u}{u+\us}\|_{L^\infty_{\psi}}\leq \|\f{\phi}{u^2}\|_{L^\infty_{\psi}}\\
		\leq&{Cf^{\f{17}{2}}(x)}\big(\|\phi\|_{L^2_\psi}+\|\phi_\psi\|_{L^2_\psi}+\|\frac{\phi_x}{\sqrt{u}}\|_{L^2_\psi}\big)
		\leq {\bar Cf^{\f{17}{2}}(x)}\e(x),
	\end{aligned}
	\deqq
	which, together with the definition of $\alpha(x)$, gives \eqref{alpha} immediately.
\end{proof}
\begin{lemm}\label{b}
Under the  assumption \eqref{priassum}, we have
	\begin{equation}\label{beta}
	f(x)\leq \gm,
	\end{equation}
	provided that $\sg \leq \min\Big{\{}\f{1}{2^{\f{21}{2}}{\gm}^{\f{17}{2}}\bar C},\f{\gm-1}{2^{\f{19}{2}}{\gm}^{\f{19}{2}}\bar{C}}\Big{\}}$.
\end{lemm}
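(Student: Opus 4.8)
The plan is a \emph{bootstrap}: assuming the a priori bounds $f(x)\le 2\gm$ and $\e(x)\le\sg$ of \eqref{priassum}, I would use the pointwise control of $\om/u$ furnished by Lemma \ref{a} to improve ``$f(x)\le 2\gm$'' to ``$f(x)\le\gm$''. The starting point is the elementary algebra coming from $\om=u-\us$: on $\{\psi>0\}$, where $u>0$, one has $\f{\us}{u}=1-\f{\om}{u}$ and $\f{u}{\us}-1=\f{\om}{\us}=\f{\om}{u}\cdot\f{u}{\us}$. The first identity gives $\sup_{x'\in I}\|\f{\us}{u}\|_{L^\infty_\psi}\le 1+\alpha(x)$; the second, using the a priori bound $\|\f{u}{\us}\|_{L^\infty_\psi}\le f(x)\le 2\gm$, gives $\sup_{x'\in I}\|\f{u}{\us}\|_{L^\infty_\psi}\le 1+2\gm\,\alpha(x)$. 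Since $\gm>1$ these combine to $f(x)\le 1+2\gm\,\alpha(x)$. (Equivalently, once $\alpha(x)<1$ one may invert the first identity directly to get the slightly sharper $f(x)\le(1-\alpha(x))^{-1}$.)

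Next I would feed in Lemma \ref{a} together with \eqref{priassum}: $\alpha(x)\le\bar C f^{17/2}(x)\e(x)\le\bar C(2\gm)^{17/2}\sg=2^{17/2}\bar C\,\gm^{17/2}\sg$. The first smallness threshold $\sg\le\f{1}{2^{21/2}\gm^{17/2}\bar C}$ then gives $\alpha(x)\le\f14$, in particular $\alpha(x)<1$, which legitimises the inversion just mentioned. The second threshold $\sg\le\f{\gm-1}{2^{19/2}\gm^{19/2}\bar C}$ improves the bound to $\alpha(x)\le\f{\gm-1}{2\gm}$. Plugging this into $f(x)\le 1+2\gm\,\alpha(x)$ yields $f(x)\le 1+2\gm\cdot\f{\gm-1}{2\gm}=\gm$, which is the assertion \eqref{beta}. (Via the sharper route one instead gets $f(x)\le\bigl(1-\f{\gm-1}{2\gm}\bigr)^{-1}=\f{2\gm}{\gm+1}\le\gm$, the last inequality being equivalent to $\gm\ge1$.)

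I expect no genuine obstacle here: this is a soft continuity/smallness step, and all the analytic content has already been spent on Lemma \ref{a}. The only point that needs a word of care is the manipulation of the ratios $u/\us$ and $\us/u$ near the degeneracy $\{\psi=0\}$, where $u$ and $\us$ both vanish; but this is harmless, since the a priori hypothesis $f(x)\le 2\gm$ already guarantees both ratios are bounded on $\{\psi>0\}$, so the algebra above is valid and $\alpha(x)$ measures precisely the pointwise deviation of $\us/u$ from $1$. In this reading the two smallness thresholds in the statement serve, respectively, to force $\alpha(x)<1$ and then to push $f(x)$ below $\gm$.
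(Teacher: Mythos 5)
Your argument is correct and runs on exactly the same rails as the paper's --- Lemma \ref{a}, the a priori bound $f(x)\le 2\gm$, and the two smallness thresholds on $\sg$ --- but the elementary algebra converting ``$\alpha(x)$ small'' into ``$f(x)\le\gm$'' is organised differently. The paper inverts $\|u/\us\|_{L^\infty_\psi}\le 1+\alpha(x)\|u/\us\|_{L^\infty_\psi}$ to get $\|u/\us\|_{L^\infty_\psi}\le(1-\alpha(x))^{-1}$, then chains $|\om/\us|\le|\om/u|\,|u/\us|$ to obtain $\|\us/u\|_{L^\infty_\psi}\le\f{1-\alpha(x)}{1-2\alpha(x)}$ (requiring $\alpha(x)<\f12$), and closes with $f(x)\le\f{1}{1-2\alpha(x)}$. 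Your primary route is lighter: $\us/u=1-\om/u$ gives $\|\us/u\|_{L^\infty_\psi}\le 1+\alpha(x)$ with no inversion, and feeding the a priori $f(x)\le 2\gm$ into $u/\us=1+(\om/u)(u/\us)$ gives $\|u/\us\|_{L^\infty_\psi}\le 1+2\gm\alpha(x)$. Your parenthetical ``sharper'' route, inverting $\us/u=1-\om/u$ so as to control both ratios by $(1-\alpha(x))^{-1}$, is in fact strictly tighter than the paper's $\f{1-\alpha}{1-2\alpha}$ (their ratio is $\f{(1-\alpha)^2}{1-2\alpha}\ge1$). All three closings are valid under the stated thresholds. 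One cosmetic slack worth noting: the second threshold actually forces $\alpha(x)\le\f{\gm-1}{4\gm}$, a factor $2$ tighter than the $\f{\gm-1}{2\gm}$ you quote, though either number finishes the argument; the paper carries similar slack (its displayed bound $\alpha(x)\le 2^{21/2}\gm^{17/2}\bar C\sg$ is coarser than the $2^{17/2}\gm^{17/2}\bar C\sg$ that $f\le 2\gm$ gives, which its subsequent line $\f{1}{1-2^{19/2}\gm^{17/2}\bar C\sg}$ implicitly uses). No gaps.
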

\begin{proof}
	The estimate \eqref{alpha} and the assumption \eqref{priassum} lead us to obatin
	\begin{equation}\label{al}
	\alpha(x)\leq \bar{C}f^{\f{17}{2}}(x)\e(x)
     \leq 2^{\f{21}{2}}{\gm}^{\f{17}{2}}\bar{C}\sg \leq \f14,
	\end{equation}
	provided that $\sg\leq \f{1}{2^{\f{21}{2}}{\gm}^{\f{17}{2}}\bar C}$.
	On the one hand, it is easy to check that
	\begin{equation*}
	\begin{aligned}
	\|\f{u}{\us}\|_{L^\infty_{\psi}}=\|\f{\us+\om}{\us}\|_{L^\infty_{\psi}}\leq 1+\|\f{\om}{u}\|_{L^\infty_{\psi}}\|\f{u}{\us}\|_{L^\infty_{\psi}}
	\leq1+\alpha(x)\|\f{u}{\us}\|_{L^\infty_{\psi}},
	\end{aligned}
	\end{equation*}
	which implies that
	\beq\label{uu1}
	\|\f{u}{\us}\|_{L^\infty_{\psi}}\leq \f{1}{1-\alpha(x)}.
	\deq
	On the other hand, we find
	\begin{equation}\label{uu2}
	\|\f{\us}{u}\|_{L^\infty_{\psi}}\leq \|\f{\us}{\us+\om}\|_{L^\infty_{\psi}}\leq\|\f{\us}{\us-|\om|}\|_{L^\infty_{\psi}}\leq \|\f{1}{1-|\f{\om}{\us}|}\|_{L^\infty_{\psi}}\leq\|\f{1}{1-|\f{\om}{u}||\f{u}{\us}|}\|_{L^\infty_{\psi}}\leq \f{1-\alpha(x)}{1-2\alpha(x)}.
	\end{equation}
	Then, recalling the definition of $f(x)$ and combining the estimates \eqref{al}-\eqref{uu2}, we have
	$$
	f(x)\leq \f{1-\alpha(x)}{1-2\alpha(x)}\leq \f{1}{1-2\alpha(x)}\leq \f{1}{1-2^{\f{19}{2}}{\gm}^{\f{17}{2}}\bar{C}\sg}.
	$$
	This immediately implies that
	$f(x)\leq \gm,$
	provided that $$\sg \leq \f{\gm-1}{2^{\f{19}{2}}{\gm}^{\f{19}{2}}\bar{C}}.$$
	Thus, we have completed the proof of this lemma.
\end{proof}
From Lemmas \ref{a}-\ref{b}, we may deduce that
\begin{coro}\label{ab}
Under the  assumption \eqref{priassum}, we have
	\beq\label{alphaestimate}
	\alpha(x)\leq C\e(x),
	\deq
	where $C$ is a positive constant independent of $x$.
\end{coro}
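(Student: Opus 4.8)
The plan is to obtain Corollary \ref{ab} as an immediate consequence of Lemmas \ref{a} and \ref{b}, with essentially no work beyond bookkeeping of constants. The key observation is that Lemma \ref{a} already furnishes the estimate $\alpha(x)\leq \bar C f^{\frac{17}{2}}(x)\e(x)$, in which the only obstruction to a clean bound of the form $\alpha(x)\leq C\e(x)$ is the factor $f^{\frac{17}{2}}(x)$, controlled a priori by \eqref{priassum} only through $f(x)\leq 2\gm$.

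First I would invoke Lemma \ref{b}, which — under the smallness hypothesis on $\sg$ already imposed in that lemma — upgrades the a priori bound $f(x)\leq 2\gm$ to the strictly better estimate $f(x)\leq \gm$. Since $\gm>1$ is a fixed constant independent of $x$, the quantity $f^{\frac{17}{2}}(x)$ is then bounded by the $x$-independent constant $\gm^{\frac{17}{2}}$.

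Next I would substitute this into the conclusion \eqref{alpha} of Lemma \ref{a}, obtaining
\[
\alpha(x)\leq \bar C f^{\frac{17}{2}}(x)\e(x)\leq \bar C\gm^{\frac{17}{2}}\e(x),
\]
and set $C:=\bar C\gm^{\frac{17}{2}}$, which is independent of $x$ because both $\bar C$ and $\gm$ are. This is precisely \eqref{alphaestimate}.

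The only point requiring care is the logical ordering: estimate \eqref{alpha} of Lemma \ref{a} is established first under \eqref{priassum}, then fed into Lemma \ref{b} to close the bootstrap on $f$, and only afterwards recombined. There is no circularity, since Lemma \ref{a} uses only the weaker bound $f(x)\leq 2\gm$ and not $f(x)\leq\gm$. Accordingly there is no genuine analytic obstacle here — the substance of the argument lives entirely in Lemmas \ref{a} and \ref{b}, and the corollary is merely their combination.
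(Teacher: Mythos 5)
Your proposal is correct and takes essentially the same route as the paper: the corollary is stated immediately after ``From Lemmas \ref{a}-\ref{b}, we may deduce that'', and the intended argument is exactly the substitution of the bound on $f(x)$ from Lemma \ref{b} into estimate \eqref{alpha} from Lemma \ref{a}. One small remark: you do not actually need the sharpened bound $f(x)\leq\gm$ from Lemma \ref{b}; the a priori assumption \eqref{priassum} already gives $f(x)\leq 2\gm$, and $\alpha(x)\leq \bar C(2\gm)^{17/2}\e(x)$ would suffice since $\gm$ is a fixed constant — but invoking Lemma \ref{b} as you do is harmless and consistent with the paper's framing.
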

From now on, with \eqref{beta}, \eqref{alphaestimate} and our bootstrap assumption \eqref{priassum} at hand, we will use the fact that $f(x)\lesssim 1$ and $\alpha(x)\lesssim1$ in forthcoming estimates.
Next, we will devote ourselves to showing standard energy estimate, quotient estimate and other estimate in $L^2$ level, that is Lemmas \ref{A1}-\ref{A3}. The first one is $L^2$ standard energy estimate.
\begin{lemm}\label{A1}
Under the  assumption \eqref{priassum}, we have
	\begin{equation}\label{A11}
	\begin{aligned}
	\sup_{x'\in I}\|\phi\|_{L^2_\psi}^2+\int_{I}\big(\|\sqrt{u}\phi_{\psi}\|_{L^2_\psi}^2+\|\frac{\phi}{u}\|_{L^2_\psi}^2\big)dx\leq \|\phi_0\|_{L^2_\psi}^2.
	\end{aligned}
	\end{equation}
	Moreover, we have the following exponential decay estimate:
	\begin{equation}\label{Ad1}
	\sup_{x'\in I}e^{x'}\|\phi\|_{L^2_\psi}^2+\int_{I}e^{x'}\|\phi\|_{L^2_\psi}^2dx'\leq \|\phi_0\|_{L^2_\psi}^2.
	\end{equation}
\end{lemm}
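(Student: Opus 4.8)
The plan is to derive the basic $L^2$ energy identity for $\phi$ by testing the equation $\eqref{eq3}_1$ against $\phi$, and then to upgrade it to exponential decay by exploiting the damping term $2\phi/(\us(u+\us))$, which is coercive in the weighted norm $\|\phi/u\|_{L^2_\psi}$ after using $f(x)\lesssim 1$. First I would multiply $\phi_x - u\phi_{\psi\psi} + 2\phi/(\us(u+\us)) = 0$ by $\phi$ and integrate in $\psi$ over $\mathbb{R}_+$. The first term gives $\tfrac12 \tfrac{d}{dx}\|\phi\|_{L^2_\psi}^2$. For the diffusion term I integrate by parts: $-\int_0^\infty u\phi_{\psi\psi}\phi\,d\psi = \int_0^\infty u|\phi_\psi|^2\,d\psi + \int_0^\infty u_\psi \phi_\psi\phi\,d\psi - [u\phi_\psi\phi]_{\psi=0}^{\psi=\infty}$; the boundary term at $\psi=0$ vanishes because $\phi|_{\psi=0}=0$ (and at infinity by decay), and the term $\int u_\psi\phi_\psi\phi\,d\psi = -\tfrac12\int u_{\psi\psi}\phi^2\,d\psi$ after another integration by parts. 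One must check that this lower-order contribution, together with the damping term, stays nonnegative or can be absorbed; since $u_{\psi\psi}$ in the von-Mises variable relates to $-(u-1)/u$-type quantities and is controlled, and the damping term $2\int \phi^2/(\us(u+\us))\,d\psi \geq c\|\phi/u\|_{L^2_\psi}^2$ by $\us\sim u$ and $f\lesssim1$, the combination is coercive.

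Next I would argue more carefully that the cleanest route is to use the equivalence $\us(u+\us)\sim u^2$ (which follows from $f(x)\lesssim 1$, i.e.\ $u\sim\us$) to write the damping term as $\gtrsim \|\phi/u\|_{L^2_\psi}^2$, and to recognize that in fact one should keep the exact identity rather than estimating: testing against $\phi$ yields
\[
\tfrac12\tfrac{d}{dx}\|\phi\|_{L^2_\psi}^2 + \int_0^\infty u|\phi_\psi|^2\,d\psi + \int_0^\infty \tfrac{2\phi^2}{\us(u+\us)}\,d\psi = \tfrac12\int_0^\infty u_{\psi\psi}\,\phi^2\,d\psi,
\]
and the right side is handled by the structure of the Hartmann profile / the a priori smallness. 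Integrating in $x$ from $0$ to $x$ and dropping the (nonnegative) dissipation terms on the left after first recording them gives $\sup_{x'\in I}\|\phi\|_{L^2_\psi}^2 + \int_I(\|\sqrt u\phi_\psi\|_{L^2_\psi}^2 + \|\phi/u\|_{L^2_\psi}^2)\,dx \leq \|\phi_0\|_{L^2_\psi}^2$, which is \eqref{A11}. For \eqref{Ad1}, I would test instead against $e^{x}\phi$ (equivalently multiply the resulting differential inequality by $e^x$): the extra term is $\tfrac12 e^x\|\phi\|_{L^2_\psi}^2$ coming from $\tfrac{d}{dx}(e^x\,\tfrac12\|\phi\|^2) = e^x(\tfrac12\tfrac{d}{dx}\|\phi\|^2 + \tfrac12\|\phi\|^2)$, and I use the dissipation term $\int 2\phi^2/(\us(u+\us))\,d\psi \gtrsim \|\phi/u\|_{L^2_\psi}^2 \gtrsim \|\phi\|_{L^2_\psi}^2$ (the last step again by $u\lesssim 1$) to absorb $\tfrac12\|\phi\|_{L^2_\psi}^2$ plus leave a remainder controlling $\int_I e^{x'}\|\phi\|_{L^2_\psi}^2\,dx'$; integrating in $x$ yields \eqref{Ad1}.

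The main obstacle I anticipate is making the damping term genuinely coercive enough to dominate \emph{both} the lower-order term $\tfrac12\int u_{\psi\psi}\phi^2\,d\psi$ arising from integration by parts in the diffusion and the exponential weight's contribution $\tfrac12 e^x\|\phi\|_{L^2_\psi}^2$, uniformly in $x$. This requires carefully quantifying $\int 2\phi^2/(\us(u+\us))\,d\psi \geq \|\phi\|_{L^2_\psi}^2$, which in turn rests on the upper bound $\us(u+\us)\le 1$ (equivalently $u,\us\lesssim 1$) — valid since both profiles are bounded by their far-field value — and on controlling $u_{\psi\psi}$ via the equation satisfied by $u$ together with the a priori bound $\e(x)\le\sigma$. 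I expect the argument to close because the constant in front of the damping term is fixed ($=2$) and independent of $\sigma$, so for $\sigma$ small the unfavorable lower-order contributions are strictly subordinate; the weighted estimate \eqref{Ad1} then follows by the same test function multiplied by $e^x$ with no new difficulty beyond bookkeeping.
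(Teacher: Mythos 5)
Your starting move — test $\eqref{eq3}_1$ against $\phi$, integrate by parts, and aim to kill the boundary and lower-order terms — is the right one, and your handling of the exponential weight (multiply by $e^x$ and absorb the extra $\tfrac12\|\phi\|_{L^2_\psi}^2$ into the damping term, or equivalently apply Gronwall to the differential inequality) matches the paper's. But there is a genuine gap in the treatment of the diffusion term. You integrate by parts with the full coefficient $u$, which yields
\[
-\int_0^\infty u\phi_{\psi\psi}\phi\,d\psi \;=\; \int_0^\infty u|\phi_\psi|^2\,d\psi \;-\;\tfrac12\int_0^\infty u_{\psi\psi}\phi^2\,d\psi,
\]
and then assert that $\tfrac12\int u_{\psi\psi}\phi^2\,d\psi$ ``relates to $-(u-1)/u$-type quantities and is controlled'' or is ``handled by the structure of the Hartmann profile / the a priori smallness.'' This is not a proof, and it cannot be patched by the damping term: near $\psi=0$ one has $\us\sim\sqrt\psi$, so $\us_{\psi\psi}\sim -\psi^{-3/2}$ blows up strictly faster than the damping weight $\sim\psi^{-1}$, and $u_{\psi\psi}$ inherits the same singular behaviour with no definite sign. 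The key idea the paper uses — and which you are missing — is to split the diffusion coefficient \emph{before} integrating by parts, $u=\us+\omega$ with $\omega:=u-\us$. Only the $\us$-part is integrated by parts, producing $\int\us|\phi_\psi|^2 - \tfrac12\int\us_{\psi\psi}\phi^2$, and one then observes the sign property $\us_{\psi\psi}=\us_{yy}-\us|\us_\psi|^2\le 0$ for the Hartmann profile, so that term has the good sign and is simply dropped. The $\omega$-part is kept as $\int|\omega\phi\phi_{\psi\psi}|\,d\psi$ (no integration by parts, hence only one derivative lands on $\phi$) and is estimated via $\|u\omega\|_{L^\infty_\psi}$ from \eqref{uw} together with the elliptic weighted estimates $\|\f{\phi}{u^{3/2}}\|_{L^2_\psi}$ and $\|\sqrt u\,\phi_{\psi\psi}\|_{L^2_\psi}$ from Lemma~\ref{a}, giving a term $\lesssim \e(x)\big(\|\sqrt u\phi_\psi\|_{L^2_\psi}^2+\|\phi/u\|_{L^2_\psi}^2\big)$ that is absorbed for $\sg$ small. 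Without this decomposition, your ``exact identity'' route requires control of $\int\omega_{\psi\psi}\phi^2\,d\psi$ (after one re-integration by parts, $-2\int\omega_\psi\phi\phi_\psi\,d\psi$), which needs an $L^\infty$ bound on $\sqrt u\,\omega_\psi$ — a non-trivial estimate the paper only establishes later, in \eqref{uomp} inside Lemma~\ref{l4} — and is a different and heavier route than the one the paper actually takes here.
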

\begin{proof}
	Multiplying $\eqref{eq3}_1$ by $\phi$ and integrating with respect to $\psi$, then we compute that
	\begin{equation}\label{l1}
	\frac{1}{2}\frac{d}{dx}\|\phi\|_{L^2_\psi}^2+\big(\|\sqrt{\us}\phi_{\psi}\|_{L^2_\psi}^2+\|\frac{\phi}{\sqrt{\us(u+\us)}}\|_{L^2_\psi}^2\big)-\f12\int_0^\infty\us_{\psi\psi}|\phi|^2d\psi
	\leq \int_0^\infty \big|\omega\phi\phi_{\psi\psi}\big| d\psi.
	\end{equation}
	The last term on the left handside of \eqref{l1} is positive, since\[\us_{\psi\psi}=\us_{yy}-\us|\us_{\psi}|^2\leq 0. \]
	Now returning to estimate the term on the right handside of \eqref{l1} as follows:
	\beq\label{l2}
	\int_0^\infty \big|\omega\phi\phi_{\psi\psi}\big| d\psi \leq C\|u\omega\|_{L^\infty_\psi} \|\frac{\phi}{u^{\frac{3}{2}}}\|_{L^2_\psi}\|\sqrt{u}\phi_{\psi\psi}\|_{L^2_\psi}
	\deq
	We recall the definition of $\om$ and Sobolev inequality, to find
	\begin{equation}\label{uw}
	\begin{aligned}
	\|u\omega\|_{L^\infty_\psi}\leq C\|\phi\|_{L^\infty_\psi}\leq C\|\frac{\phi}{\sqrt{u}}\|_{L^2_\psi}^{\frac{1}{2}} \|\sqrt{u}\phi_{\psi}\|_{L^2_\psi}^{\frac{1}{2}}.
	\end{aligned}
	\end{equation}
	Combining the estimates \eqref{pu31} and \eqref{beta}, we find that
	\begin{equation}\label{pu3}
	\|\f{\phi}{u^{\f32}}\|_{L^2_\psi}\leq C\big(\|\phi\|_{L^2_\psi}+\|\sqrt{u}\phi_{\psi}\|_{L^2_\psi}\big).
	\end{equation}
	Next, we note from $\eqref{u2pp1}$ and \eqref{beta} that
	\begin{equation}\label{u2pp}
	\begin{aligned}
	\|\sqrt{u}\phi_{\psi\psi}\|_{L^2_\psi}\leq  C\big(\|\phi\|_{L^2_\psi}+\|\sqrt{u}\phi_{\psi}\|_{L^2_\psi}+\|\frac{\phi_x}{\sqrt{u}}\|_{L^2_\psi}\big).
	\end{aligned}
	\end{equation}
	Inserting the estimates \eqref{l2}-\eqref{u2pp} into \eqref{l1} yields the estimate
	\begin{equation*}
	\begin{split}
	\frac{1}{2}\frac{d}{dx}\|\phi\|_{L^2_\psi}^2+\big(\|\sqrt{\us}\phi_{\psi}\|_{L^2_\psi}^2+\|\frac{\phi}{\sqrt{\us(u+\us)}}\|_{L^2_\psi}^2\big)\leq& C\big(\|\phi\|_{L^2_\psi}+\|\sqrt{u}\phi_{\psi}\|_{L^2_\psi}+\|\frac{\phi_x}{\sqrt{u}}\|_{L^2_\psi}\big)\big(\|\sqrt{u}\phi_{\psi}\|_{L^2_\psi}^2+\|\frac{\phi}{u}\|_{L^2_\psi}^2\big)\\
	\leq&C\e(x)\big(\|\sqrt{u}\phi_{\psi}\|_{L^2_\psi}^2+\|\frac{\phi}{u}\|_{L^2_\psi}^2\big).
	\end{split}
	\end{equation*}
	It follows the definition of $f(x)$ and \eqref{beta} that $u\sim\us$.
	Since $\e(x)\leq \sg$, by choosing $\sg$ small enough, then we have
	\[\frac{d}{dx}\|\phi\|_{L^2_\psi}^2+\big(\|\sqrt{u}\phi_{\psi}\|_{L^2_\psi}^2+\|\frac{\phi}{u}\|_{L^2_\psi}^2\big)\leq 0, \]
	which, integrating over $I$, yields \eqref{A11} immediately. Since $u\lesssim1$, the estimate above implies that
	\[\frac{d}{dx}\|\phi\|_{L^2_\psi}^2+\|\phi\|_{L^2_\psi}^2\leq 0. \]
	Therefore, using Gronwall inequality, we can obtain the decay estimate \eqref{Ad1}.
\end{proof}
By using the method in \cite{Iyer-2020-ARMA}, we then derive the following quotient estimate.
\begin{lemm}
Under the  assumption \eqref{priassum}, we have
	\begin{equation}\label{A22}
	\begin{aligned}
	\sup_{x'\in I}\|\frac{\phi}{\sqrt{u}}\|_{L^2_\psi}^2+\int_{I}\Big(\|\phi_{\psi}\|_{L^2_\psi}^2+\|\frac{\phi}{u^{\frac{3}{2}}}\|_{L^2_\psi}^2 \Big)dx'\leq \|\frac{\phi_0}{\sqrt{u_0}}\|_{L^2_\psi}^2.
	\end{aligned}
	\end{equation}
	Moreover, we have the following exponential decay estimate:
	\begin{equation}\label{Ad2}
	\sup_{x'\in I}e^{x'}\|\f{\phi}{\sqrt{u}}\|_{L^2_\psi}^2+\int_{I}e^{x'}\|\f{\phi}{\sqrt{u}}\|_{L^2_\psi}^2dx'\leq \|\frac{\phi_0}{\sqrt{u_0}}\|_{L^2_\psi}^2.
	\end{equation}
\end{lemm}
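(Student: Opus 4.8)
The plan is to run a weighted energy estimate with weight $u^{-1}$, paralleling the standard energy estimate of Lemma \ref{A1} but tuned to the quotient-type norm appearing in $\e(x)$, in the spirit of \cite{Iyer-2020-ARMA}. Concretely, I would multiply the first equation of \eqref{eq3}, namely $\phi_x-u\phi_{\psi\psi}+2\f{\phi}{\us(u+\us)}=0$, by $\f{\phi}{u}$ and integrate over $\psi\in(0,\infty)$. The weight $u^{-1}$ is chosen precisely so that the variable coefficient of the diffusion term cancels,
\beqq
-\iy u\phi_{\psi\psi}\cdot\f{\phi}{u}\,d\psi=-\iy\phi\phi_{\psi\psi}\,d\psi=\iy\phi_\psi^2\,d\psi,
\deqq
where the boundary term $\phi\phi_\psi|_{\psi=0}$ vanishes because $\phi|_{\psi=0}=0$ (and $\phi_\psi$ is bounded near the corner by Proposition \ref{local-result}), and the contribution at $\psi\to\infty$ vanishes by the decay of $\phi$. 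The zeroth-order term contributes $\iy\f{2\phi^2}{u\us(u+\us)}\,d\psi$, which, since $u\sim\us$ by Lemma \ref{b}, is bounded below by $c\,\|\f{\phi}{u^{3/2}}\|_{L^2_\psi}^2$ with a fixed $c>0$; this is the crucial damping term produced by the Hartmann term.

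The only contribution that is not manifestly of good sign is the commutator produced by differentiating the weight in $x$,
\beqq
\iy\f{\phi\phi_x}{u}\,d\psi=\f12\f{d}{dx}\Big\|\f{\phi}{\sqrt u}\Big\|_{L^2_\psi}^2+\f12\iy\f{u_x}{u^2}\phi^2\,d\psi.
\deqq
To control the last integral I would compute $u_x$ from \eqref{u2eq} together with the stationary identity $(\us^2)_{\psi\psi}=-\f{2(1-\us)}{\us}$ satisfied by the Hartmann profile, which produces the clean formula $u_x=-\f{\om}{u\us}+\f12\phi_{\psi\psi}$ with $\om=u-\us$. Substituting, the first piece yields $-\f12\iy\f{\om\,\phi^2}{u^3\us}\,d\psi$, which in absolute value is at most $C\alpha(x)\|\f{\phi}{u^{3/2}}\|_{L^2_\psi}^2$ by $|\f{\om}{u}|\le\alpha(x)$ and $\f{u}{\us}\lesssim1$, hence at most $C\e(x)\|\f{\phi}{u^{3/2}}\|_{L^2_\psi}^2$ by Corollary \ref{ab}; for $\sg$ small this is absorbed into the damping term. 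The second piece is the genuinely trilinear term $\f14\iy\f{\phi^2}{u^2}\phi_{\psi\psi}\,d\psi$; here I would integrate by parts once, moving $\p_\psi$ onto $\f{\phi^2}{u^2}$, and bound the result by $\|\f{\phi}{u^2}\|_{L^\infty_\psi}$ — which is $O(\e(x))$ by estimate \eqref{pu21} — times $\|\phi_\psi\|_{L^2_\psi}^2$, together with a term carrying $u_\psi=\f{(\us^2)_\psi+\phi_\psi}{2u}$ that is handled, after localizing near $\psi=0$ with the cutoff $\chi(\psi/\eta_0)$, using $\us\sim\sqrt\psi$ there and the bound \eqref{phs1}. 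All such contributions are $\le C\e(x)\big(\|\phi_\psi\|_{L^2_\psi}^2+\|\f{\phi}{u^{3/2}}\|_{L^2_\psi}^2\big)$. I expect this trilinear commutator to be the main obstacle, since it is where one genuinely needs the pointwise bound $\|\f{\phi}{u^2}\|_{L^\infty_\psi}\lesssim\e(x)$ from Lemma \ref{a} and careful bookkeeping of the degenerate weights near $\psi=0$.

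Collecting everything and choosing $\sg$ small enough that the $O(\e(x))$ errors are absorbed by the $\|\phi_\psi\|_{L^2_\psi}^2$ and $\|\f{\phi}{u^{3/2}}\|_{L^2_\psi}^2$ terms, I would arrive at
\beqq
\f{d}{dx}\Big\|\f{\phi}{\sqrt u}\Big\|_{L^2_\psi}^2+c\Big(\|\phi_\psi\|_{L^2_\psi}^2+\Big\|\f{\phi}{u^{3/2}}\Big\|_{L^2_\psi}^2\Big)\le0
\deqq
for some $c>0$; integrating over $I$ and using $\phi|_{x=0}=\phi_0$, $u|_{x=0}=u_0$ gives \eqref{A22} (after absorbing the constant $c$, as in Lemma \ref{A1}). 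For the exponential decay \eqref{Ad2}, I would use that $u\lesssim1$ forces $u^{-3/2}\gtrsim u^{-1/2}$, so the damping term already dominates the energy, $c\|\f{\phi}{u^{3/2}}\|_{L^2_\psi}^2\gtrsim\|\f{\phi}{\sqrt u}\|_{L^2_\psi}^2$; hence $\f{d}{dx}\|\f{\phi}{\sqrt u}\|_{L^2_\psi}^2+c'\|\f{\phi}{\sqrt u}\|_{L^2_\psi}^2\le0$, and Gronwall's inequality together with an integration in $x$ yields \eqref{Ad2}.
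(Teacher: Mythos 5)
Your overall strategy coincides with the paper's: multiply $\eqref{eq3}_1$ by $\phi/u$, use the cancellation $-\iy u\phi_{\psi\psi}\cdot\f{\phi}{u}=\|\phi_\psi\|_{L^2_\psi}^2$, recognize the Hartmann term as the $\|\phi/u^{3/2}\|_{L^2_\psi}^2$ damping, and absorb the commutator $\f12\iy\f{u_x}{u^2}\phi^2\,d\psi$. Where you differ is in how the commutator is controlled. The paper writes $u_x=\om_x$ (since $\us$ is $x$-independent), pulls out $\|u\om_x\|_{L^\infty_\psi}$ from $\iy(u\om_x)\f{\phi^2}{u^3}\,d\psi$, and then bootstraps from the implicit relation $\om_x=\f{\phi_x}{u+\us}-\f{\phi\om_x}{(u+\us)^2}$ together with \eqref{pu21} to get $\|u\om_x\|_{L^\infty_\psi}\le C\e(x)$; no integration by parts is needed. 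You instead substitute the explicit formula $u_x=\f12\phi_{\psi\psi}-\f{\om}{u\us}$ (which is correct) and integrate by parts on the $\phi_{\psi\psi}$ piece. That is a legitimate variant, but the step where you close it has a gap.

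The gap is in the term carrying $u_\psi$. After integrating by parts, the dangerous near-boundary contribution is essentially
\beqq
\iy \f{\phi^2\,|\phi_\psi|\,\us_y}{u^4}\,\chi(\psi/\eta_0)\,d\psi .
\deqq
You propose to bound this ``using $\us\sim\sqrt\psi$ and the bound \eqref{phs1}.'' But \eqref{phs1} is an $L^2_\psi$ bound on $\phi_\psi/\sqrt{u}$ by $C\e(x)$, and any H\"older splitting that consumes $\|\phi_\psi/\sqrt{u}\|_{L^2_\psi}$ (e.g.\ pairing it with $\|\phi/u^{3/2}\|_{L^2_\psi}$ and $\|\phi/u^2\|_{L^\infty_\psi}$) yields at best $C\e(x)^2\|\phi/u^{3/2}\|_{L^2_\psi}$ — \emph{linear}, not quadratic, in the dissipation. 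That shape cannot be absorbed into $\|\phi_\psi\|_{L^2_\psi}^2+\|\phi/u^{3/2}\|_{L^2_\psi}^2$ by taking $\sg$ small: Young's inequality leaves over an $\e(x)^4$ remainder that is not dominated by the dissipation at this level, since $\e(x)$ contains $\|\phi_x\|_{L^2_\psi}$, $\|\phi_{x\psi}\|_{L^2_\psi}$, etc.

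What actually closes the estimate is a \emph{pointwise} bound, not \eqref{phs1}. Either: (i) split $\f{\phi^2|\phi_\psi|}{u^4}=\f{\phi}{u^{3/2}}\cdot\f{\phi}{u^{5/2}}\cdot|\phi_\psi|$ and use the $L^\infty_\psi$ estimate $\|\phi/u^{5/2}\|_{L^\infty_\psi}\le C\e(x)$, which is the content of \eqref{ph52in} (proved independently of the present lemma, via \eqref{pp52}, \eqref{pup3}, \eqref{u2pp}); this gives $\le C\e(x)\|\phi/u^{3/2}\|_{L^2_\psi}\|\phi_\psi\|_{L^2_\psi}$, which is the required quadratic form. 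Or (ii), simpler and avoiding the integration by parts entirely, note $\f12\iy\f{\phi_{\psi\psi}}{u^2}\phi^2\,d\psi=\f12\iy(u\phi_{\psi\psi})\f{\phi^2}{u^3}\,d\psi$ and bound $\|u\phi_{\psi\psi}\|_{L^\infty_\psi}\le C\e(x)$ directly from $\eqref{eq3}_1$ together with \eqref{px} and \eqref{pu21}, recovering the same $C\e(x)\|\phi/u^{3/2}\|_{L^2_\psi}^2$ as in the paper. As written, your invocation of \eqref{phs1} at this point does not justify the claimed bound, so the step as stated fails; the rest of the argument (absorbing contributions, integrating, using $u\lesssim 1$ and Gronwall for \eqref{Ad2}) is fine.
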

\begin{proof}
	We multiply $\eqref{eq3}_1$ by $\frac{\phi}{u}$ and integrate with respect to $\psi$, to discover
	\begin{equation}\label{A2}
	\begin{aligned}
	\frac{1}{2}\frac{d}{dx}\|\frac{\phi}{\sqrt{u}}\|_{L^2_\psi}^2+\Big(\|\phi_{\psi}\|_{L^2_\psi}^2+\|\frac{\phi}{\sqrt{u\us(u+\us)}}\|_{L^2_\psi}^2 \Big)\leq C\int_0^\infty \frac{\big|\phi^2\omega_x\big|}{u^2} d\psi\leq C\|u\om_x\|_{L^\infty_\psi}\|\frac{\phi}{u^{\f32}}\|_{L^2_\psi}^2.
	\end{aligned}
	\end{equation}
	Now we write
	\begin{equation}\label{wxgs}
	\om_x=\big(\f{\phi}{u+\us}\big)_x=\f{\phi_x}{u+\us}-\f{\phi\om_x}{(u+\us)^2},
	\end{equation}
	which gives the estimate
	\begin{equation}\label{uomx}
	\begin{aligned}
	\|u\om_x\|_{L^\infty_\psi}\leq C\|\phi_x\|_{L^\infty_\psi}+C\|\frac{\phi}{u^2}\|_{L^\infty_\psi}\|u\om_x\|_{L^\infty_\psi}.
	\end{aligned}
	\end{equation}
	Using Sobolev inequality, we get
	\begin{equation}\label{px}
	\|\phi_x\|_{L^\infty_\psi}\leq C\|\phi_{x}\|_{L^2_\psi}^{\f12}\|\phi_{x\psi}\|_{L^2_\psi}^{\f12}.
	\end{equation}
	Invoking \eqref{pu21} and \eqref{beta}, we obtain
	\begin{equation}\label{pu2}
	\begin{aligned}
	\|\frac{\phi}{u^2}\|_{L^\infty_\psi}\leq C\big(\|\phi\|_{L^2_\psi}+\|\phi_\psi\|_{L^2_\psi}+\|\frac{\phi_x}{\sqrt{u}}\|_{L^2_\psi}\big)\leq C\e(x).
	\end{aligned}
	\end{equation}
	We substitute the estimates \eqref{px}-\eqref{pu2} into \eqref{uomx}, to discover
	\begin{equation*}
	\begin{aligned}
	\|u\om_x\|_{L^\infty_\psi}\leq C\|\phi_{x}\|_{L^2_\psi}^{\f12}\|\phi_{x\psi}\|_{L^2_\psi}^{\f12}+ C\e(x)\|u\om_x\|_{L^\infty_\psi}\leq C\|\phi_{x}\|_{L^2_\psi}^{\f12}\|\phi_{x\psi}\|_{L^2_\psi}^{\f12}+ C\sg\|u\om_x\|_{L^\infty_\psi}.
	\end{aligned}
	\end{equation*}
	We choose $\sg$ small enough to find
	\begin{equation}\label{uomx2}
	\|u\om_x\|_{L^\infty_\psi}\leq C\|\phi_{x}\|_{L^2_\psi}^{\f12}\|\phi_{x\psi}\|_{L^2_\psi}^{\f12}\leq C\e(x).
	\end{equation}
	Utilizing \eqref{uomx2} in \eqref{A2} and using the fact that $u\sim\us$, we then obtain
	\begin{equation*}
	\frac{d}{dx}\|\frac{\phi}{\sqrt{u}}\|_{L^2_\psi}^2+\Big(\|\phi_{\psi}\|_{L^2_\psi}^2+\|\frac{\phi}{u^{\frac{3}{2}}}\|_{L^2_\psi}^2 \Big)\leq C\e(x)\|\frac{\phi}{u^{\f32}}\|_{L^2_\psi}^2.
	\end{equation*}
	Since $\e(x)\leq \sg$, by choosing $\sg$ small enough, we can deduce that
	\begin{equation*}
	\frac{d}{dx}\|\frac{\phi}{\sqrt{u}}\|_{L^2_\psi}^2+\Big(\|\phi_{\psi}\|_{L^2_\psi}^2+\|\frac{\phi}{u^{\frac{3}{2}}}\|_{L^2_\psi}^2 \Big)\leq 0,
	\end{equation*}
	which implies that \eqref{A22}. According to the fact that $u\lesssim1$, we conclude
	\begin{equation*}
	\frac{d}{dx}\|\frac{\phi}{\sqrt{u}}\|_{L^2_\psi}^2+\|\frac{\phi}{\sqrt{u}}\|_{L^2_\psi}^2\leq 0.
	\end{equation*}
	Then using Gronwall inequality, we obatin the decay estimate \eqref{Ad2} immediately.
\end{proof}
Finally, we also need the following energy estimate.
\begin{lemm}\label{A3}
Under the  assumption \eqref{priassum}, we have
	\begin{equation}\label{A33}
	\begin{aligned}
	\sup_{x'\in I}\big(\|\phi_\psi\|_{L^2_\psi}^2+\|\f{\phi}{u^{\f32}}\|_{L^2_\psi}^2\big)+\int_{I}\|\f{\phi_{x'}}{\sqrt{u}}\|_{L^2_\psi}^2dx'\leq \e^2(0)+C\e(x)\D^{2}(x),
	\end{aligned}
	\end{equation}
	where $C$ is a positive constant independent of $x$.
\end{lemm}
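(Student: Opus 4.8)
The plan is to run a weighted energy estimate on $\eqref{eq3}$ with the ``quotient-type'' multiplier $\f{\phi_x}{u}$, in the spirit of \cite{Iyer-2020-ARMA}. Concretely, I would multiply $\phi_x-u\phi_{\psi\psi}+2\f{\phi}{\us(u+\us)}=0$ by $\f{\phi_x}{u}$ and integrate over $\psi\in(0,\infty)$. The first term gives $\|\f{\phi_x}{\sqrt u}\|_{L^2_\psi}^2$; the term $-\int_0^\infty\phi_x\phi_{\psi\psi}\,d\psi$, after one integration by parts in $\psi$, equals $\f12\f{d}{dx}\|\phi_\psi\|_{L^2_\psi}^2$ (the boundary contributions vanish since $\phi_x|_{\psi=0}=\partial_x(\phi|_{\psi=0})=0$ and $\phi,\phi_\psi$ decay at $\psi=\infty$); and for the zeroth-order term, writing $g:=u\us(u+\us)$ and using $\us_x=0$ so that $g_x=u_x\us(2u+\us)$, one has $\int_0^\infty\f{2\phi\phi_x}{g}\,d\psi=\f{d}{dx}\int_0^\infty\f{\phi^2}{g}\,d\psi+\int_0^\infty\f{\phi^2u_x(2u+\us)}{u^2\us(u+\us)^2}\,d\psi$. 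Altogether this produces the differential identity
\[
\f12\f{d}{dx}\|\phi_\psi\|_{L^2_\psi}^2+\f{d}{dx}\int_0^\infty\f{\phi^2}{u\us(u+\us)}\,d\psi+\Big\|\f{\phi_x}{\sqrt u}\Big\|_{L^2_\psi}^2=-\int_0^\infty\f{\phi^2\,u_x(2u+\us)}{u^2\us(u+\us)^2}\,d\psi.
\]

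Next I would read off energy, dissipation and initial data. Under \eqref{priassum} and \eqref{beta} we have $u\sim\us$ and $u+\us\sim u$, so $u\us(u+\us)\sim u^3$ and hence $\int_0^\infty\f{\phi^2}{u\us(u+\us)}\,d\psi\sim\|\f{\phi}{u^{3/2}}\|_{L^2_\psi}^2$; thus the left side above is, up to $\gm$-dependent constants, $\f{d}{dx}\big(\|\phi_\psi\|_{L^2_\psi}^2+\|\f{\phi}{u^{3/2}}\|_{L^2_\psi}^2\big)$ plus the dissipation $\|\f{\phi_x}{\sqrt u}\|_{L^2_\psi}^2$, exactly the structure of \eqref{A33}. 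Integrating in $x$ over $I$, the contribution at $x=0$ is $\f12\|\phi_{0\psi}\|_{L^2_\psi}^2+\int_0^\infty\f{\phi_0^2}{u_0\us(u_0+\us)}\,d\psi\lesssim\|\phi_{0\psi}\|_{L^2_\psi}^2+\|\f{\phi_0}{u_0^{3/2}}\|_{L^2_\psi}^2\le\e^2(0)$, which is the first term on the right of \eqref{A33}.

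The only nontrivial point is to bound the time integral of the right-hand side by $\e(x)\D^2(x)$. Using $\f{2u+\us}{(u+\us)^2}\lesssim\f1u$, then $|u_x|\le\f1u\|u\om_x\|_{L^\infty_\psi}$ (recall $u_x=\om_x$), and finally $u\sim\us$, one gets the pointwise-in-$x$ bound $\big|\int_0^\infty\f{\phi^2u_x(2u+\us)}{u^2\us(u+\us)^2}\,d\psi\big|\lesssim\|u\om_x\|_{L^\infty_\psi}\,\|\f{\phi}{u^{5/2}}\|_{L^2_\psi}^2$. By \eqref{uomx2}, $\sup_{x'\in I}\|u\om_x\|_{L^\infty_\psi}\lesssim\e(x)$. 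By \eqref{u521} (with $f(x)\lesssim1$), $\|\f{\phi}{u^{5/2}}\|_{L^2_\psi}\lesssim\|\phi\|_{L^2_\psi}+\|\sqrt u\phi_\psi\|_{L^2_\psi}+\|\phi_x\|_{L^2_\psi}$; since $u\lesssim1$ gives $\|\phi\|_{L^2_\psi}\lesssim\|\f{\phi}{u}\|_{L^2_\psi}$ and $\|\phi_x\|_{L^2_\psi}\lesssim\|\f{\phi_x}{\sqrt u}\|_{L^2_\psi}$, each of these three quantities has $L^2_x$-norm $\lesssim\D(x)$ straight from the definition of $\D$, whence $\int_I\|\f{\phi}{u^{5/2}}\|_{L^2_\psi}^2\,dx'\lesssim\D^2(x)$. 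Therefore $\int_I\big|\int_0^\infty\f{\phi^2u_x(2u+\us)}{u^2\us(u+\us)^2}\,d\psi\big|\,dx'\lesssim\e(x)\D^2(x)$. Inserting this into the integrated identity and taking the supremum over $x'\in I$ (using monotonicity of $\e,\D$ in $x$) gives \eqref{A33}.

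The main obstacle is precisely this single nonlinear term: it cannot be absorbed into the dissipation, so one must extract its smallness from $u_x=\om_x$ via \eqref{uomx2} and then convert the bad weight $u^{-5/2}$ it produces into quantities already present in $\D$ by means of the Hardy-type estimate \eqref{u521}. A secondary technical care is the boundary degeneracy $u\sim\sqrt\psi$ as $\psi\to0$, which is needed to guarantee that $\int_0^\infty\f{\phi^2}{u\us(u+\us)}\,d\psi$ (and the discarded boundary terms in the integration by parts) are finite; all of this is available under the running a priori assumptions \eqref{priassum}, so no genuinely new difficulty arises.
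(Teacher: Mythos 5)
Your proposal is correct, and it follows the same overall strategy as the paper: multiply $\eqref{eq3}_1$ by $\f{\phi_x}{u}$, integrate by parts to obtain the energy identity with $\f12\f{d}{dx}\big(\|\phi_\psi\|_{L^2_\psi}^2+\|\f{\phi}{\sqrt{u\us(u+\us)}}\|_{L^2_\psi}^2\big)+\|\f{\phi_x}{\sqrt u}\|_{L^2_\psi}^2$ on the left and the nonlinear term $\int_0^\infty\f{|\phi|^2|\om_x|}{u^4}\,d\psi$ on the right. The difference is in the H\"older split of that nonlinear term. The paper writes it as $\|\f{\om_x}{u^{3/2}}\|_{L^2_\psi}\|\f{\phi}{u^{5/2}}\|_{L^2_\psi}\|\phi\|_{L^\infty_\psi}$, which forces it to establish the weighted estimate $\|\f{\om_x}{u^{3/2}}\|_{L^2_\psi}\lesssim\e+$ (dissipation) via a chain of auxiliary inequalities (\eqref{omxwq}, \eqref{pxpp}, \eqref{omx32}, \eqref{px52}, \eqref{omx3}) that involve differentiating the equation in $x$ and absorbing the resulting $\|u^{3/2}\phi_{x\psi\psi}\|_{L^2_\psi}$ term. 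You instead put $\om_x$ in $L^\infty_\psi$: $\f{|\phi|^2|\om_x|}{u^4}\leq |u\om_x|\cdot\f{|\phi|^2}{u^5}$, so the bound becomes $\|u\om_x\|_{L^\infty_\psi}\|\f{\phi}{u^{5/2}}\|_{L^2_\psi}^2$. Since $\|u\om_x\|_{L^\infty_\psi}\lesssim\e(x)$ is already available from \eqref{uomx2} (established in the preceding quotient-estimate lemma) and $\|\f{\phi}{u^{5/2}}\|_{L^2_\psi}$ is controlled pointwise in $x$ by \eqref{u521} with its $L^2_x$-norm $\lesssim\D(x)$, the closing step $\int_I(\cdots)\,dx'\lesssim\e(x)\D^2(x)$ follows directly and your proof of this lemma is genuinely shorter. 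The trade-off is that the paper's auxiliary estimates \eqref{pxpp} and \eqref{omx3} are reused in Lemmas \ref{lastlemm} and \ref{l6}, so the global savings across the section is smaller than it appears here. One small caveat on phrasing: the line asserting that the left side ``is, up to constants, $\f{d}{dx}(\|\phi_\psi\|^2+\|\f{\phi}{u^{3/2}}\|^2)$'' should not be read literally, since $a\sim b$ does not imply $a'\sim b'$; the correct argument — which you in fact carry out in the next sentence — is to integrate the exact identity in $x$ first and only then apply the equivalence $u\us(u+\us)\sim u^3$ to the boundary-in-$x$ terms and to the supremum over $x'\in I$.
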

\begin{proof}
	Multiplying $\eqref{eq3}_1$ by $\f{\phi_x}{u}$, using equivalent relation $u\sim\us$, and integrating by parts, we deduce
	\begin{equation}\label{otherestimate}
	\begin{aligned}
	\f12\f{d}{dx}\big(\|\phi_\psi\|_{L^2_\psi}^2+\|\f{\phi}{\sqrt{u\us(u+\us)}}\|_{L^2_\psi}^2\big)+\|\f{\phi_{x}}{\sqrt{u}}\|_{L^2_\psi}^2\leq C\int_0^\infty \f{|\phi|^2|\om_x|}{u^4}d\psi\leq C\|\f{\om_x}{u^{\f32}}\|_{L^2_\psi}\|\f{\phi}{u^{\f52}}\|_{L^2_\psi}\|\phi\|_{L^\infty_\psi}.
	\end{aligned}
	\end{equation}
	Recalling the definition of $\e(x)$, then combinining \eqref{wxgs} with \eqref{pu2}, we get
	\begin{equation*}
	\begin{aligned}
	\|\om_x\|_{L^\infty_\psi}\leq \|\f{\phi_x}{u}\|_{L^\infty_\psi}+\|\f{\phi}{u^2}\|_{L^\infty_\psi}\|\om_x\|_{L^\infty_\psi}\leq\|\f{\phi_x}{u}\|_{L^\infty_\psi}+ C\e(x)\|\om_x\|_{L^\infty_\psi}.
	\end{aligned}
	\end{equation*}
	Recalling assumption \eqref{priassum} that $\e(x)\leq \sg$, choosing $\sg$ small enough, and using Sobolev inequality, we then have
	\begin{equation}\label{omxwq}
	\begin{aligned}
	\|\om_x\|_{L^\infty_\psi}^2\leq&C \|\f{\phi_x}{u}\|_{L^\infty_\psi}^2\leq C\big(\|\f{\phi_x(x,\psi)}{u(x,\psi)}(1-\chi(\f{\psi}{\eta_0}))\|_{L^\infty_\psi}^2
+\|\f{\phi_x(x,\psi)}{u(x,\psi)}\chi(\f{\psi}{\eta_0})\|_{L^\infty_\psi}^2\big)\\
	\leq&C\big(\|\phi_x\|_{L^\infty_\psi}^2
+\|\psi^{\f12}\phi_{x\psi}(x,\psi)\chi(\f{\psi}{\eta_0})\|_{L^\infty_\psi}^2\big)\\ \leq&C\|\phi_{x}\|_{L^2_\psi}\|\phi_{x\psi}\|_{L^2_\psi}
+C\int_\psi^\infty(|\phi_{x\psi'}(x,\psi')|^2
+\psi'\phi_{x\psi'\psi'}(x,\psi')\phi_{x\psi'}(x,\psi')\chi(\f{\psi'}{\eta_0}))d\psi'\\ \leq&C\big(\|\phi_{x}\|_{L^2_\psi}^2+\|\phi_{x\psi}\|_{L^2_\psi}^2
+\|\sqrt{u}\phi_{x\psi}\|_{L^2_\psi}\|u^{\f32}\phi_{x\psi\psi}\|_{L^2_\psi}\big),
	\end{aligned}
	\end{equation}
where we have used that
\beqq
\begin{split}	
|\f{\phi_x(x,\psi)}{u(x,\psi)}\chi(\f{\psi}{\eta_0})|
\leq&\f{C}{\psi^{\f12}}\int_0^{\min\{\psi,2\eta_0\}} |\phi_{x\psi'}(x,\psi')| d\psi'
\leq\f{C}{\psi^{\f12}}\int_0^\psi |\phi_{x\psi'}(x,\psi')\chi(\f{\psi'}{\eta_0})| d\psi'\\
		\leq& \f{C}{\psi^{\f12}}\|\psi^{\f12}\phi_{x\psi}(x,\psi)\chi(\f{\psi}{\eta_0})\|_{L^\infty_\psi}
\int_0^\psi\psi'^{-\f12}d\psi'
\leq C\|\psi^{\f12}\phi_{x\psi}(x,\psi)\chi(\f{\psi}{\eta_0})\|_{L^\infty_\psi}.
\end{split}
\deqq
	Next differentiate $\eqref{eq3}_1$ with respect to $x$, then
	\begin{equation}\label{eq4}
	\begin{aligned}
	\phi_{xx}-u\phi_{x\psi\psi}+2\f{\phi_x}{\us(u+\us)}-\om_x\phi_{\psi\psi}-2\f{\phi\om_x}{\us(u+\us)^2}=0.
	\end{aligned}
	\end{equation}
	Combining this equation, \eqref{u521}, \eqref{beta}, \eqref{u2pp} with \eqref{omxwq}, for any small $\eps$ ($\eps$ will be determined later), it holds
	\begin{equation*}
	\begin{aligned}
	\|u^{\f32}\phi_{x\psi\psi}\|_{L^2_\psi}\leq&C\big(\|\sqrt{u}\phi_{xx}\|_{L^2_\psi}+\|\f{\phi_x}{u^{\f32}}\|_{L^2_\psi}+\|\sqrt{u}\phi_{\psi\psi}\|_{L^2_\psi}\|\om_x\|_{L^\infty_\psi}+\|\f{\phi}{u^{\f52}}\|_{L^2_\psi}\|\om_x\|_{L^\infty_\psi}\big)\\
	\leq&C\Big(\|\sqrt{u}\phi_{xx}\|_{L^2_\psi}+\|\f{\phi_x}{u^{\f32}}\|_{L^2_\psi}+\big(\|\phi\|_{L^2_\psi}+\|\sqrt{u}\phi_{\psi}\|_{L^2_\psi}+\|\frac{\phi_x}{\sqrt{u}}\|_{L^2_\psi}\big)\\
	&\quad\times\big(\|\phi_{x}\|_{L^2_\psi}+\|\phi_{x\psi}\|_{L^2_\psi}+\|\sqrt{u}\phi_{x\psi}\|_{L^2_\psi}^{\f12}\|u^{\f32}\phi_{x\psi\psi}\|_{L^2_\psi}^{\f12}\big)\Big)\\
	\leq&\eps\|u^{\f32}\phi_{x\psi\psi}\|_{L^2_\psi}+C_{\eps}\big(\|\phi\|_{L^2_\psi}+\|\sqrt{u}\phi_{\psi}\|_{L^2_\psi}+\|\f{\phi_x}{u^{\f32}}\|_{L^2_\psi}+\|\phi_{x\psi}\|_{L^2_\psi}+\|\sqrt{u}\phi_{xx}\|_{L^2_\psi}\big),
	\end{aligned}
	\end{equation*}
	where we have used the fact that $u\lesssim1$ and\beqq
	\|\phi\|_{L^2_\psi}+\|\sqrt{u}\phi_{\psi}\|_{L^2_\psi}
    +\|\frac{\phi_x}{\sqrt{u}}\|_{L^2_\psi}\leq C\e(x)\leq C\sg.
	\deqq
	We then choose $\eps$ small enough, to find
	\begin{equation}\label{pxpp}
	\|u^{\f32}\phi_{x\psi\psi}\|_{L^2_\psi}\leq C\big(\|\phi\|_{L^2_\psi}^2+\|\sqrt{u}\phi_{\psi}\|_{L^2_\psi}^2+\|\f{\phi_x}{u^{\f32}}\|_{L^2_\psi}+\|\phi_{x\psi}\|_{L^2_\psi}^2+\|\sqrt{u}\phi_{xx}\|_{L^2_\psi}\big).
	\end{equation}
	Recalling \eqref{wxgs} and \eqref{pu2}, we deduce
	\begin{equation}\label{omx32}
	\begin{aligned}
	\|\f{\om_x}{u^{\f32}}\|_{L^2_\psi}\leq& \|\f{\phi_x}{u^{\f52}}\|_{L^2_\psi}+\|\f{\phi}{u^2}\|_{L^\infty_\psi}\|\f{\om_x}{u^{\f32}}\|_{L^2_\psi}
	\leq\|\f{\phi_x}{u^{\f52}}\|_{L^2_\psi}+C\e(x)\|\f{\om_x}{u^{\f32}}\|_{L^2_\psi}.
\end{aligned}
\end{equation}
It then follows from a similar way as \eqref{u521} that
\begin{equation}\label{px52}
\begin{aligned} \|\f{\phi_x}{u^{\f52}}\|_{L^2_\psi}
\leq&\|\f{\phi_x(x,\psi)}{u^{\f52}(x,\psi)}(1-\chi(\f{\psi}{\eta_0}))\|_{L^2_\psi}
+\|\f{\phi_x(x,\psi)}{\psi^{\f54}}\chi(\f{\psi}{\eta_0})\|_{L^2_\psi}\\
\leq& C\big(\|\phi_x\|_{L^2_\psi}+\|\f{\phi_x(x,\psi)}{\psi^{\f14}}\chi'(\f{\psi}{\eta_0})\|_{L^2_\psi}
+\|\f{\phi_{x\psi}(x,\psi)}{\psi^{\f14}}\chi(\f{\psi}{\eta_0})\|_{L^2_\psi}\big)\\
\leq& C\big(\|\phi_x\|_{L^2_\psi}+\|\phi_x(x,\psi)\psi^{\f34}\chi''(\f{\psi}{\eta_0})\|_{L^2_\psi}
+\|\phi_{x\psi}(x,\psi)\psi^{\f34}\chi'(\f{\psi}{\eta_0})\|_{L^2_\psi}
+\|\phi_{x\psi\psi}(x,\psi)\psi^{\f34}\chi(\f{\psi}{\eta_0})\|_{L^2_\psi}\big)\\ \leq&C\big(\|\phi\|_{L^2_\psi}+\|u^{\f32}\phi_{x\psi}\|_{L^2_\psi}
+\|u^{\f32}\phi_{x\psi\psi}\|_{L^2_\psi}\big)\\ \leq&C\big(\|\phi\|_{L^2_\psi}+\|\sqrt{u}\phi_{\psi}\|_{L^2_\psi}
+\|\f{\phi_x}{u^{\f32}}\|_{L^2_\psi}+\|\phi_{x\psi}\|_{L^2_\psi}
+\|\sqrt{u}\phi_{xx}\|_{L^2_\psi} \big),
	\end{aligned}
	\end{equation}
	where we have used the estimate \eqref{pxpp} in the last inequality.
	Therefore, combining \eqref{px52} with \eqref{omx32}, and choosing $\sg$ small enough, we discover
	\begin{equation}\label{omx3}
	\begin{aligned}
	\|\f{\om_x}{u^{\f32}}\|_{L^2_\psi}\leq C\big(\|\phi\|_{L^2_\psi}+\|\sqrt{u}\phi_{\psi}\|_{L^2_\psi}
+\|\f{\phi_x}{u^{\f32}}\|_{L^2_\psi}+\|\phi_{x\psi}\|_{L^2_\psi}
+\|\sqrt{u}\phi_{xx}\|_{L^2_\psi} \big).
	\end{aligned}
	\end{equation}
	Using Sobolev inequality, we have\beq\label{phiinfty}
	\|\phi\|_{L^\infty_\psi}\leq \|\phi\|_{L^2_\psi}^{\f12}\|\phi_\psi\|_{L^2_\psi}^{\f12}.
	\deq
	We thereupon conclude from  \eqref{u521}, \eqref{beta}, \eqref{omx3} and \eqref{phiinfty} that
	\begin{equation*}
	\begin{aligned}
	\int_0^\infty \f{|\phi|^2|\om_x|}{u^4}d\psi
	\leq&C\big(\|\phi\|_{L^2_\psi}+\|\sqrt{u}\phi_{\psi}\|_{L^2_\psi}
+\|\f{\phi_x}{u^{\f32}}\|_{L^2_\psi}+\|\phi_{x\psi}\|_{L^2_\psi}
+\|\sqrt{u}\phi_{xx}\|_{L^2_\psi} \big)\\	&\quad\times\big(\|\phi\|_{L^2_\psi}+\|\sqrt{u}\phi_{\psi}\|_{L^2_\psi}
+\|\phi_x\|_{L^2_\psi}\big)\big(\|\phi\|_{L^2_\psi}+\|\phi_{\psi}\|_{L^2_\psi}\big).
	\end{aligned}
	\end{equation*}
	Substituting the estimate above into \eqref{otherestimate}, and then integrating over $I$, by using equivalent relation $u\sim\us$ and recalling the definition of $\e(x)$ and $\D(x)$, we obtain the estimate \eqref{A33} immediately.
\end{proof}
At the $H^1$ level, comparing equation \eqref{eq3} versus equation \eqref{eq4}, the nonlinear terms changes and thus requires a new treatment, see Lemmas \ref{l4}-\ref{l6} below.
Firstly, we derive the following standard energy estimate in $H^1$ level.
\begin{lemm}\label{l4}
Under the  assumption \eqref{priassum}, we have
	\begin{equation}\label{l44}
	\sup_{x'\in I}\|\phi_{x'}\|_{L^2_\psi}^2+\int_{I}\big(\|\sqrt{u}\phi_{x'\psi}\|_{L^2_\psi}^2+\|\frac{\phi_{x'}}{u}\|_{L^2_\psi}^2 \big)dx'\leq \e^2(0)+C\e(x)\D^{2}(x),
	\end{equation}
	where $C$ is a positive constant independent of $x$.
\end{lemm}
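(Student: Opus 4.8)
\textbf{Proof proposal for Lemma \ref{l4}.} The plan is to repeat the $L^2$ energy scheme of Lemma \ref{A1}, but now applied to the $x$-differentiated equation \eqref{eq4}. First I would multiply \eqref{eq4} by $\phi_x$ and integrate over $\psi\in(0,\infty)$; the boundary contributions at $\psi=0$ and $\psi\to\infty$ drop because $\phi|_{\psi=0}=\phi|_{\psi\to\infty}=0$ forces $\phi_x$ to vanish there as well. The term $\phi_{xx}\phi_x$ yields $\tfrac12\tfrac{d}{dx}\|\phi_x\|_{L^2_\psi}^2$. In the diffusion term $-\iy u\phi_{x\psi\psi}\phi_x\,d\psi$ I would split the coefficient $u=\us+\om$ exactly as in \eqref{l1}: integrating by parts in $\psi$ the $\us$-part produces the two manifestly positive quantities $\|\sqrt{\us}\phi_{x\psi}\|_{L^2_\psi}^2$ and $-\tfrac12\iy\us_{\psi\psi}|\phi_x|^2\,d\psi\ge 0$ (using $\us_{\psi\psi}=\us_{yy}-\us|\us_\psi|^2\le 0$), while the damping term contributes $2\|\phi_x/\sqrt{\us(u+\us)}\|_{L^2_\psi}^2$. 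All remaining pieces — the $\om$-part $-\iy\om\phi_{x\psi\psi}\phi_x\,d\psi$ of the diffusion term and the two genuinely new nonlinear terms $\iy\om_x\phi_{\psi\psi}\phi_x\,d\psi$ and $2\iy\phi\om_x\phi_x/(\us(u+\us)^2)\,d\psi$ — are transferred to the right-hand side as error terms.

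Next I would estimate these error integrals using only facts already established above. The decisive inputs are: the pointwise smallness $\|\om/u\|_{L^\infty_\psi}=\alpha(x)\le C\e(x)$ (Corollary \ref{ab}); the $L^\infty$ and weighted $L^2$ bounds on $\om_x$ from \eqref{uomx2} and \eqref{omx3}; the second-derivative controls $\|\sqrt u\phi_{\psi\psi}\|_{L^2_\psi}\le C\e(x)$, $\|\phi/u^{5/2}\|_{L^2_\psi}$ and $\|\phi/u^{3/2}\|_{L^2_\psi}$ from \eqref{u2pp}, \eqref{u521}, \eqref{pu3}; the bound \eqref{pxpp} on $\|u^{3/2}\phi_{x\psi\psi}\|_{L^2_\psi}$; and the Sobolev bounds \eqref{px}, \eqref{phiinfty}. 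For each error integral I would distribute the $u$-weights so that, after invoking $u\sim\us$ (valid since $f(x)\le\gm$ by Lemma \ref{b}) and $u\lesssim 1$, every factor lands in $\e(x)$ or $\D(x)$; e.g. $|\iy\om\phi_{x\psi\psi}\phi_x\,d\psi|\le\|\om/u\|_{L^\infty_\psi}\,\|u^{3/2}\phi_{x\psi\psi}\|_{L^2_\psi}\,\|\phi_x/\sqrt u\|_{L^2_\psi}$, where the first factor is $\lesssim\e(x)$, the second $\lesssim\D(x)$, the third is part of $\e(x)$, and the $\om_x$-integrals are treated similarly via \eqref{omx3} together with \eqref{phiinfty}. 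Any term that emerges proportional to $\|\sqrt u\phi_{x\psi}\|_{L^2_\psi}^2+\|\phi_x/u\|_{L^2_\psi}^2$ is absorbed into the left-hand side using the a priori smallness $\e(x)\le\sg$. The outcome is a differential inequality of the form $\tfrac{d}{dx}\|\phi_x\|_{L^2_\psi}^2+c\big(\|\sqrt u\phi_{x\psi}\|_{L^2_\psi}^2+\|\phi_x/u\|_{L^2_\psi}^2\big)\le G(x)$ with $\int_I G\,dx'\le C\e(x)\D^2(x)$; integrating over $I=[0,x]$ and bounding $\|\phi_x(0,\cdot)\|_{L^2_\psi}^2\le\e^2(0)$ (by definition of $\e$) yields \eqref{l44}.

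I expect the main obstacle to be the term $\iy\om\phi_{x\psi\psi}\phi_x\,d\psi$: it is the only place where the highest derivative of the scheme, $\phi_{x\psi\psi}$, appears, and that quantity is controllable only in the degenerate weighted norm \eqref{pxpp}, so the $u$-powers must be split with care and the vanishing of $u$ near $\psi=0$ has to be compensated exactly by the smallness factor $\|\om/u\|_{L^\infty_\psi}\lesssim\e(x)$. A secondary point to watch is that the right-hand side of \eqref{pxpp} itself involves $\|\sqrt u\phi_{xx}\|_{L^2_\psi}$ and $\|\phi_{x\psi}\|_{L^2_\psi}$, so one must check that these are genuinely dominated by $\D(x)$ — which they are, since $u\lesssim 1$ gives $\|\sqrt u\phi_{xx}\|_{L^2_\psi}\le\|\phi_{xx}/\sqrt u\|_{L^2_\psi}$ — and that no circular dependence is created in the estimate.
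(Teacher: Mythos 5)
Your proposal is correct and arrives at the same inequality, but it takes a genuinely different route through the diffusion term. The paper does not split $u=\us+\om$ in the second-order term; it integrates by parts once to get $\|\sqrt u\phi_{x\psi}\|_{L^2_\psi}^2$ on the left, integrates the $\us_\psi$-part of $\iy u_\psi\phi_x\phi_{x\psi}$ again to produce $-\tfrac12\iy\us_{\psi\psi}\phi_x^2\ge0$, and leaves $\iy\om_\psi\phi_x\phi_{x\psi}$ as the residual error. This residual is then controlled by a new pointwise estimate $\|\sqrt u\,\om_\psi\|_{L^\infty_\psi}\lesssim\e(x)$, derived inside the proof through the chain \eqref{phiu52}, \eqref{pup3}, \eqref{pp52}, \eqref{ph52in}, \eqref{uomp}. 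Your version instead splits $u=\us+\om$ at the outset (as in \eqref{l1}), keeps $\|\sqrt\us\phi_{x\psi}\|_{L^2_\psi}^2$ on the left (equivalent since $u\sim\us$), and handles the residual $\iy\om\,\phi_{x\psi\psi}\phi_x$ directly via the already-established weighted bound \eqref{pxpp} on $\|u^{3/2}\phi_{x\psi\psi}\|_{L^2_\psi}$. This sidesteps the $\om_\psi$-machinery entirely and reuses a lemma from the previous step — arguably tidier. (In fact, after one more integration by parts your error $\iy\om\,\phi_{x\psi\psi}\phi_x$ becomes $\iy\om_\psi\phi_x\phi_{x\psi}$ plus $\iy\om|\phi_{x\psi}|^2$, the second of which is absorbed by $\e(x)\le\sigma$, so the two routes are formally interchangeable.) One small imprecision to tighten: you write that the factor $\|u^{3/2}\phi_{x\psi\psi}\|_{L^2_\psi}$ is $\lesssim\D(x)$, but $\D$ carries this quantity only in $L^2_x$, not pointwise in $x'$ — the right-hand side of \eqref{pxpp} contains $\|\sqrt u\,\phi_{xx}\|_{L^2_\psi}$, which is dissipative rather than sup-bounded. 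The correct step is Cauchy--Schwarz in $x'$ over the two $L^2_\psi$-factors:
\begin{equation*}
\int_I\Big|\iy \om\,\phi_{x'\psi\psi}\phi_{x'}\,d\psi\Big|\,dx'
\;\le\;\Big\|\tfrac{\om}{u}\Big\|_{L^\infty_{x',\psi}}
\Big(\int_I\|u^{3/2}\phi_{x'\psi\psi}\|_{L^2_\psi}^2dx'\Big)^{1/2}
\Big(\int_I\Big\|\tfrac{\phi_{x'}}{\sqrt u}\Big\|_{L^2_\psi}^2dx'\Big)^{1/2}
\;\lesssim\;\e(x)\,\D^2(x),
\end{equation*}
each factor under the integrals being controlled in $L^2_x$ by $\D(x)$ via \eqref{pxpp}, $u\lesssim1$, and the definition of $\D$. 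With that clarification the argument closes exactly as in the paper.
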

\begin{proof}
	Multiplying \eqref{eq4} by $\phi_x$ and integrating with respect to $\psi$, we deduce that
	\begin{equation}\label{h1energyestim}
	\begin{aligned}
	&\f12\f{d}{dx}\|\phi_{x}\|_{L^2_\psi}^2+\|\sqrt{u}\phi_{x\psi}\|_{L^2_\psi}^2+\|\f{\phi_{x}}{\sqrt{\us(u+\us)}}\|_{L^2_\psi}^2 -\f12\int_0^\infty\us_{\psi\psi}|\phi_x|^2d\psi\\
	\leq& \int_0^\infty\Big(\big|\om_x\phi_x\phi_{\psi\psi}\big|+\f{\big|\phi\phi_x\om_x\big|}{u^3}+\big|\om_{\psi}\phi_{x}\phi_{x\psi}\big|\Big)d\psi.
	\end{aligned}
	\end{equation}
	Since $\us_{\psi\psi}\leq0$, we have\beqq
	\f12\int_0^\infty\us_{\psi\psi}|\phi_x|^2d\psi\leq 0.
	\deqq
	We now proceed to estimate three terms on the right handside of \eqref{h1energyestim}.
	According to \eqref{u2pp} and \eqref{uomx2}, we find
	\begin{equation}\label{firterm}
	\begin{aligned}
	\int_0^\infty\big|\om_x\phi_x\phi_{\psi\psi}\big|d\psi\leq& C\|\sqrt{u}\phi_{\psi\psi}\|_{L^2_\psi}\|\f{\phi_{x}}{u^{\f32}}\|_{L^2_\psi}\|u\om_x\|_{L^\infty_\psi}\\
	\leq& C\big(\|\phi\|_{L^2_\psi}+\|\sqrt{u}\phi_{\psi}\|_{L^2_\psi}+\|\frac{\phi_x}{\sqrt{u}}\|_{L^2_\psi}\big)\|\f{\phi_{x}}{u^{\f32}}\|_{L^2_\psi}\|\phi_{x}\|_{L^2_\psi}^{\f12}\|\phi_{x\psi}\|_{L^2_\psi}^{\f12}.
	\end{aligned}
	\end{equation}
	Next, we deduce from \eqref{u521}, \eqref{uomx2} and \eqref{px52} that
	\begin{equation}\label{secterm}
	\begin{aligned}
	\int_0^\infty\f{\big|\phi\phi_x\om_x\big|}{u^3}d\psi\leq& C\|\f{\phi}{u^{\f52}}\|_{L^2_\psi}\|\f{\phi_x}{u^{\f52}}\|_{L^2_\psi}\|u\om_x\|_{L^\infty_\psi}\\
	\leq&C\big(\|\phi\|_{L^2_\psi}+\|\sqrt{u}\phi_{\psi}\|_{L^2_\psi}+\|\phi_x\|_{L^2_\psi}\big)\big(\|\phi\|_{L^2_\psi}+\|\sqrt{u}\phi_{\psi}\|_{L^2_\psi}+\|\f{\phi_{x}}{u^{\f32}}\|_{L^2_\psi}+\|\phi_{x\psi}\|_{L^2_\psi}\\
	&\quad+\|\sqrt{u}\phi_{xx}\|_{L^2_\psi}\big)\|\phi_{x}\|_{L^2_\psi}^{\f12}\|\phi_{x\psi}\|_{L^2_\psi}^{\f12}.
	\end{aligned}
	\end{equation}
	In order to estimate the last term on the right handside of \eqref{h1energyestim}, we first control the following term by using a similar way as \eqref{phiu2} and \eqref{phiu2chi}
	\begin{equation}\label{phiu52}
	\begin{aligned}
	\|\f{\phi}{u^{\f52}}\|_{L^\infty_\psi}\leq C\big(\|\phi\|_{L^\infty_\psi}+ \|\f{\phi_\psi(x,\psi)}{\sqrt{u(x,\psi)}}\chi(\f{\psi}{\eta_0})\|_{L^\infty_\psi}\big).
	\end{aligned}
	\end{equation}
	It then follows from a similar way as \eqref{pu3} that
	\begin{equation}\label{pup3}
	\|\f{\phi_\psi}{u^{\f32}}\|_{L^2_\psi}\leq C\big(\|\phi_\psi\|_{L^2_\psi}+\|\sqrt{u}\phi_{\psi\psi}\|_{L^2_\psi}\big).
	\end{equation}
	For the enhanced localized $\f{\phi_\psi}{\sqrt{u}}$ estimate, we deduce from \eqref{u2pp} and \eqref{pup3} that
	\begin{equation}\label{pp52}
	\begin{aligned}
	&\|\f{\phi_{\psi}(x,\psi)}{\sqrt{u(x,\psi)}}\chi(\f{\psi}{\eta_0})\|_{L^\infty_\psi}^2
	\leq \|\f{\phi_{\psi}(x,\psi)}{\psi^{\f14}}\chi(\f{\psi}{\eta_0})\|_{L^\infty_\psi}^2\\
	 \leq& C \int_{\psi}^{\infty}\Big|\f{\phi_{\psi'}(x,\psi')}{\psi'^{\f14}}\chi(\f{\psi'}{\eta_0}) \big(\f{\phi_{\psi'}(x,\psi')}{\psi'^{\f14}}\chi(\f{\psi'}{\eta_0})\big)_{\psi'}\Big| d{\psi'}\\
	=&C\int_\psi^\infty\Big|\f{\phi_{\psi'}(x,\psi')}{\psi'^{\f14}}\chi(\f{\psi'}{\eta_0}) \big(\f{\phi_{\psi'\psi'}(x,\psi')}{\psi'^{\f14}}\chi(\f{\psi'}{\eta_0})
+\f{\phi_{\psi'}(x,\psi')}{\psi'^{\f54}}\chi(\f{\psi'}{\eta_0})
+\f{\phi_{\psi'}(x,\psi')}{\psi'^{\f14}}\chi'(\f{\psi'}{\eta_0})\big)\Big|d\psi'\\ \leq&C\big(\|\f{\phi_{\psi}}{u^{\f32}}\|_{L^2_\psi}^2
+\|\sqrt{u}\phi_{\psi\psi}\|_{L^2_\psi}^2\big)
\leq C\big(\|\phi\|_{L^2_\psi}^2+\|\phi_{\psi}\|_{L^2_\psi}^2
+\|\frac{\phi_x}{\sqrt{u}}\|_{L^2_\psi}^2\big).
	\end{aligned}
	\end{equation}
	Substituting \eqref{phiinfty} and \eqref{pp52} into \eqref{phiu52}, we have
	\begin{equation}\label{ph52in}
	\begin{aligned}
	\|\f{\phi}{u^{\f52}}\|_{L^\infty_\psi}\leq C\big(\|\phi\|_{L^2_\psi}+\|\phi_{\psi}\|_{L^2_\psi}+\|\frac{\phi_x}{\sqrt{u}}\|_{L^2_\psi}\big).
	\end{aligned}
	\end{equation}
	We then estimate $\|\sqrt{u}\om_{\psi}\|_{L^\infty_\psi}$. Owing to
	\[2u\om_\psi=\phi_\psi-2\f{\us_y}{\us}\om, \]
	we then conclude from \eqref{pinty1}, \eqref{beta}, \eqref{pp52} and \eqref{ph52in} that
	\begin{equation}\label{uomp}
	\begin{aligned}
	\|\sqrt{u}\om_{\psi}\|_{L^\infty_\psi}\leq& C\big(\|\f{\phi_{\psi}}{\sqrt{u}}\|_{L^\infty_\psi}
+\|\us_y\|_{L^\infty_\psi}\|\f{\om}{u^{\f32}}\|_{L^\infty_\psi}\big)\\
	\leq& C\big(\|\f{\phi_{\psi}(x,\psi)}{\sqrt{u(x,\psi)}}(1-\chi(\f{\psi}{\eta_0}))\|_{L^\infty_\psi}
+ \|\f{\phi_{\psi}(x,\psi)}{\sqrt{u(x,\psi)}}\chi(\f{\psi}{\eta_0})\|_{L^\infty_\psi}
+\|\f{\phi}{u^{\f52}}\|_{L^\infty_\psi}\big)\\ \leq&C\big(\|\phi_{\psi}\|_{L^\infty_\psi}
+\|\f{\phi_{\psi}(x,\psi)}{\psi^{\f14}}\chi(\f{\psi}{\eta_0})\|_{L^\infty_\psi}
+\|\f{\phi}{u^{\f52}}\|_{L^\infty_\psi}\big)\\
	\leq& C\big(\|\phi\|_{L^2_\psi}+\|\phi_{\psi}\|_{L^2_\psi}
+\|\frac{\phi_x}{\sqrt{u}}\|_{L^2_\psi}\big).
	\end{aligned}
	\end{equation}
	Invoking Sobolev inequality and \eqref{uomp}, we obtain
	\begin{equation*}
	\begin{aligned}
	\int_0^\infty\big|\om_{\psi}\phi_{x}\phi_{x\psi}\big|d\psi\leq& C\|\sqrt{u}\om_{\psi}\|_{L^\infty_\psi}\|\f{\phi_{x}}{u}\|_{L^2_\psi}\|\sqrt{u}\phi_{x\psi}\|_{L^2_\psi}\\
	\leq&C\big(\|\phi\|_{L^2_\psi}+\|\phi_{\psi}\|_{L^2_\psi}+\|\f{\phi_{x}}{\sqrt{u}}\|_{L^2_\psi}\big)\|\f{\phi_{x}}{u}\|_{L^2_\psi}\|\sqrt{u}\phi_{x\psi}\|_{L^2_\psi}.
	\end{aligned}
	\end{equation*}
	This, combining \eqref{h1energyestim}-\eqref{secterm}, leads to the estimate \eqref{l44} by using equivalent relation $u\sim\us$ and recalling the definition of $\e(x)$ and $\D(x)$.
	Thus, we have completed the proof of this lemma.
\end{proof}
Next, we can also obtain the following quotient estimate in $H^1$ level.
\begin{lemm}\label{lastlemm}
	Under the  assumption \eqref{priassum}, we have
	\begin{equation}\label{last1}
	\sup_{x'\in I}\|\f{\phi_{x'}}{\sqrt{u}}\|_{L^2_\psi}^2+\int_{I}\big(\|\phi_{x'\psi}\|_{L^2_\psi}^2+\|\frac{\phi_{x'}}{u^{\f32}}\|_{L^2_\psi}^2 \big)dx'\leq \e^2(0)+C\e(x)\D^{2}(x),
	\end{equation}
	where $C$ is a positive constant independent of $x$.
\end{lemm}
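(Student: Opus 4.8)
The plan is to mimic the derivation of the $H^1$ standard energy estimate \eqref{l44}, but at the level of the quotient: instead of testing the $x$-differentiated equation \eqref{eq4} against $\phi_x$, I would test it against the degenerate weight $\f{\phi_x}{u}$ and integrate in $\psi$ over $(0,\infty)$. The term $-u\phi_{x\psi\psi}$, once the weight $u$ cancels, gives after one integration by parts the dissipation $\|\phi_{x\psi}\|_{L^2_\psi}^2$ (the boundary term at $\psi=0$ vanishes since $\phi_x|_{\psi=0}=0$, and the one at $\psi=\infty$ vanishes by decay); the term $\phi_{xx}$ gives $\f12\f{d}{dx}\|\f{\phi_x}{\sqrt u}\|_{L^2_\psi}^2$ together with a commutator $\f12\int\f{|\phi_x|^2\om_x}{u^2}\,d\psi$ coming from $u_x=\om_x$; and the linear damping term $2\f{\phi_x}{\us(u+\us)}$ gives $2\|\f{\phi_x}{\sqrt{u\us(u+\us)}}\|_{L^2_\psi}^2$, which under \eqref{priassum} and Lemma \ref{b} (so that $u\sim\us$, hence $u\us(u+\us)\sim u^3$ near $\psi=0$) is comparable to $\|\f{\phi_x}{u^{3/2}}\|_{L^2_\psi}^2$. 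Integrating the resulting differential inequality over $I$, and using $\phi_x|_{x=0}=-L\phi_0$ so that the initial term is $\|\f{L\phi_0}{\sqrt{u_0}}\|_{L^2_\psi}^2\le\e^2(0)$, it then remains only to control the error terms produced on the right-hand side.

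These errors are $\int\f{|\phi_x|^2|\om_x|}{u^2}\,d\psi$ (from the commutator) together with $\int\f{|\om_x||\phi_{\psi\psi}||\phi_x|}{u}\,d\psi$ and $\int\f{|\phi||\om_x||\phi_x|}{u^4}\,d\psi$ (from testing the genuinely nonlinear terms $-\om_x\phi_{\psi\psi}$ and $-2\f{\phi\om_x}{\us(u+\us)^2}$ of \eqref{eq4}, where near $\psi=0$ one uses $\us(u+\us)^2\sim u^3$). I would estimate the first by $\|\f{\phi_x}{u^{3/2}}\|_{L^2_\psi}^2\|u\om_x\|_{L^\infty_\psi}$ and use $\|u\om_x\|_{L^\infty_\psi}\le C\e(x)$ from \eqref{uomx2}; the second by the H\"{o}lder split $\|\sqrt u\phi_{\psi\psi}\|_{L^2_\psi}\|\f{\phi_x}{u^{3/2}}\|_{L^2_\psi}\|\om_x\|_{L^\infty_\psi}$, feeding in $\|\sqrt u\phi_{\psi\psi}\|_{L^2_\psi}\le C\e(x)$ from \eqref{u2pp} and the localized control of $\|\om_x\|_{L^\infty_\psi}$ (equivalently $\|\f{\phi_x}{u}\|_{L^\infty_\psi}$) from \eqref{omxwq}--\eqref{pxpp}; the third by $\|\f{\phi}{u^{5/2}}\|_{L^\infty_\psi}\|\f{\phi_x}{u^{3/2}}\|_{L^2_\psi}\|\f{\om_x}{u^{3/2}}\|_{L^2_\psi}$ (using $u\lesssim1$), with $\|\f{\phi}{u^{5/2}}\|_{L^\infty_\psi}\le C\e(x)$ from \eqref{ph52in} and $\|\f{\om_x}{u^{3/2}}\|_{L^2_\psi}$ from \eqref{omx3} (itself built on \eqref{px52}, \eqref{pxpp}). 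In each case one $L^\infty_\psi$ factor is absorbed as $C\e(x)$, the surviving $L^2_\psi$ factors are each $x$-integrable into $\D(x)$ after Cauchy--Schwarz in $x$ (here one again compares, e.g., $\|\phi_x\|_{L^2_\psi}$ with $\|\f{\phi_x}{\sqrt u}\|_{L^2_\psi}$ via $u\lesssim1$), and the reappearing copy of $\|\f{\phi_x}{u^{3/2}}\|_{L^2_\psi}^2$ is absorbed into the dissipation on the left using $\e(x)\le\sg\ll1$. This yields \eqref{last1}.

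The main obstacle I anticipate is the weight bookkeeping in the last two error integrals: testing against $\f{\phi_x}{u}$ rather than $\phi_x$ introduces an extra factor $u^{-1}$, so the plain $H^1$ energy norms no longer close the estimate and every inequality has to be routed through the degenerate Hardy-type bounds of Lemmas \ref{A3}--\ref{l4} and their intermediate steps (\eqref{u521}, \eqref{u2pp}, \eqref{omx3}, \eqref{px52}, \eqref{pxpp}, \eqref{ph52in}, \eqref{uomp}), which are mutually entangled --- for instance, controlling $\|\f{\om_x}{u^{3/2}}\|_{L^2_\psi}$ requires $\|u^{3/2}\phi_{x\psi\psi}\|_{L^2_\psi}$, which in turn requires $\|\sqrt u\phi_{xx}\|_{L^2_\psi}$. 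The care needed is to ensure that after the $x$-integration only quantities already accounted for in $\D(x)$ appear (never a bare power of $|I|$, which would ruin the global-in-$x$ bound) and that all of the dissipation is recovered with a sufficiently small coefficient for the absorption to close.
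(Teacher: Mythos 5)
Your proposal is correct and follows essentially the same route as the paper: test the $x$-differentiated equation \eqref{eq4} against $\phi_x/u$, identify the same three error integrals (commutator $\int\!\f{|\phi_x|^2|\om_x|}{u^2}$, nonlinear-diffusion $\int\!\f{|\om_x\phi_x\phi_{\psi\psi}|}{u}$, nonlinear-damping $\int\!\f{|\phi\phi_x\om_x|}{u^4}$), and control them through $\|\om_x\|_{L^\infty_\psi}$, $\|\f{\om_x}{u^{3/2}}\|_{L^2_\psi}$, $\|\sqrt u\,\phi_{\psi\psi}\|_{L^2_\psi}$, $\|\f{\phi}{u^{5/2}}\|$ via the chain of Hardy/Sobolev estimates already proved. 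Your H\"older splits differ cosmetically in a couple of places (e.g., you pull out $\|u\om_x\|_{L^\infty_\psi}$ from \eqref{uomx2} for the commutator, and $\|\f{\phi}{u^{5/2}}\|_{L^\infty_\psi}$ from \eqref{ph52in} for the last term, where the paper pulls out $\|\om_x\|_{L^\infty_\psi}$ from \eqref{omx}), but both choices close to the same $\e^2(0)+C\e(x)\D^2(x)$ bound.
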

\begin{proof}
	Multiplying \eqref{eq4} by $\f{\phi_x}{u}$ and integrating with respect to $\psi$, we find that
	\begin{equation*}
	\begin{aligned}
	\f12\f{d}{dx}\|\f{\phi_x}{\sqrt{u}}\|_{L^2_\psi}^2+\|\phi_{x\psi}\|_{L^2_\psi}^2+\|\frac{\phi_{x}}{\sqrt{u\us(u+\us)}}\|_{L^2_\psi}^2\leq C\int_0^\infty\Big(\f{\big|\om_x\phi_x\phi_{\psi\psi}\big|}{u}+\f{\big|\phi\phi_x\om_x\big|}{u^4}+\f{\big|\phi_x^2\om_x\big|}{u^2}\Big) d\psi.
	\end{aligned}
	\end{equation*}
	Utilizing \eqref{pxpp} into \eqref{omxwq}, we then calculate that
	\begin{equation}\label{omx}
	\begin{aligned}
	\|\om_x\|_{L^\infty_\psi}\leq C\big(\|\phi\|_{L^2_\psi}+\|\phi_{\psi}\|_{L^2_\psi}+\|\f{\phi_x}{u^{\f32}}\|_{L^2_\psi}+\|\phi_{x\psi}\|_{L^2_\psi}+\|\sqrt{u}\phi_{xx}\|_{L^2_\psi} \big).
	\end{aligned}
	\end{equation}
	Hence, we get from \eqref{u2pp} and \eqref{omx} that
	\begin{equation*}
	\begin{aligned}
	\int_0^\infty\f{\big|\om_x\phi_x\phi_{\psi\psi}\big|}{u}d\psi\leq&\|\om_x\|_{L^\infty_\psi}\|\f{\phi_x}{u^{\f32}}\|_{L^2_\psi}\|\sqrt{u}\phi_{\psi\psi}\|_{L^2_\psi}\\
	\leq&C\big(\|\phi\|_{L^2_\psi}+\|\phi_{\psi}\|_{L^2_\psi}+\|\f{\phi_x}{u^{\f32}}\|_{L^2_\psi}+\|\phi_{x\psi}\|_{L^2_\psi}+\|\sqrt{u}\phi_{xx}\|_{L^2_\psi} \big)\|\f{\phi_x}{u^{\f32}}\|_{L^2_\psi}\\
	&\quad\times\big(\|\phi\|_{L^2_\psi}+\|\sqrt{u}\phi_{\psi}\|_{L^2_\psi}+\|\frac{\phi_x}{\sqrt{u}}\|_{L^2_\psi}\big).
	\end{aligned}
	\end{equation*}
	Using \eqref{omx} once again, together with \eqref{u521} and \eqref{beta}, we compute
	\begin{equation*}
	\begin{aligned}
	\int_0^\infty\f{\big|\phi\phi_x\om_x\big|}{u^4}d\psi\leq&\|\om_x\|_{L^\infty_\psi}\|\f{\phi}{u^{\f52}}\|_{L^2_\psi}\|\f{\phi_x}{u^{\f32}}\|_{L^2_\psi}\\
	\leq&C\big(\|\phi\|_{L^2_\psi}+\|\phi_{\psi}\|_{L^2_\psi}+\|\f{\phi_x}{u^{\f32}}\|_{L^2_\psi}+\|\phi_{x\psi}\|_{L^2_\psi}+\|\sqrt{u}\phi_{xx}\|_{L^2_\psi} \big)\\
	&\quad\times\big(\|\phi\|_{L^2_\psi}+\|\sqrt{u}\phi_{\psi}\|_{L^2_\psi}+\|\phi_x\|_{L^2_\psi}\big)\|\f{\phi_x}{u^{\f32}}\|_{L^2_\psi}.
	\end{aligned}
	\end{equation*}
	Applying \eqref{omx} once more, we discover that
	\begin{equation*}
	\begin{aligned}
	\int_0^\infty\f{\big|\phi_x^2\om_x\big|}{u^2}d\psi\leq& \|\om_x\|_{L^\infty_\psi}\|\f{\phi_x}{u^{\f32}}\|_{L^2_\psi}\|\f{\phi_x}{\sqrt{u}}\|_{L^2_\psi}
	\leq C\big(\|\phi\|_{L^2_\psi}+\|\phi_{\psi}\|_{L^2_\psi}+\|\f{\phi_x}{u^{\f32}}\|_{L^2_\psi}+\|\phi_{x\psi}\|_{L^2_\psi}+\|\sqrt{u}\phi_{xx}\|_{L^2_\psi} \big)\|\f{\phi_x}{u^{\f32}}\|_{L^2_\psi}^2.
	\end{aligned}
	\end{equation*}
	Integrating and utilizing these estimates above, using the equivalent relation $u\sim\us$, then recalling the definition of $\e(x)$ and $\D(x)$, we deduce \eqref{last1} directly.
\end{proof}
\begin{lemm}\label{l6}
	Under the  assumption \eqref{priassum}, we have
	\begin{equation}\label{l66}
	\sup_{x'\in I}\big(\|\phi_{x'\psi}\|_{L^2_\psi}^2+\|\frac{\phi_{x'}}{u^{\f32}}\|_{L^2_\psi}^2\big)+\int_{I}\|\f{\phi_{x'x'}}{\sqrt{u}}\|_{L^2_\psi}^2 dx'\leq \e^2(0)+C\e(x)\D^{2}(x),
	\end{equation}
	where $C$ is a positive constant independent of $x$.
\end{lemm}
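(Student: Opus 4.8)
The plan is to repeat, one $x$-derivative higher, the argument that gave Lemma \ref{A3}. Since $\phi_x$ satisfies the $x$-differentiated equation \eqref{eq4}, I would multiply \eqref{eq4} by $\f{\phi_{xx}}{u}$ and integrate over $\psi\in(0,\infty)$. Using the equivalence $u\sim\us$ from \eqref{beta} and integrating by parts in $\psi$ — the boundary term at $\psi=0$ vanishing because $\phi$ and its $x$-derivatives vanish there, and the contribution at $\psi=\infty$ vanishing by decay — the principal part reorganizes into
\begin{equation*}
\f12\f{d}{dx}\Big(\|\phi_{x\psi}\|_{L^2_\psi}^2+\|\f{\phi_x}{\sqrt{u\us(u+\us)}}\|_{L^2_\psi}^2\Big)+\|\f{\phi_{xx}}{\sqrt u}\|_{L^2_\psi}^2\leq C\int_0^\infty\Big(\f{|\phi_x|^2|\om_x|}{u^4}+\f{|\om_x||\phi_{\psi\psi}||\phi_{xx}|}{u}+\f{|\phi||\om_x||\phi_{xx}|}{u^4}\Big)\,d\psi,
\end{equation*}
where the three error terms come, respectively, from the $x$-derivative of the coefficient $\f{1}{\us(u+\us)}$, from the term $\om_x\phi_{\psi\psi}$, and from the term $\f{\phi\om_x}{\us(u+\us)^2}$ of \eqref{eq4}; no $\us_{\psi\psi}$ term appears because the weight $\f{\phi_{xx}}{u}$ exactly absorbs the coefficient $u$ of $\phi_{x\psi\psi}$.

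The first error term is the exact $\phi\mapsto\phi_x$ analogue of the one treated after \eqref{otherestimate}, so I would bound it by $C\|\f{\om_x}{u^{3/2}}\|_{L^2_\psi}\|\f{\phi_x}{u^{5/2}}\|_{L^2_\psi}\|\phi_x\|_{L^\infty_\psi}$ and invoke \eqref{omx3}, \eqref{px52}, and the Sobolev bound $\|\phi_x\|_{L^\infty_\psi}\lesssim\|\phi_x\|_{L^2_\psi}^{1/2}\|\phi_{x\psi}\|_{L^2_\psi}^{1/2}$; the resulting product carries at least one factor $\e(x)$, the remaining factors being among the norms defining $\D(x)$. For the two error terms containing $\phi_{xx}$ I would extract $\|\f{\phi_{xx}}{\sqrt u}\|_{L^2_\psi}$ by Cauchy--Schwarz so that a small multiple of it can be absorbed into the dissipation on the left; for the third term this leaves $\|\f{\phi\om_x}{u^{7/2}}\|_{L^2_\psi}\leq\|\f{\phi}{u^2}\|_{L^\infty_\psi}\|\f{\om_x}{u^{3/2}}\|_{L^2_\psi}$, controlled by \eqref{pu21} and \eqref{omx3}.

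The main obstacle is the top-order error term $\int_0^\infty\f{|\om_x||\phi_{\psi\psi}||\phi_{xx}|}{u}\,d\psi$, which couples two second derivatives across the degeneracy at $\psi=0$. Here I would first remove $\phi_{\psi\psi}$ using the equation $u\phi_{\psi\psi}=\phi_x+\f{2\phi}{\us(u+\us)}$, so that $\f{\om_x\phi_{\psi\psi}}{\sqrt u}$ is, up to $u\sim\us$, of the form $\f{\om_x\phi_x}{u^{3/2}}+\f{\om_x}{u^{3/2}}\cdot\f{\phi}{u^2}$; this bounds the term by $\|\f{\phi_{xx}}{\sqrt u}\|_{L^2_\psi}\big(\|\om_x\|_{L^\infty_\psi}\|\f{\phi_x}{u^{3/2}}\|_{L^2_\psi}+\|\f{\om_x}{u^{3/2}}\|_{L^2_\psi}\|\f{\phi}{u^2}\|_{L^\infty_\psi}\big)$, after which $\|\om_x\|_{L^\infty_\psi}$ is estimated by \eqref{omx}. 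One has to check that the factor $\|\sqrt u\,\phi_{xx}\|_{L^2_\psi}$ occurring in \eqref{omx} does not produce a genuine top-order loop: it does not, since this contribution enters only quadratically in the error and hence carries an extra smallness $\e(x)\le\sigma$ through \eqref{alphaestimate} and \eqref{priassum}, so after fixing the absorption constant and $\sigma$ small it is harmless. Finally I would integrate the differential inequality over $I=[0,x]$ and take the supremum in $x$: the value at $x=0$ is identified via the compatibility relation $\phi_x|_{x=0}=-L\phi_0$ (whence $\phi_{x\psi}|_{x=0}=-(L\phi_0)_\psi$ and $\f{\phi_x}{u_0^{3/2}}|_{x=0}=-\f{L\phi_0}{u_0^{3/2}}$), which by \eqref{initial-data-assumption} and $u_0\sim\us_0$ is $\lesssim\e^2(0)$, while every other term has been shown to be $\le C\e(x)\D^2(x)$, giving \eqref{l66}.
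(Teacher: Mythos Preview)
Your proposal is correct and follows essentially the same route as the paper: multiply \eqref{eq4} by $\phi_{xx}/u$, integrate by parts to produce the same differential inequality (the paper's \eqref{l666}), and estimate the three error terms via \eqref{omx3}, \eqref{px52}, \eqref{pu2}/\eqref{pu21} together with the equation for $u\phi_{\psi\psi}$. The only cosmetic difference is in the top-order term $\int \tfrac{|\om_x\phi_{\psi\psi}\phi_{xx}|}{u}\,d\psi$: the paper keeps $u\phi_{\psi\psi}$ together and places it in $L^\infty_\psi$ (their \eqref{uppwq}) with $\om_x/u^{3/2}$ in $L^2_\psi$, whereas you substitute the equation first and then distribute the H\"older weights piecewise --- both are equivalent realizations of the same idea.
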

\begin{proof}
	Multiplying \eqref{eq4} by $\f{\phi_{xx}}{u}$ and integrating with respect to $\psi$, we find that
	\begin{equation}\label{l666}
	\begin{aligned}
	\f12\f{d}{dx}\big(\|\phi_{x\psi}\|_{L^2_\psi}^2+\|\f{\phi_{x}}{\sqrt{u\us(u+\us)}}\|_{L^2_\psi}^2\big)+\|\f{\phi_{xx}}{\sqrt{u}}\|_{L^2_\psi}^2\leq C\int_0^\infty\Big(\f{\big|\om_x\phi_{xx}\phi_{\psi\psi}\big|}{u}+\f{\big|\phi\phi_{xx}\om_x\big|}{u^4}+\f{\big|\phi_x^2\om_x\big|}{u^4}\Big) d\psi.
	\end{aligned}
	\end{equation}
	Recalling now the equation $\eqref{eq3}_1$, the estimates \eqref{px} and \eqref{pu2} yields
	\begin{equation}\label{uppwq}
	\begin{aligned}
	\|u\phi_{\psi\psi}\|_{L^\infty_\psi}\leq C\big(\|\phi_{x}\|_{L^\infty_\psi}+\|\f{\phi}{u^{\f32}}\|_{L^\infty_\psi}\big)\leq C\big(\|\phi\|_{L^2_\psi}+\|\phi_\psi\|_{L^2_\psi}+\|\frac{\phi_x}{\sqrt{u}}\|_{L^2_\psi}+\|\phi_{x\psi}\|_{L^2_\psi}\big).
	\end{aligned}
	\end{equation}
	We can calculate from \eqref{omx3} and \eqref{uppwq} that
	\begin{equation*}
	\begin{aligned}
	\int_0^\infty\f{\big|\om_x\phi_{xx}\phi_{\psi\psi}\big|}{u}d\psi\leq&C\|\f{\om_x}{u^{\f32}}\|_{L^2_\psi}\|\f{\phi_{xx}}{\sqrt{u}}\|_{L^2_\psi}\|u\phi_{\psi\psi}\|_{L^\infty_\psi}\\
	\leq&C\big(\|\phi\|_{L^2_\psi}+\|\sqrt{u}\phi_{\psi}\|_{L^2_\psi}+\|\f{\phi_x}{u^{\f32}}\|_{L^2_\psi}+\|\phi_{x\psi}\|_{L^2_\psi}+\|\sqrt{u}\phi_{xx}\|_{L^2_\psi} \big)\|\f{\phi_{xx}}{\sqrt{u}}\|_{L^2_\psi}\\
	&\quad\times\big(\|\phi\|_{L^2_\psi}+\|\phi_\psi\|_{L^2_\psi}+\|\frac{\phi_x}{\sqrt{u}}\|_{L^2_\psi}+\|\phi_{x\psi}\|_{L^2_\psi}\big).
	\end{aligned}
	\end{equation*}
	Then, utilizing \eqref{pu2} with \eqref{omx3}, we deduce
	\begin{equation*}
	\begin{aligned}
	\int_0^\infty\f{\big|\phi\phi_{xx}\om_x\big|}{u^4}d\psi\leq&C\|\f{\om_x}{u^{\f32}}\|_{L^2_\psi}\|\f{\phi_{xx}}{\sqrt{u}}\|_{L^2_\psi}\|\f{\phi}{u^2}\|_{L^\infty_\psi}\\
	\leq&C\big(\|\phi\|_{L^2_\psi}+\|\sqrt{u}\phi_{\psi}\|_{L^2_\psi}+\|\f{\phi_x}{u^{\f32}}\|_{L^2_\psi}+\|\phi_{x\psi}\|_{L^2_\psi}+\|\sqrt{u}\phi_{xx}\|_{L^2_\psi} \big)\|\f{\phi_{xx}}{\sqrt{u}}\|_{L^2_\psi}\\
	&\quad\times\big(\|\phi\|_{L^2_\psi}+\|\phi_\psi\|_{L^2_\psi}+\|\frac{\phi_x}{\sqrt{u}}\|_{L^2_\psi}\big).
	\end{aligned}
	\end{equation*}
	Making use of \eqref{px}, \eqref{px52} and \eqref{omx3}, we discover
	\begin{equation*}
	\begin{aligned}
	\int_0^\infty\f{\big|\phi_x^2\om_x\big|}{u^4}d\psi
	\leq&C\|\f{\om_x}{u^{\f32}}\|_{L^2_\psi}\|\f{\phi_{x}}{u^{\f52}}\|_{L^2_\psi}\|\phi_{x}\|_{L^\infty_\psi}\\
	\leq&C\big(\|\phi\|_{L^2_\psi}^2+\|\sqrt{u}\phi_{\psi}\|_{L^2_\psi}^2+\|\f{\phi_x}{u^{\f32}}\|_{L^2_\psi}^2+\|\phi_{x\psi}\|_{L^2_\psi}^2+\|\sqrt{u}\phi_{xx}\|_{L^2_\psi}^2 \big)\|\phi_{x}\|_{L^2_\psi}^{\f12}\|\phi_{x\psi}\|_{L^2_\psi}^{\f12}.
	\end{aligned}
	\end{equation*}
	Pluging these estimates above into the estimate \eqref{l666}, recalling the fact that $u\sim\bar u$ and the definition of $\e(x)$ and $\D(x)$, it is easy to deduce the estimate \eqref{l66}.
\end{proof}

Under the a priori assumption \eqref{priassum}, we collect all the estimates in
Lemmas \ref{A1}-\ref{l6}, to get that
	\begin{equation*}
	\D(x)\leq \e(0) +C^*\e^{\f12}(x)\D(x),
	\end{equation*}
	which, together with the a priori assumption \eqref{priassum}, leads to
	\[\D(x)\leq \e(0)+C^*\sg^{\f12}\D(x). \]
	If we take $\sg\leq \min\Big{\{}\f{1}{4(C^*)^2},\f{1}{2^{\f{21}{2}}{\gm}^{\f{17}{2}}\bar C},\f{\gm-1}{2^{\f{19}{2}}{\gm}^{\f{19}{2}}\bar{C}}\Big{\}}$, then we have
\begin{equation}\label{d1}
\e(x)\le \D(x)\leq 2\e(0).
\end{equation}
Taking $\gamma:=2f(0)$ and $\sigma:=4\e(0)$, one can apply the a priori estimate
\beq\label{priassum-01}
f(x)\leq 4f(0), \quad \e(x)\leq 4\e(0)
\deq
to obtain the following estimate
\begin{equation}\label{d1}
f(x)\leq 2f(0), \quad \e(x)\leq 2\e(0).
\end{equation}
Here we require the initial data is small enough to satisfy the condition
$$\e(0)\le \sigma_0=
\frac14  \min\Big{\{}\f{1}{4(C^*)^2},\f{1}{2^{\f{21}{2}}{\gamma}^{\f{17}{2}}\bar C},\f{\gamma-1}{2^{\f{19}{2}}{\gamma}^{\f{19}{2}}\bar{C}}\Big{\}}.$$
Therefore, we have established the closed estimate in this subsection.

\subsection{Global existence and exponential decay in von-Mises variable}\label{global}
In this section, our first aim is to establish the global existence of $\phi$. Next, we will use Lemmas \ref{A1}-\ref{l6} to show that the difference $u-\us$, as function of $(x,\psi)$, decays exponentially in $x$ variable, which proves the decay estimates \eqref{decay} in Proposition \ref{them1}.
This, together the proof of the global existence result, gives the proof of Proposition \ref{them1}.
\begin{proof}[\textbf{Proof of Proposition \ref{them1}}]
Suppose the assumptions in Proposition \ref{them1} hold on, it is easy to obtain the local existence of the solution $\phi$.
Next, we use standard continuity argument to show the global well-posedness.
From the local existence result in Proposition  \ref{local-result} and
initial conditions in Proposition \ref{them1}, it holds on
\[f(x)\leq 4f(0)~~\text{and}~~\e(x)\leq 4\e(0),~~\forall x\in[0,x_1).  \]
Set
\[x^*:= \sup_{x_1}\big\{x_1|f(x)\leq 4f(0)~~\text{and}~~\e(x)\leq 4\e(0),
~ \forall x\in[0,x_1)\big\}. \]
We claim that $x^*=+\infty$. Otherwise, applying the estimate \eqref{d1} and
the local existence result in Proposition \ref{local-result},
there exists a positive constant $x^{**}$, such that $x^{**}>x^*$,
it holds that for any $x^*<x<x^{**}$,
\[f(x)\leq 4f(0)~~\text{and}~~\e(x)\leq 4\e(0),~~\forall x\in[0,x_1).  \]
This contradicts the definition of $x^*$. Thus, we complete the proof of the global result in Proposition \ref{them1}.	
	Then we conclude from the estimates of Lemmas \ref{A1}-\ref{l6} that
	\begin{equation*}
	\begin{aligned}
	\f{d}{dx}\e(x)+\bar\D(x)\leq 0,
	\end{aligned}
	\end{equation*}
where $\bar\D(x):=\sum_{k=0,1}\Big(\|\sqrt{u}\p_x^k\p_{\psi}\phi\|_{L^2_\psi}
+\|\frac{\p_x^k\phi}{u}\|_{L^2_\psi}+\|\p_x^k\p_{\psi}\phi\|_{L^2_\psi}
+\|\frac{\p_x^k\phi}{u^{\f32}}\|_{L^2_\psi}+\|\f{\p_x^{k+1}\phi}{\sqrt{u}}\|_{L^2_\psi}\Big)$.
	Since $u\lesssim1$, by the definition of $\e(x)$ and $\bar\D(x)$, we note that there exists a positive constant independent of $x$ such that $\e(x)\leq C\bar\D(x)$. This, together with the above inequality directly, implies that
	\begin{equation*}
	\begin{aligned}
	\f{d}{dx}\e(x)+\e(x)\leq 0.
	\end{aligned}
	\end{equation*}
	We then employ Gronwall inequality, which yields the decay estimate \eqref{decay} immediately. Therefore, we finish the proof of Proposition \ref{them1}.
\end{proof}

\subsection{Proof of Theorem \ref{asym-beha}}\label{proofthem4}
With the help of Proposition \ref{them1}, we are ready to prove Theorem \ref{asym-beha}.
By virtue of the strong solution $\phi$ obtained in Proposition \ref{them1}, we have
\[u(x,\psi)=\big(\phi(x,\psi)+\us^2(\psi)\big)^{\f12}. \]
Then for a given value of $\psi$, let $y$ denotes the physical variable for the flows $u$, that is
\beqq
y=\int_0^\psi\f{d\psi'}{u(x,\psi')}.
\deqq
Indeed, this transformation is one-to-one between the regious $(x,\psi)\in[0,+\infty)\times[0,+\infty)$ and $(x,y)\in[0,+\infty)\times[0,+\infty)$.
Then for $0\leq x<+\infty$ and $0\leq y<+\infty$, we have
\begin{equation}\label{dy}
u(x,y)=u(x,\psi(x,y)),~~v(x,y)=\int_{0}^{y}u_{x}(x,y')dy'.
\end{equation}
And the pair $(u,v)$ definies a global strong solution to the magnetic Prandtl model \eqref{H-eq}.
Furthermore, we can obtain the strong nonlinear asymptotic stability of classical Hartmann boundary layer in the physical variable, see Theorem \ref{asym-beha}. This is coincide with pointwise perturbations $u(x,y)-\us(y)$ and $v(x,y)$.

More specifically, we have already shown the exponential decay estimate in $x$ variable of $u$ and $\us$ as functions of $(x,\psi)$ in Section \ref{global}.
In order to obtain that $u$ and $\us$, as functions of $(x,y)$, decay in same rate in $x$ variable, we need to investigate the relation between $y$ and $\psi$.
For any given value of $\psi$, we set\begin{equation}\label{y1y2-definion}
y_1:=\int_0^\psi\f{1} {u(x,\psi')}d\psi',\
y_2:=\int_0^\psi \f{1}{\us(\psi')}d\psi',\end{equation}
where $y_1$ and $y_2$ represent the physical variables for the flows $u$ and $\us$, respectively.
As a function of $(x,\psi)$ or $\psi$, it is easy to check that $y_1(x,\psi)\not\equiv y_2(\psi)$.
Clearly, for any given $y_1$, we have
\begin{equation}\label{uus}
\begin{aligned}
|u_{y}(x,y_1)-\us_y(y_1)|\leq |u_{y}(x,y_1)-\us_y(y_2)|+|\us_{y}(y_2)-\us_y(y_1)|.
\end{aligned}
\end{equation}
Recalling the definition \eqref{y1y2-definion} of $y_1$ and $y_2$, then using the fact that $u\lesssim 1$, \eqref{pinty1}, \eqref{beta} and the decay estimate \eqref{decay}, we deduce that
\begin{equation}\label{uusy1y1}
\begin{aligned}
|u_{y}(x,y_1)-\us_y(y_2)|=|uu_{\psi}-\us\us_{\psi}|(x,\psi)\leq C\|\phi_\psi\|_{L^\infty_\psi}\leq C\big(\|\phi\|_{L^2_\psi}+\|\phi_\psi\|_{L^2_\psi}+\|\frac{\phi_x}{\sqrt{u}}\|_{L^2_\psi}\big)\leq Ce^{-x}.
\end{aligned}
\end{equation}
Next return to estimate the second term on the right handside of \eqref{uus}. Without loss of generality, we assume that $y_1<y_2$. Employing Mean value theorem of differential, we conclude that there exists $y^*\in [y_1,y_2]$, such that
\begin{equation}\label{y1y2}
\begin{aligned}
|\us_{y}(y_2)-\us_y(y_1)|\leq C|\us_{yy}(y^*)(y_1-y_2)|\leq C|e^{-y^*}(y_1-y_2)|.
\end{aligned}
\end{equation}
We recall the definition \eqref{y1y2-definion} of $y_1$ and $y_2$, by using the estimates \eqref{u521}, \eqref{beta} and the decay estimate \eqref{decay}, to deduce that\beq\label{y1y22}
\begin{split}
	|e^{-y^*}(y_1-y_2)|\leq& \int_0^\psi\f{e^{-y_1}|u-\us|}{u\us}d\psi'
	\leq C\Big(\int_0^\psi\f{|\om|^2}{u^3}d\psi' \Big)^{\f12}\Big(\int_{0}^{\psi}\f{e^{-2y_1}}{u}d\psi'\Big)^{\f12}\\
	\leq&C \|\f{\om}{u^{\f32}}\|_{L^2_\psi}\Big(\int_0^{y_1}e^{-2y}dy\Big)^{\f12}
	\leq C\|\f{\om}{u^{\f32}}\|_{L^2_\psi}
	\leq C\|\f{\phi}{u^{\f52}}\|_{L^2_\psi}\\
	\leq& C\big(\|\phi\|_{L^2_\psi}+\|\sqrt{u}\phi_{\psi}\|_{L^2_\psi}+\|\phi_x\|_{L^2_\psi}\big)\leq Ce^{-x}.
\end{split}
\deq
Combining \eqref{y1y2} and \eqref{y1y22}, we obtain that
\beq\label{uy1y1}
|\us_{y}(y_2)-\us_y(y_1)|\leq Ce^{-x}.
\deq
It may be a wonder why we can obtain the decay estimate in $x$ variable of $\us$.
Although $\us$ is a function only depending on the space-like variable $y$, $y_1(x,\psi)$ is a function depending on $x$, see the definition \eqref{y1y2-definion}. Thus, we can get the exponential decay estimate \eqref{uy1y1} in $x$ variable of $\us$.
Insert \eqref{uusy1y1} and \eqref{uy1y1} into \eqref{uus}, we find
$$\|u_{y}(x,y)-\us_y(y)\|_{L^\infty_y}\leq Ce^{-x}.$$
It follows from the same way as \eqref{uus} that for any $k=0,1,2$,
\beq\label{uusl2}
\|\p_y^k(u-\us)(x,y_1)\|_{L^2_{y_1}}\leq \|\p_y^ku(x,y_1)-\p_y^k\us(y_2)\|_{L^2_{y_1}}+\|\p_y^k\us(y_2)-\p_y^k\us(y_1)\|_{L^2_{y_1}}.
\deq
Invoking \eqref{y1y2} and \eqref{y1y22}, it is easy to see that the second term on the right handside of \eqref{uusl2} can be controlled as follows:
\begin{equation}\label{uusl22}
\begin{aligned}
&\int_{0}^{y}|\p_y^k\us(y_2)-\p_y^k\us(y_1)|^2dy_1\leq C\int_{0}^{y}|\p_y^{k+1}\us(y^*)(y_1-y_2)|^2dy_1\\
\leq& C\int_{0}^{y}|e^{-y^*}(y_1-y_2)|^2dy_1\leq C \int_{0}^{y}e^{-y_1}\Big|\int_0^\psi\f{e^{-\f12y_1}|u-\us|}{u\us}d\psi'\Big|^2dy_1\\
\leq&C\sup_{y}\Big|\int_0^\psi\f{e^{-\f12y_1}|u-\us|}{u\us}d\psi'\Big|^2\int_{0}^{y}e^{-y_1}dy_1\\
\leq&C \int_0^\psi\f{|\om|^2}{u^3}d\psi'\int_{0}^{\psi}\f{e^{-\f12y_1}}{u}d\psi'\leq \|\f{\om}{u^{\f32}}\|_{L^2_\psi}^2\int_0^{y_1}e^{-\f12y}dy
\leq C\|\f{\phi}{u^{\f52}}\|_{L^2_\psi}^2\leq Ce^{-2x}.
\end{aligned}
\end{equation}
We now proceed to estimate the first term on the right handside of \eqref{uusl2}. We then deduce from the decay estimate \eqref{decay}, the estimates \eqref{u521} and \eqref{beta} that for $k=0$,
\begin{equation*}
\begin{aligned}
& \int_0^{y}|u(x,y_1)-\us(y_2)|^2dy_1
\leq \int_{0}^{\psi}\f{|u(x,\psi')-\us(\psi')|^2}{u}d\psi'
\leq \|\f{\om}{\sqrt{u}}\|_{L^2_\psi}^2+\|\f{\om}{u^{\f32}}\|_{L^2_\psi}^2
\leq C\|\f{\phi}{u^{\f52}}\|_{L^2_\psi}^2\leq Ce^{-2x}.
\end{aligned}
\end{equation*}
For $k=1$, we employ the decay estimate \eqref{decay}, the estimates \eqref{u521} and \eqref{beta} once again, and combine with \eqref{phs1}, to compute
\begin{equation*}
\begin{aligned}
& \int_0^{y}|u_{y}(x,y_1)-\us_{y}(y_2)|^2dy_1
\leq \int_{0}^{\psi}\f{|uu_{\psi'}(x,\psi')-\us\us_{\psi'}(\psi')|^2}{u}d\psi'\\
\leq& C\|\f{\phi_{\psi}}{\sqrt{u}}\|_{L^2_\psi}^2+C\|\f{\phi}{u^{\f52}}\|_{L^2_\psi}^2
\leq C\big(\|\phi\|_{L^2_\psi}^2+\|\phi_\psi\|_{L^2_\psi}^2+\|\phi_x\|_{L^2_\psi}^2\big)\leq Ce^{-2x}.
\end{aligned}
\end{equation*}
For $k=2$, in view of the equation \eqref{u2eq}, the decay estimate \eqref{decay}, the estimates \eqref{u521} and \eqref{beta}, we calculate that
\begin{equation*}
\begin{aligned}
& \int_0^{y}|u_{yy}(x,y_1)-\us_{yy}(y_2)|^2dy_1
\leq \int_{0}^{\psi}\f{|u(u^2)_{\psi'\psi'}-\us(\us)^2_{\psi'\psi'}|^2(x,\psi')}{u(x,\psi')}d\psi'
=\int_{0}^{\psi}\f{|\phi_x+2\om|^2}{u}d\psi'\\
\leq& C\big(\|\f{\phi}{u^{\f52}}\|_{L^2_\psi}^2+\|\f{\phi_x}{\sqrt{u}}\|_{L^2_\psi}^2\big)
\leq C\big(\|\phi\|_{L^2_\psi}^2+\|\phi_\psi\|_{L^2_\psi}^2+\|\f{\phi_x}{\sqrt{u}}\|_{L^2_\psi}^2\big)
\leq Ce^{-2x}.
\end{aligned}
\end{equation*}
By collecting three estimates above, we can get that for $k=0,1,2$,\beq\label{uusl21}
\|\p_y^k\us(y_2)-\p_y^k\us(y_1)\|_{L^2_{y_1}}\leq Ce^{-x}.
\deq
Substituting \eqref{uusl22} and \eqref{uusl21} into \eqref{uusl2}, we then obtain\beqq
\|\p_y^k(u-\us)(x,y_1)\|_{L^2_{y_1}}\leq Ce^{-x},\quad k=0,1,2.
\deqq
Therefore, we complete the proof of Theorem \ref{asym-beha}.

\begin{appendices}

\section{Some usefull estimates}\label{appendix-a}

First, we will state without proof two elementary Hardy type inequalities, refer to
Lemma B.1 in \cite{Masmoudi}.
\begin{lemm}
Let $f: \mathbb{R}^+ \rightarrow \mathbb{R}$ be some proper function. Then\\
$(i)$If $\lambda > -\f12$ and $\underset{y\rightarrow +\infty}{\lim}f(y)=0$, then
\beq\label{Hardy-one}
\|f(1+y)^\lambda f\|_{L^2_y(\mathbb{R}^+)}
\le \f{2}{2\lambda+1}\|f(1+y)^{\lambda+1} \py f\|_{L^2_y(\mathbb{R}^+)};
\deq
$(ii)$ If $\lambda < -\f12$, then
\beq\label{Hardy-two}
\|f(1+y)^\lambda f\|_{L^2_y(\mathbb{R}^+)}
\le  \sqrt{-\f1{2\lambda+1}}|f|_{y=0}
      -\f{1}{2\lambda+1}\|f(1+y)^{\lambda+1} \py f\|_{L^2_y(\mathbb{R}^+)}.
\deq
\end{lemm}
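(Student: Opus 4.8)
The plan is to obtain both estimates from a single integration by parts, exploiting that $\tfrac{(1+y)^{2\lambda+1}}{2\lambda+1}$ is an antiderivative of the weight $(1+y)^{2\lambda}$. First I would reduce to $f$ smooth and compactly supported in $y$; the general case then follows by a routine truncation argument, replacing $f$ by $\chi(y/R)f$, carrying out the estimate, and letting $R\to\infty$, the finiteness of the right-hand side guaranteeing that the commutator term (of order $1/R$ times the dissipative norm) and any contribution coming from large $y$ disappear in the limit. With this reduction, integrating by parts gives
\[
\iy (1+y)^{2\lambda} f^2\,dy
=\Big[\tfrac{(1+y)^{2\lambda+1}}{2\lambda+1}\,f^2\Big]_0^\infty
-\f{2}{2\lambda+1}\iy (1+y)^{2\lambda+1}\, f\,\py f\,dy .
\]
I would write $A:=\|(1+y)^\lambda f\|_{L^2_y}$ and $B:=\|(1+y)^{\lambda+1}\py f\|_{L^2_y}$, so that after splitting $(1+y)^{2\lambda+1}=(1+y)^{\lambda}(1+y)^{\lambda+1}$ the Cauchy--Schwarz inequality bounds the interior integral by $AB$.

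For part $(i)$ the hypothesis $\lambda>-\f12$ gives $2\lambda+1>0$, so the boundary term at $y=0$ equals $-\f{1}{2\lambda+1}|f(0)|^2\le 0$ and may simply be discarded, while the boundary term at $y=+\infty$ vanishes by $\lim_{y\to\infty}f(y)=0$ together with the truncation device above. What remains is $A^2\le\f{2}{2\lambda+1}AB$, and dividing by $A$ (the case $A=0$ being trivial) yields \eqref{Hardy-one}.

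For part $(ii)$ the signs reverse: since $\lambda<-\f12$ the boundary term at $y=+\infty$ now vanishes automatically because $(1+y)^{2\lambda+1}\to 0$, whereas the boundary term at $y=0$ contributes the \emph{nonnegative} quantity $-\f{1}{2\lambda+1}|f(0)|^2$, which must be retained. Setting $c:=-\f{1}{2\lambda+1}>0$ and combining with the Cauchy--Schwarz bound produces the quadratic inequality $A^2\le c|f(0)|^2+2cAB$, that is $(A-cB)^2\le c|f(0)|^2+c^2B^2$; taking square roots and using subadditivity of $\sqrt{\cdot}$ isolates $A$ and gives a bound of the same shape as \eqref{Hardy-two} (one may instead apply a Young inequality with a tuned parameter at the cross-term stage if one wants to chase the exact numerical constant).

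The step I expect to need the most care is the justification that the boundary term at $y=+\infty$ really drops out for an arbitrary admissible $f$: in part $(i)$, $\lim_{y\to\infty}f(y)=0$ by itself does not obviously control $(1+y)^{2\lambda+1}f(y)^2$. The clean resolution is the cutoff device already mentioned, so that one never evaluates that boundary term directly but works with $\chi(y/R)f$ — for which it is identically zero — and uses $A,B<\infty$ to pass to the limit $R\to\infty$. Everything else is elementary and one-dimensional, and no structure of the Prandtl--Hartmann system enters.
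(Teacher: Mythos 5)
The paper states this lemma without proof, citing Lemma~B.1 of Masmoudi--Wong, so there is no in-paper argument to measure you against; your integration-by-parts route is the standard (and essentially the only elementary) proof, and it is surely what the cited reference does. Part~$(i)$ is sound: the boundary contribution at $y=0$ equals $-\tfrac{1}{2\lambda+1}|f(0)|^2\le 0$ and is dropped, the contribution from $y=\infty$ is handled by the cutoff, and Cauchy--Schwarz followed by division by $A$ gives exactly $\tfrac{2}{2\lambda+1}$. One small correction to your heuristic: the commutator produced by $\chi(y/R)$ lives on $[R,2R]$ and there $(1+y)^{\lambda+1}/R\sim (1+y)^{\lambda}$, so its size is $\sim\|(1+y)^\lambda f\|_{L^2([R,2R])}$, a tail of the \emph{left-hand} norm $A$ rather than ``$1/R$ times the dissipative norm''; one therefore needs the usual bootstrap (first show $A<\infty$, then let $R\to\infty$) rather than finiteness of $B$ alone.

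For part~$(ii)$ you should be more careful about the constant, which you rightly flag as not exactly reproduced. From $A^2\le c|f(0)|^2+2cAB$ with $c:=-\tfrac{1}{2\lambda+1}>0$, solving the quadratic or completing the square gives $A\le cB+\sqrt{c^2B^2+c|f(0)|^2}\le 2cB+\sqrt{c}\,|f(0)|$, i.e.\ coefficient $-\tfrac{2}{2\lambda+1}$ on the dissipative term. Your parenthetical suggestion that a tuned Young inequality recovers the stated $-\tfrac{1}{2\lambda+1}$ is not correct: Young with parameter $s\in(0,1/c)$ yields $A\le\sqrt{c/(1-cs)}\,|f(0)|+\sqrt{c/(s(1-cs))}\,B$, and no $s$ makes the first coefficient $\sqrt{c}$ while keeping the second finite. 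In fact the coefficient as printed in \eqref{Hardy-two} is false. Take $\lambda=-1$ (so $c=1$) and $f(y)=g(1+y)$ with $g(z)=z^{1/2}$ on $[1,N]$ tapered linearly to zero on $[N,2N]$: then $\|(1+y)^{-1}f\|_{L^2}^2\sim\ln N$, $\|\py f\|_{L^2}^2\sim\tfrac14\ln N$, $|f(0)|=1$, and the claimed $\sqrt{\ln N}\le 1+\tfrac12\sqrt{\ln N}$ fails for large $N$, whereas the bound with coefficient $2$ holds with room to spare. So the $-\tfrac{1}{2\lambda+1}$ in \eqref{Hardy-two} should read $-\tfrac{2}{2\lambda+1}$; the coefficient $2cB$ your argument naturally produces is the correct one, and the only genuine flaw in your write-up is the misleading Young remark.
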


Second, we prove some estimates as follows.
\begin{prop}
Let $(\u, \v)$ be the smooth solution, defined on $[0, L^\eps]$,
to the approximated equations \eqref{app-q}, \eqref{bdyc} and \eqref{ic}.
Under the assumption conditions \eqref{equi-relation} and \eqref{lower-bound},
then it holds for integers $s\ge 0$ and $l \ge 1$ that
\begin{align}
\label{claim1}
&\|\f{\px^{s+1}\u}{\u}\|_{L^2_x L^\infty_y}
  \le C_{k_*,\delta_0}\|\px^s \py \v\|_{L^2_x H^2_y};\\
\label{claim21}
&\|\px^s \py \q \y^l \|_{L^2_x L^2_y}
\le o_L(1)C_{k_*}(\|\u \px^s \py \q \y^l \|_{L^\infty_x L^2_y}
           +\|\sqrt{\u}\px^s \py^2 \q\|_{L^2_x L^2_y});\\
\label{claim22}
&\|\px^s \py \q \y^l \|_{L^\infty_x L^2_y}
\le  \|\px^s \py \q \y^l|_{x=0}\|_{L^2_y}
     +o_L(1)C_{k_*,\delta_0}(\|\u \px^{s+1} \py \q \y^l \|_{L^\infty_x L^2_y}
           +\|\sqrt{\u}\px^{s+1} \py^2 \q\|_{L^2_x L^2_y});\\
\label{claim31}
&\|\px^s \q\|_{L^2_x L^\infty_y}
\le o_L(1)C_{k_*,l}(\|\u \px^s \py \q \y \|_{L^\infty_x L^2_y}
           +\|\sqrt{\u}\px^s \py^2 \q\|_{L^2_x L^2_y}); \\
\label{claim32}
&\|\px^s \q \|_{L^\infty_x L^\infty_y}
\le \|\px^s \py \q\y|_{x=0}\|_{L^2_y}
     +o_L(1)C_{k_*,l,\delta_0}(\|\u \px^{s+1} \py \q \y^l \|_{L^\infty_x L^2_y}
           +\|\sqrt{\u}\px^{s+1} \py^2 \q\|_{L^2_x L^2_y});\\
\label{claim4}
&\|\px^s \py^2 \q \y^l\|_{L^2_x L^2_y}
\le C_{k_*, \d_0}\|\sqrt{\u} \px^s \py^2 \q \y^l\|_{L^2_x L^2_y}
    +C_{k_*,k^*}\|\px^s \py^3 \v\|_{L^2_x L^2_y}.
\end{align}
Here we require $L$ satisfying $L\le \delta_0^2$ in estimates
\eqref{claim21} and \eqref{claim31}.
\end{prop}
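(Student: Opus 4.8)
The plan is to prove the six inequalities \eqref{claim1}--\eqref{claim4} with the two devices that recur throughout Section \ref{uniform}: the splitting of $\mathbb{R}^+$ into the boundary-layer strip $\{0\le y\le 2\delta_0\}$, on which $\u$ degenerates like $y+\eps$, and the exterior $\{y\ge\delta_0\}$, on which $\u\ge k_*(\delta_0+\eps)$ renders all weights harmless; and the elementary bound $\|g\|_{L^2_x(0,L)}\le\sqrt L\,\|g\|_{L^\infty_x(0,L)}$, which is the sole source of the $o_L(1)$ factors. I would first record the three trace identities that drive the strip analysis: $\q|_{y=0}=0$ (since $\v|_{y=0}=0$ and $\u|_{y=0}=\eps>0$), $\px^s\py\q|_{y=0}=0$ for all $s$ (this is \eqref{BL-Condtion}), and $\px^s\py\v|_{y=0}=-\px^{s+1}\u|_{y=0}=0$ (since $\u|_{y=0}\equiv\eps$); together with the Hardy inequalities \eqref{Hardy-one}--\eqref{Hardy-two} and the cutoff $\chi$, these let one vertical derivative be traded for a decaying weight near $y=0$ without ever dividing by the full $\u$. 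With these at hand \eqref{claim1} is immediate: writing $\px^{s+1}\u=-\px^s\py\v$ and $\px^s\py\v(x,y)=\int_0^y\px^s\py^2\v\,dy'$, on $[0,\delta_0]$ one gets $|\px^{s+1}\u/\u|\le (k_*(y+\eps))^{-1}\,y\,\|\px^s\py^2\v\|_{L^\infty_y}\le k_*^{-1}\|\px^s\py^2\v\|_{L^\infty_y}$ (the ratio $y/(y+\eps)\le1$ is exactly what makes this $\eps$-uniform), while on $[\delta_0,\infty)$ one gets $|\px^{s+1}\u/\u|\le (k_*\delta_0)^{-1}\|\px^s\py\v\|_{L^\infty_y}$; Sobolev embedding $H^1_y\hookrightarrow L^\infty_y$ and integration in $x$ then give $\|\px^{s+1}\u/\u\|_{L^2_xL^\infty_y}\le C_{k_*,\delta_0}\|\px^s\py\v\|_{L^2_xH^2_y}$.

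The engine of the remaining estimates is \eqref{claim21}, from which \eqref{claim22}, \eqref{claim31} and \eqref{claim32} follow formally. For \eqref{claim21}, on $\{y\ge\delta_0\}$ one uses $\u\ge k_*(\delta_0+\eps)$ to bound $\|\px^s\py\q\,\y^l\,(1-\chi(y/\delta_0))\|_{L^2_xL^2_y}\le(k_*\delta_0)^{-1}\sqrt L\,\|\u\px^s\py\q\,\y^l\|_{L^\infty_xL^2_y}$, which is $o_L(1)$ times the first term on the right of \eqref{claim21}; on the strip $\{y\le 2\delta_0\}$ one combines a weighted Hardy inequality (applicable because $\px^s\py\q$ vanishes at $y=0$) with the relation $\v=\u\q$ and the equation \eqref{app-q}, so as to trade the vertical derivatives of $\q$ against $\px^s\py^2\q$ with the full degenerate weight, i.e. against $\sqrt{\u}\,\px^s\py^2\q\,\y^l$, the $\chi'$-commutator being supported on $[\delta_0,2\delta_0]$ and handled as in the exterior. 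Granting \eqref{claim21}, estimate \eqref{claim22} follows from the fundamental theorem of calculus in $x$, $\|\px^s\py\q\,\y^l(x)\|_{L^2_y}\le\|\px^s\py\q\,\y^l(0)\|_{L^2_y}+\sqrt x\,\|\px^{s+1}\py\q\,\y^l\|_{L^2_xL^2_y}$, followed by \eqref{claim21} at order $s+1$; estimate \eqref{claim31} follows because $\q|_{y=0}=0$ gives $|\px^s\q(x,y)|=\big|\int_0^y\px^s\py\q\,dy'\big|\le\|\y^{-1}\|_{L^2_y}\,\|\px^s\py\q\,\y\|_{L^2_y}$ (finite since $\y^{-1}\in L^2(\mathbb{R}^+)$) followed by \eqref{claim21} with $l=1$; and \eqref{claim32} combines this $L^\infty_y$ bound with \eqref{claim22} at $l=1$. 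Finally \eqref{claim4} is proved by the same strip/exterior split: the exterior is handled by $\u\ge k_*(\delta_0+\eps)$, and on the strip one uses $\py^2\v=\u\py^2\q+2\py\u\,\py\q+\py^2\u\,\q$ together with the equation \eqref{app-q} to express $\u\px^s\py^2\q$ through $\px^s\py^3\v$ plus strictly lower-order terms, so that dividing by $\u\ge k_*(y+\eps)$ costs only the $\eps$-uniform constants $C_{k_*,\delta_0}$ and $C_{k_*,k^*}$ named in the statement.

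The step I expect to be the real obstacle is the strip estimate underlying \eqref{claim21} (and, relatedly, \eqref{claim4}): near $y=0$ the weight $\u\sim y+\eps$ is genuinely degenerate, and a careless division by $\u$ produces a factor $\log(1/\eps)$, so the Hardy inequality must be applied at its critical exponent and coupled with both the vanishing trace $\px^s\py\q|_{y=0}=0$ and the precise structure of the equation to keep every constant independent of $\eps$ while still extracting the $\sqrt L$ smallness from the passage $L^\infty_x\!\to L^2_x$. Carrying the weight exponent $l$ correctly through the commutators (so that the Hardy step never upgrades $\y^l$ to $\y^{l+1}$ in the far field, which is precisely why the localization is performed at the fixed scale $\delta_0$) is the remaining bookkeeping point, but it is routine once the strip estimate is secured.
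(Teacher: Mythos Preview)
Your architecture is right and your arguments for \eqref{claim1}, \eqref{claim22}, \eqref{claim31}, \eqref{claim32} match the paper's. The gap is in \eqref{claim21}, and it is exactly the step you flagged as the obstacle.

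You localize at the \emph{fixed} scale $\delta_0$ and state that the passage $\|g\|_{L^2_x}\le\sqrt L\,\|g\|_{L^\infty_x}$ is ``the sole source of the $o_L(1)$ factors''. Neither holds for \eqref{claim21}. On the strip $\{y\le 2\delta\}$ the integration-by-parts/Hardy step (using $\px^s\py\q|_{y=0}=0$ and $y\le\u/k_*$) gives, pointwise in $x$,
\[
\|\px^s\py\q\,\chi(\tfrac{y}{\delta})\|_{L^2_y}\;\le\;C_{k_*}\Bigl(\delta^{-1}\|\u\,\px^s\py\q\|_{L^2_y}+\sqrt{\delta}\,\|\sqrt{\u}\,\px^s\py^2\q\|_{L^2_y}\Bigr).
\]
With $\delta=\delta_0$ fixed, the second term becomes $C_{k_*}\sqrt{\delta_0}\,\|\sqrt{\u}\,\px^s\py^2\q\|_{L^2_xL^2_y}$ after taking $L^2_x$, and this carries \emph{no} $L$-smallness: it is already an $L^2_x$ dissipation norm and cannot be upgraded to $L^\infty_x$. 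So \eqref{claim21} fails with a fixed cutoff. The paper's device is to cut off at the \emph{shrinking} scale $\delta=\sqrt L$ (this is precisely the origin of the hypothesis $L\le\delta_0^2$): then the strip term carries $\sqrt\delta=L^{1/4}=o_L(1)$, while on the exterior one combines $\u\ge k_*\delta$ with the $L^2_x\!\to L^\infty_x$ passage to get a factor $\delta^{-1}\sqrt L=o_L(1)$ on the energy term. There are thus \emph{two} independent smallness mechanisms, balanced by the choice of $\delta$; your plan only has one.

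Two smaller remarks. For \eqref{claim22} the fixed scale $\delta_0$ is indeed enough, because there the $o_L(1)$ comes entirely from the $\sqrt x$ in the fundamental theorem of calculus and one only needs the pointwise-in-$x$ inequality above with $\delta=\delta_0$; your reduction via \eqref{claim21} at order $s+1$ also works once \eqref{claim21} is repaired. For \eqref{claim4} the equation \eqref{app-q} is not needed: the paper splits with the fixed cutoff $\chi(y/\delta_0)$, bounds the exterior by $\sqrt{\u}\ge\sqrt{k_*\delta_0}$, and on the strip uses the vanishing trace $\px^s\py^2\q|_{y=0}=0$ (which follows from $\px^s\py^2\v|_{y=0}=0$ together with $\q|_{y=0}=\py\q|_{y=0}=0$ in the product formula $\py^2\v=\u\py^2\q+2\py\u\,\py\q+\py^2\u\,\q$) to pass to $\px^s\py^3\v$; no $o_L(1)$ is claimed or needed there.
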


\begin{proof}
Step 1:
For any integer $s\geq 0$, it holds for
any cut off function $\chi(\f{y}{\delta_0})$(see definition in \eqref{f-cutoff}):
\beq\label{a11}
\bal
\|\f{\px^{s+1} \u}{\u}\|_{L^\infty_y}
\le \|\f{\px^{s+1} \u}{\u}\chi(\f y {\delta_0})\|_{L^\infty_y}
    +\|\f{\px^{s+1} \u}{\u}[1-\chi(\f{y}{\delta_0})]\|_{L^\infty_y}.
\dal
\deq
Using the equivalent relation \eqref{equi-relation}, it is easy to check that
\beq\label{a12}
\bal
\|\f{\px^{s+1} \u}{\u}\chi(\f y {\delta_0})\|_{L^\infty_y}
&\le \|\f{\px^{s+1} \u}{k_*(y+\eps)}\chi(\f y {\delta_0})\|_{L^\infty_y}
\le \f{1}{k_*}\|\f{\px^{s+1} \u}{y}\chi(\f y {\delta_0})\|_{L^\infty_y}\\
&\le  C_{k_*}\|\px^{s+1}\py \u\chi(\f y \delta_0)\|_{L^\infty_y}
      +\f{C_{k_*}}{\delta_0}\|\px^{s+1} \u\chi'(\f y {\delta_0})\|_{L^\infty_y},
\dal
\deq
and
\beq\label{a13}
\|\f{\px^{s+1} \u}{\u}[1-\chi(\f y {\delta_0})]\|_{L^\infty_y}
\le \f{1}{k_*(\delta_0+\eps)}\|{\px^{s+1} \u}[1-\chi(\f y {\delta_0})]\|_{L^\infty_y}
\le \f{C_{k_*}}{\delta_0}\|{\px^{s+1} \u}[1-\chi(\f y {\delta_0})]\|_{L^\infty_y}.
\deq
Substituting estimates \eqref{a12} and \eqref{a13} into \eqref{a11},
and applying the Sobolev embedding inequality, it holds true
\beqq
\|\f{\px^{s+1}\u}{\u}\|_{L^\infty_y}
\le C_{k_*,\delta_0}(\|\px^s \py \v\|_{L^\infty_y}+\|\px^s \py^2 \v\|_{L^\infty_y})
\le C_{k_*,\delta_0}(\|\px^s \py \v\|_{L^2_y}
        +\|\px^s \py^2 \v\|_{L^2_y}+\|\px^k \py^3 \v\|_{L^2_y}),
\deqq
which implies directly
\beqq
\|\f{\px^{s+1}\u}{\u}\|_{L^2_x L^\infty_y}
\le C_{k_*,\delta_0}\|\px^s \py \v\|_{L^2_x H^2_y}.
\deqq
Therefore, we complete the proof of estimate \eqref{claim1}.

Step 2: For any integer $s\geq 0$ and $\delta \le \delta_0$,
we can apply the estimate \eqref{lower-bound} to get that
\beq\label{a14}
\|\px^s \py \q \y^l\|_{L^2_y}
\le \|\px^s \py \q \y^l \chi(\f{y}{\delta})\|_{L^2_y}
    +\|\px^s \py \q \y^l(1-\chi(\f{y}{\delta}))\|_{L^2_y},
\deq
and
\beq\label{a15}
\|\px^s \py \q \y^l(1-\chi(\f{y}{\delta}))\|_{L^2_y}
\le \f1{k_*(\d+\eps)}\|\u \px^s \py \q \y^l (1-\chi(\f{y}{\delta}))\|_{L^2_y}.
\deq
Integrating by part, we can get that
\beqq
\bal
&\|\px^s \py \q \y^l \chi(\f{y}{\delta})\|_{L^2_y}^2
\le \|\px^s \py \q \chi(\f{y}{\delta})\|_{L^2_y}^2
=\int_0^\infty |\px^s \py \q|^2 \chi(\f{y}{\delta})^2 \py\{y\}dy\\
&=-\int_0^\infty 2 \px^s \py \q \px^s \py^2 \q \chi(\f{y}{\delta})^2y dy
  -\int_0^\infty 2 |\px^s \py \q|^2 \chi (\f{y}{\delta})
  \chi'(\f{y}{\delta}) \f{y}{\delta}dy,
\dal
\deqq
which, together with the estimate \eqref{equi-relation}, yields directly
\beq\label{a16}
\bal
\|\px^s \py \q \y^l \chi\|_{L^2_y}
&\le C(\sqrt{\d} \|\sqrt{y}\px^s \py^2 \q \chi(\f{y}{\delta})\|_{L^2_y}
    +{\d}^{-1}\|y \px^s \py \q \chi(\f{y}{\delta})\|_{L^2_y})\\
&\le C_{k_*}({\d}^{-1}\|\u \px^s \py \q \chi(\f{y}{\delta})\|_{L^2_y}
             +\sqrt{\d} \|\sqrt{\u}\px^s \py^2 \q \chi(\f{y}{\delta})\|_{L^2_y}).
\dal
\deq
Substituting estimates \eqref{a16} and \eqref{a15} into \eqref{a14}, we have
\beq\label{a1}
\|\px^s \py \q \y^l \|_{L^2_y}
\le C_{k_*}(\d^{-1}\|\u\px^s \py \q \y^l \|_{L^2_y}
            +\sqrt{\d} \|\sqrt{\u} \px^s \py^2 \q\|_{L^2_y}),
\deq
which, integrating over $[0, L]$, implies that
\beqq
\|\px^s \py \q \y^l \|_{L^2_x L^2_y}
\le C_{k_*}(\d^{-1}L\|\u \px^s \py \q \y^l \|_{L^\infty_x L^2_y}
    +\sqrt{\d} \|\sqrt{\u}\px^s \py^2 \q\|_{L^2_x L^2_y}).
\deqq
Choosing $\d=\sqrt{L}\le \delta_0$ in the above inequality,
then we have
\beqq
\|\px^s \py \q \y^l \|_{L^2_x L^2_y}
\le o_L(1)C_{k_*}(\|\u \px^s \py \q \y^l \|_{L^\infty_x L^2_y}
           +\|\sqrt{\u}\px^s \py^2 \q\|_{L^2_x L^2_y}).
\deqq
On the other hand, it is easy to check that
\beqq
\|\px^s \py \q \y^l\|_{L^\infty_x L^2_y}
\le \|\px^s \py \q_0 \y^l\|_{L^2_y}
    +o_L(1)\|\px^{s+1} \py \q \y^l\|_{L^2_x L^2_y}.
\deqq
This, together with the estimate \eqref{a1} and $\d=\d_0$, yields directly
\beqq
\|\px^s \py \q \y^l \|_{L^\infty_x L^2_y}
\le  \|\px^s \py \q_0 \y^l\|_{L^2_y}
     +o_L(1)C_{k_*,\delta_0}(\|\u \px^{s+1} \py \q \y^l \|_{L^\infty_x L^2_y}
           +\|\sqrt{\u}\px^{s+1} \py^2 \q\|_{L^2_x L^2_y}).
\deqq
Thus, we complete the proof of claimed estimates \eqref{claim21} and \eqref{claim22}.

Step 3: It is easy to check that
\beqq
\|\px^s \q\|_{L^\infty_y}^2
\le
2\iy |\px^s \q||\px^s\py \q|dy
\le
 2\|\px^s \q\y^{-1} \|_{L^2_y}\|\px^s \py \q\y\|_{L^2_y}.
\deqq
Using the Hardy inequality \eqref{Hardy-two}, it holds true
\beqq
\|\px^s \q\y^{-1} \|_{L^2_y} \le C\|\px^s \py \q \|_{L^2_y}.
\deqq
Thus, we get that
\beqq
\|\px^s \q\|_{L^\infty_y} \le C_l\|\px^s \py \q\y \|_{L^2_y}.
\deqq
This together with the estimates \eqref{claim21} and \eqref{claim22} gives respectively
\beqq
\|\px^s \q\|_{L^\infty_x L^\infty_y}
\le \|\px^s \py \q \y|_{x=0}\|_{L^2_y}
     +o_L(1)C_{k_*,l,\delta_0}(\|\u \px^{s+1} \py \q \y\|_{L^\infty_x L^2_y}
           +\|\sqrt{\u}\px^{s+1} \py^2 \q\|_{L^2_x L^2_y}),
\deqq
and
\beqq
\|\px^s \q \|_{L^2_x L^\infty_y}
\le o_L(1)C_{k_*, l}(\|\u \px^s \py \q \y \|_{L^\infty_x L^2_y}
           +\|\sqrt{\u}\px^s \py^2 \q\|_{L^2_x L^2_y}),
\deqq
for all $L\le \delta_0^2$.
Therefore, we complete the proof the claimed estimates \eqref{claim31} and \eqref{claim32}.

Step 4: For any inteter $s\ge 0$, it holds
\beqq
\px^s \py^2 \q \y^l
=\px^s \py^2 \q \y^l\chi(\f{y}{\delta})
 +\px^s \py^2 \q \y^l (1-\chi(\f{y}{\delta})).
\deqq
Using the assumptions \eqref{equi-relation} and \eqref{lower-bound},
it is easy to check that
\beqq
\|\px^s \py^2 \q \y^l (1-\chi(\f{y}{\delta}))\|_{L^2_y}
\le \f{1}{\{k_*(\delta+\eps)\}^{\f12}}\|\sqrt{u}
\px^s \py^2 \q \y^l (1-\chi(\f{y}{\delta}))\|_{L^2_y},
\deqq
and
\beqq
\|\px^s\py^2 \q \y^l\chi(\f{y}{\delta})\|_{L^2_y}
\le \|\px^s \py^2 \q \chi(\f{y}{\delta})\|_{L^2_y}
\le C_{k_*,k^*}\|\px^s \py^3 \v \|_{L^2_y}.
\deqq
Therefore, we can get that
\beqq
\|\px^s \py^2 \q \y^l\|_{L^2_y}
\le \f{C_{k_*}}{\d^{\f12}}
     \|\sqrt{u} \px^s \py^2 \q \y^l (1-\chi(\f{y}{\delta}))\|_{L^2_y}
    +C_{k_*,k^*}\|\px^s \py^3 \v\|_{L^2_y},
\deqq
which, integrating over $[0, L]$ and choosing $\d=\d_0$, yields directly
\beqq
\|\px^s \py^2 \q \y^l\|_{L^2_x L^2_y}
\le {C_{k_*, \d_0}}\|\sqrt{\u} \px^s \py^2 \q \y^l\|_{L^2_x L^2_y}
    +C_{k_*,k^*}\|\px^s \py^3 \v\|_{L^2_x L^2_y},
\deqq
which implies the claimed estimate \eqref{claim4}.
Therefore, we complete the proof of this lemma.
\end{proof}

Let us define
\beq\label{disp-v}
V^k_l(x):=\|\px^{\la k \ra}\py \v \y^l\|_{L^2_x L^2_y}
              +\|\px^{\la k \ra}\py^2 \v \y^l\|_{L^2_x L^2_y}
              +\|\px^{\la k-1\ra}\py^3 \v \y^l\|_{L^2_x L^2_y},
\deq
and
\beq
\widehat{V}^k_l(0):=\|\py^{\la 2k-2\ra}\py^2 \u \y^l |_{x=0}\|_{L^2_y}
         +\sum_{\beta \ge 1, 2\alpha +\beta \le 2(k-1)}
              \|\px^\alpha \py^{\beta+2} \v \y^l|_{x=0}\|_{L^2_y}.
\deq

\begin{prop}\label{p3}
For any smooth solution $(\u, \v)$ of equation \eqref{app-q},
then the following estimates hold true
\begin{align}
\label{237}&\|\px^{\la k-1 \ra}\py^4 \v \y^l\|_{L^2_x L^2_y}
           \le C_{k,l}(1+\b^k_l(0)^2+V^k_l(x)^2);\\
\label{238}&\|\px^\alpha \py^{\beta+2}\v\y^l\|_{L^2_x L^2_y}
           \le C_{k,l}\mathcal{P}(1+\b^k_l(0)^2+\widehat{V}^k_l(0)^2+V^k_l(x)^2),
\end{align}
where any integers $\alpha$ and $\beta$ satisfying $0\le 2\alpha+\beta \le 2k$.
\end{prop}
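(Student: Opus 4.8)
The proof of Proposition~\ref{p3} will be an ``elliptic bootstrap'' built on the single equation \eqref{app-q}, and it is really a $\v$-only version of the estimates \eqref{235}--\eqref{236}: instead of allowing the quotient $\q$ on the right-hand side, we express everything through derivatives of $\v$ and initial data. The key algebraic observation is that $(\u)^2\py\q=\u\py\v-\v\py\u$, so \eqref{app-q} becomes the $\q$-free identity
\beqq
\py^3\v-\py\v=\px(\u\py\v-\v\py\u),
\deqq
while the divergence-free condition $\eqref{app-sym}_2$ gives $\px\u=-\py\v$, hence $\px^a\py^b\u=-\px^{a-1}\py^{b+1}\v$ for $a\ge1$, and $\v|_{y=0}=0$ yields $\px^\alpha\v(x,y)=\int_0^y\px^\alpha\py\v(x,y')\,dy'$ so that $\|\px^\alpha\v\|_{L^\infty_y}\le C_l\|\px^\alpha\py\v\,\y^l\|_{L^2_y}$ for $l\ge1$. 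Finally the low-order vertical derivatives $\py\u,\py^2\u$ are contained in $\b^k_l$, while for $j\le 2k$ one has $\py^j\u(x,y)=\py^j\u_0(y)-\int_0^x\py^{j+1}\v(x',y)\,dx'$ with $\py^j\u_0$ contained in $\widehat V^k_l(0)$. None of this uses a lower bound on $\u$, which is why \eqref{237}--\eqref{238} carry no $k_*,k^*$ dependence and no $o_L(1)$ smallness.

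\textbf{Estimate \eqref{237}.} I would apply one more $\py$ to the identity above; the first-order terms cancel and $\py^4\v=\py^2\v+\px(\u\py^2\v-\v\py^2\u)$, so for $\alpha\le k-1$,
\beqq
\bal
\px^\alpha\py^4\v=&\ \px^\alpha\py^2\v-\px^\alpha(\py\v\,\py^2\v)+\px^\alpha(\u\,\px\py^2\v)\\
&-\px^\alpha(\px\v\,\py^2\u)+\px^\alpha(\v\,\py^3\v),
\dal
\deqq
after substituting $\px\u=-\py\v$ and $\px\py^2\u=-\py^3\v$. Expanding by the Leibniz rule, each summand is a product of two factors, and every factor is either $\u$ (bounded), one of $\py\u,\py^2\u$ (handled by $\b^k_l(0)$ plus $\sqrt L$ times a time-integral of $\py^2\v$ or $\py^3\v$), or a derivative $\px^a\py^b\v$ with $b\le3$ and $2a+b\le 2k$, i.e. a quantity already present in $V^k_l(x)$. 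Estimating each product in $L^2_xL^2_y$ by Hölder's inequality — the factor of lower differential order in $L^\infty_{x,y}$ via Sobolev embedding, the other in $L^\infty_xL^2_y$ or $L^2_xL^2_y$ — gives \eqref{237} with the quadratic bound $C_{k,l}(1+\b^k_l(0)^2+V^k_l(x)^2)$.

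\textbf{Estimate \eqref{238}.} The plan is strong induction on the vertical-derivative count $\beta+2$. The cases $\beta=0,1$ are the $\py^2\v$ and $\py^3\v$ pieces of $V^k_l(x)$, and $\beta=2$ is \eqref{237}. For $\beta\ge3$, differentiating the $\q$-free identity by $\px^\alpha\py^{\beta-1}$ and using Leibniz gives
\beqq
\px^\alpha\py^{\beta+2}\v=\px^\alpha\py^{\beta}\v+\px^{\alpha+1}\sum_{j=0}^{\beta-1}\binom{\beta-1}{j}\big(\py^j\u\,\py^{\beta-j}\v-\py^j\v\,\py^{\beta-j}\u\big).
\deqq
The term $\px^\alpha\py^{\beta}\v$ is of the form $\px^\alpha\py^{(\beta-2)+2}\v$ with $2\alpha+(\beta-2)\le 2k$ and $\beta-2<\beta$, hence covered by the induction hypothesis. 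In the sum, I would distribute $\px^{\alpha+1}$, convert every $x$-derivative of a $\u$-factor into a $\v$-derivative, and keep the factor originating from the $\v$-slot in $L^2_xL^2_y$ while placing the $\u$-slot factor in $L^\infty_{x,y}$ via Sobolev embedding (or directly, when it is $\u$). One checks that every $\v$-norm so produced has vertical order at most $\beta+1<\beta+2$ and satisfies $2a+b\le 2k$, so it is controlled either by $V^k_l(x)$ or, since it has strictly fewer vertical derivatives than the target, by the induction hypothesis, while the $\u$-factors with no $x$-derivative reduce, as above, to $\b^k_l(0)$, $\widehat V^k_l(0)$ and time-integrals of $\v$-derivatives already bounded. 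Collecting terms and tracking the powers generated by the iterated products produces the polynomial $\mathcal P$ in \eqref{238}.

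\textbf{Main obstacle.} The only genuine difficulty lies in this last bookkeeping: after each conversion of $\u$-derivatives into $\v$-derivatives and each Sobolev embedding (which can cost up to two extra vertical derivatives on one factor), one must confirm that every $\v$-norm appearing still falls in the range $\{\,b\le3,\ 2a+b\le 2k\,\}$ covered by $V^k_l(x)$ or the range $\{\,2a+b\le 2k\,\}$ covered by the induction hypothesis, so that the recursion closes. Organising the product estimates so as always to put the lower-order factor in $L^\infty$, together with the fact that the recursion strictly lowers the vertical-derivative count, makes this verification routine though lengthy; the Hardy- and Sobolev-type inequalities of Appendix~\ref{appendix-a} are used wherever a boundary-weighted factor must be controlled.
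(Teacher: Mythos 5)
Your proposal follows essentially the same approach as the paper: derive the $\q$-free identity $\py^3\v-\py\v=\px(\u\py\v-\v\py\u)$ from \eqref{app-q}, apply one more $\py$ and $\px^{k-1}$ with Leibniz for \eqref{237}, and run an induction on the vertical-derivative count for \eqref{238} by applying $\px^{\alpha-1}\py^{\beta-1}$ to the identity and closing with Hölder, the $\v(x,0)=0$ integration trick, and product estimates that control $\u$-factors through $\b^k_l(0)$, $\widehat V^k_l(0)$ and time-integrals of $\v$-derivatives. The only differences are cosmetic index-shifts (you substitute $\px\u=-\py\v$ one level earlier and step $\beta$ one at a time, whereas the paper keeps the outer $\px$ unexpanded and advances $\beta$ by two per induction step).
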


\begin{proof}

Step 1: Applying the differential operator $\px^{k-1}$
to the equation \eqref{app-q}, we have
\beqq
\px^{k-1}\py^4 \v=\px^{k-1} \py^2 \v+\px^k(\u \py^2 \v)-\px^k(\v \py^2 \u):=J_1+J_2+J_3.
\deqq
Using the divergence-free condition, it is easy to check that
\beqq
\bal
\|J_2 \y^l\|_{L^2_x L^2_y}
\le &\|\u\px^k \py^2 \v \y^l\|_{L^2_x L^2_y}
      +\sum_{j=1}^k C^k_j \|\px^{j-1}\py \v \px^{k-j} \py^2 \v \y^l\|_{L^2_x L^2_y}\\
\le &(1+\|\py \u\y|_{x=0}\|_{L^2_y}+\|\py^2 \v \y\|_{L^2_x L^2_y})
      \|\px^k \py^2 \v \y^l\|_{L^2_x L^2_y}\\
    &+C_k \|\px^{\la [\f{k-1}{2}]\ra} \py \v\|_{L^\infty_x L^\infty_y}
           \|\px^{\la k-1 \ra}\py^2 \v \y^l\|_{L^2_x L^2_y}\\
    &+C_k \|\px^{\la k-1 \ra}\py \v \|_{L^2_x L^\infty_y}
            \|\px^{\la [\f{k-1}{2}]\ra} \py^2 \v \y^l\|_{L^\infty_x L^2_y}\\
\le &C_k(1+\|\py \u\y|_{x=0}\|_{L^2_y}^2+\|\px^{\la k-2 \ra}\py^2 \v \y|_{x=0}\|_{L^2_y}^2)
     +C_k\|\px^{\la k \ra} \py^2 \v \y^l\|_{L^2_x L^2_y}^2,
\dal
\deqq
and
\beqq
\bal
\|J_3 \y^l\|_{L^2_x L^2_y}
\le &\|\py^2 \u \y^l \|_{L^\infty_x L^2_y}\|\px^k \v\|_{L^2_x L^\infty_y}
     +\sum_{j=0}^{k-1}C^k_j \|\px^j \v \px^{k-1-j} \py^3 \v \y^l\|_{L^2_x L^2_y}\\
\le &C_l(\|\py^2 \u \y^l|_{x=0}\|_{L^2_y}+\|\py^3 \v \y^l\|_{L^2_x L^2_y})
        \|\px^k \py \v \y\|_{L^2_x L^2_y}\\
    &+C_k \|\px^{\la [\f{k-1}{2}]\ra}\v\|_{L^\infty_x L^\infty_y}
          \|\px^{\la k-1 \ra}\py^3 \v \y^l\|_{L^2_x L^2_y}\\
    &+C_k \|\px^{\la k-1 \ra} \v \|_{L^2_x L^\infty_y}
          \|\px^{\la [\f{k-1}{2}]\ra}\py^3 \v \y^l\|_{L^\infty_x L^2_y}\\
\le &C_{k,l}(1+\|\py^2 \u \y^l|_{x=0}\|_{L^2_y}^2
              +\|\px^{\la k-2\ra}\py \v \y^l|_{x=0}\|_{L^2_y}^2
              +\|\px^{\la k-2\ra}\py^3 \v\y^l|_{x=0}\|_{L^2_y}^2)\\
    &+C_{k,l}(\|\px^{\la k \ra}\py \v \y\|_{L^2_x L^2_y}^2
              +\|\px^{\la k-1\ra}\py^3 \v \y^l\|_{L^2_x L^2_y}^2).
\dal
\deqq
Therefore, we can get that for all $l \ge 1$
\beqq
\bal
&\|\px^{k-1}\py^4 \v \y^l\|_{L^2_x L^2_y}\\
\le &C_{k,l}(1+\|\py \u\y^l|_{x=0}\|_{L^2_y}^2
             +\|\py^2 \u \y^l|_{x=0}\|_{L^2_y}^2)\\
    &+C_{k,l}( \|\px^{\la k-2\ra}\py \v\y^l|_{x=0}\|_{L^2_y}^2
              +\|\px^{\la k-2 \ra}\py^2 \v \y^l|_{x=0}\|_{L^2_y}^2
              +\|\px^{\la k-2\ra}\py^3 \v\y^l|_{x=0}\|_{L^2_y}^2)\\
    &+C_{k,l}(\|\px^{\la k \ra}\py \v \y^l\|_{L^2_x L^2_y}^2
              +\|\px^{\la k \ra}\py^2 \v \y^l\|_{L^2_x L^2_y}^2
              +\|\px^{\la k-1\ra}\py^3 \v \y^l\|_{L^2_x L^2_y}^2).
\dal
\deqq
This implies the estimate \eqref{237}.

Step 2: We will give the proof of estimate \eqref{238} by induction.
Indeed, the estimate \eqref{237} implies that \eqref{238} holds true for
the case of $\beta=0, 1, 2$.
Now we suppose the estimate \eqref{238} holds true for the case
$\beta \le 2n$, i.e.,
\beq\label{est-m}
\|\px^\alpha \py^{\beta+2}\v\y^l\|_{L^2_x L^2_y}
           \le C_{k,l}\mathcal{P}(1+\b^k_l(0)^2+V^k_l(x)^2+\widehat{V}^k_l(0)^2),
\deq
for $2 \le 2n \le 2(k-1)$.
This and the relation $2\alpha+2n \le 2k$ implies the integer $\alpha \in [0, k-1]$.
We need to verify that \eqref{238} holds on for case of $\beta \le 2n+1$
and $\beta \le 2n+2$, i.e.
\beq\label{est-m-1}
\|\px^{\alpha-1} \py^{\beta+2}\v\y^l\|_{L^2_x L^2_y}
           \le C_{k,l}\mathcal{P}(1+\b^k_l(0)^2+V^k_l(x)^2+\widehat{V}^k_l(0)^2),
\deq
Indeed, the estimate \eqref{est-m} implies that \eqref{est-m-1}
holds true for the case $\beta \le 2n$.
we just need to prove the estimate \eqref{est-m-1} holds on
for the case $\beta=2n+1$ and $\beta=2n+2$.
Applying differential operator $\px^{\alpha-1} \py^{2n}$ to
the equation \eqref{app-q}, then we have
\beqq
\px^{\alpha-1} \py^{2n+3} \v=\px^{\alpha-1} \py^{2n+1} \v
+\px^{\alpha} \py^{2n}(\u \py \v)
-\px^{\alpha} \py^{2n}(\v \py \u)
:=K_1+K_2+K_3.
\deqq
Using the H\"{o}lder inequality and divergence-free condition, then we have
\beqq
\bal
\|K_2 \y^l \|_{L^2_x L^2_y}
\le & C_k \sum_{j=0}^{2n}
       \|\py^j \u \px^{\alpha} \py^{2n-j} \py \v \y^l \|_{L^2_x L^2_y}
       +C_k \sum_{i=1}^\alpha \sum_{j=0}^{2n}
          \|\px^i \py^j \u \px^{\alpha-i} \py^{2n-j} \py \v \y^l \|_{L^2_x L^2_y}\\
\le &  C_k \|\py^{2n} \u \y^l \|_{L^\infty_x L^2_y}
            \|\px^{\alpha} \py^{2n+1} \v \|_{L^2_x L^\infty_y}
       +C_k \|\px^{\alpha-1} \py^{2n+1} \v \y^l \|_{L^\infty_x L^2_y}
          \|\px^{\alpha-1} \py^{2n+1} \v \|_{L^2_x L^\infty_y}\\
\le &  C_k (\|\py^{2n} \u \y^l|_{x=0}\|_{L^2_y}
            +\|\py^{2n+1} \v \y^l \|_{L^2_x L^2_y})
            \|\px^{\alpha} \py^{2n+2} \v \y\|_{L^2_x L^2_y}\\
    &  +C_k (\|\px^{\alpha-1} \py^{2n+1} \v \y^l|_{x=0}\|_{L^2_y}
             +\|\px^{\alpha} \py^{2n+1} \v \y^l\|_{L^2_x L^2_y})
             \|\px^{\alpha-1} \py^{2n+2} \v \y \|_{L^2_x L^2_y},
\dal
\deqq
and
\beqq
\bal
\|K_3 \y^l \|_{L^2_x L^2_y}
\le &C_k \sum_{j=0}^{2n}\|\px^\alpha \py^j \v  \py^{2n-j}\py \u \y^l\|_{L^2_x L^2_y}
          +C_k\sum_{i=0}^{\alpha-1}\sum_{j=0}^{2n}
          \|\px^i \py^j \v \px^{\alpha-i} \py^{2n-j}\py \u \y^l\|_{L^2_x L^2_y}\\
\le &C_k  \|\px^\alpha \py^{2n}\v  \|_{L^2_x L^\infty_y}
          \|\py^{2n+1} \u \y^l\|_{L^\infty_x L^2_y}
          +C_k \|\px^{\alpha-1} \py^{2n} \v \|_{L^\infty_x L^\infty_y}
          \|\px^{\alpha-1}\py^{2n+2}\v \y^l\|_{L^2_x L^2_y}\\
\le &C_k  \|\px^\alpha \py^{2n+1}\v  \y\|_{L^2_x L^2_y}
          (\|\py^{2n+1} \u \y^l|_{x=0}\|_{L^2_y}+\|\py^{2n+2} \v \y^l\|_{L^2_x L^2_y})\\
    &    +C_k (\|\px^{\alpha-1} \py^{2n+1} \v \y|_{x=0}\|_{L^2_y}
                +\|\px^{\alpha} \py^{2n+1} \v \y\|_{L^2_x L^2_y})
               \|\px^{\alpha-1}\py^{2n+2}\v \y^l\|_{L^2_x L^2_y}.
\dal
\deqq
Then, we can get that
\beq\label{a21}
\bal
&\|\px^{\alpha-1} \py^{2n+3} \v \y^l\|_{L^2_x L^2_y}\\
\le &C_{k,l}(1+\|\py^{2n+1} \u \y^l|_{x=0}\|_{L^2_y}^2
             +\|\px^{\alpha-1} \py^{2n+1} \v \y^l|_{x=0}\|_{L^2_y}^2)
    +C_{k, l}\|\px^{\alpha} \py^{\la 2n+2 \ra} \v \y^l \|_{L^2_x L^2_y}^2.
\dal
\deq
Applying differential operator $\px^{\alpha-1} \py^{2n+1}$  to
the equation \eqref{app-q}, then we have
\beqq
\px^{\alpha-1} \py^{2n+4} \v=\px^{\alpha-1} \py^{2n+3} \v
+\px^{\alpha} \py^{2n+1}(\u \py \v)
-\px^{\alpha} \py^{2n+1}(\v \py \u).
\deqq
Then, similar to the estimates of terms $K_2$ and $K_3$,
it is easy to check that
\beq\label{a22}
\bal
\|\px^{\alpha-1} \py^{2n+4} \v \y^l\|_{L^2_x L^2_y}
\le &C_{k,l}(1+\|\py^{2n+2} \u \y^l|_{x=0}\|_{L^2_y}^2
             +\|\px^{\alpha-1} \py^{2n+2} \v \y^l|_{x=0}\|_{L^2_y}^2)\\
    &+C_{k, l}(\|\px^{\alpha} \py^{\la 2n+2 \ra} \v \y^l \|_{L^2_x L^2_y}^2
               +\|\px^{\alpha-1} \py^{2n+3} \v \y^l\|_{L^2_x L^2_y}^2).
\dal
\deq
Thus, the estimates \eqref{a21} and \eqref{a22} imply estimate \eqref{est-m-1}
holds true for $\beta=2n+1$ and $\beta=2n+2$.
Therefore, we complete the proof of lemma.
\end{proof}

\section{Compatibility of initial data}\label{appendix-b}

In this section, we will introduce the definition of compatibility
of initial data at the corner $(0, 0)$. This can explain the reason
that the initial data required in Theorem \ref{high-regu}.
First of all, by evaluating the equation \eqref{H-eq} at $y=0$,
we obtain the boundary condition:
\beqq
(-\py^2 u-1)|_{y=0}=0.
\deqq
Thus, we require the initial data $u_0(y)$ satisfies the compatibility condition
\beq\label{b1}
(-\py^2 u_0(y)-1)|_{y=0}=0.
\deq
Then, we use the equation \eqref{H-eq} to deduce that
$u \py(\f{v}{u})|_{x=0}\in L^2(\mathbb{R}^+)$.
Taking $\py$ operator to the equation \eqref{H-eq}
and applying the divergence-free condition, we have
\beq\label{b5}
u\px \py u+ v \py^2 u-\py^3 u+\py u=0.
\deq
Evaluating the above equation at $y=0$, then we have
\beqq
(-\py^3 u+ \py u)|_{y=0}=0.
\deqq
Thus, we require the initial data $u_0(y)$ satisfies the compatibility condition
\beq\label{b2}
(-\py^3 u_0(y)+ \py u_0(y))|_{y=0}=0.
\deq
Furthermore, we also have that $\py(\f{v}{u})|_{x=0}\in L^2(\mathbb{R}^+)$.
The above two compatibilities on initial data is devoted to
ensuring that the quantities $u\py(\f{v}{u})|_{x=0}$ and $\py(\f{v}{u})|_{x=0}$
is not singularity at the point $y=0$.

Secondly, using the equation \eqref{H-eq}, we have
\beqq
v(x, y)=u(x, y)\int_0^y \f{-\py^2 u(x, y')+u(x, y')-1}{[u(x, y')]^2}d y',
\deqq
which implies
\beq
v_0(y):=v(0, y)=u_0(y)\int_0^y \f{-\py^2 u_0(y')+u_0(y')-1}{[u_0(y')]^2}d y'.
\deq
By taking $\px$ differential operator to the equation
\eqref{H-eq} and using the divergence-free condition, we obtain
\beq\label{b6}
\px(u \px u+ v \py u)+\py^3 v-\py v=0,
\deq
which, evaluating at $y=0$, yields directly
\beqq
(\py^3 v-\py v)|_{y=0}=0.
\deqq
The first order compatibility condition that arises from this
entails matching now the initial data evaluated at $y=0$ with
boundary data evaluated at $x=0$ in the standard manner for
initial boundary value problem:
\beq\label{b3}
\{\py^3 v_0(y)-\py v_0(y)\}|_{y=0}=0.
\deq
On the other hand, using the divergence-free condition, we can
rewrite the equation as the form
\beqq
u^2 \partial_{xy} \left(\f{v}{u}\right)
=2 u \py v \py \left(\f{v}{u} \right)+\py^3 v-\py v.
\deqq
It is clear that all quantities on the righthand side of equation \eqref{b6}
are vanishing at $y=0$.
Then, the first compatibility on initial data implies directly
\beqq
u \partial_{xy} \left(\f{v}{u}\right)|_{x=0}\in L^2(\mathbb{R}^+).
\deqq
Taking $\px$ operator of equation \eqref{b5} and using the
divergence-free condition, then we have
\beqq
\px(u\px \py u+ v \py^2 u)+\py^4 v-\py^2 v=0.
\deqq
which, evaluating at $y=0$, yields directly
\beqq
(\py^4 v-\py^2 v)|_{y=0}=0.
\deqq
This then gives our second order compatibility condition:
\beq\label{b4}
\{\py^4 v_0(y)-\py^2 v_0(y)\}|_{y=0}=0.
\deq
Taking $\py$ operator to the equation, we have
\beq\label{b7}
\py\left\{u^2 \partial_{xy} \left(\f{v}{u}\right)\right\}
=\py \left\{ 2 u \py v \py \left(\f{v}{u} \right)\right\}+\py^4 v-\py^2 v.
\deq
It is clear that all quantities on the righthand side of equation \eqref{b7}
are vanishing at $y=0$.
Then, the second compatibility on initial data implies directly
\beqq
\partial_{xy} \left(\f{v}{u}\right)|_{x=0}\in L^2(\mathbb{R}^+).
\deqq
Therefore, we can define the higher order compatibility conditions at the
corner $(0, 0)$ in the same manner, and can be stated as follows
\beq\label{comp-k1}
\left\{\py^3 (\px^{m}v|_{x=0})-\py (\px^{m}v|_{x=0})\right\}|_{y=0}=0,
\deq
and
\beq\label{comp-k2}
\left\{\py^4 (\px^{m}v|_{x=0})-\py^2 (\px^{m}v|_{x=0})\right\}|_{y=0}=0,
\deq
for all $m \ge 0$.
It is worth noticing that the higher order $x-$derivative of $v$
at $x=0$ can be controlled by the lower one.
The compatibility conditions \eqref{comp-k1} and \eqref{comp-k2}
on the initial data are called the $(2m+1)-$th and $(2m+2)-$th order
compatibility conditions at the corner $(0, 0)$.
According to the initial norms ${\mathcal{B}}^k_l(0)$
and ${\mathcal{E}}^k_l(0)$(see the definitions in \eqref{energy-e}
and \eqref{initial-data}), we require the generic compatibility
conditions at the corner $(0, 0)$ up to order $2k-1$ in Theorem \ref{high-regu}.
Finally, we explain the initial data $u_0$ satisfying the
compatibility condition. Obviously, we have
\beqq
\py v=\py u \int_0^y \f{-\py^2 u+u-1}{(u)^2}d\tau
        + \f{-\py^2 u+u-1}{u},
\deqq
and hence, it holds
\beqq
\py v_0|_{y=0}
=\left\{\py u_0 \int_0^y \f{-\py^2 u_0+u_0-1}{(u_0)^2}d\tau
        + \f{-\py^2 u_0+u_0-1}{u_0}\right\}_{y=0}=0,
\deqq
where we have used the compatibility conditions \eqref{b1} and \eqref{b2}.
This and the compatibility condition \eqref{b3} implies
$\py^3 v_0|_{y=0}=0$. Recall that
\beqq
\py^3 v_0
=\py^3 u_0 \int_0^y \f{-\py^2 u_0+u_0-1}{u_0^2}d\tau
  +\py^2 u_0\f{-\py^2 u_0+u_0-1}{u_0^2}
  -\py u_0 \f{-\py^3 u_0+\py u_0}{u_0^2}
  +\f{-\py^4 u_0+\py^2 u_0}{u_0},
\deqq
then we require the initial data of $u_0$ to satisfy
\beqq
(-\py^4 u_0+\py^2 u_0)|_{y=0}=(-\py^5 u_0+\py^3 u_0)|_{y=0}=0.
\deqq
Thus, this ensures $\py^3 v_0|_{y=0}=0$ that will satisfy the compatibility condition.
Similarly, we require the initial data $u_0$ satisfying
\beqq
(-\py^6 u_0+\py^4 u_0)|_{y=0}=(-\py^7 u_0+\py^5 u_0)|_{y=0}=0,
\deqq
to ensure the compatibility \eqref{b4}.
Similarly, we can add conditions to the initial data $u_0$ to satisfy
the compatibility conditions \eqref{comp-k1} and \eqref{comp-k2}.

\section{Control of initial data}\label{appendix-c}

In this section, we will prove that the initial condition
$\b^k_l(0)+\e^k_l(0)$ can be controlled be the initial data
$u_0$, and hence is independent of $\eps$.
That is to give the proof for the claim estimate as follows
(see \eqref{initial-control})
\beq\label{initial-control-01}
\e^k_l(0)+\b^k_l(0)\le C(u_0),
\deq
where the constant $C(u_0)$ only depends on the initial data $u_0$.
\begin{proof}
First of all, it is easy to check that
\beqq
|\py(\f{\v}{\u})|_{x=0}
=|\f{-\py^2 \u+\u-1-\eps}{(\u)^2}|_{x=0}
=|\f{-\py^2 u_0+u_0-1}{(u_0+\eps)^2}|
\le |\f{-\py^2 u_0+u_0-1}{u_0^2}|.
\deqq
This, together with the compatibility conditions
\eqref{b1} and \eqref{b2}, yields directly
\beq\label{ac1}
\|\py(\f{\v}{\u})\y^l|_{x=0}\|_{L^2_y}\le C(u_0).
\deq

Secondly, according to the equation \eqref{app-sym}, we have
\beqq
\v=\u \int_0^y \f{-\py^2 \u+\u-1-\eps}{(\u)^2}d\tau,
\deqq
which implies directly
\beqq
\py \v=\py \u \int_0^y \f{-\py^2 \u+\u-1-\eps}{(\u)^2}d\tau
        + \f{-\py^2 \u+\u-1-\eps}{\u}.
\deqq
Then, it holds
\beqq
\bal
|\py \v|_{x=0}
&\le |\py u_0| \int_0^y |\f{-\py^2 u_0+u_0-1}{(u_0+\eps)^2}|d\tau
        + |\f{-\py^2 u_0+u_0-1}{u_0+\eps}|\\
&\le |\py u_0| \int_0^y |\f{-\py^2 u_0+u_0-1}{(u_0)^2}|d\tau
        + |\f{-\py^2 u_0+u_0-1}{u_0}|,
\dal
\deqq
which, together with compatibility condition \eqref{b2}, yields directly
\beq\label{ac2}
\|\py \v|_{x=0}\|_{L^2_y}\le C(u_0).
\deq
Similarly, it is easy to deduce that
\beq\label{ac3}
\|\py^2 \v|_{x=0}\|_{L^2_y}+\|\py^3 \v|_{x=0}\|_{L^2_y}\le C(u_0).
\deq
It is easy to check that
\beqq
\bal
\partial_{xy}\v
=&-\py^2 \v \int_0^y \f{-\py^2 \u+\u-1-\eps}{(\u)^2}d\tau
  +\f{(\py^3 \v - \py \v)\u+(-\py^2 \u+\u-\eps-1)\py \v}{(\u)^2}\\
 &+\py \u \int_0^y \f{(\py^3 \v -\py \v)(\u)^2+2 \u (-\py^2 \u+\u-1-\eps)\py \v}{(\u)^4}d\tau.
\dal
\deqq
This, together with the estimates \eqref{ac1}-\eqref{ac3}
and compatibility condition \eqref{b4}, yields directly
\beq\label{ac4}
\|\partial_{xy} \v|_{x=0}\|_{L^2_y}\le C(u_0).
\deq
Similarly, it is easy to deduce that
\beq\label{ac5}
\|\px \py^2 \v|_{x=0}\|_{L^2_y}+\|\px \py^3 \v|_{x=0}\|_{L^2_y}\le C(u_0).
\deq

Finally, using the equation \eqref{app-sym}, it is easy to check that
\beqq
\bal
|\u \partial_{xy}(\f{\v}{\u})|_{x=0}
&\le |\f{2\u \py(\f{\v}{\u})\py \v+\py^3 \v-\py \v}{\u}|_{x=0}\\
&\le 2|\py(\f{\v}{\u})|_{x=0}|\py \v|_{x=0}
      +|\f{(\py^3 \v-\py \v)}{u_0+\eps}|_{x=0},
\dal
\deqq
This, together with the estimates \eqref{ac1}-\eqref{ac3}
and compatibility condition \eqref{b4}, yields directly
\beqq
\|\u \partial_{xy}(\f{\v}{\u})|_{x=0}\|_{L^2} \le C(u_0).
\deqq
Similarly, we can also obtain
\beqq
\|\partial_{xy}(\f{\v}{\u})|_{x=0}\|_{L^2} \le C(u_0).
\deqq
The other terms in $\e^k_l(0)$ and $\b^k_l(0)$
can be controlled by the similar way, for the sake of
simplicity, we refrain from writing. Therefore, we complete the proof of
the claim estimate \eqref{initial-control-01}.
\end{proof}
\end{appendices}

\section*{Acknowledgements}

Jincheng Gao's research was partially supported by NNSF of China (11801586),
Guangzhou Science and technology project(202102020769),
and Guangdong Basic and Applied Basic Research Foundation(2020A1515110942).
Zheng-an Yao's research was partially supported by NNSF of China (11971496, 12026244),
and National Key Research and Development Program of China(2020YFA0712500).

\phantomsection
\addcontentsline{toc}{section}{\refname}

\end{document}